\theoremstyle{plain}
\newtheorem{thm}{Theorem}[section]
\numberwithin{equation}{thm}
\newtheorem{prop}[thm]{Proposition}
\newtheorem{propn}[thm]{Proposition}
\newtheorem{lemma}[thm]{Lemma}
\newtheorem{cor}[thm]{Corollary}
\theoremstyle{definition}
\newtheorem{defn}[thm]{Definition}
\newtheorem{notn}[thm]{Notation}
\newtheorem{cont}[thm]{Contents}
\theoremstyle{remark}
\newtheorem{rem}[thm]{Remark}
\newtheorem{example}[thm]{Example}
\newcommand{\Z}{\mathbb{Z}}
\newcommand{\F}{\mathbb{F}}
\def\no{n\up{o}\kern 2pt}
\def\rond{\kern 1pt{\scriptstyle\circ}\kern 1pt}
\def\iso{\mathrel{\mathop{\kern 0pt\longrightarrow }\limits^{\sim}}}
\def\Hom{\mathop{\rm Hom}\nolimits}
\def\iHom{\mathcal{H}\mathrm{om}}
\newcommand{\xto}{\xrightarrow}
\newcommand{\Spec}{\mathrm{Spec}}
\newcommand{\kbar}{\overline{k}}
\def\Q{\mathbb{Q}}
\def\det{\mathop{\rm det}\nolimits}
\def\dim{\mathop{\rm dim}\nolimits}
\def\Gal{\mathop{\rm Gal}\nolimits}
\newcommand{\et}{\acute{e}t}
\begin{document}
\title{An arithmetic Yau--Zaslow formula}
\author{Jesse Pajwani} \email{jesse.pajwani@bristol.ac.uk} 
\address{School of Mathematics, School of Mathematics, University of Bristol, Bristol,
BS8 1TW, UK, and the Heilbronn Institute for Mathematical Research, Bristol,
UK}
\author{Ambrus P\'al} \email{ambrus.pal@ttk.elte.hu} \address{Mathematical Institute, Eötvös Loránd University, 1117 Budapest, Pázmány Péter sétány 1/C, Hungary.} 

\begin{abstract} We prove an arithmetic refinement of the Yau--Zaslow formula by replacing the classical Euler characteristic in Beauville's argument by a motivic Euler characteristic, related to the work of Levine. Our result implies similar formulas for other related invariants, including a generalisation of a formula of Kharlamov and R\u asdeaconu on counting real rational curves on real $K3$ surfaces, and Saito's determinant of cohomology. 
\end{abstract}

\keywords{Enumerative geometry, Quadratically enriched enumerative geometry, K3 surfaces, Euler characteristic}

\maketitle
\pagestyle{myheadings}
\setcounter{tocdepth}{1}
\tableofcontents

\section{Introduction}
Let $X$ be a $K3$ surface over a field $k$ which admits a complete linear system $\mathcal{C}$ consisting of curves of genus $g$. Assume that the curves in $\mathcal{C}$ are geometrically integral and belong to a primitive divisor class. It is well known that $\mathcal{C}$ contains a finite number $n_g$ of rational curves. The starting point for this paper is the Yau--Zaslow formula from \cite{Be} and \cite{YZ}.
\begin{thm}[(Complex Yau--Zaslow formula, \cite{Be}, \cite{YZ})]
Let $k=\mathbb{C}$. Then $n_g$ depends only on $g$, not on the choice of $K3$ surface $X$ or the choice of linear system of genus $g$ curves $\mathcal{C}$. Moreover there is a generating series for $n_g$ in $\mathbb{Z}[[t]]$
$$
1+\sum_{g=1}^{\infty} n_g t^g = \prod_{n=1}^{\infty} (1-t^n)^{-24}.
$$
\end{thm}
Perhaps the historical significance of the above complex Yau--Zaslow formula is that it predicted certain genus $0$ Gromov--Witten invariants; it counts the number of stable maps from $\mathbb P^1$ to a linear system $|L|$ with $c_1(L)^2=2g-2$. Curves in $|L|$ have arithmetic genus $g$ so we can expect the number of rational curves to be finite, under the natural assumption that the rational curves are precisely those with $g$ nodes, which is generically the case by Chen's theorem, Theorem 1.1 of \cite{Ch}. Beauville, whose work we follow closely in this paper, noted that under good conditions these curves are precisely the ones whose compactified Jacobian has nonzero Euler characteristic, and that Euler characteristic is $1$, so he proposed taking the Euler characteristic of the relative compactified Jacobian over the linear system as another way to compute this genus $0$ Gromov--Witten invariant. There has been subsequent work, see for example \cite{BL} and \cite{FGvS}, giving proofs in more general situations, while the state of the art is \cite{KMPS}. From the modern perspective this equality between these genus $0$ Gromov--Witten invariants and Euler characteristic of the compactified Jacobian can be understood as a special case of the MNOP conjecture relating Gromov--Witten and Donaldson--Thomas theories, with the latter simplifying to the Euler characteristic of the compactified Jacobian under good conditions. In this paper we are concerned with giving arithmetic refinements of both sides of this equation with the aid of tools from motivic homotopy. It should be noted that another refinement is provided by \cite{GS} in a different setting. While we take a different approach to these, recent papers such as \cite{Vi} and \cite{EW} compute refined Donaldson--Thomas invariants for $\mathbb{P}^3$ and $\mathbb{A}^3$ respectively using motivic homotopy, providing evidence that it should be possible to refine this side in a general case. Similarly in \cite{KLSW}, the authors use a different approach to compute refinements of genus $0$ Gromov--Witten invariants using tools from motivic homotopy.

For the Yau--Zaslow formula, Kharlamov and R\u asdeaconu also proved a version of the Yau--Zaslow formula over $\mathbb{R}$ in \cite{KR1}.
\begin{thm}[(Real Yau--Zaslow formula, \cite{KR1})]
Let $k=\mathbb{R}$. Let $n_+$ (resp. $n_-$) denote the number of rational curves in $\mathcal{C}$ with Welschinger number $+$ (resp. $-$), and let $\mathbb{W}(X,g) := n_+ - n_-$. Then $\mathbb{W}(X,g)$ is given by the coefficient of $t^g$ in the power series
$$
\prod_{i=1}^{\infty}\big(1+(-t^i)\big)^{-a} \cdot \prod_{j=1}^{\infty}\big(1-t^{2j}\big)^{-b} \in \Z[[t]],
$$
where $a$ is the compactly supported Euler characteristic of $X(\mathbb{R})$, and $b = \frac12 (24-a)$. 
\end{thm}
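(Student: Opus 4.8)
The plan is to run Beauville's proof of the complex Yau--Zaslow formula over $\R$, replacing the topological Euler characteristic everywhere by the Euler characteristic with compact supports $e_c$ of the real locus (which for the projective varieties below agrees with the ordinary Euler characteristic of the compact manifold of real points); it is cleanest to carry everything out one level higher, as an identity in the Grothendieck--Witt ring $GW(\R)$ built from Levine's $\mathbb A^1$-Euler characteristic $\chi^{\mathbb A^1}$, and then to apply the signature homomorphism $GW(\R)\to\Z$, the rank homomorphism recovering the complex formula. First I would set up the relative compactified Jacobian $\pi\colon\jd\to\mathcal C\cong\mathbb P^g$ of rank-one torsion-free sheaves of a suitable fixed degree $d$ on the fibres of the universal curve; the Picard rank $1$ and primitivity hypotheses make the generic fibre integral, and identifying $\jd$ with a moduli space of sheaves on $X$ with the corresponding Mukai vector shows (Mukai, over any field after base change and descent of the real structure) that $\jd$ is a smooth projective symplectic variety of dimension $2g$ with trivial canonical bundle, carrying a real structure compatible with that of $X$, with $\pi$ compatible with base change and with fibre the compactified Jacobian $\bar J(C)$ over a real point $[C]\in\mathbb P^g(\R)$.

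On the local side I would stratify $\mathbb P^g$ so that along each locally closed stratum $S$ both the equisingularity type of $C$ and the real deformation type of its singular points are locally constant, so that $\pi$ restricts to a locally trivial fibration of real analytic sets; additivity of $e_c(-(\R))$ (and, for the refinement, of $\chi^{\mathbb A^1}$ in a compact-support six-functor formalism) then gives $e_c(\jd(\R))=\sum_S e_c(S(\R))\cdot e_c(\bar J(C_S)(\R))$, the sum over strata with generic member $C_S$. The crucial input is the real form of Beauville's normalisation lemma: for integral $C$ the class of $\bar J(C)$ is controlled by that of the Jacobian of its normalisation, so positive geometric genus forces $e_c(\bar J(C)(\R))=0$ (a positive-dimensional real abelian variety, and the generalised Jacobians that occur, contribute $0$ once one checks that the split and non-split torus factors arising from the various real types of node are accounted for), leaving only the rational curves; for a real rational curve with only nodes the local term is $\pm1$ according to the Welschinger number of the curve --- concretely, the parity of the number of its solitary real double points --- which one reads off from the real points of the compactified Jacobian of a real nodal rational curve. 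Summing over those $[C]\in\mathbb P^g(\R)$ that are rational yields $\mathbb{W}(X,g)=e_c(\jd(\R))$ for the right choice of $d$; conjugate pairs of curves are not real points of $\mathbb P^g$, which is precisely why $\mathbb{W}(X,g)$ and not $n_g$ is computed.

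It remains to compute the global invariant $e_c(\jd(\R))$. I would reduce to $\mathrm{Hilb}^g(X)$ --- either by a real deformation argument, using that $\jd$ is a smooth proper family over the moduli of polarised $K3$ surfaces and that $e_c(-(\R))$ (resp.\ $\chi^{\mathbb A^1}$) is an invariant of such families, or directly from the moduli-of-sheaves description --- and then invoke, or prove in parallel with Göttsche's argument, the real analogue of Göttsche's formula
$$\sum_{g\ge0}e_c\big(\mathrm{Hilb}^g(S)(\R)\big)\,t^g=\prod_{i\ge1}(1+t^i)^{e_c(S(\R))}\cdot\prod_{j\ge1}(1-t^{2j})^{\,(e(S(\C))-e_c(S(\R)))/2}$$
for a smooth projective real surface $S$; applied with $S=X$ and $e(X(\C))=24$ this is exactly the asserted product, with $a=e_c(X(\R))$ and $b=\tfrac12(24-a)$.

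The hard part will be the local step: showing that the contribution of a real rational curve is its Welschinger number, rather than some unsigned or differently-normalised quantity, genuinely requires a description of the real points of compactified Jacobians of real curves with prescribed planar singularities, and it is here --- not in the global Göttsche computation --- that the passage to signs, or to quadratic forms in the $GW(\R)$-refinement, does the essential work. A secondary obstacle is the deformation invariance of $e_c(-(\R))$ in real families where the real topology jumps (wall-crossing across the real discriminant): for this the smoothness and properness of $\jd$ together with a comparison with its complex Betti cohomology through the real-\'etale, equivalently $C_2$-equivariant, realisation are the natural tools.
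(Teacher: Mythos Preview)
Your proposal is correct and follows essentially the same four-step strategy as the paper (and as \cite{KR1} itself): fibrewise computation of $\chi$ of compactified Jacobians, a Fubini argument over the linear system, identification with $X^{[g]}$, and a real G\"ottsche formula; the paper carries this out in $\widehat{\mathrm{W}}(k)$ via $\chi^{mot}$ and then applies $\mathrm{sign}$, exactly as you suggest. The one place your sketch diverges is the passage from $\overline{\mathrm{Pic}}^g(\mathcal C)$ to $X^{[g]}$: rather than a deformation-invariance or moduli-of-sheaves argument, the paper (Lemma~\ref{BarKon2}, following \cite[Prop.~2.1]{KR1}) uses that Beauville's birational map extends to a diffeomorphism of the real loci, which gives the equality of $e_c$ directly and sidesteps the wall-crossing issue you flag; for general real closed fields the paper transfers this from $\mathbb R$ by model completeness. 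Also note the sign bookkeeping: the signature of the local factor at a rational curve is $(-1)^c$ with $c$ the number of real nodes with real branches, which equals $(-1)^g W(C)$ via $s+c+2n=g$, so a substitution $t\mapsto -t$ appears at the end (cf.\ the lemma preceding Corollary~\ref{yau-zaslow4}).
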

Recent papers show that for $k$ any field of characteristic not $2$, we can often refine results in enumerative geometry over $\mathbb{C}$ to obtain an element of the Grothendieck--Witt ring of our base field $\widehat{\mathrm{W}}(k)$, a ring which is widely studied for its number theoretic properties. The element we obtain is an arithmetic refinement in the sense that we may recover the integer obtained by the classical count over $\mathbb{C}$ by applying a certain homomorphism out of this ring. Examples of such arithmetic refinements are an arithmetic refinement of the count of $27$ lines on a smooth cubic surface in \cite{KW}, and an arithmetic refinement of the count of lines meeting $4$ lines in $\mathbb{P}^3$ in \cite{SW}. As a result of this paper we obtain an arithmetic refinement of the Yau--Zaslow formula over any characteristic $0$ field. To do this, we use a motivic Euler characteristic $\chi^{mot}$ taking values in $\widehat{\mathrm{W}}(k)$. The field of arithmetic curve counting has its origins in $\mathbb{A}^1$-homotopy theory and $\chi^{mot}$ is no different. The original definition of $\chi^{mot}$ comes from the categorical Euler characteristic in the stable motivic homotopy category over our base field. A theorem of Levine and Raksit in \cite{LR} allows us to compute this Euler characteristic geometrically when our varieties are smooth and projective. By Theorem 2.13 of \cite{ABOWZ}, this extends uniquely to a motivic measure on the ring of varieties, making it an excellent candidate for curve counting, and since the motivic Euler characteristic can be defined geometrically, we may use geometric techniques in our proofs.

 To state our arithmetic Yau--Zaslow formula, we need some notation.
\begin{notn}
Let $k$ be a characteristic $0$ field, and let $X$ and $\mathcal{C}$ be as above. Without loss of generality, we may assume that $k$ is finitely generated over $\Q$, and let $k^{const}$ denote the algebraic closure of $\Q$ in $k$. Suppose that the rational curves in $\mathcal{C}$ are nodal, which is generically the case by Chen's theorem, Theorem 1.1 of \cite{Ch}. For $C$ a curve, let $\mathcal S(C)$ be the set of singular points of $C$. Let $p \in \mathcal S(C)$ be a nodal point so that the tangents to $C$ at $p$ are either defined over $k(p)$ or a quadratic extension, and write $\alpha_p$ for the element of $k(p)^\times/(k(p)^\times)^2$ such that the tangents are defined over $k(p)(\sqrt{\alpha_p})$. For $q$ a closed point of $\mathbb{P}^g$, write $C_q$ for the curve lying over $q$ and $G(X)$ for the set of closed points of $\mathcal{C}$ such that $C_q$ is rational. Let $X^{(m)}$ denote the $m^{\text{th}}$ symmetric power of $X$. Let $\Delta(g)$ be the integer from Theorem $\ref{bettibound}$. Let $\Delta^{k^{const}}_g$ be the group of order $2$ characters of $\Gal_{k^{const}}$ that are unramified at all primes of $k^{const}$ whose residue characteristic $p$ satisfies $p > \Delta(g)\cdot [k^{const}:\Q]+1$, and let $\Delta^k_g$ denote the image of $\Delta_g^{k^{const}}$ under the map $\Hom(\Gal_{k^{const}}, \Z/2\Z) \to \Hom(\Gal_k, \Z/2\Z)$. Let $J_g$ denote the subset of elements $q$ of $\widehat{\mathrm{W}}(k)$ such that $\mathrm{rank}(q)=0$, $\mathrm{sign}(q)=0$, and $\mathrm{disc}(q) \in \Delta^k_g$, which is an ideal by Definition $\ref{jgdefinition}$. For $L/k$ a finite field extension, let $\mathrm{Tr}_{L/k}: \widehat{\mathrm{W}}(L) \to \widehat{\mathrm{W}}(k)$ be the trace transfer map defined by taking a quadratic form $q$ over $L$ to $\mathrm{Tr}_{L/k} \circ q$, where $\mathrm{Tr}_{L/k}$ denote the Galois trace. Let $N_{L/k}: \widehat{\mathrm{W}}(L) \to \widehat{\mathrm{W}}(k)$ be Rost's norm transfer map from Corollary 5 of \cite{Ro}. Let $\mathcal{J}$ be the ideal of $\widehat{\mathrm{W}}(k)[[t]]$ such that $t^g[q] \in \mathcal{J}$ if and only if $[q] \in J_g$. Finally define
$$
\mathbf B^{mot}_{\mathcal{C}}(X) :=  \sum_{q \in G(X)} \mathrm{Tr}_{k(q)/k} \left( \prod_{p\in\mathcal S(C_q)}\left( 1+ N_{k(p)/k(q)}([\langle -2 \rangle] - [\langle 2\alpha_p \rangle])\right) \right) \in \widehat{\mathrm{W}}(k)/J_g.
$$
\end{notn}

\begin{thm}[(Arithmetic Yau--Zaslow formula)]\label{main}
The element $\mathbf{B}^{mot}_{\mathcal{C}}(X)$ is given by $\langle (-1)^g \rangle$ multiplied by the coefficient of $t^g$ in the power series
$$
\prod_{d=1}^\infty \sum_{m=0}^\infty \chi^{mot}(X^{(m)})\langle (-1)^d \rangle t^{md} \in \widehat{\mathrm{W}}(k)[[t]]/\mathcal{J}.
$$
\end{thm}
Because the ideal $J_g$ is sufficiently small and because we compute $\chi^{mot}(X^{(m)})$ modulo $J$ in Corollary $\ref{totsymm}$, we recover both the complex Yau--Zaslow formula of \cite{Be} and the real Yau--Zaslow formula of \cite{KR1} by taking the rank and signature of the above formula respectively. We also obtain a Yau--Zaslow type formula by taking the discriminant of the above equation, modulo a finite error term in $\Delta^k_g$. Our proof closely follows the steps laid out in Beauville's orignal proof (see \cite{Be}) of the classical Yau--Zaslow formula, in that there are four main steps:
\begin{enumerate}
\item[$(i)$] the computation of the motivic Euler characteristic of compactified Jacobians of curves, which captures both the number of rational curves on the surface, as well as arithmetic information about their singularities,
\item[$(ii)$] applying a Fubini theorem, which says that the motivic Euler characteristic of the compactified Jacobian of the whole linear system is expressible in data that solely depends on the rational curves,
\item[$(iii)$] applying a Batyrev-Kontsevich type theorem to relate the motivic Euler characteristic of the compactified Jacobians to the motivic Euler characteristic of certain Hilbert schemes,
\item[$(iv)$] applying a G\"ottsche formula to compute the motivic Euler characteristic of these Hilbert schemes in terms of the Euler characteristic of the surface.
\end{enumerate}

\begin{cont} The next three sections are dedicated to proving properties of the motivic Euler characteristic. In section $2$, we define our motivic Euler characteristic, before showing its relationship to other invarants including Levine's categorical Euler characteristic from \cite{Le} and Saito's determinant of cohomology from \cite{S0}. In section $3$, we show a Galois descent type result for the motivic Euler characteristic. Section $4$ is concerned with changing the base field, and showing that the motivic Euler characteristic is compatible with the natural transfer maps on the Grothendieck ring of varieties and the Grothendieck--Witt ring respectively by using the results from section $3$. Sections $5$ to $9$ are concerned with the proof strategy laid out above. In section $5$, we prove our version of a Fubini theorem by using an argument of Saito from \cite{S0}. The Fubini theorem we obtain isn't perfect: as well as information directly related to the motivic Euler characteristic of the singular curves, we also need to take into account the monodromy action around these curves. The monodromy action gives rise to a Saito $\epsilon$-factor for computing the discriminant of the motivic Euler characteristic. We therefore compute these Saito factors, and show that for a given $g$ these $\epsilon$ factors lie in $\Delta^k_g$, so in particular there are only finitely many possibilities. In section $6$, we use a stratification of compactified Jacobians of curves in terms of their partial normalisations in order to directly compute their motivic Euler characteristics, and then we can apply the Fubini theorem to obtain an expression for the whole compactified Jacobian. Section $7$ is concerned with a Batyrev--Kontsevich type theorem, allowing us to relate the motivic Euler characteristic of the compactified Jacobian of the linear system to the Hilbert scheme of points of the K3 surface. We prove this over real closed fields by using a model theoretic argument to reduce to the real case, which follows by Proposition 3.1 of \cite{KR1}. For the rank and discriminant, we adapt Batyrev's original $p$-adic integration argument from \cite{Ba}. In section $8$, we derive a motivic G{\"o}ttsche formula, which allows us to compute the motivic Euler characteristic of these Hilbert schemes in terms of the motivic Euler characterstic of the underlying K3 surface by showing that we can apply a result of de Cataldo and Migliorini from \cite{dCM}. Finally in section $9$, we put these steps together to arrive at the arithmetic Yau--Zaslow formula, Theorem $\ref{main}$. We then show that Theorem $\ref{main}$ recovers the complex Yau--Zaslow formula of \cite{Be}, the real Yau--Zaslow formula of \cite{KR1}, and we deduce a new formula for discriminants in Corollary $\ref{yau-zaslow5}$.
\end{cont}

\subsection{Notation and conventions}
We fix the following conventions for this paper.

\begin{itemize}
\item  We let $k$ denote the base field for our Yau--Zaslow formula. For this paper, we will often restrict to $\mathrm{char}(k)=0$, and we will always restrict to $\mathrm{char}(k) \neq 2$. 
\item Fields denoted by $\F$ will be finite fields which normally arise through looking at the reduction of a model of our variety.
\item The symbol $\ell$ will always denote a prime number different than $\mathrm{char}(k)$ and $\mathrm{char}(\F)$. 
\item If $k$ is a field, a variety over $k$ will mean a reduced separated scheme of finite type over $\Spec(k)$. We will not require varieties to be irreducible. We will consider the empty scheme $\emptyset=\Spec(\{0\})$ to be a variety over $k$.
\item All orderings on fields will be assumed to be total orderings that are compatible with the operations on fields.  A real closed field will mean a field equipped with an ordering such that no algebraic extension of the field can be equipped with an ordering.
\item A Calabi--Yau variety will always mean a smooth projective variety with trivial canonical bundle. In particular, a $K3$ surface is a $2$-dimensional, simply connected, Calabi--Yau variety. 
\item A morphism of varieties will always denote a finitely presented morphism $f: X \to Y$ which is compatible with the structure maps to the base.
\end{itemize}
We also fix the following notation for the whole paper. 

\begin{tabular}{p{1.7cm}|p{13cm}}
$k^\times$ &The multiplicative group of $k$.\\
$\kbar$& A fixed separable closure of $k$.\\
$\Gal_k$& The absolute Galois group $\Gal(\kbar/k)$ of $k$.\\
$\widehat{\mathrm{W}}(k)$ & The Grothendieck--Witt ring of $k$ (see Definition $\ref{gwring}$). \\
$K_0(\mathrm{Var}_k)$ & The Grothendieck ring of varieties over $k$ (see Definition $\ref{k0vark}$). \\
$g$ & A fixed positive integer.\\
$\mathcal{C}$ & A linear system of curves of genus $g$ lying on the $K3$ surface.\\
$\mathbb{G}_{m,k}$& The split one dimensional affine torus, $\Spec(k[x,x^{-1}])$.
\end{tabular}
We should note that a choice of separable closure does not affect the results in this paper, since all results are isomorphism invariant.

\subsection{Acknowledgements}

The first author wishes to thank Johannes Nicaise, Tomer Schlank, and Toby Gee, for reading through early outlines of this work and giving helpful feedback, as well as the support of Imperial College London, and the University of Canterbury. Parts of this work appeared as part of the first author's thesis, and the first author would also like to thank Kirsten Wickelgren and Kevin Buzzard for their careful reading of the thesis in their role as examiners. This work was undertaken while the first author was supported by the London School of Geometry and Number Theory under the Engineering and Physical Sciences Research Council grant number [EP/S021590/1], and was revised while the author was supported by the Marsden Fund administered by the Royal Society of New Zealand. The second author wishes to thank Frank Neumann for several discussions about the contents of this work, and for his interest. He also wishes to acknowledge the generous support of the Imperial College Mathematics Department’s Platform Grant which made the visit of Frank Neumann to Imperial College possible. We both thank the anonymous referees for their careful reading of the paper and the suggested proof of Lemma $\ref{hodgederhamcomparison}$. We thank Stephen McKean and Dori Bejleri for helpful comments. We thank Richard Thomas for his discussions on the modern view of the complex Yau--Zaslow formula. We thank Marc Levine and the rest of the study group in Essen for pointing out some errors in earlier versions of this paper.

\section{The motivic Euler characteristic and related invariants}
In this section we first recall the definition of the motivic Euler characteristic from \cite{ABOWZ} that forms the basis for our arithmetic curve counting before showing how it relates to three different invariants, namely, the complex compactly supported Euler characteristic, the real compactly supported Euler characteristic, and the determinant of the $\ell$-adic cohomology.
\begin{defn}\label{gwring} Let $k$ be a field whose characteristic is different from $2$. For every $a\in k$ let $\langle a\rangle$ denote the quadratic form of rank $1$ such that $\langle a \rangle(\alpha) = a\alpha^2$ for any $\alpha \in k$. For every pair $q,q'$ of quadratic forms let $q\oplus q',q\otimes q'$ denote the orthogonal direct sum and the tensor product of $q$ and $q'$, respectively. The {\it Grothendieck--Witt ring} $\widehat{\mathrm{W}}(k)$ of quadratic forms over $k$ is the ring generated by symbols $[q]$, where $q$ is any quadratic form over $k$, subject to the relations $[q]=[q']$, when $q$ and $q'$ are isomorphic, the relations $[q\oplus q']=[q]+[q']$ and $[q\otimes q']=[q]\cdot[q']$, when $q,q'$ are any pair of quadratic forms over $k$, and the relation $[\langle0\rangle]=0$. 

Define $\mathbb{H} := \langle 1 \rangle + \langle -1 \rangle$. The \emph{Witt ring}, denoted by $\mathrm{W}(k)$, is the quotient $\widehat{\mathrm{W}}(k)/(\mathbb{H})$. We say an element $q \in \widehat{\mathrm{W}}(k)$ is \emph{hyperbolic} if it lies in the ideal generated by $\mathbb{H}$, which is the same as the abelian subgroup consisting of elements of the form $n\mathbb{H}$ for $n \in \Z$. 

For a non-degenerate quadratic form $q$, the {\it rank} of $q$ is the dimension of the underlying vector space, denoted by rank$(q)$. It extends uniquely to a ring homomorphism $\mathrm{rank}:\widehat{\mathrm{W}}(k)\to\mathbb Z$. Write $I$ for the kernel of this homomorphism. The subset $T\subset\widehat{\mathrm{W}}(k)$ of torsion elements is also an ideal, and let $J=I^2\cap T$.
\end{defn}

\begin{defn}\label{DREulerCharacteristic} Let $X$ be a smooth, irreducible projective variety over $k$ of dimension $n$. Let $H_{dR}^*(X/k)$ denote the de Rham cohomology of $X$ over $k$. Then the composition of the cup-product square $a \mapsto a \cup a$ and the trace map $\mathrm{Tr}:H_{dR}^*(X/k)\to k$ defines a quadratic form on the vector space $H_{dR}^{2*}(X/k)$, which we may see is non-degenerate using Poincaré duality. Let $[H_{dR}^{2*}(X/k)]$ denote the class of this quadratic form in $\widehat{\mathrm{W}}(k)$, and set
$$\chi^{dR}(X)=[H_{dR}^{2*}(X/k)]-\frac{1}{2}\big(\sum_{i\geq0}
\dim(H_{dR}^{2i+1}(X/k))\big)\cdot\mathbb H\in\widehat{\mathrm{W}}(k),$$
which we will call the {\it de Rham Euler characteristic} of $X$. 
\end{defn}
In this paper, we will often work with the de Rham Euler characteristic due to its compatibility with results of Saito from \cite{S0}. However, we may compare the de Rham Euler characteristic to other Euler characteristics taking values in $\widehat{\mathrm{W}}(k)$. 

\begin{defn}\label{HodgeEulerCharacteristic}
Fix $X/k$ to be a smooth, irreducible projective of dimension $n$.  Following Construction 1.2 of \cite{LR}, we define the \emph{Hodge Euler characteristic} of $X$ as follows.

Consider the trace map $\mathrm{Tr}: H^d(X, \Omega^d_{X/k}) \to k$. We can define a cohomologically bounded complex of $k$ vector spaces $\mathrm{Hdg}(X/k)$ such that differentials are all zero and the vector space in degree $n$ is given by $\bigoplus_{j=i+n} H^i(X, \Omega^j_{X/k})$. The cup product composed with the trace map then furnishes a nondegenerate symmetric bilinear form $\mathrm{Tr}$ on $\mathrm{Hdg}(X/k)$ as with the de Rham Euler characteristic. This therefore defines a class in the $0^{\text{th}}$ Grothendieck--Witt group of the exact category of cohomologically bounded cochain complexes of $k$ vector spaces in the sense of Section 2.2 of \cite{Sc}. Since cochain complexes of $k$ vector spaces are always split, we can identify this group with $\widehat{\mathrm{W}}(k)$.  We will write $\chi^{Hdg}(X)$ to mean the element $(\mathrm{Hdg}(X/k), \mathrm{Tr}) \in \widehat{\mathrm{W}}(k)$ constructed above.
\end{defn}

\begin{lemma}\label{hodgederhamcomparison}
Let $X/k$ be an irreducible smooth projective variety over $k$. Then we may identify $\chi^{dR}(X)=\chi^{Hdg}(X) \in \widehat{\mathrm{W}}(k)$.
\end{lemma}
The following elegant proof was suggested to us by one of the referees of this paper. There is an alternative, more complicated, proof of this statement based on an argument by David Speyer (\cite{Sp}), by closely studying the Hodge-de Rham spectral sequence.
\begin{proof}
We proceed by considering $X \times_k \mathbb{A}^1_k$, where parameterise $\mathbb{A}^1_k$ by a co-ordinate $t$. Let $p_1: X \times_k \mathbb{A}^1_k \to X$ denote projection onto the first factor and $p_2: X \times_k \mathbb{A}^1_k \to \mathbb{A}^1_k$ denote projection onto the second factor. Consider the de Rham complex $(\Omega^*_{X/k}, d)$ on $X$. Fibrewise cup product and the trace map furnishes a quadratic form on $Rp_{2*}(p_1^*\Omega^*_{X/k}, t\cdot d)$ in the category of cohomologically bounded cochain complexes of coherent sheaves on $\mathbb{A}^1_k$. This gives rise to an element of the $0^{\text{th}}$ Grothendieck--Witt group of this abelian category in the sense of Section 2.2 of \cite{Sc}, which we denote by $\widehat{\mathrm{W}}(\mathrm{Ch}^b\mathrm{Coh}(\mathcal{O}_{\mathbb{A}^1_k}))$. Since $\mathbb{A}^1_k$ is a separated noetherian scheme, we may argue as in Theorem 8.2 of \cite{We} to identify $\widehat{\mathrm{W}}(\mathrm{Ch}^b\mathrm{Coh}(\mathcal{O}_{\mathbb{A}^1_k})) = \widehat{\mathrm{W}}(\mathbb{A}^1_k) := KO^{[0]}_0(\mathbb{A}^1_k)$, where the Hermitian $K$-theory group $KO^{[0]}_0(\mathbb{A}^1_k)$ is as defined as in Section 5 of \cite{PW}. Similarly by the same logic as the previous definition we can identify $\widehat{\mathrm{W}}(\mathrm{Ch}^b\mathrm{Coh}(\mathcal{O}_{\Spec(k)})) = \widehat{\mathrm{W}}(k)$, which in turn we may identify with $KO^{[0]}_0(\Spec(k))$ by the definition of $KO$ as in Section $4$ of \cite{PW}.

 Let $i_j:\mathrm{Spec}(k)\to\mathbb A^1_K$ be the inclusion at the points $j=0,1$.  By the proper base-change theorem, we have an equality in the Grothendieck--Witt ring of $k$:
$$
\widehat{\mathrm{W}}(\mathrm{Ch}^b\mathrm{Coh}(\mathcal{O}_{\Spec(k)})) \ni i_1^* Rp_{2*}(p_1^*\Omega^*_{X/k}, t\cdot d)=
\bigoplus_n H^n_{dR}(X)[-n] = \chi^{dR}(X) \in \widehat{\mathrm{W}}(k)
$$
with cup product and trace form to $k$. Over $t=0$, the differential vanishes and so
$$\widehat{\mathrm{W}}(\mathrm{Ch}^b\mathrm{Coh}(\mathcal{O}_{\Spec(k)})) \ni i_0^* Rp_{2*}(p_1^*\Omega^*_{X/k}, t\cdot d)= \bigoplus_{p,q}H^p(X,\Omega^q)[-p-q] = \chi^{Hdg}(X) \in \widehat{\mathrm{W}}(k)$$
again with cup product and trace form to $k$, where we note that there is a potential even shift in the grading in the second complex, but this does not affect the class in $\widehat{\mathrm{W}}(k)$. Since the Hermitian $K$-theory groups are representable by a spectrum in the stable motivic homotopy category of $k$ by Theorem 1.1 of \cite{PW}, they are $\mathbb A^1$-invariant. Therefore $\widehat{\mathrm{W}}(k) = KO^{[0]}_0(\Spec(k)) = KO^{[0]}_0(\mathbb{A}^1_k)$, and $i^*_0, i^*_1$ induce identifications $KO^{[0]}_0(\mathbb{A}^1_k) = \widehat{\mathrm{W}}(k)$ with $i_0^* = i_1^*$. In particular, there is an equality in $\widehat{\mathrm{W}}(k)$:
$$
\chi^{dR}(X) = i_0^*(Rp_{2*}(p_1^*\Omega^*_{X/k}, t\cdot d))= i_1^*(Rp_{2*}(p_1^*\Omega^*_{X/k}, t\cdot d)) = \chi^{Hdg}(X),
$$
as required.
\end{proof}

\begin{defn} Remark 2.1 (2) of \cite{Le} defines the {\it categorical Euler characteristic} $\chi^{cat}(X)$ for an arbitrary algebraic variety $X$ over $k$ by using the infinite suspension spectrum $\Sigma_T^\infty X_+$  as an object of $\textrm{SH}(k)$, the stable motivic homotopy category over $k$. Since $\Sigma_T^{\infty} X_+$ is dualisable, it gives us a canonical endomorphism of the unit object $\mathbb S_k$
$$\chi^{cat}(X)\in
\textrm{End}_{\textrm{SH}(k)}(\mathbb S_k).$$
By Morel's theorem, Theorem 6.4.1 of \cite{Mo}, we can identify $\textrm{End}_{\textrm{SH}(k)}(\mathbb S_k)\cong\widehat{\mathrm{W}}(k)$ when $k$ has characteristic $0$, and $\textrm{End}_{\textrm{SH}(k)}(\mathbb S_k)\cong\widehat{\mathrm{W}}(k)[\frac1p]$ when $k$ has positive odd characteristic.
\end{defn}

One of the main theorems of \cite{LR} is the following.
\begin{thm}[(Theorem 1.3 of \cite{LR})]\label{gauss-bonet} Let $X$ be a smooth, projective variety over $k$. Then there is an equality of Euler characteristics $\chi^{cat}(X)=
\chi^{Hdg}(X)$.
\end{thm}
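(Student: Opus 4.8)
The following is a reconstruction of the strategy one would use to prove this (it is Theorem~1.3 of \cite{LR}). The plan is to deduce the equality from a motivic Gauss--Bonnet theorem, refining the classical fact that the Euler characteristic of a closed manifold equals the Euler number of its tangent bundle. First I would use that, for $X$ smooth and proper over $k$, the suspension spectrum $\Sigma_T^\infty X_+$ is strongly dualisable in $\mathrm{SH}(k)$ with dual the Thom spectrum of the virtual bundle $-T_{X/k}$ (Atiyah duality), and that the evaluation and coevaluation maps realising this duality are built from the diagonal $\delta\colon X\to X\times X$ --- whose normal bundle is $T_{X/k}$ --- together with the Pontryagin--Thom collapse on a tubular neighbourhood of $\delta$. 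Plugging these explicit maps into the definition of the categorical trace $\chi^{cat}(X)=\mathrm{tr}(\mathrm{id}_{\Sigma_T^\infty X_+})$, the trace collapses to the pushforward along $\pi_X\colon X\to\Spec k$ of the Euler class $e(T_{X/k})=\delta^*\delta_*(1)$, i.e.\ one obtains the motivic Gauss--Bonnet identity $\chi^{cat}(X)=\pi_{X*}\,e(T_{X/k})\in\widehat{\mathrm{W}}(k)$, where the Euler class and proper pushforward are taken in the $\mathrm{SL}$-oriented (equivalently $\mathrm{Sp}$-oriented, carrying the $\det T_{X/k}$-twist) cohomology of the sphere spectrum, identified with $\widehat{\mathrm{W}}(k)$ by Morel's theorem.

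Next I would compute $\pi_{X*}e(T_{X/k})\in\widehat{\mathrm{W}}(k)$. Its rank is the classical Euler number $\int_X c_n(T_{X/k})=\chi_{\mathrm{top}}(X)$, which recovers only $\mathrm{rank}\,\chi^{dR}(X)$; the substantive point is to pin down the quadratic form. For this I would pass to an $\mathrm{SL}$-oriented refinement of de Rham cohomology (working with Milnor--Witt motivic cohomology $\widetilde{\mathrm{CH}}^{*}$, or directly with the Hodge-filtered de Rham spectrum) so that $\pi_{X*}e(T_{X/k})$ is identified with the ``self-intersection of the diagonal'', that is, with the trace of the identity on the perfect complex $R\Gamma_{dR}(X/k)$ equipped with its Poincar\'e-duality self-pairing, under which it is self-dual of total degree $2n$. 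The input here is twofold: the $\mathrm{SL}$-oriented Gysin formalism, and the degeneration of the Hodge--de Rham spectral sequence in characteristic $0$ (equivalently a Hochschild-homology / Hirzebruch--Riemann--Roch computation), which is what lets one identify the Euler-class pushforward with this self-pairing and in particular match the Serre-duality pairing on $\bigoplus_{p,q}H^q(X,\Omega^p_{X/k})$ with the cup-product pairing on $H^{2*}_{dR}(X/k)$.

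Finally I would evaluate the trace of the identity on $\big(R\Gamma_{dR}(X/k),\ \text{Poincar\'e pairing}\big)$. For any perfect complex $C$ over $k$ with a symmetric self-duality $C\simeq C^\vee[2n]$, this trace is the alternating sum $\sum_i(-1)^i$ of the induced pairings $H^i(C)\times H^{2n-i}(C)\to k$. In each even degree $2i\neq n$ the pair $(H^{2i},H^{2n-2i})$ contributes a hyperbolic block of rank $2\dim H^{2i}_{dR}(X/k)$, and the middle even cohomology contributes its intrinsic cup-product form; these assemble into the class $[H^{2*}_{dR}(X/k)]$ of Definition~\ref{DREulerCharacteristic}. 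Each odd-degree pair contributes a skew-hyperbolic block, which enters the alternating sum with a sign, and tracking the contributions shows they sum to $-\tfrac12\big(\sum_{i\ge0}\dim H^{2i+1}_{dR}(X/k)\big)\mathbb{H}$. Hence the trace equals $\chi^{dR}(X)$, and combining with the previous two steps yields $\chi^{cat}(X)=\chi^{dR}(X)$.

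The formal ingredients --- Atiyah duality, the $\mathrm{SL}$-oriented Gysin and Euler-class formalism, and the sign bookkeeping of the last step --- are routine given the available machinery. The genuine difficulty is the comparison in the middle step: upgrading the classical identity $\int_X e(T_{X/k})=\chi_{\mathrm{top}}(X)$ to an identity of quadratic forms. This requires either a de Rham realisation that is symmetric monoidal on dualisable objects and compatible with Morel's identification on the nose (so that it reproduces $\langle a\rangle$ rather than just its rank), or a Hochschild-homology computation matching Serre duality on Hodge cohomology with cup-product duality on $H^{2*}_{dR}$; it is here that one essentially uses $\mathrm{char}\,k=0$, and this is the heart of \cite{LR}.
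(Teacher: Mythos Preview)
The paper does not give a proof of this statement: it is stated as Theorem~1.3 of \cite{LR} and then used as a black box, with no argument supplied beyond the citation. Your proposal is therefore not to be compared against anything in the paper itself. What you have written is a reasonable high-level outline of the Levine--Raksit strategy (motivic Atiyah duality giving $\chi^{cat}(X)=\pi_{X*}e(T_{X/k})$, then computing this pushforward in an $\mathrm{SL}$-oriented de Rham/Hodge theory and identifying it with the trace pairing on $R\Gamma_{dR}(X/k)$), and your identification of the genuine content --- the quadratic refinement of the comparison, requiring Hodge--de Rham degeneration --- is accurate. For the purposes of this paper, though, a one-line citation suffices.
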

\begin{cor}
 Let $X$ be a smooth, projective variety over $k$. Then there is an equality of Euler characteristics $\chi^{cat}(X)=
\chi^{dR}(X)$.
\end{cor}
\begin{proof}
Combine the above theorem with Lemma $\ref{hodgederhamcomparison}$.
\end{proof}
Assume $\mathrm{char}(k)=0$ for the rest of the paper, unless otherwise stated.
\begin{defn}\label{k0vark} Let $k$ be any field. Let $K_0(\mathrm{Var}_k)$ denote the {\it Grothendieck ring of varieties over $k$}. This is the ring generated by symbols $[X]$, where $X$ is any variety over $k$, subject to the relation $[X]=[X']$, when $X$ and $X'$ are isomorphic, the relation $[X]=[X-X']+[X']$, when $X'$ is a closed subvariety in $X$, and $[X\times X']=[X]\cdot[X']$, when $X,X'$ are any pair of varieties over $k$. The $0$ element of this ring is $[\emptyset]$, and the multiplicative unit is $[\Spec(k)]$. 

A \emph{motivic measure} is a ring homomorphism out of $K_0(\mathrm{Var}_k)$. 
\end{defn}
The above Euler characteristic does not give us a motivic measure; see Remark \ref{not_measure}. However, Theorem 2.13 of \cite{ABOWZ} gives the following.
\begin{thm}\label{euler} There is a unique ring homomorphism $\chi^{mot}:K_0(\mathrm{Var}_k)\to\widehat{\mathrm{W}}(k)$ such that for every smooth, irreducible, projective variety $X/k$
$$\chi^{mot}([X])=\chi^{Hdg}(X) = \chi^{dR}(X).$$
\end{thm}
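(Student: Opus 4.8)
The plan is to use Bittner's presentation of the Grothendieck ring of varieties in characteristic zero, which states that $K_0(\mathrm{Var}_k)$ is generated by the classes $[X]$ of smooth projective $k$-varieties, subject only to $[\emptyset]=0$ and the blow-up relation $[\mathrm{Bl}_Y X] + [Y] = [X] + [E]$, one for each smooth projective $X$ and each smooth closed $Y\subseteq X$, where $E\subseteq \mathrm{Bl}_YX$ is the exceptional divisor (a projective bundle over $Y$ of relative dimension $\codim_X(Y)-1$). Granting this, uniqueness of $\chi^{mot}$ is immediate: the smooth projective classes generate $K_0(\mathrm{Var}_k)$ and $\chi^{mot}$ is forced to equal $\chi^{dR}$ on them. (It is harmless that Bittner's generators need not be irreducible, since $\chi^{dR}$ is visibly additive over disjoint unions.) For existence one defines a ring homomorphism out of the free ring on smooth projective varieties by $[X]\mapsto\chi^{dR}(X)$ and checks that it annihilates the Bittner relations, i.e. that $\chi^{dR}(\emptyset)=0$, that $\chi^{dR}(X\times X')=\chi^{dR}(X)\cdot\chi^{dR}(X')$, and that $\chi^{dR}$ satisfies the blow-up relation.

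The first point is trivial, and the second is the K\"unneth formula for de Rham cohomology together with the compatibility of the cup product and the trace with the K\"unneth decomposition (one separately checks that the $\mathbb{H}$-correction terms, which record odd Betti numbers, multiply correctly). Everything of substance is in the blow-up relation. Here I would start from the classical decompositions
\[
H^{n}_{dR}(\mathrm{Bl}_YX)\;\cong\;\pi^{*}H^{n}_{dR}(X)\ \oplus\ \bigoplus_{i=1}^{c-1}H^{n-2i}_{dR}(Y),\qquad H^{n}_{dR}(E)\;\cong\;\bigoplus_{i=0}^{c-1}H^{n-2i}_{dR}(Y),
\]
with $c=\codim_X(Y)$, and make three observations about the cup-product quadratic forms. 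First, $\pi^{*}$ is a ring map and $\pi$ is birational, so $\pi_{*}\pi^{*}=\mathrm{id}$ and hence $\pi^{*}$ preserves traces; thus $\pi^{*}$ embeds $(H^{2*}_{dR}(X),\text{cup})$ isometrically into $(H^{2*}_{dR}(\mathrm{Bl}_YX),\text{cup})$. Second, the image of $\pi^{*}$ is orthogonal to the exceptional summands, and likewise the projective-bundle decomposition of $H^{2*}_{dR}(E)$ is ``anti-triangular'': these vanishings all come from $s_{j}(N_{Y/X})=0$ for $j<0$, using the projection formula and $p_{*}(\xi^{m})=s_{m-(c-1)}(N_{Y/X})$. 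Third, one computes the cup-product form on the exceptional summands explicitly via $j^{*}j_{*}=\cup\,c_{1}(\mathcal{O}_E(E))$ with $\mathcal{O}_E(E)\cong\mathcal{O}_E(-1)$; the Gram matrix is block anti-triangular with leading anti-diagonal blocks equal to $\pm$ the Poincar\'e pairing of $Y$. Such a form is hyperbolic up to a single copy, carrying a sign, of the middle-degree cup-product form of $Y$ — and this copy, using $[q]+[\langle-1\rangle\otimes q]=\mathrm{rank}(q)\cdot\mathbb{H}$, cancels modulo hyperbolic forms against the $[H^{2*}_{dR}(Y)]$ appearing on the other side of the relation (or against the middle-degree form of $E$, depending on the parity of $\dim X$). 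What is left after this cancellation is that both sides of the relation carry the same honest quadratic form — the middle-degree cup-product form of $X$, or none — together with $\mathbb{H}$-coefficients that depend only on de Rham Betti numbers; the required numerical identity then follows from the additivity of Betti numbers under blow-ups, the projective-bundle formula, and K\"unneth (equivalently, from the fact that the topological Euler characteristic and the individual Betti numbers already define motivic measures).

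The main obstacle is precisely this blow-up computation: tracking the \emph{quadratic} behaviour of de Rham cohomology, not merely the Betti numbers, and matching it across the relation in $\widehat{\mathrm{W}}(k)$. It unavoidably involves a short case analysis in the parities of $\dim X$ and $\codim_X(Y)$, since these decide which groups lie in the middle degree and therefore contribute a non-hyperbolic summand, and one must keep careful track of the signs coming from $\mathcal{O}_E(E)\cong\mathcal{O}_E(-1)$. An alternative, and arguably cleaner, route bypasses this: construct a compactly supported $\mathbb{A}^{1}$-Euler characteristic $\chi_{c}\colon K_0(\mathrm{Var}_k)\to\widehat{\mathrm{W}}(k)$ directly from the six-functor formalism on $\mathrm{SH}(k)$, where additivity is automatic from localization triangles and multiplicativity from the monoidal structure, and then invoke the Gauss--Bonnet theorem of Levine and Raksit, Theorem \ref{gauss-bonet}, to identify $\chi_{c}(X)=\chi^{cat}(X)=\chi^{dR}(X)$ for $X$ smooth and proper; uniqueness is again supplied by Bittner's presentation. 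Either way, the entire content of the theorem is the comparison of $\chi^{dR}$ on smooth projective varieties with the Bittner relations.
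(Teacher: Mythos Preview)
The paper does not give its own proof of this theorem; it simply records it as Theorem 2.13 of \cite{ABOWZ} and remarks immediately afterward that Bittner's presentation of $K_0(\mathrm{Var}_k)$ is the key ingredient. Your outline --- use Bittner for uniqueness, and for existence verify that $\chi^{dR}$ respects products and the blow-up relation --- is exactly the strategy that remark points to, so you have reconstructed the intended argument. Your alternative route, building a compactly supported $\mathbb{A}^1$-Euler characteristic from the six-functor formalism on $\mathrm{SH}(k)$ and then invoking Theorem~\ref{gauss-bonet}, is likewise in the spirit of \cite{ABOWZ} and is arguably the cleaner way to avoid the case-by-case parity bookkeeping you correctly flag in the direct blow-up computation.
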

A key ingredient in the proof of the above is Bittner's presentation of the ring of varieties, Theorem 3.1 of \cite{Bi}, which tells us that the underlying abelian group of $K_0(\mathrm{Var}_k)$ can be generated by smooth projective varieties, subject to only a smooth blow-up excision relation. In particular, the above theorem requires us to restrict to characteristic $0$.

\begin{rem}\label{not_measure} 
We can show that the categorical Euler characteristic is not a motivic measure. For example, consider $\mathbb{P}^1$. Let $\{\infty\}$ denote the closed point at infinity. Then $X \setminus \{\infty\} \cong \mathbb{A}^1$, so in $K_0(\mathrm{Var}_k)$, $[\mathbb{P}^1] = [\mathbb{A}^1]+[\Spec(k)]$.  Proposition 2.5 of \cite{Le} computes $\chi^{cat}(\mathbb{P}^1) = \langle 1 \rangle + \langle -1 \rangle$, but since $\mathbb{A}^1$ is clearly $\mathbb{A}^1$-contractible, we have $\chi^{cat}(\mathbb{A}^1) = \chi^{cat}(\Spec(k)) = \langle 1 \rangle$. Therefore $\chi^{cat}(\mathbb{P}^1) \neq \chi^{cat}(\mathbb{A}^1) + \chi^{cat}(\Spec(k))$ whenever $\langle -1 \rangle \neq \langle 1 \rangle$, i.e.~whenever $-1$ is not a square in $k$. 
\end{rem}

For the rest of the paper we will write $\chi^{mot}(X)$ for $\chi^{mot}([X])$ for every variety $X$ over $k$ and call this the {\it motivic Euler characteristic} of $X$. This is a slight abuse of notation, since $\chi^{mot}$ depends on $k$. When the base field is ambiguous, we will write $\chi^{mot}_k$, however for the most part we will omit the subscript for readability. 

\begin{defn}\label{3.8dim}Suppose the dimension $n$ of $X$ is even. The composition of the cup-product and the trace map $\mathrm{Tr}:H_{dR}^{2n}(X/k)\to k$ defines a non-degenerate quadratic form on $H_{dR}^{n}(X/k)$. Let $[H_{dR}^{n}(X/k)]\in\widehat{\mathrm{W}}(k)$ denote the class of this quadratic form. Define integers $b^+$ and $b^-$ by
\begin{align*}
b^{-}&:=\sum_{i<n}\dim_k H_{dR}^{i}(X/k)\\
b^+ &:= \sum_{i < n} (-1)^i \dim_k H_{dR}^{i}(X/k).
\end{align*}
\end{defn}
\begin{lemma}\label{even} Suppose that $X$ has even dimension $n$. Then
$$\chi^{dR}(X)=b^{+}\cdot\mathbb H+
[H_{dR}^{n}(X/k)].$$
If $X$ has odd dimension, then $\chi^{dR}(X) = \left(b^+ - \frac{1}{2}\mathrm{dim}_k(H^n_{dR}(X/k))\right)\cdot \mathbb{H}$.
\end{lemma}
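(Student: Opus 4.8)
The plan is to compute $[H_{dR}^{2*}(X/k)]$ by decomposing the underlying quadratic form along the cohomological grading and using Poincar\'e duality to recognise all but the ``middle'' block as hyperbolic, and then to fold the odd--degree correction term in the definition of $\chi^{dR}$ using the Betti--number identities coming from duality. Writing a class $a\in H_{dR}^{2*}(X/k)$ as $a=\sum_{j\ge 0}a_j$ with $a_j\in H_{dR}^{2j}(X/k)$, and using that the trace $\mathrm{Tr}$ only sees cohomological degree $2n$, one gets $\mathrm{Tr}(a\cup a)=\sum_{j+l=n}\mathrm{Tr}(a_j\cup a_l)$. Thus, with respect to the grading, the form is ``anti--diagonal'': each summand $H_{dR}^{2j}(X/k)$ is totally isotropic and is paired only against $H_{dR}^{2(n-j)}(X/k)$.

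First I would dispose of the off--middle blocks. For each $j$ with $2j<n$, Poincar\'e duality for algebraic de Rham cohomology (valid since $\mathrm{char}(k)=0$) says the pairing $H_{dR}^{2j}(X/k)\times H_{dR}^{2(n-j)}(X/k)\xrightarrow{\ \cup\ } H_{dR}^{2n}(X/k)\xrightarrow{\ \mathrm{Tr}\ }k$ is perfect, so the restriction of our quadratic form to $H_{dR}^{2j}(X/k)\oplus H_{dR}^{2(n-j)}(X/k)$ is nondegenerate and admits the Lagrangian $H_{dR}^{2j}(X/k)$; over a field of characteristic $0$ such a form is hyperbolic, so this block contributes $\dim_k H_{dR}^{2j}(X/k)\cdot\mathbb{H}$. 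Letting $j$ run over $2j<n$ exhausts all even--degree cohomology groups except, when $n$ is even, the middle group $H_{dR}^{n}(X/k)$, on which the form is precisely the cup--product form of Definition~\ref{3.8dim}. Hence
$$[H_{dR}^{2*}(X/k)]=\Big(\sum_{\substack{0\le i<n\\ i\ \mathrm{even}}}\dim_k H_{dR}^{i}(X/k)\Big)\mathbb{H}+[H_{dR}^{n}(X/k)]$$
when $n$ is even, and the same formula with the last term deleted when $n$ is odd (there being no middle block).

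It remains to incorporate the correction $-\tfrac12\big(\sum_{i\ \mathrm{odd}}\dim_k H_{dR}^{i}(X/k)\big)\mathbb{H}$ from the definition of $\chi^{dR}(X)$. Poincar\'e duality gives $\dim_k H_{dR}^{i}(X/k)=\dim_k H_{dR}^{2n-i}(X/k)$, so pairing $i$ with $2n-i$ shows $\sum_{i\ \mathrm{odd}}\dim_k H_{dR}^{i}(X/k)$ equals $2\sum_{i<n,\ i\ \mathrm{odd}}\dim_k H_{dR}^{i}(X/k)$ when $n$ is even, and $2\sum_{i<n,\ i\ \mathrm{odd}}\dim_k H_{dR}^{i}(X/k)+\dim_k H_{dR}^{n}(X/k)$ when $n$ is odd. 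Substituting this together with the displayed expression for $[H_{dR}^{2*}(X/k)]$ into $\chi^{dR}(X)$, the coefficient of $\mathbb{H}$ collapses to $\sum_{0\le i<n}(-1)^i\dim_k H_{dR}^{i}(X/k)=b^+$ in the even case, and to $b^+-\tfrac12\dim_k H_{dR}^{n}(X/k)$ in the odd case, as claimed.

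The only genuine inputs are Poincar\'e duality for de Rham cohomology and the elementary fact that a nondegenerate quadratic form over a characteristic--$0$ field possessing a Lagrangian is hyperbolic; everything else is bookkeeping with Betti numbers, so the point demanding care is simply keeping the index ranges and the parity of $n$ straight throughout.
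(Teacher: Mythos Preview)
Your proof is correct and follows essentially the same approach as the paper: decompose $H_{dR}^{2*}(X/k)$ into orthogonal blocks $H_{dR}^{2j}\oplus H_{dR}^{2(n-j)}$ for $2j<n$, use Poincar\'e duality to see each block has a Lagrangian and is therefore hyperbolic, and leave the middle piece (when $n$ is even) as $[H_{dR}^n(X/k)]$. The paper's write-up omits the explicit Betti-number bookkeeping you carry out to identify the coefficient of $\mathbb H$ as $b^+$ (resp.\ $b^+-\tfrac12\dim H_{dR}^n$), simply declaring ``the claim is now clear,'' so your version is in fact more detailed than the original.
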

\begin{proof} 
Suppose $n$ is even, and consider the quadratic form on $H_{dR}^{2*}(X/k)$. This decomposes as an orthogonal direct sum of the middle term $H_{dR}^{n}(X/k)$ and of $H_{dR}^{2i}(X/k)\oplus H_{dR}^{2n-2i}(X/k)$ as the integer $i$ runs between $0$ and $n-1$. For every such $i$ the direct sum $H_{dR}^{2i}(X/k)\oplus H_{dR}^{2n-2i}(X/k)$ is a non-degenerate subspace of $H_{dR}^{2*}(X/k)$ of rank $2\dim(H_{dR}^{2i}(X/k))$ and $H_{dR}^{2i}(X/k)$ is an isotropic subspace of dimension $\dim(H_{dR}^{2i}(X/k))$. Therefore the quadratic form $H_{dR}^{2i}(X/k)\oplus H_{dR}^{2n-2i}(X/k)$ is hyperbolic, so is isomorphic to the orthogonal sum of $\dim(H_{dR}^{2i}(X/k))$ copies of $\mathbb H$, so the claim is clear in the even case. The odd case follows by an identical argument.
\end{proof}

We are interested in recovering invariants related to quadratic forms from the Grothendieck--Witt ring. The motivic Euler characteristic encodes classical invariants on varieties, the first being the compactly supported Euler characteristic of the complex points.

\begin{defn}
 For every variety $X$ over $k$ let $e(X)$ denote the compactly supported Euler characteristic of the base change $X_{\overline{k}}$ of $X$ to $\overline k$. 
\end{defn}
\begin{thm}\label{levine1} Let $\mathrm{rank}:\widehat{\mathrm{W}}(k) \to \mathbb Z$ be the rank homomorphism from Definition $\ref{gwring}$. We have $e(X)=\mathrm{rank}(\chi^{mot}(X))$ for every algebraic variety $X$ over $k$.
\end{thm}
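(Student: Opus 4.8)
The plan is to reduce to the known fact that over $\mathbb{C}$ the rank of $\chi^{dR}$ of a smooth projective variety equals the topological Euler characteristic, and then propagate this equality through all the structures involved. First I would observe that both sides of the claimed identity, $e(-)$ and $\mathrm{rank}\circ\chi^{mot}(-)$, are ring homomorphisms $K_0(\mathrm{Var}_k)\to\mathbb{Z}$: for the left-hand side this is the standard fact that compactly supported Euler characteristic is a motivic measure (it is additive on closed–open decompositions by the long exact sequence in compactly supported cohomology, and multiplicative by the Künneth formula), and for the right-hand side it is the composition of the ring homomorphism $\chi^{mot}$ from Theorem \ref{euler} with the ring homomorphism $\mathrm{rank}$ from Definition \ref{gwring}. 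Hence, by Bittner's presentation (Theorem 3.1 of \cite{Bi}), it suffices to check the equality on classes $[X]$ of smooth projective varieties $X/k$.

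For such $X$, by Theorem \ref{euler} we have $\chi^{mot}(X)=\chi^{dR}(X)$, so I need $e(X)=\mathrm{rank}(\chi^{dR}(X))$. Unwinding Definition \ref{DREulerCharacteristic}, the rank of $[H_{dR}^{2*}(X/k)]$ is $\sum_{i\ge0}\dim_k H_{dR}^{2i}(X/k)$, and $\mathrm{rank}(\mathbb{H})=2$, so
$$
\mathrm{rank}(\chi^{dR}(X))=\sum_{i\ge0}\dim_k H_{dR}^{2i}(X/k)-\sum_{i\ge0}\dim_k H_{dR}^{2i+1}(X/k)=\sum_{j\ge0}(-1)^j\dim_k H_{dR}^j(X/k).
$$
This is the de Rham Euler characteristic of $X/k$ as an integer. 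Since $\mathrm{char}(k)=0$, de Rham cohomology is a finite-dimensional $k$-vector space whose dimension is a bona fide invariant, and I would invoke base-change for algebraic de Rham cohomology along $k\hookrightarrow\bar k$ (flat base change, $X$ being smooth and proper) to reduce to $k=\bar k$, and then either the comparison isomorphism with a $\mathbb{C}$-model (via a further base change through a common field, using that Betti numbers of the analytification of a smooth projective complex variety compute $e(X(\mathbb{C}))$) or directly the Lefschetz-type fact that $\sum_j(-1)^j\dim_k H_{dR}^j(X/k)$ is stable under extension of algebraically closed fields. Either route gives $\sum_j(-1)^j\dim_k H_{dR}^j(X/k)=e(X)$.

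The main obstacle is purely bookkeeping around base change and comparison: one must be careful that "$e(X)$" is defined in Definition via $\ell$-adic or Betti compactly supported cohomology of $X_{\bar k}$, while the computation above produces a de Rham Euler characteristic, so the heart of the argument is the compatibility of these two through the de Rham–Betti (or de Rham–$\ell$-adic, via Artin comparison plus the fact that Euler characteristics are independent of $\ell$) comparison isomorphisms in characteristic $0$. None of this is deep, but it is the only nontrivial input; everything else is the formal "both sides are motivic measures, so check on smooth projectives" reduction. I would also remark that this argument is essentially Levine's, which is why the theorem is attributed to him in its statement.
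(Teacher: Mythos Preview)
Your proof is correct and follows the same high-level strategy as the paper: both sides are motivic measures, so by Bittner's presentation it suffices to check on smooth projective varieties, and then one reduces to a known comparison over $\mathbb{C}$.

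The technical route differs slightly. The paper, after reducing to smooth projective $X$, descends $X$ to a finitely generated subfield $F\subset k$, embeds $F\hookrightarrow\mathbb{C}$, and then invokes the \emph{naturality of the categorical Euler characteristic $\chi^{cat}$ with respect to base change} together with the invariance of rank on $\widehat{\mathrm{W}}$ under field extensions, to transport the equality from $\mathbb{C}$ (where it is Remark~1.3 of \cite{Le}) back to $k$. Your argument instead stays entirely on the de~Rham side: you compute $\mathrm{rank}(\chi^{dR}(X))=\sum_j(-1)^j\dim_k H^j_{dR}(X/k)$ directly from Definition~\ref{DREulerCharacteristic}, and then use flat base change for algebraic de~Rham cohomology plus the de~Rham--Betti (or de~Rham--$\ell$-adic) comparison in characteristic~$0$. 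Your route is arguably more elementary, since it avoids appealing to any black-box properties of $\chi^{cat}$ in $\mathrm{SH}(k)$; the paper's route, on the other hand, packages the comparison step into Levine's Remark~1.3 and so is shorter to state once that reference is granted. Both are standard and neither requires any new idea.
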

\begin{rem} 
We may use any Weil cohomology theory to compute the compactly supported Euler characteristic and we will get the same value, for example, we may work with $\ell$-adic cohomology. Independence from the cohomology theory is underlined by Theorem $\ref{gauss-bonet}$, since $\chi^{mot}(X)$ only depends on $\chi^{cat}(X)$; an invariant that comes from the stable motivic homotopy category. For $\ell$-adic cohomology, Laumon proved in \cite{Lau} that for any finite type separated scheme over an algebraically closed field and any prime $\ell$ different from the characteristic, the $\ell$-adic Euler characteristic and the compactly supported $\ell$-adic Euler characteristic are equal.
\end{rem}
\begin{proof}[Proof of Theorem $\ref{levine1}$]
For $X$ a projective variety over $\mathbb{C}$, this is proven in Remark 2.3 (1) of \cite{Le}. The general case can be reduced to this case as follows. The assignment $X\mapsto e(X)$ gives rise to a ring homomorphism $ K_0(\mathrm{Var}_k) \to\mathbb Z$. Since $K_0(\mathrm{Var}_k)$ is generated by smooth projective varieties by Theorem 3.1 of \cite{Bi}, it is enough prove the claim for $X$ smooth and projective. Let $X$ be such a variety and let $F\subset k$ be a subfield which is finitely generated over $\mathbb{Q}$ such that $X$ is already defined over $F$, i.e.~there is a smooth projective variety $Y$ over $F$ whose base change to $k$ is $X$. Fix an embedding of $F$ into $\mathbb C$, which exists since $F$ is countable. Using the naturality of the categorical Euler characteristic with respect to base change, and the invariance of the rank function on the Grothendieck--Witt ring under base change, we get that
$$\mathrm{rank}(\chi^{mot}(X))=\mathrm{rank}(\chi^{mot}(Y\times_Fk))=
\mathrm{rank}(\chi^{mot}(Y))=\mathrm{rank}(\chi^{mot}(Y\times_F\mathbb C)).$$
By proper base change for our choice of Weil cohomology theory we have
$$e(X)=e(Y\times_Fk)=e(Y)=e(Y\times_F\mathbb C).$$
Applying Remark 2.3 (1) of \cite{Le} gives $\mathrm{rank}(\chi^{mot}(Y\times_F\mathbb C))=e(Y\times_F\mathbb C)$, as required.
\end{proof}

As well as the complex points of our variety, the motivic Euler characteristic allows us to recover information about the real points of our variety and natural generalisations of this. 

\begin{defn} Assume $k$ is a real closed field. Then by Sylvester's law of inertia, every non-degenerate quadratic form $q$ over $k$ is isomorphic to a diagonal form
$$\underbrace{\langle1\rangle\oplus\langle1\rangle\oplus
\cdots\oplus\langle1\rangle}_m\oplus
\underbrace{\langle-1\rangle\oplus\cdots\oplus\langle-1\rangle}_n,$$
and $m-n$ only depends on the isomorphism class of $q$. This is called the {\it signature} of $q$ and we write $\mathrm{sign}(q)$. It induces an isomorphism $\mathrm{sign}:\mathrm{W}(k)\to \mathbb Z.$
\end{defn}
Now let $k$ be again any field whose characteristic is not two.

\begin{defn}\label{realspec} The {\it real spectrum} Spr$(k)$ of $k$ is the set of all orderings of the field $k$, see Definition 1.2 of Chapter 8, \S1 of \cite{La}. Note that we may have $\mathrm{Spr}(k)=\emptyset$, for example if $-1$ is the sum of squares in $k$. In particular $\mathrm{Spr}(k)=\emptyset$ if $k$ has positive characteristic. Consider Spr$(k)$ as a topological space by equipping it with the {\it Harrison topology}, which may be a slight abuse of notation if $\mathrm{Spr}(k) = \emptyset$. The definition can be found in Chapter 8, \S6 of \cite{La}, which we recall here for convenience. Let $S\subset k$ be a finite subset and let $\prec$ be a total ordering on the set $S$. Let $U(S,\prec)\subset\mathrm{Spr}(k)$ be the set consisting of all orderings $<$ on $k$ such that $<$ restricted to $S$ is $\prec$. The sets $U(S,\prec)$ form a sub-basis of the Harrison topology, which makes Spr$(k)$ a compact and totally disconnected topological space.
\end{defn}
\begin{defn}\label{2.13a} For a topological space $X$ and a ring $A$ let $\mathcal C(X,A)$ denote the ring of continuous $A$-valued functions, where we give $A$ the discrete topology. For $<$ in $\mathrm{Spr}(k)$ let $k_<$ denote the real closure of the ordered field $(k,<)$. For every non-degenerate quadratic form $q$ over $k$ let $\mathrm{sign}(q):\mathrm{Spr}(k)\to\mathbb Z$ be the function sending each $<$ in $\mathrm{Spr}(k)$ to the signature of the base change of $q$ to $k_<$. This is continuous with respect to the Harrison topology, so we get a ring homomorphism:
$$\mathrm{sign}:\mathrm{W}(k)\to\mathcal C(\mathrm{Spr}(k),\mathbb Z),$$
where we note that if $\mathrm{Spr}(k)=\emptyset$, then $\mathcal C(\mathrm{Spr}(k), \mathbb{Z}) = \{0\}$ so $\mathrm{sign}$ is the trivial homomorphism. By slight abuse of notation let sign$:\widehat{\mathrm{W}}(k)\to\mathcal C(\mathrm{Spr}(k),\mathbb Z)$ denote the composition of the quotient map $\widehat{\mathrm{W}}(k)\to
\mathrm{W}(k)$ with the map sign above.
\end{defn}

\begin{defn}\label{2.13b} Assume again that $k$ is a real closed field, and let $X$ be an algebraic variety over $k$. Then $X(k)$ is a semi-algebraic set over $k$, so its {\it compactly supported Euler characteristic} $\chi^{rc}(X(k))\in\mathbb Z$ can be defined using real algebraic geometry (see for example Section 11.2 of \cite{BCR} on pages 266-67). Now let $k$ be again any field whose characteristic is not two, and let $X$ be as above. We will write $\epsilon(X):\mathrm{Spr}(k)\to\mathbb Z$ for the function which assigns each $<$ in $\mathrm{Spr}(k)$ to the compactly supported real Euler characteristic of $X(k_<)$. 
\end{defn}
\begin{thm}\label{levine2}For every algebraic variety $X$ over $k$ we have
$$\mathrm{sign}(\chi^{mot}(X))=\epsilon(X).
$$
\end{thm}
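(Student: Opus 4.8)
The plan is to mirror the strategy of the proof of Theorem \ref{levine1}, replacing the rank homomorphism with the signature homomorphism $\mathrm{sign}:\widehat{\mathrm{W}}(k)\to\mathcal C(\mathrm{Spr}(k),\mathbb Z)$ of Definition \ref{2.13a}, and replacing the compactly supported Euler characteristic of $X_{\overline k}$ with the function $\epsilon(X)$ of Definition \ref{2.13b}. First I would observe that both sides of the claimed identity are ring homomorphisms $K_0(\mathrm{Var}_k)\to\mathcal C(\mathrm{Spr}(k),\mathbb Z)$: the right-hand side because $X\mapsto\chi^{rc}(X(k_<))$ is a motivic measure over each real closed field $k_<$ (additivity over closed-open decompositions and multiplicativity over products are standard properties of the semi-algebraic Euler characteristic, cf.\ \cite{BCR}), assembled over $\mathrm{Spr}(k)$, and the left-hand side because it is the composition of the ring homomorphism $\chi^{mot}$ with the ring homomorphism $\mathrm{sign}$. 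By Bittner's presentation (Theorem 3.1 of \cite{Bi}) it therefore suffices to check the equality on classes $[X]$ with $X/k$ smooth and projective.

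For such $X$, fix an ordering $<$ of $k$ and let $k_<$ be its real closure. By the compatibility of $\chi^{mot}$ and of the signature with base field extension (naturality of $\chi^{cat}$ under base change, together with the fact that $\mathrm{sign}(q)$ at $<$ is by definition the signature of $q\otimes_k k_<$), we reduce to the case where the base field is real closed and we must show $\mathrm{sign}(\chi^{mot}(X))=\chi^{rc}(X(k))$ for $X$ smooth projective over a real closed field $k$. Over a real closed field one can further descend: $X$ is defined over a real closed subfield $F\subset k$ finitely generated over $\mathbb Q$ in the relevant sense (or, using the Tarski--Seidenberg transfer principle, one may transport the statement between any two real closed fields, in particular to $\mathbb R$), so it is enough to treat $k=\mathbb R$.

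Over $\mathbb R$, the key input is the classical fact that the signature of the cup-product form on $H^*_{dR}(X/\mathbb R)$ — equivalently, by comparison with Betti cohomology, on $H^*(X(\mathbb C);\mathbb R)$ with its Galois/complex-conjugation action — computes the topological Euler characteristic of the real locus. Concretely, I would use Lemma \ref{even} to write $\chi^{dR}(X)=b^+\cdot\mathbb H+[H^n_{dR}(X/\mathbb R)]$ (for $n=\dim X$ even; the odd case is purely hyperbolic and contributes $0$ to the signature, matching $\chi^{rc}(X(\mathbb R))=0$ for odd-dimensional $X$, which also follows from Poincaré duality over $\mathbb R$). Since $\mathrm{sign}(\mathbb H)=0$, the signature of $\chi^{dR}(X)$ equals the signature of the middle cup-product form; by the equivariant Hodge-theoretic description of this form (the $(\pm1)$-eigenspaces of conjugation acting on $H^n(X(\mathbb C);\mathbb R)$ pair into the two parts of the signature), this signature is precisely the alternating sum $\sum_i (-1)^i \dim H^i(X(\mathbb R);\mathbb R)=\chi^{rc}(X(\mathbb R))$. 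This last identification is exactly Proposition 2.1 of \cite{KR1} (or the results of Krasnov and others on real Euler characteristics via equivariant cohomology), which I would cite rather than reprove.

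The main obstacle is the last step: establishing cleanly that $\mathrm{sign}(\chi^{dR}(X))=\chi^{rc}(X(\mathbb R))$ for smooth projective $X/\mathbb R$. Everything else is formal bookkeeping (homomorphism property, Bittner reduction, base-change compatibility, Tarski--Seidenberg transfer), but the signature-equals-real-Euler-characteristic statement requires the equivariant Hodge/Lefschetz computation relating the conjugation action on singular cohomology of $X(\mathbb C)$ to the topology of $X(\mathbb R)$. I expect to handle it by invoking Proposition 2.1 of \cite{KR1} together with the de Rham–Betti comparison, and by noting that the trace-pairing quadratic form used in Definition \ref{3.8dim} agrees, up to the hyperbolic summands already peeled off, with the conjugation-equivariant intersection form whose signature is controlled there.
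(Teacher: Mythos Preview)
Your reduction steps---both sides are motivic measures, Bittner to reduce to smooth projective, base change to a real closed field, transfer to $\mathbb R$---are correct and essentially what the paper does. The divergence is at the core step over $\mathbb R$.

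The paper does \emph{not} attempt a direct Hodge-theoretic computation of $\mathrm{sign}(\chi^{dR}(X))$. Instead, over a real closed $k$ it builds a real realization functor $\mathrm{Re}_B:\mathrm{SH}(k)\to\mathrm{SH}$ from the semi-algebraic singular simplicial set $S_\bullet(X(k))$ (Definition~2.5 and Theorem~2.20(ii) of \cite{Pa2}). By construction this sends $\Sigma^\infty_T X_+$ to $\Sigma^\infty S_\bullet(X(k))$, so the induced map $\sigma:\mathrm{End}_{\mathrm{SH}(k)}(\mathbb S_k)\to\mathbb Z$ automatically satisfies $\sigma(\chi^{cat}(X))=\chi^{rc}(X(k))$. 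The remaining work is to identify $\sigma$ with $\mathrm{sign}$, which is done by checking the single generator $\langle -1\rangle$, reducing via naturality to $\mathbb Q^{real}\subset\mathbb R$ and then invoking Remark~1.3 of \cite{Le}.

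Your route has a genuine gap at the last step. First, Proposition~2.1 of \cite{KR1} is not the statement you want: as quoted in this paper (proof of Lemma~\ref{BarKon2}) it asserts that a certain birational map between Calabi--Yau varieties extends to a diffeomorphism, and says nothing about cup-product signatures or real Euler characteristics. Second, your heuristic that ``the $(\pm1)$-eigenspaces of conjugation acting on $H^n(X(\mathbb C);\mathbb R)$ pair into the two parts of the signature'' conflates two different objects. The Lefschetz trace of complex conjugation $F_\infty$ on the \emph{Betti} real structure $H^*(X(\mathbb C),\mathbb R)$ does give $\chi(X(\mathbb R))$; but the signature you need is that of the cup-product form on the \emph{de~Rham} real structure $H^*_{dR}(X/\mathbb R)$. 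These are two distinct real forms of $H^*(X(\mathbb C),\mathbb C)$, related by $c_{dR}=F_\infty\circ c_B$, and the positive/negative parts of the pairing on one are not the $(\pm1)$-eigenspaces of $F_\infty$ on the other. A direct argument along your lines does exist---one tracks both real structures through the Hodge decomposition on each $H^{p,q}$---but it is precisely this computation that Remark~1.3 of \cite{Le} (and \S8 of \cite{LR}) packages, and it is not a one-line citation.
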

\begin{proof} If $k$ has positive characteristic, this is trivial, since it has no real closures, so without loss of generality assume $k$ is characteristic $0$. When $k=\mathbb R$ and $X$ is projective, this is Remark 2.3 (2) of \cite{Le}. For $k$ a general field, using the naturality of the motivic Euler characteristic with respect to base change, we may assume without loss of generality that $k$ is real closed by passing to the real closure of $(k,<)$ for every point $<$ in $\mathrm{Spr}(k)$. Since the compactly supported Euler characteristic over real closed fields is a motivic measure, we may reduce to the case where $X$ is smooth and projective as in the proof of Theorem \ref{levine1}, so we can use the categorical Euler characteristic instead. 

As we already remarked, for every quasi-projective variety $X$ over $k$ the set $X(k)$ has a structure of semi-algebraic set over $k$. Let $S_\bullet(X(k))$ denote the {\it simplicial set of semi-algebraic singular simplexes} of the semi-algebraic space $X(k)$ of $k$-valued points of $X$ as defined in Definition 2.5 of \cite{Pa2}. The assignment $X\mapsto S_\bullet(X(k))$ induces a functor $S_\bullet$ from the category of smooth quasiprojective varieties $Sm/k$ to $s\mathbf{Set}$, the category of simplicial sets. Let $\mathrm{SH}$ denote the stable homotopy category of simplicial sets. The functor $S_\bullet$ gives rise to a realisation functor $\mathrm{Re}_B:\textrm{SH}(k)\to\textrm{SH}$ which for $X\in Sm/k$ sends the suspension spectrum $\Sigma_T^\infty X_+$ to the suspension spectrum $\Sigma^{\infty}(S_\bullet(X(k)))$. This induces a map $\sigma:\textrm{End}_{\textrm{SH}(k)}(\mathbb S_k)\to\textrm{End}_{\textrm{SH}}(\mathbb S_k)\cong\mathbb Z$. We claim that $\sigma$ is the signature map. It is enough to show that $\langle-1\rangle$ goes to $-1$. Using the naturality of the constructions we only need to check this for any real closed subfield $L \subseteq k$. By taking $L$ to be the algebraic closure of $\mathbb{Q}$ in $k$, assume without loss of generality that $L$ is the real closure $\mathbb{Q}^{real}$ of $\mathbb{Q}$. Since $\mathbb R$ contains $\mathbb{Q}^{real}$, we may use naturality again to reduce to the case  $k=\mathbb R$. In this case, Remark 2.3 (2) of \cite{Le} applies, since the forgetful map from the simplicial set of semi-algebraic singular simplexes to the usual simplicial set of continuous singular simplexes is a homotopy equivalence by Theorem 2.20(ii) of \cite{Pa2}. 
\end{proof}

\begin{rem} By Pfister's local-global principle (see Theorems 3.2, 4.9 and 6.9 of Section VIII of \cite{La}) the induced map
$$\mathrm{sign}\otimes\mathrm{id}_{\mathbb Z[\frac{1}{2}]}:\mathrm{W}(k)\otimes\mathbb Z[\tfrac{1}{2}]\to
\mathcal C(\mathrm{Spr}(k),\mathbb Z[\tfrac{1}{2}])$$
is an isomorphism. Therefore the invariants rank and sign compute the Grothen\-dieck--Witt ring up to torsion, so the motivic Euler characteristic recovers both the algebraically closed Euler characteristic in the form of the rank, as well as the real closed Euler characteristic in the form of the signature. Conversely, the motivic Euler characteristic, up to torsion, can be computed from these invariants. The full motivic Euler characteristic contains interesting torsion information which cannot be recovered from these invariants. We turn our attention to some torsion information about $\chi^{mot}$ which also relates to a more classical invariant.
\end{rem}

\begin{defn} Assume that $k$ is a field whose characteristic is not $2$. Then every non-degenerate quadratic form $q$ is isomorphic to a diagonal form
$$\langle a_1\rangle\oplus\langle a_2\rangle\oplus
\cdots\oplus\langle a_n\rangle.$$
The {\it discriminant} disc$(q)$ of the form $q$ above is defined as the class of the product
$$a_1\cdot a_2\cdots a_n\textrm{ in }k^\times/k^{\times2}.$$
This is independent of the diagonalisation of $q$, since it is the determinant of the symmetric matrix defining the bilinear form associated to $q$. It induces a homomorphism of the underlying abelian groups
$$\mathrm{disc}:\widehat{\mathrm{W}}(k)
\to k^\times/k^{\times2}.$$
We will freely use the identification $k^\times/k^{\times2} = \Hom(\Gal_k, \Z/2\Z)$.
\end{defn}
For $X$ a variety over $k$, we would like to relate $\mathrm{disc}(\chi^{mot}(X))$ to other invariants on $X$. 

\begin{defn}\label{2.1.1}
Let $\ell$ be a prime number different from the characteristic of $k$, let $U$ be a smooth scheme over $k$, and let $\mathcal{F}$ be a smooth $\ell$-adic sheaf on $U$. Following Saito in \cite{S0} we will consider the one-dimensional $\ell$-adic representation of $\Gal_k$ given by the determinant 
$$\det R\Gamma_c(U_{\overline{k}}, \mathcal{F}):=\bigotimes_i \det H^i_c(U_{\overline{k}}, \mathcal{F})^{\otimes(-1)^i}.$$
The representation above is called the {\it determinant of cohomology} of
$\mathcal F$, and for $\mathcal{F}=\Q_{\ell}$, we will call it \emph{the $\ell$-adic determinant of cohomology} of $X$. This is a $1$ dimensional $\ell$-adic representation, so we will view it as an element of the abelian group $\Hom(\Gal_k, \Q_{\ell}^\times)$.  For $\mathcal{F}=\mathbb{Q}_{\ell}$, we will write $\mathrm{det}_\ell(U)$ to mean $\det R\Gamma_c(U_{\overline{k}}, \mathbb{Q}_\ell)$.

Let $X$ be a smooth proper variety over $k$ of dimension $n$. As shown in Lemma 5 of \cite{S0}, Poincar\'e duality allows us to obtain the following identity
$$
\det R\Gamma(X_{\overline{k}}, \mathbb{Q}_{\ell})=\mathbb{Q}_{\ell}(-\frac{1}{2}ne(X))\otimes \kappa(X)
$$
where $\kappa(X)$ is a character of order at most $2$ of $\Gal_k$, which is independent of $\ell$. Suppose that $n$ is odd. As in Lemma 5 of \cite{S0}, the existence of a non-degenerate alternating form on $H^n_{\et}(X_{\kbar},\Q_{\ell})$ given by the cup product allows us to obtain that $\mathrm{det}( H^n_{\et}(X_{\overline{k}}, \Q_{\ell}))=\Q_{\ell}( -j)$ where $j=\frac12 \mathrm{dim}_{\Q_{\ell}} (H^n_{\et}(X_{\kbar}, \Q_{\ell}))$. In particular, $\kappa(X)=1$ if $\mathrm{dim}(X)$ is odd.
\end{defn}

\begin{thm}\label{etale1} For every smooth projective variety $X$ over $k$ of dimension $n$, we have
$$\mathrm{disc}(\chi^{cat}(X))=\mathrm{disc}(\chi^{dR}(X))=(-1)^{\frac12 n \cdot e(X)}\cdot\kappa(X).$$
\end{thm}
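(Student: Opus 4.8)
The plan is to reduce to the two pieces that make up $\chi^{dR}(X)$ via Lemma~\ref{even}, and to compute the discriminant of each piece separately. By Theorem~\ref{gauss-bonet} we have $\chi^{cat}(X) = \chi^{dR}(X)$, and by Theorem~\ref{euler} this equals $\chi^{mot}(X)$ for $X$ smooth projective, so the first two equalities in the statement are free and only the last one requires work. The discriminant is only a homomorphism of the underlying \emph{additive} groups, so I would first record that $\mathrm{disc}(\mathbb{H}) = \mathrm{disc}(\langle 1\rangle \oplus \langle -1\rangle) = -1 \in k^\times/(k^\times)^2$, hence $\mathrm{disc}(r\cdot \mathbb{H}) = (-1)^r$ for $r \in \Z$. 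If $n = \dim X$ is odd, Lemma~\ref{even} gives $\chi^{dR}(X) = (b^+ - \tfrac12 \dim_k H^n_{dR}(X/k))\cdot \mathbb{H}$, so the discriminant is $(-1)^{b^+ - \frac12 \dim H^n}$; I would then need to check that this exponent is congruent mod $2$ to $m e(X)$ (with $\kappa(X) = 1$ by definition in the odd case), which is a Hodge-symmetry bookkeeping argument on the Betti/de Rham numbers, using $e(X) = \sum_i (-1)^i \dim H^i_{dR}$ and the pairing between $H^i$ and $H^{2n-i}$.

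The substantive case is $n$ even. Here Lemma~\ref{even} gives $\chi^{dR}(X) = b^+ \cdot \mathbb{H} + [H^n_{dR}(X/k)]$, so
$$\mathrm{disc}(\chi^{dR}(X)) = (-1)^{b^+}\cdot \mathrm{disc}\big([H^n_{dR}(X/k)]\big).$$
The heart of the proof is therefore the identification of $\mathrm{disc}[H^n_{dR}(X/k)]$, the discriminant of the cup-product pairing on middle-dimensional de Rham cohomology, with (a sign times) the character $\kappa(X)$ coming from the determinant of $\ell$-adic cohomology. The cleanest route is a comparison-of-cohomology-theories argument: the discriminant of the middle cup-product form, viewed as an element of $k^\times/(k^\times)^2 = \Hom(\Gal_k, \Z/2\Z)$, should agree with the determinant of the Galois representation $H^n_{\text{\'et}}(X_{\kbar}, \Q_\ell)$ up to a Tate twist and the contribution of the lower cohomology groups (which are hyperbolic and pair $H^i$ against $H^{2n-i}$, contributing determinants that cancel in pairs). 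Concretely, $\det R\Gamma(X_{\kbar},\Q_\ell)$ modulo squares picks out exactly $\det H^n_{\text{\'et}}$ modulo squares (the even-degree terms below the middle cancel against the terms above, by Poincar\'e duality, leaving the odd-degree terms which contribute via $\mathbb{H}$'s, matching the $b^+$ count), and the cup-product pairing on $H^n$ is, after a choice of basis, the Gram matrix whose determinant is exactly the $\ell$-adic determinant up to the cyclotomic twist $\Q_\ell(-\tfrac12 n e(X))$. Reducing the twist mod squares converts $\Q_\ell(-\tfrac12 n e(X))$ into the quadratic character $(-1)^{?}$ and one reads off $(-1)^{me(X)}\kappa(X)$ from the definition of $\kappa(X)$; here I expect to use that the cyclotomic character mod squares is the character of $k(\sqrt{-1})/k$, or more precisely to track carefully how $m$ (the exact definition of which I would pin down from the rank of the relevant form — presumably $m = \tfrac12(e(X) - \dim H^n) + \dim H^n$ or the dimension of a maximal positive-definite-over-$\R$ subspace, whichever the paper has set up) enters.

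The main obstacle is precisely this last comparison: relating the discriminant of the \emph{de Rham} cup-product form to the determinant of the \emph{$\ell$-adic} Galois representation in a way that is valid over an arbitrary characteristic-$0$ field $k$, not just $\C$ or a number field. Over $\C$ or over a field with a fixed embedding into $\C$ one could use the comparison isomorphism and Hodge theory directly, but for general $k$ I would run a spreading-out/base-change argument exactly as in the proof of Theorem~\ref{levine1}: descend $X$ to a finitely generated subfield $F \subseteq k$, use the compatibility of both sides with base change (the discriminant map and the de Rham cup-product form are defined functorially, and $\kappa(X)$ is insensitive to base field in the appropriate sense since it is characterised \'etale-locally), and then invoke the known $\C$-statement — the identity $\det R\Gamma(X_{\C},\mathbb{Q}_\ell) = \mathbb{Q}_\ell(-\tfrac12 n e(X)) \otimes \kappa(X)$ together with the classical fact that the discriminant of the intersection form on $H^n(X(\C),\Q)$ computes $\kappa(X)$ up to the stated sign. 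I would also need to be slightly careful that $\mathrm{disc}$ is only a group homomorphism, so additivity across the decomposition of Lemma~\ref{even} is fine but I must not accidentally use multiplicativity; and that torsion/hyperbolic summands genuinely contribute $(-1)^{\text{(rank)}/2}$ and nothing finer. Modulo these comparison subtleties, the argument is a direct computation.
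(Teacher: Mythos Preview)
Your overall structure matches the paper's: use Theorem~\ref{gauss-bonet} to pass to $\chi^{dR}$, decompose via Lemma~\ref{even}, and compute the discriminant of each summand. The paper's proof is then two lines: it applies $\mathrm{disc}$ to the decomposition to get $\mathrm{disc}(\chi^{dR}(X)) = (-1)^{b^-}\cdot \mathrm{disc}([H^n_{dR}(X/k)])$ (using $b^+ \equiv b^- \pmod 2$), and then invokes Saito's theorem (Theorem~\ref{saito1}, which is Theorem~2 of \cite{S0}) as a black box to identify $\mathrm{disc}([H^n_{dR}(X/k)])$ with $(-1)^{me(X)+b^-}\kappa(X)$.

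The gap in your proposal is that you try to \emph{re-derive} this last identification rather than cite it, and your sketch cannot work. Your plan is to spread out to a finitely generated field and then ``invoke the known $\C$-statement''; but over $\C$ every element is a square, so $\mathrm{disc}$ is identically trivial and there is nothing to compare. The equality between the de Rham discriminant of the middle cup-product form and the $\ell$-adic character $\kappa(X)$ over an arbitrary characteristic-zero field is genuinely the content of Saito's paper \cite{S0}, not a comparison-isomorphism bookkeeping exercise. The paper treats it as an input, not something to be proved here.

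Two smaller points. First, $m$ is simply $n/2$ where $n=\dim X$ (see the hypothesis ``$n=2m$'' in Theorem~\ref{saito1}); it is not a signature-type quantity as you speculate. Second, for the odd-dimensional case the paper just observes from Lemma~\ref{even} that $\chi^{dR}(X)=\tfrac12 e(X)\cdot\mathbb H$, so the discriminant is a power of $-1$ and matches the right-hand side immediately with $\kappa(X)=1$; no Hodge-symmetry bookkeeping beyond that is needed.
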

If $n$ is odd, then $e(X)$ is even, so $\frac12 n \cdot e(X)$ is always a positive integer. For $X$ a $K3$ surface, this formula gives $\mathrm{disc}(\chi^{mot}(X))=\kappa(X)$. This result is essentially equivalent to a theorem of Saito from \cite{S0}. First we are going to formulate this as Saito did. The main result of Saito's paper \cite{S0} linking the determinant of cohomology with de Rham cohomology is the following. 
\begin{thm}[(Saito's theorem, \cite{S0})]\label{saito1} Assume that $X$ has even dimension $n$. Then
$$\kappa(X)=(-1)^{\frac12e(X)+b^{-}}\cdot
\mathrm{disc}([H_{dR}^{n}(X/k)]).$$
\end{thm}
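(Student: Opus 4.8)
The plan is to reduce Saito's identity, via Poincar\'e duality, to a statement about the middle cohomology $H^n$ alone, then to recast both sides as realisations of a single rank-one motive, and finally to reduce to the sign in the functional equation of a zeta function over a finite field — the genuinely hard computation. Concretely: write $b_i=\dim H^i_{et}(X_{\bar k},\mathbb{Q}_\ell)$. Cup product gives $H^n_{et}\cong(H^n_{et})^\vee(-n)$, hence $(\det H^n_{et})^{\otimes2}\cong\mathbb{Q}_\ell(-nb_n)$; since $n=2m$ this lets us write $\det H^n_{et}\cong\mathbb{Q}_\ell(-mb_n)\otimes\eta$ for a unique character $\eta\colon\Gal_k\to\{\pm1\}$, the discriminant character of the orthogonal representation $H^n_{et}(m)$. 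For $i\neq n$, Poincar\'e duality pairs $H^i_{et}$ with $H^{2n-i}_{et}$; as $(-1)^{2n-i}=(-1)^i$, their joint contribution to $\det R\Gamma(X_{\bar k},\mathbb{Q}_\ell)=\bigotimes_i(\det H^i_{et})^{\otimes(-1)^i}$ is the pure Tate twist $\mathbb{Q}_\ell(-nb_i)^{\otimes(-1)^i}$, while $i=n$ contributes $\det H^n_{et}$. Summing the Tate twists, using $e(X)=b_n+2\sum_{i<n}(-1)^ib_i$, gives exactly $\mathbb{Q}_\ell(-\tfrac12 ne(X))$, so comparing with the definition of $\kappa(X)$ yields $\kappa(X)=\eta$. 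It remains to prove
$$\eta=(-1)^{me(X)+b^-}\cdot\mathrm{disc}\big([H^n_{dR}(X/k)]\big)\quad\text{in }k^\times/(k^\times)^2 .$$

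Next I would reinterpret both quantities through the rank-one motive $D:=\det H^n(X)(mb_n)$ over $k$: it has weight $0$, and its $\ell$-adic realisation is the quadratic character $\eta$ by the previous paragraph, so $D$ is the Artin motive attached to $\eta$. By Kummer theory a rank-one Artin motive equipped with an isomorphism of the square of its de Rham realisation with $k$ determines, and is determined by, a class in $k^\times/(k^\times)^2$, obtained by evaluating the trivialisation on $v\otimes v$ for any $k$-basis $v$ of the de Rham realisation. For $D$ the relevant trivialisation is induced by the cup product $H^n_{dR}\otimes H^n_{dR}\to H^{2n}_{dR}(X/k)$, the de Rham trace $\mathrm{Tr}\colon H^{2n}_{dR}(X/k)\to k$, and the canonical $k$-rational generator of $\mathbb{Q}(1)_{dR}$ (coming from $dt/t\in H^1_{dR}(\mathbb{G}_m)$). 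Evaluating on $(e_1\wedge\cdots\wedge e_{b_n})\otimes(\text{canonical generator})^{\otimes mb_n}$ for a $k$-basis $\{e_i\}$ of $H^n_{dR}(X/k)$ reproduces $\det(\langle e_i,e_j\rangle_{dR})=\mathrm{disc}([H^n_{dR}(X/k)])$ up to a scalar $c\in k^\times$ that accounts for the discrepancy (to the power $b_n$) between the de Rham trace and the canonical identification $H^{2n}_{dR}(X/k)\cong\mathbb{Q}(-n)_{dR}=k$, and for the square root implicit in writing $\det H^n_{et}\cong\mathbb{Q}_\ell(-mb_n)\otimes\eta$. Thus everything reduces to the statement $c\equiv(-1)^{me(X)+b^-}\pmod{(k^\times)^2}$.

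To prove this I would, exactly as in the base-change reductions used elsewhere in the paper, spread $X$ out to a smooth proper scheme over a $\mathbb{Z}$-algebra of finite type; by constructibility of all objects involved, and because a quadratic character of $\Gal_k$ is detected by its restrictions to decomposition subgroups, it suffices to treat $k$ a number field and to evaluate both sides at an arbitrary place $v$ of good reduction with $v\nmid\ell$. There $\eta$ is unramified, and $\eta(\mathrm{Frob}_v)=\det(\mathrm{Frob}_v\mid H^n_{et})\cdot q_v^{-mb_n}$ is precisely the sign occurring in the functional equation of the zeta function of the reduction $\bar X_v$, while $\mathrm{disc}([H^n_{dR}(X/k)])$ maps in $k_v^\times/(k_v^\times)^2$ to the discriminant of the (perfect, by good reduction) intersection form on the middle cohomology of $\bar X_v$. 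The remaining step — showing that for a smooth proper $Y/\mathbb{F}_q$ of even dimension $n=2m$ the sign $\det(\mathrm{Frob}\mid H^n)\cdot q^{-mb_n}$ equals $(-1)^{me(Y)+b^-}$ times the discriminant of the intersection form — is the heart of \cite{S0}, and I expect it to be by far the hardest part of the argument; everything above it is formal manipulation of determinants and Poincar\'e duality. I would attack it as Saito does: use Bittner's presentation of the Grothendieck ring together with Lefschetz pencils to reduce the computation of the determinant of cohomology to smooth hypersurfaces and iterated hyperplane sections in projective space; compute it in closed form for Fermat--diagonal hypersurfaces as a product of Jacobi-sum Hecke characters and transport the answer to a general hypersurface by deformation; compare with the classical closed form for the discriminant of the intersection form of a hypersurface; and, having shown along the way that $c$ is the image of a rational number depending only on $n$, $b_n$, $e(X)$ and $b^-$, fix the exponent by evaluating both sides on a handful of explicit examples such as $\mathbb{P}^{2m}$, $(\mathbb{P}^1)^{2m}$, smooth quadrics, and $E\times\mathbb{P}^{2m-1}$, where $\eta$, $\mathrm{disc}([H^n_{dR}])$, $e(X)$ and $b^-$ are all elementary to compute.
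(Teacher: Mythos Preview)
The paper's own proof of this statement is a one-line citation: ``This is Theorem~2 on page~412 of \cite{S0}.'' Saito's theorem is imported as a black box, not reproved. Your proposal, by contrast, is an attempt to sketch the actual content of Saito's argument. So there is nothing to compare at the level of strategy: the paper does not offer one.

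As a sketch of Saito's proof your outline is broadly reasonable, and you correctly identify that the first reduction ($\kappa(X)=\eta$, the discriminant character of the orthogonal self-dual $H^n_{\et}(m)$) is pure Poincar\'e duality, while the genuine content lies in matching $\eta$ to the de Rham discriminant up to the sign $(-1)^{me(X)+b^-}$. A couple of points where your sketch is loose: the reduction to finite places does not quite work as stated, because the de Rham discriminant does not literally specialise to an intersection-form discriminant on the reduction without passing through an integral or crystalline comparison (this is handled carefully in \cite{S0}); and invoking Bittner's presentation here is anachronistic, since Saito's paper predates it and instead works directly with Lefschetz pencils and hypersurfaces. But these are refinements of a sketch that is already more than the paper itself provides; for the purposes of this paper, simply citing \cite{S0} is the intended proof.
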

\begin{proof} This is Theorem 2 on page 412 of \cite{S0}. 
\end{proof}
\begin{proof}[Proof of Theorem \ref{etale1}]
By Lemma $\ref{even}$ the result is trivial if $\mathrm{dim}(X)$ is odd, since we have $\chi^{dR}(X) = \frac12 e(X) \cdot \mathbb{H}$. Therefore, suppose $\mathrm{dim}(X)$ is even.

 We can work with $\chi^{dR}$ instead of $\chi^{cat}$, by Theorem \ref{gauss-bonet}. For all $q,q' \in \widehat{\mathrm{W}}(k)$, we have that the discriminant $\mathrm{disc}(q+q')=\mathrm{disc}(q)\cdot\mathrm{disc}(q')$. Applying the map disc to both sides of the equation in Lemma \ref{even} therefore gives
\begin{align*}
\mathrm{disc}(\chi^{cat}(X))=\mathrm{disc}(\chi^{dR}(X))&=
\mathrm{disc}(\mathbb H)^{b^{+}}\cdot\mathrm{disc}([H_{dR}^{n}(X/k)])\\
&=(-1)^{b^{-}}\cdot\mathrm{disc}([H_{dR}^{n}(X/k)]),
\end{align*}
where we use that $b^{-}\equiv b^{+}\!\!\mod2$. The claim follows by Theorem \ref{saito1}.\end{proof}

We want to extend Theorem $\ref{etale1}$ to all varieties by replacing $\chi^{dR}$ with $\chi^{mot}$, and we do this by showing that the terms in Theorem $\ref{etale1}$ extend to motivic measures. Assume $\mathrm{char}(k)=0$.
\begin{lemma} The $\ell$-adic determinant of cohomology $\mathrm{det}_{\ell}: K_0(\mathrm{Var}_k)\to\mathrm{Hom}(\Gal_k,\mathbb{Q}_{\ell}^\times)$ is a homomorphism of abelian groups.
\end{lemma}
\begin{proof} Let $X$ be a variety over $k$, let $j:U\to X$ be an open immersion, and let $i:Z\to X$ the closed immersion which is the complement of $j$. We claim $\mathrm{det}_{\ell}(X)=\mathrm{det}_{\ell}(U)\cdot
\mathrm{det}_{\ell}(Z)$. Consider the short exact sequence of sheaves on $X$
$$0 \to  j_!(\mathbb{Q}_{\ell}) \to
\mathbb{Q}_{\ell} \to
i_*(\mathbb{Q}_{\ell}) \to 0.$$
This short exact sequence gives rise a distinguished triangle:
$$R_c\Gamma(X_{\overline{k}},j_!(\mathbb{Q}_{\ell}))
\to
R_c\Gamma(X_{\overline{k}}, \mathbb{Q}_{\ell})
\to
R_c\Gamma(X_{\overline{k}},i_*(\mathbb{Q}_{\ell}))
\to 
R_c\Gamma(X_{\overline{k}},j_!(\mathbb{Q}_{\ell}))[1],$$
so taking the determinant gives 
$$\det R_c\Gamma(X_{\overline{k}}, \mathbb{Q}_{\ell})=
\det R_c\Gamma(X_{\overline{k}},j_!(\mathbb{Q}_{\ell}))\cdot
\det R_c\Gamma(X_{\overline{k}},i_*(\mathbb{Q}_{\ell})).$$
Since $j_!$ is exact we have $\det R_c\Gamma(U_{\overline{k}},\mathbb{Q}_{\ell})= 
\det R_c\Gamma(X_{\overline{k}},j_!(\mathbb{Q}_{\ell}))$, and the proper base change theorem gives $\det R_c\Gamma(Z_{\overline{k}}, \mathbb{Q}_{\ell})= \det R_c\Gamma(X_{\overline{k}},i_*(\mathbb{Q}_{\ell}))$, as required.
\end{proof}

\begin{propn}\label{bidefn} For each non-negative integer, $i$, there is a unique homomorphism of abelian groups $b_i: K_0(\mathrm{Var}_k)\to\mathbb Z$ which extends the $i$-th Betti number of smooth projective varieties.
\end{propn}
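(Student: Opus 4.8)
The plan is to use Bittner's presentation of $K_0(\mathrm{Var}_k)$, exactly as in the proofs of Theorems $\ref{levine1}$ and $\ref{etale1}$. Since $K_0(\mathrm{Var}_k)$ is generated as an abelian group by classes $[X]$ of smooth projective varieties over $k$, subject only to the blow-up relation $[\mathrm{Bl}_Z X] - [E] = [X] - [Z]$ (where $Z \subset X$ is a smooth closed subvariety, $\mathrm{Bl}_Z X$ the blow-up, and $E$ the exceptional divisor), it suffices to define $b_i$ on smooth projective varieties by the usual $i$-th Betti number $b_i(X) := \dim_k H^i_{dR}(X/k)$ (equivalently $\dim_{\mathbb Q_\ell} H^i_{\et}(X_{\overline k}, \mathbb Q_\ell)$, which agrees by comparison theorems in characteristic $0$), and to check that this assignment respects the blow-up relation. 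Uniqueness is then automatic, since the values on generators are forced.

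First I would set up the assignment $X \mapsto b_i(X)$ on smooth projective varieties and note it is well defined (independent of the chosen Weil cohomology theory in characteristic $0$). The one substantive step is the blow-up compatibility: I need the identity
$$
b_i(\mathrm{Bl}_Z X) + b_i(Z) = b_i(X) + b_i(E),
$$
for every smooth closed immersion $Z \hookrightarrow X$ of smooth projective varieties. This follows from the classical blow-up formula for the cohomology of a blow-up along a smooth center of codimension $c$: there is an isomorphism
$$
H^i(\mathrm{Bl}_Z X) \cong H^i(X) \oplus \bigoplus_{j=1}^{c-1} H^{i-2j}(Z),
$$
and, applying this with $X$ replaced by $\mathbb{P}(N_{Z/X})$-bundle structure, $H^i(E) \cong \bigoplus_{j=0}^{c-1} H^{i-2j}(Z)$. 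Subtracting, the contribution of $\bigoplus_{j=1}^{c-1} H^{i-2j}(Z)$ cancels and one is left precisely with $b_i(\mathrm{Bl}_Z X) - b_i(X) = b_i(E) - b_i(Z)$. This is standard and holds for $\ell$-adic or de Rham cohomology over any field of characteristic $0$; I would cite the projective bundle formula and the blow-up formula rather than reprove them.

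The main (and only) obstacle is ensuring the hypotheses of Bittner's theorem are genuinely met — in particular that the blow-up relation above is the \emph{only} relation needed among smooth projective generators, so that checking compatibility with it is enough to guarantee a well-defined homomorphism out of $K_0(\mathrm{Var}_k)$. This is exactly Theorem 3.1 of \cite{Bi}, already invoked earlier in the excerpt, so there is no real difficulty; the verification of the blow-up identity is a routine consequence of the blow-up and projective bundle formulas. One caveat worth recording: this argument uses $\mathrm{char}(k) = 0$ (as assumed), both for resolution of singularities underlying Bittner's presentation and for the comparison between de Rham and étale Betti numbers.
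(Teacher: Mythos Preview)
Your argument is correct: Bittner's presentation reduces the claim to checking the blow-up relation, and the blow-up formula together with the projective bundle formula gives exactly $b_i(\mathrm{Bl}_Z X) - b_i(E) = b_i(X) - b_i(Z)$. The paper does not actually prove the proposition; it simply records that the result is well known (the \emph{weight polynomial} or \emph{Poincar\'e polynomial}) and cites \cite{NS}. Your write-up is therefore a genuine proof where the paper gives only a reference, and it follows precisely the standard route one would find in that literature.
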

\begin{proof} 
Theorem 4.5 of \cite{NS} gives rise to a ring homomorphism 
$$
P(-, t): K_0(\mathrm{Var}_k) \to \Z[t]
$$
 such that if $X$ is a smooth projective variety, $P(X,t) = \sum_{i=0}^{\infty} (-1)^i b_i(X) t^i$, where $b_i(X)$ denotes the $i^{\text{th}}$ Betti number of $X$.  Let $c_i: \Z[t] \to \Z$ be the homomorphism of abelian groups given by taking $\sum_i \alpha_i t^i$ to $(-1)^i \alpha_i$. Defining $b_i$ to be the composition of $P(-,t)$ with $c_i$ gives the result.
\end{proof}
\begin{defn}\label{totweight} Since $b_i(X)=0$ for $i>2n$ for every smooth and projective of dimension $n$, every formal linear combination $\sum_{i\in\mathbb N}c_ib_i$ where $c_i \in \mathbb{R}$ is also well-defined and gives rise to a unique homomorphism of abelian groups $K_0(\mathrm{Var}_k) \to\mathbb R$. When $X$ is smooth and projective of dimension $n$ then
\begin{align*}
\sum_{i\in\mathbb N}\frac{(-1)^ii}{2}b_i&=\frac{(-1)^nn}{2}b_n+
\sum_{i<n}\left(\frac{(-1)^ii}{2}b_i+\frac{(-1)^{2n-i}(2n-i)}{2}b_{2n-i}\right)\\
&=\frac{(-1)^nn}{2}b_n+
\sum_{i<n}\left(\frac{(-1)^ii}{2}b_i+\frac{(-1)^{i}(2n-i)}{2}b_i\right)
\\
&=\frac{n}{2}\left((-1)^nb_n+\sum_{i<n}(-1)^i2b_i\right)=
\frac{e(X)n}{2}
\end{align*}
using Poincar\'e duality. This gives a unique homomorphism of abelian groups $w: K_0(\mathrm{Var}_k)\to\mathbb R$ extending the function $w(X) = \frac{e(X)n}2$ on smooth projective varieties over $k$, which we call the {\it total weight}. Since $\frac{e(X)n}{2}$ is an integer for $X$ smooth and projective, Theorem 3.1 of \cite{Bi} tells us that $w(X)$ is always an integer. 
\end{defn}

\begin{thm}\label{saito-full} For every algebraic variety $X$ over $k$ we have:
$$\mathrm{disc}(\chi^{mot}(X))(-1)^{w(X)}=\mathrm{det}_{\ell}(X)\cdot
\mathbb{Q}_{\ell}(w(X)) \in \Hom(\Gal_k, \Q_{\ell}^\times).$$
\end{thm}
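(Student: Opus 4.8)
The plan is to reduce the general statement to the case of smooth projective varieties, where Theorem \ref{etale1} already gives the answer, and then check that both sides of the claimed identity extend additively to all of $K_0(\mathrm{Var}_k)$. Both $\det{}_l$ and $w$ are already known to factor through $K_0(\mathrm{Var}_k)$ (by the Lemma above and Definition \ref{totweight}), and $\chi^{mot}$ is a ring homomorphism on $K_0(\mathrm{Var}_k)$ by Theorem \ref{euler}, so $\mathrm{disc}\circ\chi^{mot}$ is at least a homomorphism of abelian groups $K_0(\mathrm{Var}_k)\to\Hom(\Gal_k,\Q_l^\times)$ (using that $\mathrm{disc}$ is additive in the sense $\mathrm{disc}(q+q')=\mathrm{disc}(q)\cdot\mathrm{disc}(q')$). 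The one genuinely new ingredient is the ``Tate twist'' term $\mathbb{Q}_l(w(X))$, which must also be checked to define a homomorphism out of $K_0(\mathrm{Var}_k)$; but since $w$ is a homomorphism to $\mathbb{Z}$ and $n\mapsto\mathbb{Q}_l(n)$ is a homomorphism $\mathbb{Z}\to\Hom(\Gal_k,\Q_l^\times)$, the composite $X\mapsto\mathbb{Q}_l(w(X))$ is a homomorphism, and likewise $X\mapsto(-1)^{w(X)}$. Hence the right-hand side, as a whole, is a homomorphism $K_0(\mathrm{Var}_k)\to\Hom(\Gal_k,\Q_l^\times)$.

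Once both sides are known to be group homomorphisms on $K_0(\mathrm{Var}_k)$, it suffices by Bittner's presentation (Theorem 3.1 of \cite{Bi}, as used repeatedly above) to verify the equality on the classes $[X]$ of smooth projective varieties. So I would fix $X/k$ smooth projective of dimension $n$ and compare. By Theorem \ref{etale1}, $\mathrm{disc}(\chi^{mot}(X))=(-1)^{me(X)}\cdot\kappa(X)$ when $n=2m$ is even, and when $n$ is odd $\chi^{dR}(X)$ is hyperbolic by Lemma \ref{even}, so its discriminant is $\mathrm{disc}(\mathbb{H})^{(\text{something})}$, which needs to be pinned down: $\mathrm{disc}(\mathbb{H})=-1$, so $\mathrm{disc}(\chi^{mot}(X))=(-1)^{\frac12 e(X)}$ in the odd case (noting $\frac12 e(X)n$ need not be the relevant exponent — I'd track this carefully). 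On the other side, by the Poincar\'e-duality identity recalled just before Theorem \ref{saito-full}, $\det R\Gamma(X_{\overline{k}},\mathbb{Q}_l)=\mathbb{Q}_l(-\frac12 ne(X))\otimes\kappa(X)$ (with $\kappa(X)=1$ for $n$ odd), and for $X$ smooth proper $\det{}_l(X)=\det R\Gamma(X_{\overline{k}},\mathbb{Q}_l)$ since compactly supported and ordinary cohomology agree. Since $w(X)=\frac{e(X)n}{2}$, the right-hand side of the theorem becomes $\det{}_l(X)\cdot\mathbb{Q}_l(\tfrac{ne(X)}{2})\cdot(-1)^{ne(X)/2}=\kappa(X)\cdot(-1)^{ne(X)/2}$, the Tate twists cancelling.

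So the proof comes down to the numerical identity $(-1)^{me(X)}=(-1)^{ne(X)/2}$ when $n=2m$, which is immediate since $ne(X)/2=me(X)$, and the analogous check in the odd case where both sides reduce to $(-1)^{ne(X)/2}$ (here one uses $\kappa(X)=1$ and that $\det{}_l(X)=\mathbb{Q}_l(-\tfrac12 ne(X))$, so the right side is $(-1)^{ne(X)/2}$, matching $\mathrm{disc}$ of a hyperbolic class with the right multiplicity — this is where I'd be most careful). The main obstacle is thus not conceptual but bookkeeping: making sure the exponent of $(-1)$ coming from $\mathrm{disc}(\mathbb{H})^{b^{\pm}}$ in Lemma \ref{even} is correctly identified with $w(X)\bmod 2$ in all parity cases, and confirming that the Tate-twist factor $\mathbb{Q}_l(w(X))$ is exactly what is needed to cancel the $\mathbb{Q}_l(-\tfrac12 ne(X))$ appearing in $\det{}_l$ of a smooth projective variety — i.e.\ that the normalisation of $w$ in Definition \ref{totweight} was chosen precisely so that this works. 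I would also remark that the identity takes values in the subgroup $\Hom(\Gal_k,\{\pm1\})$ once the Tate twists are accounted for, consistent with $\mathrm{disc}$ landing in $k^\times/(k^\times)^2$.
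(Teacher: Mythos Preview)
Your proposal is correct and follows essentially the same approach as the paper: reduce to smooth projective varieties via Bittner's presentation after checking both sides are group homomorphisms on $K_0(\mathrm{Var}_k)$, then invoke Theorem~\ref{etale1}. The paper's proof is terse (it simply asserts that ``by Theorem~\ref{etale1} the relation holds when $X$ is smooth and projective''), whereas you spell out the verification that $(-1)^{me(X)}\kappa(X)$ matches $\det{}_l(X)\cdot\mathbb{Q}_l(w(X))\cdot(-1)^{w(X)}$ in both parities; your bookkeeping is fine, and your caution in the odd case is unwarranted since there $\chi^{dR}(X)=\tfrac{1}{2}e(X)\cdot\mathbb{H}$ gives $\mathrm{disc}=(-1)^{e(X)/2}=(-1)^{ne(X)/2}$ exactly because $n$ is odd.
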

\begin{proof} The left hand side lies in $k^\times/k^{\times2}$, which we identify with $\Hom(\Gal_k, \{\pm1\})$ so that we may view it as a subgroup of $\Hom(\Gal_k, \Q_{\ell}^\times)$. By Theorem \ref{etale1} the relation holds when $X$ is smooth and projective. Since both sides of the equation are homomorphisms of abelian groups, the general case follows from the presentation of $K_0(\mathrm{Var}_k)$ from Theorem 3.1 of \cite{Bi}.
\end{proof}
\begin{rem}\label{wxtorsion}
The above gives us an alternative way of characterising $w(X)$ when $k$ is finitely generated over $\Q$. In this case, the cyclotomic character has infinite order so $w(X)$ is the unique integer such that $\mathrm{det}_\ell(X) \cdot \Q_\ell(w(X))$ is torsion. Theorem $\ref{saito-full}$ then guarantees that this character is $2$-torsion, so can be identified with an element of $k^\times/(k^{\times2}$, and we may recover $\mathrm{disc}(\chi^{mot}(X))$ as $(-1)^{w(X)}$ multiplied by this element. In particular, we may recover $\mathrm{disc}(\chi^{mot}(X))$ and $w(X)$ both from $\mathrm{det}_\ell(X)$. 
\end{rem}

\begin{cor} Assume $k$ is finitely generated over $\mathbb{Q}$. Then for every variety $X$ over $k$, we have $\det_\ell(X)$ is trivial if and only if both $w(X)$ and $\mathrm{disc}(\chi^{cat}(X))$ are trivial. 
\end{cor}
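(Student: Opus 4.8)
The plan is to read the statement off Theorem~\ref{saito-full}, the only non-formal ingredient being a standard fact about the $\ell$-adic cyclotomic character over a field finitely generated over $\Q$. Write $\chi_{\mathrm{cyc}}\colon\Gal_k\to\Z_l^\times\subseteq\Q_l^\times$ for the cyclotomic character, so that $\Q_l(m)=\chi_{\mathrm{cyc}}^{m}$ as a character of $\Gal_k$, and recall that ``$=0$'' here means ``is the trivial character''. I will use two elementary observations repeatedly: $\mathrm{disc}(\chi^{mot}(X))$ lies in the subgroup $k^\times/(k^\times)^2=\Hom(\Gal_k,\Z/2\Z)$ of $\Hom(\Gal_k,\Q_l^\times)$, so it has order dividing $2$; and the factor $(-1)^{w(X)}$ appearing in Theorem~\ref{saito-full} is the class of $\pm1$ in $k^\times/(k^\times)^2$, so it too has order dividing $2$ (and equals its own inverse). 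I read the statement with $\chi^{mot}$ in place of $\chi^{cat}$, the two agreeing on smooth projective varieties by Theorem~\ref{etale1}. The direction ``$\Leftarrow$'' is then immediate: if $w(X)=0$ and $\mathrm{disc}(\chi^{mot}(X))$ is trivial, Theorem~\ref{saito-full} gives $\mathrm{disc}(\chi^{mot}(X))=\det{}_l(X)\cdot\Q_l(0)\cdot(-1)^0=\det{}_l(X)$, whose left-hand side is trivial by hypothesis.

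For ``$\Rightarrow$'', suppose $\det{}_l(X)$ is trivial. Theorem~\ref{saito-full} becomes $\Q_l(w(X))\cdot(-1)^{w(X)}=\mathrm{disc}(\chi^{mot}(X))$, hence $\Q_l(w(X))=\mathrm{disc}(\chi^{mot}(X))\cdot(-1)^{w(X)}$ is a product of two characters of order dividing $2$ and therefore has order dividing $2$. Now I would invoke the key input: since $k$ is finitely generated over $\Q$, the algebraic closure of $\Q$ in $k$ is a number field, so the restriction map identifies $\Gal(k(\mu_{l^\infty})/k)$ with an open, hence infinite, subgroup of $\Gal(\Q(\mu_{l^\infty})/\Q)\cong\Z_l^\times$. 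Thus the image of $\chi_{\mathrm{cyc}}$ is an infinite closed subgroup $U\subseteq\Z_l^\times$, and for every integer $m\neq0$ the character $\chi_{\mathrm{cyc}}^{m}$ still has infinite image because the $m$-th power map on $\Z_l^\times$ has finite kernel. So $\Q_l(w(X))=\chi_{\mathrm{cyc}}^{w(X)}$ can have order dividing $2$ only if $w(X)=0$. Feeding $w(X)=0$ back into Theorem~\ref{saito-full} shows $\mathrm{disc}(\chi^{mot}(X))=\Q_l(0)\cdot(-1)^0$ is trivial, which closes the equivalence.

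The only real obstacle is the cyclotomic-character fact used in ``$\Rightarrow$'', and it is precisely there that the hypothesis ``$k$ finitely generated over $\Q$'' is essential: over $\overline{\Q}$, for instance, every $\Q_l(m)$ is trivial while $w(\mathbb{A}^1)=1\neq0$, so the statement fails without it. Everything else is a purely formal rearrangement of Theorem~\ref{saito-full}, which itself packages Saito's Theorem~\ref{saito1}. As a sanity check in the finitely generated case, note that $\det{}_l(\mathbb{A}^1)=\Q_l(-1)$ is nontrivial while $w(\mathbb{A}^1)=1$, consistently with the corollary.
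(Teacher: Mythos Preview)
Your proof is correct and follows essentially the same route as the paper's: both directions are read off Theorem~\ref{saito-full}, with the only substantive step being that the cyclotomic character has infinite order when $k$ is finitely generated over $\Q$. The paper packages the ``$\Rightarrow$'' step slightly differently---it observes (via the Poincar\'e duality formula and Bittner's presentation) that $\det_l(X)^2=\Q_l(-2w(X))$ and hence $\det_l(X)=0$ forces $\Q_l(-2w(X))$ trivial---whereas you rearrange Theorem~\ref{saito-full} directly to see $\Q_l(w(X))$ has order dividing $2$; these are the same argument up to squaring. Your observation that the statement should really be read with $\chi^{mot}$ rather than $\chi^{cat}$ is apt: the paper's own proof invokes Theorem~\ref{saito-full}, which is stated for $\chi^{mot}$, so the corollary as written tacitly makes this identification.
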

\begin{proof} By Theorem \ref{saito-full} it will be sufficient to show that $w(X)$ vanishes when $\det_\ell(X)$ vanishes. Assume now the latter. By the observation in Definition $\ref{2.1.1}$ the composition of $\det_\ell(X)$ and the map
$\mathbb{Q}^\times_{\ell}\to\mathbb{Q}^\times_{\ell}$ given by squaring each element is the character $\mathbb{Q}_{\ell}(-2w(X))$ when $X$ is smooth and projective. Therefore since $\mathrm{det}_{\ell}$ and $w(-)$ are homomorphisms of abelian groups, this holds for general varieties by Theorem 3.1 of \cite{Bi}. Since the cyclotomic character has infinite order when $k$ is finitely generated over $\mathbb{Q}$ we get that $-2w(X)=0$, and hence $w(X)=0$. 
\end{proof}

\begin{rem}
It is often easier to prove results about the $\ell$-adic determinant of cohomology than the motivic Euler characteristic. The remainder of this section is dedicated to showing that we can deduce results about the motivic Euler characteristic from the $\ell$-adic determinant of cohomology. To do this, we introduce the following terminology so that we obtain ring homomorphisms related to the discriminant, the $\ell$-adic determinant of cohomology, and total weight.
\end{rem}
\begin{defn} We say that a pair $(R,\epsilon)$ is an {\it augmented ring} if $R$ is a commutative ring with unity and $\epsilon$ is a ring homomorphism $R\to\mathbb Z$ such that the composition of the canonical ring homomorphism $\mathbb Z\to R$ and $\epsilon$ is an isomorphism. If in addition $\mathrm{Ker}(\epsilon)^2=0$, then we say that $ (R,\epsilon)$ is {\it elementary}.
\end{defn}
\begin{notn} Let $\mathbf W(k)$ denote the quotient of $\widehat {\mathrm{W}}(k)$ by $I^2$. Then the homomorphism $\mathrm{rank}: \widehat{\mathrm{W}}(k)\to\mathbb Z$ furnishes an augmentation $\epsilon: \mathbf W(k)\to\mathbb Z$. Since we see that $\mathrm{Ker}(\epsilon)^2=0$, the pair $ (\mathbf W(k),\epsilon)$ is elementary. Moreover, the discriminant function $\widehat{\mathrm{W}}(k) \to \Hom(\Gal_k, \Z/2\Z)$ factors through $\mathbf W(k)$, and when restricted to $I/I^2$, it furnishes a group isomorphism $\mathrm{Ker}(\epsilon)\cong I/I^2\to
\mathrm{Hom}(\Gal_k,\mathbb Z/2\mathbb Z)$. 
\end{notn}
\begin{example}\label{augmentedhom} For every $\mathbb Z$-module $M$ let $E(M)$ denote the ring with underlying additive group $\mathbb Z\oplus M$ and multiplication $(a,m)\cdot(b,n)=(ab,an+bm)$. If we let $\epsilon:E(M)\to\mathbb Z$ be the projection onto the first factor, then $(E(M),\epsilon)$ is an elementary augmented ring. This construction is functorial, in that a group homomorphism $E(M)\to E(N)$ induces a ring homomorphism $M\to N$ respecting the augmentations. 
\end{example}
\begin{rem}\label{augdet} For $(R,\epsilon)$ an augmented ring, we have a decomposition of the underlying additive group $R=\mathbb Z\oplus\mathrm{Ker}(\epsilon)$. When $(R,\epsilon)$ is elementary, the decomposition above is a natural isomorphism of augmented rings between $R$ and $E(\mathrm{Ker}(\epsilon))$. In particular, we get a natural isomorphism $\mathbf W(k)\cong E(\mathrm{Hom}(\Gal_k,\mathbb Z/2\mathbb Z))$. Write $E_{disc}$ for the quotient map $\widehat{\mathrm{W}}(k) \to E(\mathrm{Hom}(\Gal_k,\mathbb Z/2\mathbb Z))$, and similarly write $\chi^{disc}: K_0(\mathrm{Var}_k) \to E(\mathrm{Hom}(\Gal_k,\mathbb Z/2\mathbb Z))$ to denote the composition $E_{disc} \circ \chi^{mot}$. 
\end{rem}
\begin{defn}\label{chil} Let $\chi_{\ell}$ and $\chi_w$ denote the maps
$$\chi_{\ell}: K_0(\mathrm{Var}_k)\to E(\mathrm{Hom}(\Gal_k,\mathbb{Q}_{\ell}^\times))
\quad\textrm{and}\quad
\chi_w:  K_0(\mathrm{Var}_k) \to E(\mathbb Z)$$
given by:
$$[X]\mapsto(e(X),\mathrm{det}_{\ell}(X))
\quad\textrm{and}\quad
[X]\mapsto(e(X),w(X))$$
respectively. We will call $\chi_{\ell}$ and $\chi_w$ the {\it augmented determinant of cohomology} and the {\it augmented total weight}, respectively. 
\end{defn}
\begin{propn}\label{elemental} The maps
$$\chi_{\ell}: K_0(\mathrm{Var}_k)\to E(\mathrm{Hom}(\Gal_k,\mathbb{Q}_{\ell}^\times))
\quad\textrm{and}\quad
\chi_w:  K_0(\mathrm{Var}_k)\to E(\mathbb Z)$$
are ring homomorphisms. 
\end{propn}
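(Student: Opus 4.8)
The plan is to deduce that both $\chi_l$ and $\chi_w$ are ring homomorphisms from the presentation of $K_0(\mathrm{Var}_k)$ together with the fact that their two "components" are already known to be compatible with the ring structure. First I would recall that by definition a map $[X]\mapsto(e(X),f(X))$ into an elementary augmented ring $E(M)$ is a ring homomorphism precisely when $e$ is a ring homomorphism $K_0(\mathrm{Var}_k)\to\mathbb{Z}$, $f$ is a group homomorphism $K_0(\mathrm{Var}_k)\to M$, and the twisted Leibniz rule $f(XY)=e(X)f(Y)+e(Y)f(X)$ holds; this is immediate from the multiplication law $(a,m)\cdot(b,n)=(ab,an+bm)$ in $E(M)$. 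The first two conditions are already in hand: $e$ is a ring homomorphism (it is the compactly supported Euler characteristic, Theorem~\ref{levine1} and the remark following it), while $\det{}_l$ and $w$ are group homomorphisms on $K_0(\mathrm{Var}_k)$ by the Lemma preceding Definition~\ref{totweight} and by Definition~\ref{totweight} itself. So the entire content of the proposition is the twisted Leibniz identity.

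To prove the Leibniz identity I would invoke Bittner's presentation, Theorem 3.1 of \cite{Bi}: the underlying abelian group of $K_0(\mathrm{Var}_k)$ is generated by classes of smooth projective varieties, modulo the blow-up relation. Since $\chi_l$ and $\chi_w$ are additive, it suffices to verify $f(XY)=e(X)f(Y)+e(Y)f(X)$ when $X$ and $Y$ are smooth and projective, because both sides are biadditive in $(X,Y)$ and the identity, once known on generators of each factor, extends to all of $K_0(\mathrm{Var}_k)\otimes K_0(\mathrm{Var}_k)$ (one must check it is compatible with the blow-up relation, but this is automatic as the relation is a relation among the additive generators and both sides of the Leibniz rule are additive in each variable). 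For smooth projective $X,Y$ the product $X\times Y$ is again smooth projective, so I can compute via the Künneth formula. For $\chi_w$: $e(X\times Y)=e(X)e(Y)$, $\dim(X\times Y)=\dim X+\dim Y$, hence $w(X\times Y)=\tfrac12 e(X)e(Y)(\dim X+\dim Y)=e(Y)\cdot\tfrac12 e(X)\dim X+e(X)\cdot\tfrac12 e(Y)\dim Y=e(Y)w(X)+e(X)w(Y)$. For $\chi_l$: the Künneth isomorphism $H^*_c((X\times Y)_{\overline k},\mathbb Q_l)\cong H^*_c(X_{\overline k},\mathbb Q_l)\otimes H^*_c(Y_{\overline k},\mathbb Q_l)$ is $\Gal_k$-equivariant, and a standard computation of the determinant of a tensor product of graded vector spaces (with the appropriate signs from the alternating product $\det R\Gamma_c=\bigotimes_i\det H^i_c{}^{\otimes(-1)^i}$) gives $\det R\Gamma_c((X\times Y)_{\overline k},\mathbb Q_l)=\det R\Gamma_c(X_{\overline k},\mathbb Q_l)^{\otimes e(Y)}\otimes\det R\Gamma_c(Y_{\overline k},\mathbb Q_l)^{\otimes e(X)}$, i.e. $\det{}_l(X\times Y)=e(Y)\det{}_l(X)+e(X)\det{}_l(Y)$ additively in $\Hom(\Gal_k,\mathbb Q_l^\times)$.

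The main obstacle, and the only step requiring genuine care, is the determinant-of-tensor-product bookkeeping for $\chi_l$: one must track the signs $(-1)^i$ in $\det R\Gamma_c=\bigotimes_i(\det H^i_c)^{\otimes(-1)^i}$ through the Künneth decomposition $H^n_c(X\times Y)=\bigoplus_{p+q=n}H^p_c(X)\otimes H^q_c(Y)$, using that $\det(V\otimes W)=(\det V)^{\otimes\dim W}\otimes(\det W)^{\otimes\dim V}$ for finite-dimensional $V,W$, and confirm that after summing the signs collapse exactly to the weights $e(Y)$ and $e(X)$ (with $e(X)=\sum_i(-1)^i\dim H^i_c(X_{\overline k})$). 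I would present this as a short lemma on determinants of graded tensor products and then apply it; everything else — additivity, reduction to smooth projective generators via \cite{Bi}, and the elementary structure of $E(M)$ — is formal.
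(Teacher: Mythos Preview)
Your proof is correct, but it differs from the paper's in two places worth noting. For $\chi_l$, the paper carries out the K\"unneth computation directly for arbitrary varieties using compactly supported $\ell$-adic cohomology (where K\"unneth is available), so no reduction via Bittner's presentation is needed; your reduction to smooth projective generators is harmless but unnecessary there. For $\chi_w$, the paper does not compute the Leibniz rule directly as you do. Instead it observes that squaring $\det{}_l$ yields the cyclotomic character raised to $-2w$, i.e.\ $c\circ\chi_w=s\circ\chi_l$ for suitable ring maps $c,s$ between the target augmented rings; since the cyclotomic character has infinite order when $k$ is finitely generated over $\mathbb{Q}$ (to which one may reduce), $c$ is injective and $\chi_w$ inherits multiplicativity from $\chi_l$. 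Your direct verification of $w(X\times Y)=e(Y)w(X)+e(X)w(Y)$ on smooth projective $X,Y$ is more elementary and self-contained; the paper's route is slicker but relies on the arithmetic fact about the cyclotomic character and on first having $\chi_l$ in hand.
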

\begin{proof} We may assume without loss of generality that $k$ is finitely generated over $\mathbb{Q}$. Define $s:E(\mathrm{Hom}(\Gal_k,\mathbb{Q}_{\ell}^\times))
\to E(\mathrm{Hom}(\Gal_k,\mathbb{Q}_{\ell}^\times))$ to be the ring homomorphism induced by the group homomorphism $\mathrm{Hom}(\Gal_k,\mathbb{Q}_{\ell}^\times)\to\mathrm{Hom}(\Gal_k,\mathbb{Q}_{\ell}^\times)$
induced by the squaring map
$\mathbb{Q}^\times_{\ell}\to\mathbb{Q}^\times_{\ell}$. Let $c:E(\mathbb Z)
\to E(\mathrm{Hom}(\Gal_k,\mathbb{Q}_{\ell}^\times))$ be the ring homomorphism induced by the unique group homomorphism $\mathbb Z
\to\mathrm{Hom}(\Gal_k,\mathbb{Q}_{\ell}^\times)$ such that the image of $-1$ is the cyclotomic character. Then $c\circ\chi_w=s\circ\chi_{\ell}$. Since $c$ is injective we get that $\chi_w$ is a ring homomorphism if $\chi_{\ell}$ is. Since $\chi_{\ell}$ is a group homomorphism, we only need that for any $X,Y$, we have
$$\mathrm{det}_{\ell}(X\times Y)=\mathrm{det}_{\ell}(X)^{\otimes e(Y)}\otimes
\mathrm{det}_{\ell}(Y)^{\otimes e(Y)}.$$
Using the K\"unneth formula:
\begin{align*}
\mathrm{det}_{\ell}(X\times Y)
&=\bigotimes_n
\det H^n_c(X_{\overline{k}}\times Y_{\overline{k}},\mathbb{Q}_{\ell})^{\otimes(-1)^n}\\
&=\bigotimes_n\bigotimes_{i+j=n}
\left(\det H^i_c(X_{\overline{k}},\mathbb{Q}_{\ell})\otimes
H^j_c(Y_{\overline{k}},\mathbb{Q}_{\ell})\right)^{\otimes(-1)^n}
\\
&=\bigotimes_{i,j}
\det H^i_c(X_{\overline{k}},\mathbb{Q}_{\ell})
^{\otimes(-1)^{i+j}b_j(Y)}\otimes
\det H^j_c(Y_{\overline{k}},\mathbb{Q}_{\ell})
^{\otimes(-1)^{j+i}b_i(X)}
\\
&=\bigotimes_i
\det H^i_c(X_{\overline{k}},\mathbb{Q}_{\ell})
^{\otimes(-1)^ie(Y)}\otimes\bigotimes_j
\det H^j_c(Y_{\overline{k}},\mathbb{Q}_{\ell})
^{\otimes(-1)^je(X)},
\end{align*}
so the claim is now clear.
\end{proof}

\begin{propn}\label{rel} Assume that $k$ is finitely generated over $\mathbb{Q}$. Let $X,Y,Z$ be elements of $K_0(\mathrm{Var}_k)$ such that $\chi_{\ell}(X)=\chi_{\ell}(Y)\cdot\chi_{\ell}(Z)$. Then $\chi^{mot}(x)=\chi^{mot}(y)\cdot\chi^{mot}(z)\mod I^2$ and $\chi_w(X) = \chi_w(Y)\chi_w(Z)$.
\end{propn}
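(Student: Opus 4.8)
The plan is to reduce both assertions to componentwise equalities in elementary augmented rings. Recall that $\mathbf W(k)=\widehat{\mathrm W}(k)/I^2\cong E(\mathrm{Hom}(\Gal_k,\mathbb Z/2\mathbb Z))$ and $E(\mathbb Z)$ are elementary augmented rings, so an element of either is determined by its augmentation (the rank, resp.\ $e$) together with its $\mathrm{Ker}(\epsilon)$-component (the discriminant, resp.\ $w$). The inputs will be: Theorem \ref{levine1}, so that $\epsilon\circ Q_k\circ\chi^{mot}=e$; Theorem \ref{saito-full}, so that the $\mathrm{Ker}(\epsilon)$-component of $Q_k\circ\chi^{mot}$ sends a class $W$ to $\det{}_l(W)\cdot\mathbb Q_l(w(W))\cdot(-1)^{w(W)}$; the identity $\det{}_l(W)^{\otimes2}=\mathbb Q_l(-2w(W))$ on all of $K_0(\mathrm{Var}_k)$ (Definition \ref{2.1.1} extended through Bittner's presentation, as in the proof of the corollary following Theorem \ref{saito-full}); and the hypothesis that $k$ is finitely generated over $\mathbb Q$, which will enter exactly once, to guarantee that $\mathbb Q_l(1)$ has infinite order. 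Since all the maps in play ($e$, $w$, $\det{}_l$, $\mathrm{rank}\circ\chi^{mot}$, $\mathrm{disc}\circ\chi^{mot}$) are group, indeed ring, homomorphisms out of $K_0(\mathrm{Var}_k)$, it does no harm that $X,Y,Z$ are arbitrary classes rather than varieties.

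First I would unpack the hypothesis $\chi_l(X)=\chi_l(Y)\cdot\chi_l(Z)$. Writing $\chi_l(W)=(e(W),\det{}_l(W))$ and using the multiplication of $E(\mathrm{Hom}(\Gal_k,\mathbb Q_l^\times))$, this is equivalent to the pair of identities
$$e(X)=e(Y)e(Z),\qquad\det{}_l(X)=\det{}_l(Y)^{\otimes e(Z)}\otimes\det{}_l(Z)^{\otimes e(Y)}.$$
Squaring both sides of the second identity and substituting $\det{}_l(W)^{\otimes2}=\mathbb Q_l(-2w(W))$ yields $\mathbb Q_l(-2w(X))=\mathbb Q_l\big(-2e(Z)w(Y)-2e(Y)w(Z)\big)$; since $\mathbb Q_l(1)$ has infinite order, this forces $w(X)=e(Z)w(Y)+e(Y)w(Z)$. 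Together with $e(X)=e(Y)e(Z)$ this is precisely $\chi_w(X)=\chi_w(Y)\cdot\chi_w(Z)$ in $E(\mathbb Z)$; it is the same observation that underlies the injectivity of the map $c$ in the proof of Proposition \ref{elemental}.

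Next I would prove $\chi^{mot}(X)\equiv\chi^{mot}(Y)\cdot\chi^{mot}(Z)\bmod I^2$. As $Q_k$ is a ring homomorphism with kernel $I^2$, it suffices to verify $Q_k(\chi^{mot}(X))=Q_k(\chi^{mot}(Y))\cdot Q_k(\chi^{mot}(Z))$, and this may be checked on the two components. By Theorem \ref{levine1} the augmentation of $Q_k(\chi^{mot}(W))$ is $e(W)$, so the augmentations agree by $e(X)=e(Y)e(Z)$. On the $\mathrm{Ker}(\epsilon)$-component the product contributes $\mathrm{disc}(\chi^{mot}(Y))^{\otimes e(Z)}\otimes\mathrm{disc}(\chi^{mot}(Z))^{\otimes e(Y)}$; expanding this with Theorem \ref{saito-full}, the $\det{}_l$-factors assemble into $\det{}_l(Y)^{\otimes e(Z)}\otimes\det{}_l(Z)^{\otimes e(Y)}=\det{}_l(X)$ by the first step, while the twist and sign factors assemble into $\mathbb Q_l\big(e(Z)w(Y)+e(Y)w(Z)\big)\cdot(-1)^{e(Z)w(Y)+e(Y)w(Z)}=\mathbb Q_l(w(X))\cdot(-1)^{w(X)}$, again by the first step. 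Hence this component is $\det{}_l(X)\cdot\mathbb Q_l(w(X))\cdot(-1)^{w(X)}=\mathrm{disc}(\chi^{mot}(X))$ by Theorem \ref{saito-full} once more, so both sides of the desired equality have equal components and therefore coincide.

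I do not anticipate a real obstacle: granted Theorems \ref{levine1} and \ref{saito-full} and the structure theory of elementary augmented rings, the argument is bookkeeping inside $E(-)$. The only delicate point is the single use of the hypothesis on $k$: the deduction $w(X)=e(Z)w(Y)+e(Y)w(Z)$ from the corresponding identity of cyclotomic twists genuinely breaks down over fields such as $\overline{\mathbb Q}$ or $\mathbb C$, where $\mathbb Q_l(1)$ is trivial, which is exactly why the proposition is restricted to $k$ finitely generated over $\mathbb Q$.
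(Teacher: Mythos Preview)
Your proof is correct and follows essentially the same route as the paper's: both unpack the hypothesis into $e(X)=e(Y)e(Z)$ and the $\det{}_l$ identity, use the relation $\det{}_l(W)^{\otimes 2}=\mathbb Q_l(-2w(W))$ together with the infinite order of the cyclotomic character to deduce $\chi_w(X)=\chi_w(Y)\cdot\chi_w(Z)$, and then invoke Theorem~\ref{saito-full} to obtain the discriminant identity and hence the congruence modulo $I^2$. The paper packages the squaring step via the maps $c$ and $s$ from the proof of Proposition~\ref{elemental}, while you spell it out directly; the content is the same.
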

\begin{proof}Since $\chi_{\ell}(X) = \chi_{\ell}(Y) \chi_{\ell}(Z)$, we have $e(X) = e(Y)e(Z)$. This allows us to say the following.
\begin{align*}
\chi_{\ell}(X)=\chi_{\ell}(Y)\cdot\chi_{\ell}(Z)
&\Longleftrightarrow
\mathrm{det}_{\ell}(X)=\mathrm{det}_{\ell}(Y)^{\otimes e(Z)}\otimes
\mathrm{det}_{\ell}(Z)^{\otimes e(Y)},\\
\chi_w(X)=\chi_w(Y)\cdot\chi_w(Z)
&\Longleftrightarrow
w(X)=e(Z)w(Y)+e(Y)w(Z),\\
\chi^{mot}(X)=\chi^{mot}(Y)\cdot\chi^{mot}(Z)
\!\!\!\!\mod I^2
&\Longleftrightarrow\\
\mathrm{disc}(\chi^{mot}&(X))
=\mathrm{disc}(\chi^{mot}(Y))^{e(Z)}\cdot\mathrm{disc}(\chi^{mot}(Z))^{e(Y)},
\end{align*}
using that all three augmented rings $E(\mathrm{Hom}(\Gal_k,\mathbb{Q}_{\ell}^\times)),E(\mathbb Z)$ and
$\mathbf W(k)$ are elementary. By Theorem $\ref{saito-full}$, it is enough to show that $\chi_{\ell}(X)=\chi_{\ell}(Y)\cdot\chi_{\ell}(Z)$ implies
$\chi_w(X)=\chi_w(Y)\cdot\chi_w(Z)$. Let $c$ and $s$ be the homomorphisms from the proof of Proposition \ref{elemental}. Since $c\circ\chi_w=s\circ\chi_{\ell}$ and $c$ is injective, the latter is clear.  
\end{proof}

The main reason it is easier to work with the $\ell$-adic determinant of cohomology rather than the determinant is that we may relate the $\ell$-adic determinant of cohomology to the zeta functions of reductions of the variety over finite fields.
For the purpose of this paper, it is often helpful to have a formulation of the $\ell$-adic determinant of cohomology of a variety in terms of $\zeta$-functions of the reduction of the variety to finite fields.
\begin{notn} Let $F$ be a field finitely generated over $\mathbb{Q}$. Let $X$ and $Y$ be two quasi-projective varieties defined over $F$. Let $U$ be a smooth connected scheme of finite type over $\mathrm{Spec}(\mathbb Z)$ with function field $F$ and choose models $\mathcal X,\mathcal Y$ for $X,Y$ over $U$, respectively. 

Since $U$ is of finite type over $\Spec(\mathbb{Z})$, if $s$ is a closed point of $U$ then the residue field $\F_s$ of $s$ is finite. Write $\mathbb F_{s^n}$ to be the unique extension of $\mathbb F_s$ of degree $n$. Consider the variety $\mathcal{X}_s/\F_s$.  let $\zeta(\mathcal X_s, t) := \mathrm{exp}\left(  \sum_{n \geq 1} \# \mathcal X_s(\mathbb F_{s^n}) t^n \right)$. 

By the proof of the Weil conjectures (see for example Section 7.5.7 of \cite{Po}), we have
$$
\zeta(\mathcal X_s, t) =\prod_i\det(\textrm{Id}-t\cdot\mathrm{Frob}_s|_{H^i_c(\overline{\mathcal{X}_s}, \mathbb{Q}_{\ell})})^{(-1)^i},
$$
where $\mathrm{Frob}_s$ denotes the geometric Frobenius at $s$ acting on the compactly supported $\ell$-adic cohomology, and $\ell$ is any prime different from the characteristic of $\mathbb F_s$. 
\end{notn}
We claim that the $\ell$-adic determinant of cohomology of $X$ is determined by the $\zeta$-functions of the reductions $\mathcal{X}_s$. In order to prove this we first need some linear algebra. 
\begin{notn}\label{gradedtrdet}
Let $V=\bigoplus_iV_i$ be a finite dimensional graded vector space over a field $L$ of characteristic zero, let $\phi:V\to V$ be an invertible $L$-linear map respecting the grading, and define
$$\mathrm{Tr}(\phi) :=\sum_i(-1)^i\mathrm{Tr}(\phi|_{V_i}),\quad\det(\phi):=\prod_i\det(\phi|_{V_i})^{(-1)^i}.$$
Define $e(V) := \mathrm{Tr}(\mathrm{Id}_V)$. Finally, define
$$\zeta(\phi,t)=\prod_i\det(\textrm{Id}_{V_i}-t\cdot\phi|_{V_i})^{(-1)^i}.$$
\end{notn}
\begin{lemma} We have
$$\zeta(\phi,t)=\mathrm{det}(\phi)(-t)^{e(V)}\cdot u,\quad  \text{where }u\in1+L[[t^{-1}]].$$
\end{lemma}
\begin{proof} Since $\phi$ is the direct sum of the restrictions $\phi|_{V_i}$, and the terms $\zeta(\phi,t),\mathrm{det}(\phi)$ and $(-t)^{e(V)}$ are all multiplicative with respect to direct sums, we may assume without loss of generality that only one graded piece of $V$ is non-zero. Shifting the grading by $j$ changes these terms by raising them to the $(-1)^j$-th power, and the operation of raising to the $(-1)^j$-th power preserves power series in $L[[t^{-1}]]$, so we may also assume that $V=V_0$. Note that
$$
\det(\textrm{Id}-t\cdot\phi) = (-t)^{\mathrm{dim}(V)} \cdot \det(\phi - t^{-1}\textrm{Id}) = (-t)^{\mathrm{dim}(V)} c_\phi(t^{-1}),
$$
where $c_{\phi}$ denotes the characteristic polynomial of $\phi$, and so $c_{\phi}(t^{-1}) = \mathrm{det}(\phi) + t^{-1}u'$ for some polynomial $u' \in L[t^{-1}]\subseteq L[[t^{-1}]]$.  Since $\phi$ is invertible, we may relabel $u'' = \frac{1}{\mathrm{det}{\phi}}u'$ and obtain
$$
\det(\textrm{Id}-t\cdot\phi) = (-t)^{\mathrm{dim}(V)} \cdot \mathrm{det}(\phi) \cdot (1 + t^{-1}u''),
$$
which gives the result. 
\end{proof}

\begin{prop} Let $X$ be a quasi-projective variety over a finite field $\mathbb F$ and let $\ell$ be a prime different from $\mathrm{char}(\mathbb F)$. Then
$$\zeta(X,t)=\mathrm{det}_{\ell}(\mathrm{Frob}_{\mathbb{F}}) \cdot (-t)^{\chi(X)}\cdot u,\quad u\in1+\mathbb{Q}_{\ell}[[t^{-1}]],$$
where $\chi(X)$ is the compactly supported Euler characteristic of $X$, $\mathrm{Frob}_{\mathbb{F}} \in\mathrm{Gal}(\overline{ \mathbb F}/\mathbb F)$ is the geometric Frobenius and $\mathrm{det}_{\ell}(X)$ is the $1$-dimensional representation given by the determinant of cohomology of $X$ as in Definition $\ref{2.1.1}$. 
\end{prop}
\begin{proof}
Let $V:= \bigoplus_i H^i_c(X_{\kbar}, \Q_\ell)$ be the compactly supported $\ell$-adic cohomology of $X$, which is a graded vector space over $\mathbb{Q}_{\ell}$, and let $\phi$ be the action of the geometric Frobenius $\mathrm{Frob}_{\mathbb{F}}$ on $V$. Then there are equalities $e(V)=\chi(X)$, $\zeta(\phi,t)=\zeta(X,t)$ and $\det(\phi)=\mathrm{det}_{\ell}(\mathrm{Frob}_{\mathbb{F}})$, so the claim follows. 
\end{proof}
\begin{thm}\label{zetafunctionsdetermine} Assume that for every closed point $s$ of U we have an equality of $\zeta$-functions $\zeta(\mathcal X_s,t)=\zeta(\mathcal Y_s,t)$. Then there is an equality of $\Gal_F$ representations $\mathrm{det}_\ell(X) = \mathrm{det}_\ell(Y)$.
\end{thm}
\begin{proof}
Let $s$ be a closed point of $U$. By the proper base change theorem for $\ell$-adic cohomology and the previous Lemma, if we pick $\mathrm{Frob}_s \in \Gal_F$ to be a lift of the Frobenius of $\F_s$, we may obtain $\mathrm{det}_{\ell}(X)(\mathrm{Frob}_{\F_s})$ from $\zeta(\mathcal X_s,t)$. By assumption $\zeta(\mathcal X_s,t) = \zeta(\mathcal Y_s, t)$, and so $\mathrm{det}_{\ell}(X)(\mathrm{Frob}_{s}) = \mathrm{det}_{\ell}(Y)(\mathrm{Frob}_{s})$. The Chebotar\"ev density theorem then shows that we have an equality of $\Gal_F$ representations $\mathrm{det}_\ell(X) = \mathrm{det}_\ell(Y)$.
\end{proof}

We will often prove facts about the motivic Euler characteristic by proving results separately for the rank, signature and discriminant, then combining them to obtain an element in $\widehat{\mathrm{W}}(k)/J$. This is made possible by the following lemma.

\begin{lemma}\label{modJ}
Let $[q] \in \widehat{\mathrm{W}}(k)$. Then $[q] \in J$ if and only if all of the discriminant, signature and rank vanish. In particular, if $[q]$ is an element of $\widehat{\mathrm{W}}(k)$, we can reconstruct the class of $[q]$ in $\widehat{\mathrm{W}}(k)/J$ from $\mathrm{rank}([q]), \mathrm{sign}([q])$ and $\mathrm{disc}([q])$.
\end{lemma}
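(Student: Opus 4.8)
The plan is to unfold the definition $J=I^2\cap T$ and treat the three invariants one at a time; the final ``in particular'' clause is then formal, since $\mathrm{rank}$, $\mathrm{sign}$ and $\mathrm{disc}$ are all group homomorphisms out of $(\widehat{\mathrm{W}}(k),+)$ (the discriminant landing in the multiplicative group $k^\times/(k^\times)^2$), so two classes agreeing under all three differ by a class killed by all three, which by the biconditional lies in $J$. For the ``only if'' direction I would argue as follows. If $[q]\in I^2\cap T$ then $\mathrm{rank}([q])=0$ because $I^2\subseteq I=\mathrm{Ker}(\mathrm{rank})$. Next $\mathrm{disc}([q])=0$, because the discriminant factors through $\mathbf{W}(k)=\widehat{\mathrm{W}}(k)/I^2$ --- as recalled in the text, where under $\mathbf{W}(k)\cong E(\mathrm{Hom}(\Gal_k,\Z/2\Z))$ it is the $\mathrm{Hom}(\Gal_k,\Z/2\Z)$-component --- so it vanishes on $I^2$. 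Finally $[q]$ is torsion, hence so is its image in $\mathrm{W}(k)$, and $\mathrm{sign}$ takes values in the torsion-free group $\mathcal{C}(\mathrm{Spr}(k),\Z)$, so $\mathrm{sign}([q])=0$.

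For the converse, suppose $\mathrm{rank}([q])$, $\mathrm{sign}([q])$ and $\mathrm{disc}([q])$ all vanish. From $\mathrm{rank}([q])=0$ we get $[q]\in I$; combining this with $\mathrm{disc}([q])=0$ and the isomorphism $I/I^2\xrightarrow{\sim}\mathrm{Hom}(\Gal_k,\Z/2\Z)$ induced by the discriminant (recorded in the text) gives $[q]\in I^2$. It remains to show $[q]\in T$, and here I would invoke Pfister's local--global principle, recalled above (see \cite{La}): the image $\bar{q}$ of $[q]$ in $\mathrm{W}(k)$ has vanishing total signature, hence is a torsion element of $\mathrm{W}(k)$, so $n[q]=m\mathbb{H}$ in $\widehat{\mathrm{W}}(k)$ for some integers $n\geq 1$ and $m$; applying $\mathrm{rank}$ gives $0=2m$, so $m=0$ and $n[q]=0$. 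Hence $[q]\in I^2\cap T=J$.

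The only substantive ingredient is Pfister's local--global principle, used exactly to upgrade ``signature zero'' to ``torsion'' in the converse; I expect this to be the sole genuine step, the rest being bookkeeping with the rank homomorphism and the already-quoted structure of the fundamental ideal $I/I^2\cong k^\times/(k^\times)^2$. The one place that needs a little care is the transition between $\mathrm{W}(k)$ and $\widehat{\mathrm{W}}(k)$: Pfister's principle is a statement about $\mathrm{W}(k)$, and one must use that $\mathrm{Ker}\bigl(\widehat{\mathrm{W}}(k)\to\mathrm{W}(k)\bigr)$ is precisely $\mathbb{Z}\cdot\mathbb{H}$, together with the rank hypothesis, to transport the torsion conclusion back up to $\widehat{\mathrm{W}}(k)$.
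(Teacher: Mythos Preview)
Your proof is correct and follows essentially the same approach as the paper. The paper's argument is terser --- it simply asserts that $T=\ker(\mathrm{rank})\cap\ker(\mathrm{sign})$ and $I^2=\ker(Q_k)$ and concludes --- while you spell out the content behind the first assertion, namely Pfister's local--global principle together with the passage from $\mathrm{W}(k)$ back to $\widehat{\mathrm{W}}(k)$ via the rank hypothesis; this added care is appropriate and does not deviate from the paper's line of reasoning.
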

\begin{proof}
This is clear by definition of $J:=I^2 \cap T$, since $T$ is the intersection of $\mathrm{ker}(\mathrm{rank})$ and $\mathrm{ker}(\mathrm{sign})$, and $I^2 = \mathrm{ker}(\widehat{\mathrm{W}}(k) \to \mathbf{W}(k))$, and $q$ lies in this kernel if and only if $\mathrm{rank}(q)$ and $\mathrm{disc}(q)$ both vanish.
\end{proof}
\begin{cor}\label{ringhommodj}
There is an injective ring homomorphism
$$
\mathrm{rank} \times \mathrm{sign} \times E_{disc}: \widehat{\mathrm{W}}(k)/J \to \mathbb Z \times \mathcal{C}(\mathrm{Spr}(k), \mathbb{Z}) \times E(\mathrm{Hom}(\Gal_k, \mathbb Z/2\mathbb Z)).
$$
\end{cor}
\begin{proof}
This is essentially a restatement of the above Lemma.
\end{proof}

\section{Galois descent for the motivic Euler characteristic}
In this section, we give a detailed account of a Galois descent type result for $\chi^{mot}$. Fix $K/k$ a finite Galois extension with Galois group $\Gal(K/k)$. If $k$ has positive characteristic, assume further that $[K:k]$ is coprime to $\mathrm{char}(k)$.

\begin{defn}\label{DescentDataVar}
Let $\mathrm{Var}_{K, desc}$ denote the category of pairs $(X, \varphi)$ of $K$-varieties $X$ equipped with descent data $\varphi$. Explicitly, the descent data $\varphi$ is a group homomorphism $\Gal(K/k) \to \mathrm{Aut}(X/k)$ such that for all $\tau \in \Gal(K/k)$ we have a commutative diagram
\begin{center}
\begin{tikzcd}
X \ar[r, "\varphi(\tau)"] \ar[d] & X \ar[d] \\
\Spec(K) \ar[r, "\tau"] & \Spec(K),
\end{tikzcd}
\end{center}
where the vertical arrows denote the structure morphism $X \to \Spec(K)$. A morphism in $\mathrm{Var}_{K, desc}$, $f: (X, \varphi) \to (X', \varphi')$ is a morphism of $K$-varieties $f: X \to X'$ which is compatible with the descent data in the sense that the obvious diagram commutes. We say a morphism is an \emph{open} (resp. \emph{closed}) \emph{immersion} if the underlying morphism of varieties is an \emph{open} (resp. \emph{closed}) \emph{immersion}. This category also has a unit object with respect to the monoidal structure induced by the product: namely, the variety $\Spec(K)$ equipped with the identity as the structure morphism, with the only possible descent data. We will write $(\Spec(K), \mathrm{Id})$ for this unit object. This allows us to form the Grothendieck ring of $K$ varieties with descent data, $K_0(\mathrm{Var}_{K, desc})$, which is the ring generated by symbols $[(X, \varphi)]$, where $(X,\varphi)$ is a $K$ variety with descent data, subject to the relations
\begin{enumerate}
\item $[(X, \varphi)] = [(X', \varphi')]$ if $(X,\varphi)$ and $(X', \varphi')$ are isomorphic,
\item $[(X, \varphi)] = [(U, \varphi|_U)] + [( X \setminus U, \varphi_{X\setminus U})]$ for $(U, \varphi_U) \hookrightarrow (X, \varphi)$ an open immersion, where the descent data on $X \setminus U$ is given by the restriction of the descent data on $X$,
\item $[(X, \varphi)] [(X', \varphi')] = [( X \times_K X', \varphi \times \varphi')]$ for any pair of $K$ varieties with descent data.
\end{enumerate}

By Galois descent (see for example, Corollary 4.4.6 of \cite{Po}), there is an equivalence of categories $\mathrm{Var}_{K, desc} \cong \mathrm{Var}_k$. This equivalence of categories preserves the unit object in the category since it sends $(\Spec(K), \mathrm{Id})$ to $\Spec(k)$, where $(\Spec(K), \mathrm{Id})$. If $(X, \varphi)$ is a $K$ variety with descent data, write $\mathrm{des}_{K/k}(X,\varphi)$ to mean the associated $k$ variety, and similarly if $Y/k$ is a variety, write $(Y_K, \phi_Y)$ to mean the associated $K$ variety with descent data.

This equivalence of categories preserves open and closed immersions (see Theorem 4.3.7 of \cite{Po}), so it induces an isomorphism $K_0(\mathrm{Var}_{K, desc}) \cong K_0(\mathrm{Var}_k)$ given on the level of quasiprojective varieties by sending $[(X, \varphi)] \mapsto [\mathrm{desc}_{K/k}(X, \varphi)]$, with inverse $[X] \mapsto [(X_K, \phi_X)]$. 
\end{defn}

We'd like to apply a Galois descent style result for the motivic Euler characteristic. In order to do this, we need a Galois descent result for quadratic forms. 
\begin{defn}
Let $K$ be a finite Galois extension of $k$, let $V$ be a vector space over $K$ with a semilinear $\Gal(K/k)$ action, and let $q: V \rightarrow K$ be a quadratic form. We say $q$ is {\it Galois compatible} if, for all $\tau \in \Gal(K/k)$ and $v \in V$, we have $q(\tau(v))=\tau(q(v)).$

Write $\varphi: \Gal(K/k) \to \mathrm{Aut}_k(V)$ for the group homomorphism defining this semilinear action. We will write $(q, \varphi)$ to denote the quadratic form on $V$ along with the semilinear $\Gal(K/k)$ action on $V$ induced by $\varphi$.  Let $\widehat{\mathrm{W}}(K)^{desc}$ denote the ring generated by the symbols $[(q, \varphi)]$, where $[(q, \varphi)]$ is a Galois compatible quadratic form over $K$, subject to the relations:
\begin{enumerate}
\item $[(q,\varphi)] = [(q',\varphi')]$ if there is an isomorphism between the underlying vector spaces of $q,q'$ which is compatible with the quadratic forms and the semilinear $\Gal(K/k)$ actions.
\item $[(q, \varphi)] + [(q', \varphi')] = [(q \oplus q', \varphi \oplus \varphi')]$, where $q \oplus q'$ (resp. $\varphi \oplus \varphi'$) represents the natural quadratic form (resp. semilinear $\Gal(K/k)$ action) on $V \oplus V'$.
\item $[(q, \varphi)]\cdot[(q',\varphi')] = [(q \otimes_K q', \varphi \otimes_K \varphi')]$, where $q \otimes_K q'$ (resp. $\varphi \otimes_K \varphi'$) represents the natural quadratic form (resp. semilinear $\Gal(K/k)$ action) on $V \otimes_K V'$. 
\end{enumerate}
For $a \in k^\times$, write $\langle a \rangle$ to mean the Galois compatible quadratic form on $K$ given by $x \mapsto ax^2$, and write $\mathbb{H} := \langle 1 \rangle + \langle -1 \rangle \in \widehat{\mathrm{W}}(K)^{desc}$.
\end{defn}
\begin{thm}\label{GaloisDescentQuadForm} There is an equivalence of categories between the category of non-degenerate quadratic forms over $k$ and the category of non-degenerate Galois compatible quadratic forms over $K$.
\end{thm}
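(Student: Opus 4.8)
The plan is to establish the equivalence of categories in Theorem~\ref{GaloisDescentQuadForm} by building on the classical Galois descent for vector spaces and then checking that quadratic forms transport through the descent in a manner compatible with the Galois-compatibility condition. First I would spell out the two functors. In one direction, given a non-degenerate quadratic form $q \colon W \to k$ over $k$, base change to obtain $W_K := W \otimes_k K$ with its natural semilinear $\Gal(K/k)$-action $\tau(w \otimes \lambda) = w \otimes \tau(\lambda)$, and the extended form $q_K \colon W_K \to K$; one checks directly that $q_K(\tau(v)) = \tau(q_K(v))$ on elementary tensors and hence everywhere by semilinearity, so $q_K$ is Galois compatible, and it is non-degenerate since non-degeneracy is preserved by field extension. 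In the other direction, given a non-degenerate Galois-compatible $(V, q)$ over $K$ with semilinear action, set $W := V^{\Gal(K/k)}$; classical Galois descent (Speiser's theorem / the vanishing of $H^1(\Gal(K/k), GL_n(K))$) gives $W \otimes_k K \xrightarrow{\sim} V$, and I would check that $q$ restricts to a $k$-valued form on $W$ precisely because $q(\tau v) = \tau(q(v))$ forces $q(w) \in K^{\Gal(K/k)} = k$ for $w \in W$.

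The key steps in order: (1) Verify functoriality of base change on morphisms of quadratic forms and that it lands in the Galois-compatible, non-degenerate subcategory. (2) Verify that $(V,q) \mapsto (V^{\Gal(K/k)}, q|_{V^{\Gal(K/k)}})$ is well-defined, i.e.\ that the restricted form is $k$-valued and non-degenerate --- non-degeneracy of $q|_W$ follows because the associated bilinear form $b$ satisfies $b(\tau v, \tau v') = \tau(b(v,v'))$, so a vector in the radical of $b|_W$ would, after base change, lie in the radical of $b$ on $V$. (3) Construct the natural isomorphisms exhibiting the two functors as mutually inverse: $W \cong (W_K)^{\Gal(K/k)}$ is elementary, and $(V^{\Gal(K/k)})_K \cong V$ is exactly the descent isomorphism, which one checks intertwines the quadratic forms. (4) Check compatibility with morphisms on both sides, which reduces to the same descent statement for linear maps together with the observation that a $K$-linear map commuting with the semilinear actions descends to a $k$-linear map.

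The main obstacle --- really the only non-formal point --- is Step~(3) and the underlying effectivity of Galois descent: one must invoke that every semilinear $\Gal(K/k)$-representation on a finite-dimensional $K$-vector space is, canonically, the base change of its fixed subspace, which is the cohomological input $H^1(\Gal(K/k), GL_n) = 1$. Everything about the quadratic form itself is then a compatibility check: the form is a $\Gal(K/k)$-equivariant map $V \to K$ (with $K$ carrying its tautological semilinear action), so it corresponds under descent to a $k$-linear\,--\,rather, quadratic\,--\,map $W \to k$, and non-degeneracy is preserved in both directions because it is a base-change-stable condition. I would remark that this is the quadratic-form analogue of the standard fact that descent data on vector spaces with extra tensorial structure (here a symmetric bilinear form) is effective, and that the Galois-compatibility hypothesis is exactly the descent datum being compatible with the structure map to the trivial descent datum $K$.
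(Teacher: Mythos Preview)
Your proposal is correct and follows essentially the same route as the paper: invoke classical Galois descent for vector spaces, define the base-change and fixed-points functors, verify the restricted form is $k$-valued via Galois compatibility, and check the functors are quasi-inverse. The only cosmetic difference is the non-degeneracy check for the descended form: you argue that a radical vector of $b|_W$ would lie in the radical of $b$ on $V$ after base change, whereas the paper takes $v\in V^{\Gal(K/k)}$, picks $w\in V$ with $B(v,w)=1$, and averages $\hat w=\frac{1}{[K:k]}\sum_\tau \tau(w)$ to produce a Galois-invariant partner with $B(v,\hat w)=1$; both arguments are valid and amount to the same use of the descent isomorphism.
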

\begin{proof}
This follows by Galois descent for vector spaces. If $q: V \rightarrow k$ is a quadratic form over $k$, we obtain a Galois compatible quadratic form over $K$
$$
q \otimes Id_K: V \otimes_k K \rightarrow K,
$$
where $\varphi: \Gal(K/k) \to \mathrm{Aut}_k(V \otimes_k K)$ is given by $\tau \mapsto \mathrm{Id} \otimes \tau$.

If $(q', \varphi) : V' \rightarrow K$ is a Galois compatible quadratic form over $K$, and $v'$ is an element of $V'^{\Gal(K/k)}$, then $\tau(q'(v')) = q'(\tau(v')) = q'(v')$ for all $\tau \in \Gal(K/k)$, which implies that $q'(v') \in k$. Restricting to the Galois invariant subspace gives us a quadratic form over $k$: 
$$
q'|_{V'^{\Gal(K/k)}}: V'^{\Gal(K/k)} \rightarrow k.
$$
W claim that $q'|_{V'^{\Gal(K/k)}}$ is non-degenerate. Let $B: V' \otimes_K V' \to K$ be the underlying bilinear form, and let $v \in V'^{\Gal(K/k)}$. Since $B'$ is non-degenerate, pick $w \in V'$ such that $B(v,w)=1$. Let $\hat{w} = \frac{1}{[K:k]} \sum_{\tau \in \Gal(K/k)} \tau(w)$. Then $\hat{w} \in V'^{\Gal(K/k)}$, and $B'(v,\hat{w}) = 1$, so $B'|_{V'^{\Gal(K/k)}}$ is non-degenerate. Finally, note that the functors $q \mapsto q \otimes Id_K$ and $q' \mapsto q'|_{V'^{\Gal(K/k)}}$ are quasi-inverse, which completes the proof.
\end{proof}
\begin{cor}\label{descentisomorphismgw}
There is a canonical isomorphism $\widehat{\mathrm{W}}(K)^{desc} \cong \widehat{\mathrm{W}}(k)$, given on the level of quadratic forms by sending $(q, \varphi)$ to $q|_{V^{\Gal(K/k)}}$.
\end{cor}
\begin{proof}
Immediate by the above.
\end{proof}

We claim that Galois descent for varieties is compatible with a Galois descent result for the Galois-compatible quadratic forms on their de Rham cohomology. This is made exact in the following.
\begin{lemma}\label{DescentCompat}
Let $X$ be a smooth projective $k$ variety. Then there is an isomorphism of graded vector spaces with a semilinear $\mathrm{Gal}(K/k)$ action:
$$
H^{*}_{dR}(X/k) \otimes_k K \cong H^{*}_{dR}(X_K/K).
$$
Let $\cup_k$ denote the cup product on $H^{*}_{dR}(X/k)$, and $\cup_K$ denote the cup product on $H^{*}_{dR}(X_K/K)$. Let $\alpha, \alpha' \in H^*_{dR}(X/k)$ and $\lambda, \lambda'$ in $K$, then under this isomorphism
$$
(\alpha \otimes \lambda) \cup_K (\alpha' \otimes \lambda') = (\alpha \cup_k \alpha') \otimes \lambda \lambda',
$$
and $Tr_{X_K}(\alpha \otimes \lambda) = \lambda Tr_X(\alpha)$. 
\end{lemma}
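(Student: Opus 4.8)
The plan is to realise de Rham cohomology as the hypercohomology of the algebraic de Rham complex and to deduce all the claims from flat base change together with the naturality of the constructions involved. Since $X$ is smooth and projective over $k$, we have $H^i_{dR}(X/k) \cong H^i(X, \Omega^\bullet_{X/k})$, the hypercohomology of the de Rham complex, and the formation of Kähler differentials commutes with base change: writing $\pi \colon X_K \to X$ for the projection, there is a canonical isomorphism of complexes of sheaves $\pi^*\Omega^\bullet_{X/k} \cong \Omega^\bullet_{X_K/K}$. Because $X$ is Noetherian and $K/k$ is flat, flat base change for the (hyper)cohomology of quasi-coherent complexes gives a canonical isomorphism $H^i_{dR}(X/k)\otimes_k K \xrightarrow{\sim} H^i_{dR}(X_K/K)$, natural in $X$. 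Naturality in $X$ yields at once the last assertion: a good $G$-action on $X$ over $k$ induces a compatible $G$-action on $X_K$ and on both sides, and the base change isomorphism is $G$-equivariant, hence an isomorphism of graded $K[G]$-modules.

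For the semilinear Galois action, observe that $\mathrm{Gal}(K/k)$ acts on $X_K = X\times_k \Spec K$ through the second factor, hence $\tau$-semilinearly on $\Omega^\bullet_{X_K/K}$ and on $H^*_{dR}(X_K/K)$. Under $\pi^*\Omega^\bullet_{X/k} \cong \Omega^\bullet_{X_K/K}$ this action corresponds to $\mathrm{id}\otimes\tau$ on $\pi^*\Omega^\bullet_{X/k}$ (acting through the scalars only), so after passing to hypercohomology the Galois action on $H^*_{dR}(X_K/K)$ becomes $\mathrm{id}\otimes\tau$ on $H^*_{dR}(X/k)\otimes_k K$; this can be verified on Čech cochains computing the hypercohomology.

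The multiplicative statement follows because the cup product is induced by the wedge product $\Omega^\bullet_{X/k}\otimes_{\mathcal O_X}\Omega^\bullet_{X/k}\to\Omega^\bullet_{X/k}$, which makes $\Omega^\bullet_{X/k}$ a sheaf of commutative differential graded algebras; base change is a morphism of such, so the cup product on $H^*_{dR}(X_K/K)$ is the $K$-bilinear extension of the one on $H^*_{dR}(X/k)$, which is exactly the formula $(\alpha\otimes\lambda)\cup_K(\alpha'\otimes\lambda') = (\alpha\cup_k\alpha')\otimes\lambda\lambda'$. For the trace, recall that for $X$ smooth and proper of dimension $n$ the trace factors as $H^{2n}_{dR}(X/k)\twoheadrightarrow H^n(X,\Omega^n_{X/k})\xrightarrow{\sim}k$, the second arrow being the Serre duality trace; since the latter (equivalently, the trace map of Grothendieck duality for $X\to\Spec k$) commutes with flat base change, we get $Tr_{X_K} = Tr_X\otimes\mathrm{id}_K$ under the identification above, i.e. $Tr_{X_K}(\alpha\otimes\lambda) = \lambda\, Tr_X(\alpha)$.

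The main obstacle is organisational rather than substantive: one needs to phrase flat base change so that a single base change map is simultaneously compatible with the hypercohomology spectral sequence, the cup product, the Galois action, and any $G$-action, so that all the stated compatibilities become instances of its naturality. The one genuinely external ingredient is base change for the Serre duality trace, which can be quoted from the construction of the trace map in Grothendieck duality.
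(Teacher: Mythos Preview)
Your argument is correct and covers all the assertions. The approach, however, differs from the paper's in its packaging. The paper does not invoke flat base change for hypercohomology directly; instead it appeals to the K\"unneth formula for de Rham cohomology over $k$, identifying $H^*_{dR}(X/k)\otimes_k H^*_{dR}(\Spec(K)/k)\cong H^*_{dR}(X_K/k)$ and then observing that $H^*_{dR}(X_K/k)$ is just $H^*_{dR}(X_K/K)$ viewed through the forgetful functor. The compatibility with cup product and trace is simply asserted as part of the K\"unneth statement, whereas you supply explicit reasons (CDGA structure on $\Omega^\bullet$, base change for the Serre duality trace). The more substantive difference is in the verification of the Galois and $G$ actions: the paper writes down the automorphism $\hat\tau$ of $X_K$ explicitly via the universal property of the fibre product and then reads off that $\hat\tau^*$ is $x\otimes y\mapsto x\otimes\tau(y)$ on $H^*_{dR}(X_K/k)$, and similarly computes $(\varphi_X\times\varphi_K)^*=\varphi_X^*\otimes\varphi_K^*$ for the $G$-action; you subsume both under ``naturality in $X$'' and a remark about \v Cech cochains. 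Your route is cleaner and isolates exactly which external input is needed (base change for the duality trace), while the paper's diagrammatic computation makes the identification of the two semilinear actions completely explicit, which is the point actually used in the subsequent Theorem~\ref{TwistingCompatibility}.
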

\begin{proof}
By the K{\"u}nneth formula, there is an isomorphism of $K$ vector spaces
$$
H^*_{dR}(X/k) \otimes_k H^*_{dR}(\Spec(K)/k) \cong H^*_{dR}(X_K/K),
$$
which respects the cup product and the trace map. Moreover, we see that we can identify $H^*_{dR}(\Spec(K)/k) \cong K$. The left hand side of the above isomorphism has a canonical semilinear action of $\mathrm{Gal}(K/k)$ on it, given by $\tau(x \otimes \alpha) = x \otimes \tau(\alpha)$. The right hand side has a left $\mathrm{Gal}(K/k)$ action on it, induced by the right action on $X_K$. We claim that these two actions are the same, which is enough to prove the theorem.

First, consider $\mathrm{Spec}(K)$ as a $k$ variety. The $\mathrm{Gal}(K/k)$ action on $\mathrm{Spec}(K)$ gives us a left action of $\mathrm{Gal}(K/k)$ on $H^*_{dR}(K/k)\cong K$, which corresponds to the standard action of $\mathrm{Gal}(K/k)$ on $K$. For $\tau \in \Gal(K/k)$, write $\hat{\tau}$ for the automorphism of $X_K$ given by the following commutative diagram
\begin{center}
\begin{tikzcd}[cramped]
X_K \ar[rdd, "p"', bend right =10] \ar[rrd, "\tau \circ q", bend left = 10] \ar[rd, dashed, "\hat{\tau}"] &&\\
& X_K \ar[d, "p"] \ar[r, "q"] & \Spec(K) \ar[d] \\
& X \ar[r] & \Spec(k),
\end{tikzcd}
\end{center}
where $\hat{\tau}$ is the morphism induced by the universal property of the fibre product. Write $p^*$, $q^*$, $\tau^*$ and $\hat{\tau}^*$ for the images of the morphisms $p$, $q$, $\tau$ and $\hat{\tau}$ after applying the contravariant functor, $H^*_{dR}(-/k)$. For $x \in H^*_{dR}(X)$, and $y \in K$, we have that
\begin{align*}
p^*(x) &= x \otimes 1 \\
q^*(y) &= 1 \otimes y \\
\tau^*(y) &= \tau(y).
\end{align*}
Therefore the map $\hat{\tau}^*: H^*_{dR}(X_K/k) \rightarrow H^*_{dR}(X_K/k) \otimes_k K$ is $x \otimes y \mapsto x \otimes \tau(y)$. 

Note that $H^*_{dR}(X_K/k)$ is the image of $H^*_{dR}(X_K/K)$ under the forgetful functor from $K$ vector spaces to $k$ vector spaces, and the map $\hat{\tau}^*$ is the map on $H^*_{dR}(X_K/K)$ induced by the semilinear $\Gal(K/k)$ action. This gives us an isomorphism $H^*_{dR}(X_K/k) \cong H^*_{dR}(X) \otimes_k K$  as required.
\end{proof}
\begin{defn}
For $X/K$ a smooth projective variety over $K$ of dimension $n$ with descent data, there is a natural Galois compatible quadratic form on $H^{2*+1}_{dR}(X/K)$ defined as follows. Note that $H^{2i+1}_{dR}(X/K)$ and $H^{2n-(2i+1)}_{dR}(X/K)$ are canonically dual and both have a semilinear $\Gal(K/k)$ action on them. If $2i+1 \neq n$, then $2n-(2i+1) \neq 2i+1$. Therefore the isomorphism from Poincaré duality $H^{2n-(2i+1)}_{dR}(X/K) \cong H^{2i+1}_{dR}(X/K)^\vee$ induces a canonical Galois compatible quadratic form on the direct sum $H^{2i+1}_{dR}(X/K)\oplus H^{2n-(2i+1)}_{dR}(X)$ given by taking a pair $(\alpha, f)$ to $f(\alpha)$. If $2i+1 = n$, then $H^n_{dR}(X/K)$ is canonically self dual. Writing $\phi: H^n_{dR}(X/K) \to H^n_{dR}(X/K)^\vee$, we obtain a Galois compatible quadratic form on $H^n_{dR}(X/K)$ by taking $\alpha$ to $\phi(\alpha)(\alpha)$, which is hyperbolic since $n$ is odd. This furnishes a Galois compatible quadratic form on all of $H^{2*+1}_{dR}(X/K)$, and we will write $[H^{2*+1}_{dR}(X/K)]$ to be the corresponding element of $\widehat{\mathrm{W}}(K)^{desc}$. Note that if we forget the descent data, the quadratic form is clearly hyperbolic. 
\end{defn}

\begin{cor}\label{DescentIsomorphism}
Let $(X, \varphi)$ be a $K$ variety with descent data $\varphi$. Then there is a natural Galois compatible non-degenerate quadratic form on $H^{2*}_{dR}(X/K)$ with Galois action induced by $\varphi$ given by the composition $x \mapsto \mathrm{Tr}_{X}(x \cup x)$.
\end{cor}
\begin{proof}
Immediate, since $(X,\varphi)$ is given by the base change of $\mathrm{desc}_{K/k}(X,\varphi)$ to $K$. 
\end{proof}
\begin{thm}\label{GaloisDescent}
There is a unique ring homomorphism 
$$
\chi^{mot,desc}_k: K_0(\mathrm{Var}_{K,desc}) \to \widehat{\mathrm{W}}(K)^{desc}$$ such that if $(X, \varphi)$ is a smooth projective variety, then
$$
\chi^{mot,desc}_k( [(X,\varphi)]) = [H^{2*}_{dR}(X/K)] - [H^{2*+1}_{dR}(X/K)].
$$
\end{thm}
\begin{proof}
Define $\chi^{mot,desc}$ to be the composition 
$$
\chi^{mot,desc}_k: K_0(\mathrm{Var}_{K,desc}) \cong K_0(\mathrm{Var}_k) \xto{\chi^{mot}} \widehat{\mathrm{W}}(k) \to \widehat{\mathrm{W}}(K)^{desc}.
$$ The above corollary guarantees that this  satisfies the desired property. Uniqueness is also immediate by the above corollary, since if $\varphi$ is any other morphism as above then the composition
$$
K_0(\mathrm{Var}_k) \to K_0(\mathrm{Var}_{K,desc}) \xto{\varphi} \widehat{\mathrm{W}}(K)^{desc} \cong \widehat{\mathrm{W}}(k)
$$
is such that for a smooth projective variety $X$ 
$$
[X] \mapsto [H^{2*}_{dR}(X/k)] - \frac12 \mathrm{dim}_k H^{2*+1}_{dR}(X/k) \cdot \mathbb{H} = \chi^{dR}(X),
$$
so the composition is equal to $\chi^{mot}$. As noted in Theorem $\ref{euler}$, the morphism $\chi^{mot}$ is unique with respect to this property, so $\chi^{mot,desc}$ will also be unique. 
\end{proof}
\begin{rem}
A similar argument to the one above was noted in section 8E of \cite{LR}.
\end{rem}

\section{The motivic Euler characteristic and transfer maps}
Let $L/k$ be a finite separable field extension. We can define various transfer maps between the rings $K_0(\mathrm{Var}_L)$ and $K_0(\mathrm{Var}_k)$, given by base change, forgetting the base, and Weil restriction, and analogously for $\widehat{\mathrm{W}}(L)$ and $\widehat{\mathrm{W}}(k)$ by base change, the trace transfer and Rost's norm transfer from \cite{Ro}. In this section, we show that the motivic Euler characteristic is compatible with these transfer maps.

\begin{defn}
Let $\iota: k \hookrightarrow L$ be a field extension. Then there is a natural ring homomorphism $\iota_*: \widehat{\mathrm{W}}(k) \to \widehat{\mathrm{W}}(L)$ given on the level of quadratic forms by sending a quadratic form $q: V \to k$ to $q \otimes \mathrm{Id}: V \otimes_k L \to L$. Similarly, there is a ring homomorphism $(\Spec(\iota))^*: K_0(\mathrm{Var}_k) \to K_0(\mathrm{Var}_L)$, given by sending a variety $X$ to $X_L$.
\end{defn}
\begin{lemma}\label{basechange}
The following diagram is commutative
\begin{center}
\begin{tikzcd}[cramped]
K_0(\mathrm{Var}_k) \ar[d, "\Spec(\iota)^*"] \ar[r, "\chi^{mot}_k"] & \widehat{\mathrm{W}}(k) \ar[d, "\iota_*"] \\
K_0(\mathrm{Var}_L) \ar[r, "\chi^{mot}_L"] & \widehat{\mathrm{W}}(L).
\end{tikzcd}
\end{center}
\end{lemma}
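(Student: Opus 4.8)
The plan is to reduce the statement to the case of smooth projective varieties, where both $\chi^{mot}$ maps are computed by the de Rham Euler characteristic, and then verify the compatibility directly on de Rham cohomology. First I would invoke Bittner's presentation (Theorem 3.1 of \cite{Bi}): the underlying abelian group of $K_0(\mathrm{Var}_k)$ is generated by classes $[X]$ of smooth projective varieties, so since all four arrows in the square are ring homomorphisms (in particular additive), it suffices to check commutativity on generators $[X]$ with $X/k$ smooth and projective. Note the base change map $K_0(\mathrm{Var}_k) \to K_0(\mathrm{Var}_L)$ sends such an $[X]$ to $[X_L]$, which is again smooth and projective over $L$.

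The core of the argument is then the identity $\chi^{dR}(X_L/L) = \chi^{dR}(X/k) \otimes_k L$ in $\widehat{\mathrm{W}}(L)$, where the right-hand side means the image of $\chi^{dR}(X/k)$ under the base change homomorphism $\widehat{\mathrm{W}}(k) \to \widehat{\mathrm{W}}(L)$. For this I would use flat base change for de Rham cohomology: there is a natural isomorphism $H^*_{dR}(X_L/L) \cong H^*_{dR}(X/k) \otimes_k L$ of graded $L$-vector spaces, compatible with the cup product and with the trace map $\mathrm{Tr}: H^{2n}_{dR}(X_L/L) \to L$ (this is precisely the K\"unneth-type statement recorded in Lemma \ref{DescentCompat}, applied with $K$ replaced by the possibly-inseparable-free extension $L$; since $\mathrm{char}(k) = 0$ every finite extension is separable, so Lemma \ref{DescentCompat} applies verbatim, or one simply cites flat base change for coherent cohomology and the algebraic de Rham complex). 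Consequently the quadratic form $a \mapsto \mathrm{Tr}(a \cup a)$ on $H^{2*}_{dR}(X_L/L)$ is the base change of the corresponding form on $H^{2*}_{dR}(X/k)$, so $[H^{2*}_{dR}(X_L/L)] = [H^{2*}_{dR}(X/k)] \otimes_k L$ in $\widehat{\mathrm{W}}(L)$; and the odd Betti numbers satisfy $\dim_L H^{2i+1}_{dR}(X_L/L) = \dim_k H^{2i+1}_{dR}(X/k)$, while the base change map sends $\mathbb{H} \in \widehat{\mathrm{W}}(k)$ to $\mathbb{H} \in \widehat{\mathrm{W}}(L)$. Combining these via Definition \ref{DREulerCharacteristic} gives the desired equality, which unwinds to $\chi^{mot}_L([X_L]) = $ (base change of $\chi^{mot}_k([X])$), i.e.\ commutativity of the square on $[X]$.

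I do not expect any serious obstacle here: the only point requiring a little care is ensuring that the isomorphism $H^*_{dR}(X_L/L) \cong H^*_{dR}(X/k)\otimes_k L$ is genuinely compatible with cup product and trace (so that it is an isomorphism of quadratic spaces, not merely of vector spaces), but this is exactly the content already extracted in Lemma \ref{DescentCompat} and is standard for flat base change of the algebraic de Rham complex on a smooth proper variety. A secondary bookkeeping point is that Theorem \ref{euler} characterises $\chi^{mot}$ uniquely by its values on smooth \emph{irreducible} projective varieties, whereas Bittner's generators are smooth projective but not necessarily irreducible; one resolves this by noting that a smooth projective variety is a finite disjoint union of its (smooth irreducible) connected components and that both $\chi^{mot}$ and base change respect disjoint unions, reducing to the irreducible case. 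With these remarks the proof is essentially a one-line application of flat base change after the Bittner reduction.
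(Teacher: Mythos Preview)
Your proposal is correct and follows essentially the same route as the paper: reduce to smooth projective varieties via Bittner's presentation, then invoke flat base change for de Rham cohomology together with its compatibility with cup product and trace to conclude that $\chi^{dR}(X_L)$ is the base change of $\chi^{dR}(X)$. The paper's proof is more terse and does not spell out the irreducibility bookkeeping or the reference to Lemma~\ref{DescentCompat}, but the substance is the same.
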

\begin{proof}
Theorem 3.1 of \cite{Bi} means it is enough to prove this result for smooth projective varieties, so we may use de Rham cohomology. Flat base change for de Rham cohomology gives us an isomorphism $H^*_{dR}(X_L/L) \cong H^*_{dR}(X/k) \otimes_k L$. Under this isomorphism, the cup product on $H^*_{dR}(X_L)$ is the extension of the cup product on $H^*_{dR}(X/k)$ via $L$-linearity, so it only remains to examine what happens to the trace map.

Since $X/k$ is smooth and projective, the cycle class map gives a canonical isomorphism $H^{2n}_{dR}(X/k) \cong H^n(X, \Omega^n_{X/k})$. The trace map $H^n(X, \Omega^n_{X/k}) \to k$ is defined to be the map given by the image of the identity under the isomorphism 
$$
Rp_{X/k,*} R\iHom_{\mathcal{O}_X}(\Omega_{X/k}, \Omega_{X/k})^0 \cong H^n(X, \Omega^n_{X/k})^\vee
$$
induced by Grothendieck--Verdier duality (see Corollary 11.2(f) of Chapter III of \cite{Ha}). In particular, we see that the trace map commutes with arbitrary base extension by Corollary 11.2(c) of loc. cit, so the trace map $\mathrm{Tr}_{X/L}$ is given by $\mathrm{Tr}_{X/k} \otimes_k \mathrm{Id}_L$, which gives us the result. 
\end{proof}
We now focus on two other natural transfer maps, which are maps $\widehat{\mathrm{W}}(L) \to \widehat{\mathrm{W}}(k)$ for quadratic forms and maps $K_0(\mathrm{Var}_L) \to K_0(\mathrm{Var}_k)$ for varieties.

\begin{defn}\label{tracedefn}
Let $L/k$ be a finite separable field extension. Write $N_{L/k}$ to be the norm map $L^\times \to k^\times$, and $\mathrm{Tr}_{L/k}$ to mean the trace map $L \to k$.  We have a natural homomorphism of abelian groups $\mathrm{Tr}_{L/k}: \widehat{\mathrm{W}}(L) \to \widehat{\mathrm{W}}(k)$, given on the level of quadratic forms by sending the quadratic form $q: V \to L$ to the quadratic form $\mathrm{Tr}_{L/k} \circ q$. This is a widely studied homomorphism in quadratically enriched enumerative geometry, appearing for example in \cite{KW} on page 19.

Write $\mathrm{disc}(L/k) := \mathrm{disc}(\mathrm{Tr}_{L/k}(\langle 1 \rangle))$. This is not misleading: for $k=\Q$, this agrees with the discriminant of the number field $L$. For $w \in \widehat{\mathrm{W}}(k)$, we have $\mathrm{rank} (\mathrm{Tr}_{L/k}(w)) = [L:k] \mathrm{rank}(w)$, so $\mathrm{Tr}_{L/k}$ is not a ring homomorphism.

Analogously to the above, if we have a variety $X \xto{p} \Spec(L)$, we can consider $X$ as a variety over $k$ with structure map $X \xto{p} \Spec(L) \to \Spec(k)$. This induces a homomorphism of abelian groups $K_0(\mathrm{Var}_L)\to K_0(\mathrm{Var}_k)$, though not a ring homomorphism.
\end{defn}

As in Remark $\ref{augdet}$, identify $\widehat{\mathrm{W}}(k)/I^2 = \mathbf{W}(k)$ with $E(\Hom(\Gal_k, \Z/2\Z))$. Let $E_{disc}$ denote the quotient map
 \begin{align*}
E_{disc}:\widehat{\mathrm{W}}(k) &\to E(\Hom(\Gal_k, \Z/2\Z))\\
w &\mapsto (\mathrm{rank}(w), \mathrm{disc}(w)).
\end{align*}

\begin{lemma}\label{tracedisc}
Let $w \in \widehat{\mathrm{W}}(L)$, such that $E_{disc}(w) = (\alpha, \beta)$. Then $\mathrm{Tr}_{L/k}(w)$ satisfies
$$
E_{disc}(\mathrm{Tr}_{L/k}(w)) = ([L:k]\alpha, \mathrm{disc}(L/k)^{\alpha}N_{L/k}(\beta)).
$$
\end{lemma}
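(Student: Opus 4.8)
The plan is to compute the rank and the discriminant of $\mathrm{Tr}_{L/k}(w)$ separately and then reassemble, reducing the discriminant computation to the classical determinant formula for a scaled trace form. By Remark~\ref{augdet} (and the discussion preceding the lemma) the class $Q_k(\mathrm{Tr}_{L/k}(w))\in E(\Hom(\Gal_k,\Z/2\Z))$ is exactly the pair $\bigl(\mathrm{rank}(\mathrm{Tr}_{L/k}(w)),\,\mathrm{disc}(\mathrm{Tr}_{L/k}(w))\bigr)$, where $\Hom(\Gal_k,\Z/2\Z)$ is identified with $k^\times/(k^\times)^2$ written multiplicatively. So it suffices to express these two quantities in terms of $\alpha=\mathrm{rank}(w)$ and $\beta=\mathrm{disc}(w)\in L^\times/(L^\times)^2$.

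Next I would pick a convenient representative of $w$. Since every non-degenerate form over $L$ diagonalises and $[\langle0\rangle]=0$, the group $\widehat{\mathrm W}(L)$ is generated by the classes $[\langle a\rangle]$, $a\in L^\times$, so write $w=\sum_i n_i[\langle a_i\rangle]$ with $n_i\in\Z$, $a_i\in L^\times$. Then $\alpha=\sum_i n_i$, and because $\mathrm{disc}$ is a homomorphism to the multiplicative group $L^\times/(L^\times)^2$ we get $\beta=\prod_i a_i^{n_i}$. As $\mathrm{Tr}_{L/k}$ is additive, $\mathrm{Tr}_{L/k}(w)=\sum_i n_i\,\mathrm{Tr}_{L/k}(\langle a_i\rangle)$, and each $\mathrm{Tr}_{L/k}(\langle a_i\rangle)$ is non-degenerate of rank $[L:k]$ (its space is $L$ viewed over $k$); equivalently one invokes the rank formula already recorded in Definition~\ref{tracedefn}. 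Hence $\mathrm{rank}(\mathrm{Tr}_{L/k}(w))=[L:k]\sum_i n_i=[L:k]\alpha$, which is the first coordinate.

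The substantive step is the discriminant of a single scaled trace form $\mathrm{Tr}_{L/k}(\langle a\rangle)\colon L\to k$, $x\mapsto\mathrm{Tr}_{L/k}(ax^2)$. Fixing a $k$-basis $e_1,\dots,e_d$ of $L$ (with $d=[L:k]$) and letting $\sigma_1,\dots,\sigma_d\colon L\hookrightarrow\overline k$ be the distinct embeddings, the Gram matrix is $\bigl(\mathrm{Tr}_{L/k}(a e_i e_j)\bigr)_{i,j}=M^{T}DM$ with $M=(\sigma_\ell(e_i))_{\ell,i}$ and $D=\mathrm{diag}(\sigma_1(a),\dots,\sigma_d(a))$, so its determinant equals $\det(M)^2\,N_{L/k}(a)$. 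Taking $a=1$ identifies $\det(M)^2$ with $\mathrm{disc}(\mathrm{Tr}_{L/k}(\langle1\rangle))=\mathrm{disc}(L/k)$ in $k^\times/(k^\times)^2$, whence $\mathrm{disc}(\mathrm{Tr}_{L/k}(\langle a\rangle))=N_{L/k}(a)\cdot\mathrm{disc}(L/k)$. Plugging this back and using once more that $\mathrm{disc}$ and $N_{L/k}$ are multiplicative,
\begin{align*}
\mathrm{disc}(\mathrm{Tr}_{L/k}(w))&=\prod_i\bigl(N_{L/k}(a_i)\,\mathrm{disc}(L/k)\bigr)^{n_i}\\
&=N_{L/k}\Bigl(\prod_i a_i^{n_i}\Bigr)\cdot\mathrm{disc}(L/k)^{\sum_i n_i}=N_{L/k}(\beta)\cdot\mathrm{disc}(L/k)^{\alpha},
\end{align*}
which is the second coordinate, and the lemma follows.

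I expect the only genuine obstacle to be organising the square-class bookkeeping cleanly; the key input, the determinant identity $\det(\mathrm{Tr}_{L/k}(a e_i e_j))=N_{L/k}(a)\det(\mathrm{Tr}_{L/k}(e_i e_j))$, is classical (it is precisely why the discriminant of a number field scales by a norm under twisting), and everything else is formal from additivity of $\mathrm{Tr}_{L/k}$ together with multiplicativity of $\mathrm{disc}$ and $N_{L/k}$. One should also double-check, as a sanity point, that no spurious signs enter: the paper's $\mathrm{disc}$ is the bare determinant of the Gram matrix rather than a signed version, and the computation above produces none.
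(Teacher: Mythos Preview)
Your proof is correct and follows essentially the same approach as the paper: both reduce to rank-one generators $\langle a\rangle$, observe that the rank is clear, and derive $\mathrm{disc}(\mathrm{Tr}_{L/k}\langle a\rangle)=\mathrm{disc}(L/k)\cdot N_{L/k}(a)$. The only cosmetic difference is in how that determinant identity is obtained---you factor the Gram matrix via the embedding matrix $M$ and the diagonal $D$, whereas the paper factors it as $m_\beta A$ with $m_\beta$ the matrix of multiplication by $\beta$ and $A$ the Gram matrix of the unscaled trace form---but these are two equivalent one-line computations of the same classical fact.
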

\begin{proof}
The statement about the rank is immediate, so we only need to show this for the discriminant. Since both $\mathrm{disc}$ and $\mathrm{Tr}_{L/k}$ are abelian group homomorphisms, it is enough to show the statement on generators of $\widehat{\mathrm{W}}(L)$, so assume $w=\langle \beta \rangle$. Let $m_{\beta}$ be the matrix defining the $k$ linear map on $L$ given by multiplication by $\beta$, so that $\mathrm{det}(m_\beta) = N_{L/k}(\beta)$. Let $A$ be a symmetric matrix representing the symmetric $k$-bilinear form on $L$ given by $(x,y) \mapsto \mathrm{Tr}_{L/k}(xy)$ for all $x, y \in L$. Then $A$ is a symmetric matrix such that $\mathrm{det}(A) = \mathrm{disc}(L/k)$. We obtain a matrix representing the quadratic form $\mathrm{Tr}_{L/k}(\langle \beta \rangle)$, given by $Am_\beta$. The result follows since
$$
\mathrm{disc}(\mathrm{Tr}_{L/k}(\langle \beta \rangle)) = \mathrm{det}(Am_\beta) =  \mathrm{det}(A)\mathrm{det}(m_\beta) = \mathrm{disc}(L/k)N_{L/k}(\beta).
$$
\end{proof}

\begin{lemma}\label{trace}
The following diagram is commutative.
\begin{center}
\begin{tikzcd}[cramped]
K_0(\mathrm{Var}_L)\ar[r] \ar[d, "\chi^{mot}_L"] & K_0(\mathrm{Var}_k) \ar[d, "\chi^{mot}_k"] \\
\widehat{\mathrm{W}}(L) \ar[r, "\mathrm{Tr}_{L/k}"] & \widehat{\mathrm{W}}(k).
\end{tikzcd}
\end{center}
\end{lemma}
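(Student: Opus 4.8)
The plan is to reduce to smooth projective varieties via Bittner's presentation and then carry out a direct de Rham computation. Both composites around the square are group homomorphisms out of $K_0(\mathrm{Var}_L)$: the two vertical maps and the bottom map $\mathrm{Tr}_{L/k}$ are additive (the latter by Definition \ref{tracedefn}), and the top forgetful map is a group homomorphism. By Theorem 3.1 of \cite{Bi} the group $K_0(\mathrm{Var}_L)$ is generated by classes $[X]$ with $X$ a smooth projective irreducible $L$-variety, so it suffices to prove $\chi^{mot}_k(X)=\mathrm{Tr}_{L/k}(\chi^{mot}_L(X))$ for such $X$. Since $\mathrm{char}(k)=0$, the extension $L/k$ is finite separable, so the structure morphism $X\to\Spec L\to\Spec k$ is the composite of a smooth projective morphism with a finite \'etale one; hence $X$, viewed over $k$, is again smooth and projective, and irreducible (irreducibility is topological). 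We may therefore compute both sides with the de Rham Euler characteristic by Theorem \ref{euler}.

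The core claim is: there is an isomorphism of graded $L$-vector spaces $H^{*}_{dR}(X/k)\cong H^{*}_{dR}(X/L)$ --- equivalently, $H^{*}_{dR}(X/k)$ is $H^{*}_{dR}(X/L)$ with scalars restricted along $k\hookrightarrow L$ --- which is compatible with cup products and under which the $k$-linear trace map $\mathrm{Tr}_{X/k}\colon H^{2n}_{dR}(X/k)\to k$ equals $\mathrm{Tr}_{L/k}\circ\mathrm{Tr}_{X/L}$, where $n=\dim X$. The isomorphism of complexes comes from $\Omega^1_{L/k}=0$, which gives $\Omega^\bullet_{X/k}=\Omega^\bullet_{X/L}$ as complexes of sheaves on $X$ (with $L$ acting, since $da=0$ for $a\in L$), hence equal hypercohomology and equal wedge/cup products; this is the restriction-of-scalars analogue of Lemma \ref{DescentCompat}. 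The trace compatibility is the functoriality of the de Rham trace map under composition of morphisms, combined with the identification of the de Rham trace of the finite \'etale map $\Spec L\to\Spec k$ with the field trace $\mathrm{Tr}_{L/k}\colon L\to k$. Granting the claim, the cup-square-then-trace quadratic form $q_k$ on $H^{2*}_{dR}(X/k)$ is exactly $\mathrm{Tr}_{L/k}\circ q_L$, since the underlying space is the same and $q_k(a)=\mathrm{Tr}_{X/k}(a\cup a)=\mathrm{Tr}_{L/k}(\mathrm{Tr}_{X/L}(a\cup a))=\mathrm{Tr}_{L/k}(q_L(a))$; by the very definition of $\mathrm{Tr}_{L/k}$ on the Grothendieck--Witt ring this yields $[H^{2*}_{dR}(X/k)]=\mathrm{Tr}_{L/k}([H^{2*}_{dR}(X/L)])$ in $\widehat{\mathrm{W}}(k)$.

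It remains to match the hyperbolic correction terms in Definition \ref{DREulerCharacteristic}. Here we use that the Scharlau transfer of a hyperbolic form is hyperbolic: a half-dimensional totally isotropic $L$-subspace of an $L$-form stays totally isotropic, and is half-dimensional over $k$, for the transferred form; hence $\mathrm{Tr}_{L/k}(\mathbb{H}_L)=[L:k]\cdot\mathbb{H}_k$ in $\widehat{\mathrm{W}}(k)$. Combining this with $\dim_k H^{2i+1}_{dR}(X/k)=[L:k]\cdot\dim_L H^{2i+1}_{dR}(X/L)$ and additivity of $\mathrm{Tr}_{L/k}$ gives
$$\mathrm{Tr}_{L/k}(\chi^{dR}_L(X))=[H^{2*}_{dR}(X/k)]-\tfrac12\Big(\sum_{i\geq 0}\dim_k H^{2i+1}_{dR}(X/k)\Big)\cdot\mathbb{H}_k=\chi^{dR}_k(X),$$
as desired. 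The main obstacle is the second paragraph: pinning down that the de Rham trace map is functorial under composition and recovers the classical field trace for $\Spec L\to\Spec k$; once that compatibility of trace maps is established, the rest is formal. (One could alternatively route this through the equivariant formalism of Section 3 by passing to the Galois closure of $L/k$ and identifying the forgetful map with an induction, but the direct de Rham argument appears shortest.)
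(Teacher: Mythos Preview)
Your proposal is correct and follows essentially the same strategy as the paper: reduce via Bittner's presentation to smooth projective $X$, identify $H^*_{dR}(X/k)$ with $H^*_{dR}(X/L)$ (with restricted scalars) via $\Omega^\bullet_{X/k}=\Omega^\bullet_{X/L}$, and then show the de Rham trace over $k$ is the field trace composed with the de Rham trace over $L$.

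The only noteworthy difference is in how the trace compatibility is justified. You invoke functoriality of the de Rham trace under composition of proper morphisms, specialised to $X\to\Spec L\to\Spec k$. The paper instead passes to the constant field $K$ of $X$ and, via the cycle class map and Grothendieck duality, identifies $H^{2n}_{dR}(X/L)\cong K$ with trace equal to the field trace $\mathrm{Tr}_{K/L}$ (and likewise over $k$), then uses transitivity $\mathrm{Tr}_{K/k}=\mathrm{Tr}_{L/k}\circ\mathrm{Tr}_{K/L}$. This is the same content unpacked: the paper's route makes the dimension-zero case the anchor and avoids citing a general composition law for the de Rham trace, while your route is cleaner but leaves that composition law (and the identification of the zero-dimensional trace with the field trace) as the point to be cited. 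Your explicit handling of the hyperbolic correction via $\mathrm{Tr}_{L/k}(\mathbb H_L)=[L:k]\,\mathbb H_k$ is a welcome detail that the paper leaves implicit.
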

\begin{proof}
We first note we can reduce to the case where $X/L$ is smooth, projective and connected, so that we may work with the de Rham Euler characteristic. Let $X/L$ be a smooth projective variety of dimension $n$. There is an isomorphism of $k$-vector spaces $H^*_{dR}(X/L) \cong H^*_{dR}(X/k)$, and this isomorphism is compatible with cup products, so it only remains to check what happens to the trace map $H^{2n}_{dR}(X/k) \to k$. From here, the proof is largely an application of Corollary 11.2 of Chapter 3 of \cite{Ha}, which we present here for the convenience of the reader.

After applying the cycle class isomorphism $H^{2n}_{dR}(X/k) \cong H^n(X, \Omega^n_{X/k})$, we see that the trace map $H^n(X, \Omega^n_{X/k}) \to k$ is the image of the identity under the isomorphism 
$$
Rp_{X/k,*}\iHom_{\mathcal{O}_X}(\Omega_{X/k}, \Omega_{X/k})^0 \cong H^n(X, \Omega^n_{X/k})^\vee
$$
induced by Grothendieck--Verdier duality, and similarly if we replace $k$ by $L$ as above. Corollary 11.2(e) of Chapter 3 of \cite{Ha} implies that the image of the identity under the isomorphism
$$
Rp_{L/k,*}\iHom_{L}(\Omega_{L/k}, \Omega_{L/k})^0 \cong H^n(\Spec(L), \Omega^n_{L/k})^\vee
$$
is given by the standard trace map $\mathrm{Tr}_{L/k}$. Since $p_{L/k}$ is étale $p^!_{L/k}=p^*_{L/k}$ is the left adjoint of $Rp_{L/k,*}$, and since it is finite $Rp_{L/k,*}=p_{L/k,*}$. Moreover, since $p^!$ is right adjoint to $p_*$, we have canonical isomorphisms $p^!_{X/k}(\mathcal{O}_k) = \Omega^n_{X/k}[-n]$, and similarly when replacing $k$ with $L$. Similarly we see
$$
p^!_{X/k}(k) = p^!_{X/L}p^!_{L/k}(k) = p^!_{X/L}( p^*_{L/k}(k)) = p^!_{X/L}(L).
$$
Using this, we have the following commutative diagram where all of the arrows are isomorphisms
\begin{center}
\begin{tikzcd}[cramped]
Rp_{X/k,*} R\iHom_{\mathcal{O}_X}(p^!_{X/k}(k), p^!_{X/k}(k))^0 \ar[r] \ar[d] & p_{L/k,*} Rp_{X/L,*} {R}\iHom_{\mathcal{O}_X}(p^!_{X/L}(L), p^!_{X/L} p^!_{L/k}(k) )^0 \ar[d] \ar[d] \\
{R}\iHom_k( Rp_{X/k, *}p^!_{X/k}(k), k)^0 \ar[dd]   & p_{L/k, *} {R}\iHom_{L}( Rp_{X/L, *} p^!_{X/L}(L), p^!_{L/k}(k))^0 \ar[d] \\
& p_{L/k, *} \Hom_L( H^n(X, \Omega^n_{X/L}), p^!_{L/k}k) \ar[d, "\Phi"] \\
\Hom_k( H^n(X, \Omega^n_{X/k}), k) \ar[r, "="]& \Hom_k(p_{L/k, *} H^n(X, \Omega^n_{X/L}), k).
\end{tikzcd}
\end{center}
Again by Corollary 11.2(e) and (f) of Chapter 3 of \cite{Ha}, the isomorphism $\Phi$ takes a morphism $f \in \Hom_L( H^n(X, \Omega^n_{X/L}), L)$ to $\mathrm{Tr}_{L/k} \circ f \in \Hom_k(H^n(X, \Omega^n_{X/L}), k)$.  Consider the identity element $\mathrm{Id} \in  Rp_{X/k,*} {R}\iHom_{\mathcal{O}_X}(p^!_{X/k}(k), p^!_{X/k}(k))^0$. Going clockwise around the above diagram maps the element $\mathrm{Id}$ to $\mathrm{Tr}_{L/k} \circ \mathrm{Tr}_{X/L}$. However, going counter clockwise maps $\mathrm{Id}$ to $\mathrm{Tr}_{X/k}$, giving the desired equality. 
\end{proof}

We now turn our attention to the norm and Weil restriction transfer maps. This first requires us to use the machinery of polynomial maps: while the transfer maps defined by the trace and forgetful functor are group homomorphisms, the transfer maps we obtain by looking at Weil restrictions and norms have a different structure.
\begin{defn} Let $B$ be an abelian group, let $A$ an abelian monoid and let $P:A\to B$ be a function on the underlying sets. We say $P$ is a \emph{polynomial map of degree $\leq -1$} if $P=0$.  For $n\geq0$, we say $P$ is a \emph{polynomial map of degree $\leq n$} if for each $x\in A$ the map
\begin{align*}\Delta_xP:A&\to B\\
\Delta_xP(y)&=P(y+x)-P(y)\end{align*}
is a polynomial map of degree $\leq n-1$. We say $P$ is a \emph{polynomial map} if it is a polynomial map of degree $\leq n$ for some $n$. Note that $P$ is a polynomial map of degree $\leq 0$ if and only if $P$ is constant.

Suppose now that $A$ and $B$ are commutative rings. A polynomial map is called \emph{multiplicative} if $P(1)=1$ and if for every $x,y \in A$, we have $P(xy)=P(x)P(y)$. Note that a multiplicative polynomial map of degree $\leq 1$ is either identically $1$ or a ring homomorphism. 
\end{defn}
Polynomial maps can be extended from monoids to rings by the following.
\begin{lemma}[(Proposition 1.6 of \cite{SJ})]\label{extend}
Let $\tilde{A}$ be an abelian monoid, let $\tilde{f}: \tilde{A} \to B$ be a polynomial map of degree $\leq n$, and let $A$ be the group completion of $\tilde{A}$. Then there exists a unique polynomial map of degree $\leq n$ $f: A \to B$ which extends $\tilde{f}$. 
\end{lemma}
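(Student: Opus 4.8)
The plan is to argue by induction on the degree $n$ of $\tilde f$, strengthening the statement so that the extension is asserted to be \emph{unique} as well as to exist; folding uniqueness into the induction is what makes the construction at step $n$ work. We may assume $\tilde A\neq\emptyset$, as otherwise $A$ is the trivial group and there is nothing to prove. If $n\leq 0$ then $\tilde f$ is constant and we extend it by that constant, with uniqueness immediate since $\tilde A$ generates $A$.

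Before the inductive step I would isolate two routine facts. First, any polynomial map $\tilde A\to B$ respects cancellation: if $a+c=a'+c$ then $\tilde f(a)-\tilde f(a')=\Delta_c\tilde f(a')-\Delta_c\tilde f(a)$, so an induction on degree gives $\tilde f(a)=\tilde f(a')$. Hence $\tilde f$ factors through the universal cancellative quotient of $\tilde A$, which embeds in $A$; so we may assume $\tilde A\subseteq A$, every element of $A$ has the form $a-b$ with $a,b\in\tilde A$, and $\tilde A$ generates $A$. Second, for any group $A$ and map $g\colon A\to B$, the identities $\Delta_{x+y}g=(\Delta_y g)(\,\cdot+x)+\Delta_x g$ and $\Delta_{-x}g=-(\Delta_x g)(\,\cdot-x)$, together with the fact that translation by a fixed element preserves polynomial degree, show that the set of $x\in A$ for which $\Delta_x g$ has degree $\leq n-1$ is a subgroup of $A$; so $g$ has degree $\leq n$ once $\Delta_b g$ has degree $\leq n-1$ for all $b$ in a generating set, and likewise $g=0$ once $g$ vanishes on $\tilde A$ and each $\Delta_b g$ ($b\in\tilde A$) vanishes on $A$ — this last remark deduces uniqueness at degree $n$ from uniqueness at degree $n-1$.

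For the inductive step, let $\tilde f$ have degree $\leq n$. Each $\Delta_b\tilde f$ with $b\in\tilde A$ has degree $\leq n-1$, hence extends uniquely to a polynomial map $D_b\colon A\to B$ of degree $\leq n-1$ by the inductive hypothesis. Given $z\in A$, pick $b\in\tilde A$ with $z+b\in\tilde A$ and set
$$
f(z):=\tilde f(z+b)-D_b(z).
$$
Well-definedness reduces, using $D_{b_i}|_{\tilde A}=\Delta_{b_i}\tilde f$, to the coherence identity $\Delta_{b_2}D_{b_1}=\Delta_{b_1}D_{b_2}$; both sides have degree $\leq n-2$ and restrict on $\tilde A$ to $\Delta_{b_1}\Delta_{b_2}\tilde f=\Delta_{b_2}\Delta_{b_1}\tilde f$, so they coincide by the uniqueness already available one degree down. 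Taking $z\in\tilde A$ shows $f|_{\tilde A}=\tilde f$, and a direct manipulation of the defining formula (again using coherence) gives $\Delta_b f=D_b$ for every $b\in\tilde A$; since each $D_b$ has degree $\leq n-1$ and $\tilde A$ generates $A$, the second fact above shows that $f$ has degree $\leq n$ and is the unique such extension.

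The only genuinely delicate point is the pair of verifications ``well-definedness of $f$'' and ``$\Delta_b f=D_b$'': both hinge on the internal coherence $\Delta_{b'}D_b=\Delta_b D_{b'}$ of the chosen system of extensions, and it is precisely to obtain this coherence for free that I would carry the uniqueness statement through the induction rather than proving existence alone.
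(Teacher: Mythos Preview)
The paper does not supply its own proof of this lemma: it is stated as a citation (Proposition~1.6 of \cite{SJ}) and used as a black box thereafter. So there is nothing in the paper to compare your argument against directly.

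That said, your proof plan is sound. The key insight---carrying uniqueness through the induction so that the coherence identity $\Delta_{b_2}D_{b_1}=\Delta_{b_1}D_{b_2}$ comes for free one degree down---is exactly what makes the construction work, and you have correctly identified this as the crux. Your preliminary reduction to a cancellative semigroup (hence $\tilde A\subseteq A$) via the observation that polynomial maps respect cancellation is clean, and the subgroup argument for ``$\Delta_x g$ has degree $\leq n-1$'' is the right way to pass from generators to all of $A$. The verification that $\Delta_b f=D_b$ implicitly also uses an additivity relation $D_{b+c}=D_b+D_c(\cdot+b)$ on $A$, which again follows from uniqueness at degree $\leq n-1$; you might make that step explicit when writing it up, but the logic is already there. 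Your argument is self-contained and would serve as a replacement for the bare citation.
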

The statement about the degree of the polynomial map is not in the statement of Proposition 1.6 of \cite{SJ}, however it follows from the proof.

For every finite separable extension $K/F$ of fields let
$\mathrm{N}_{K/F}:K\to F$ denote the norm map. The following proposition also serves as a definition of the norm map.
\begin{propn}[(Corollary 5 of \cite{Ro})]\label{normdefn}
Let $F$ be a field of characteristic $\neq 2$, and let $K/F$ be a finite separable field extension of degree $n$. Then there exists a unique multiplicative polynomial map $\mathrm{N}_{K/F}: \widehat{\mathrm{W}}(K)\to\widehat{\mathrm{W}}(F)$ of degree $\leq n$ such that $\mathrm{N}_{K/F}(\langle b\rangle)=\langle\mathrm{N}_{K/F}(b)\rangle$ for all $b \in k^\times$.
\end{propn}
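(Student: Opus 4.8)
\emph{Overall plan.} The approach is to construct $\mathrm{N}_{K/F}$ explicitly by tensor induction followed by Galois descent, and then to derive uniqueness from the fact that a polynomial map of degree $\le n$ is pinned down by its values on $\oplus$-sums of at most $n$ rank-one forms.

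\emph{Construction.} Fix a finite Galois extension $\widetilde K/F$ containing $K$, put $G=\mathrm{Gal}(\widetilde K/F)$, let $H\le G$ be the subgroup fixing $K$ (so $[G:H]=n$), and choose left coset representatives $g_1,\dots,g_n$ of $H$ in $G$. Given a nondegenerate quadratic form $q\colon V\to K$, let $q_i$ denote the base change of $q$ along $g_i\colon K\hookrightarrow\widetilde K$, a form over $\widetilde K$, and set $(W,Q):=\bigotimes_{i=1}^n q_i$ (tensor over $\widetilde K$). The group $G$ acts semilinearly on $(W,Q)$: an element $\tau$ permutes the tensor factors by the permutation it induces on the cosets $g_iH$, twisting by the corresponding elements of $H$; this is the usual tensor-induction (corestriction) of forms, and $Q$ is Galois compatible in the sense preceding Theorem \ref{GaloisDescentQuadForm}. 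By that theorem $(W,Q)$ descends to a nondegenerate form over $F$, which we call $\mathrm{N}_{K/F}(q)$; routine bookkeeping shows it is independent of $\widetilde K$ and of the chosen representatives, respects isomorphism, and has $\dim \mathrm{N}_{K/F}(q)=(\dim q)^n$. Passing to Grothendieck--Witt classes and using Lemma \ref{extend} to pass from the additive monoid of forms to its group completion $\widehat{\mathrm{W}}(K)$ produces a map $\mathrm{N}_{K/F}\colon\widehat{\mathrm{W}}(K)\to\widehat{\mathrm{W}}(F)$.

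\emph{The required properties.} For $q=\langle b\rangle$ each $q_i$ is $\langle g_i(b)\rangle$, so $(W,Q)=\langle\prod_i g_i(b)\rangle=\langle\mathrm{N}_{K/F}(b)\rangle_{\widetilde K}$ with the standard Galois action, which descends to $\langle\mathrm{N}_{K/F}(b)\rangle$; in particular $\mathrm{N}_{K/F}(\langle 1\rangle)=\langle 1\rangle$. Multiplicativity follows since $\bigotimes_i g_i(q\otimes q')\cong\bigl(\bigotimes_i g_i(q)\bigr)\otimes\bigl(\bigotimes_i g_i(q')\bigr)$ compatibly with the two $G$-actions and Galois descent is monoidal. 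For the degree bound, expand $\bigotimes_i g_i(q\oplus q')=\bigoplus_{S\subseteq\{1,\dots,n\}}\bigl(\bigotimes_{i\in S}g_i(q')\bigr)\otimes\bigl(\bigotimes_{i\notin S}g_i(q)\bigr)$; the $G$-action permutes these $2^n$ summands via its action on subsets, so after descent $\mathrm{N}_{K/F}(q\oplus q')=\sum_{[S]}T_{[S]}(q,q')$ with $[S]$ ranging over $G$-orbits of subsets and $T_{[S]}$ depending on $q'$ only through a tensor of $|S|$ conjugates. Iterating the difference operator, each term of $\Delta_{x_1}\cdots\Delta_{x_m}\mathrm{N}_{K/F}$ requires every $x_j$ to occupy one of the at most $n$ available "$q'$-slots", so for $m=n+1$ all terms cancel and $\mathrm{N}_{K/F}$ is polynomial of degree $\le n$.

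\emph{Uniqueness and main obstacle.} If $N,N'$ are multiplicative polynomial maps of degree $\le n$ agreeing on every $\langle b\rangle$, then $g:=N-N'$ is polynomial of degree $\le n$ and vanishes on all rank-one forms. Since every form over $K$ diagonalises, the monoid of nondegenerate forms is a quotient of the free commutative monoid on the symbols $\langle b\rangle$; pulling $g$ back gives a polynomial map of degree $\le n$ on a free commutative monoid vanishing on all monomials of total degree $\le n$, hence identically zero by the Newton forward-difference formula, so $g$ vanishes on the monoid of all forms. An induction on the degree (using that $g$ vanishes on a sub-semigroup that group-completes to $\widehat{\mathrm{W}}(K)$) then gives $g=0$ on $\widehat{\mathrm{W}}(K)$, so $N=N'$. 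I expect the main obstacle to be purely bookkeeping: verifying that the permutation-plus-twist action on the tensor-induced space genuinely satisfies Galois compatibility and is independent of all choices, together with organising the $G$-orbit decomposition of $\{0,1\}^n$ cleanly enough to read off the degree-$\le n$ bound. These are exactly the points handled in Rost's Corollary 5 of \cite{Ro}.
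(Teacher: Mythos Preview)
The paper does not prove this proposition; it is quoted directly from Rost \cite{Ro}. What the paper does, a few lines later (Definition~\ref{normdef2} and the lemma and corollary following it), is give exactly your tensor-induction/descent construction of the norm and then invoke the uniqueness clause of Rost's result to identify the two maps. So your construction and your verification that it sends $\langle b\rangle$ to $\langle \mathrm{N}_{K/F}(b)\rangle$ are essentially the paper's own, and your sketches of multiplicativity and of the degree bound via the $G$-orbit decomposition of subsets of $\{1,\dots,n\}$ are sound.

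The gap is in your uniqueness paragraph. You assert that the pullback of $g=N-N'$ to the free commutative monoid on the symbols $\langle b\rangle$ ``vanishes on all monomials of total degree $\le n$'', but the hypothesis only gives vanishing on the generators themselves, i.e.\ on elements of total degree~$1$. The Newton forward-difference formula does show that a polynomial map of degree $\le n$ is determined by its values on sums of at most $n$ generators, but you have not established that $N$ and $N'$ agree on $\langle b_1\rangle\oplus\cdots\oplus\langle b_m\rangle$ for $2\le m\le n$. Multiplicativity does not bridge this: rank-one forms are closed under tensor product, so multiplicativity only recycles the rank-one information you already have, and $g$ itself is not multiplicative. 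This step is precisely the nontrivial content that the paper outsources to Rost. Your diagnosis of the ``main obstacle'' as descent bookkeeping is therefore off target; the descent is routine (and the paper carries it out), while uniqueness is where the real work lies.
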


This rest of this section is dedicated to proving Theorem $\ref{WRs}$, which says that this norm polynomial map computes the motivic Euler characteristic of the Weil restriction of a variety. In order to prove this, we first need some smaller lemmas.
\begin{lemma}
Suppose that $f,g: A \to B$ are polynomial maps of degree $\leq n$, $\leq m$ respectively. Then $f+g$ is a polynomial maps of degree $\leq \mathrm{max}(n,m)$.  Similarly, if $m=1$ and $f,g$ are multiplicative, then $fg$ is a multiplicative polynomial maps of degree $\leq n+1$.
\end{lemma}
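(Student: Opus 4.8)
\emph{Proof proposal.} The plan is to treat the two assertions separately, each by an induction on degree, with the difference operator $\Delta_x$ doing all the bookkeeping. For the sum, the key identity is
$$\Delta_x(f+g)(y) = \big(f(y+x)-f(y)\big) + \big(g(y+x)-g(y)\big) = \Delta_x f(y) + \Delta_x g(y),$$
so $\Delta_x(f+g) = \Delta_x f + \Delta_x g$. One then inducts on $N = \max(n,m)$: the base case $N \le -1$ forces $f = g = 0$, hence $f+g = 0$; and in the inductive step, after relabelling so that $n = N$, the maps $\Delta_x f$ and $\Delta_x g$ have degree $\le N-1$, so by the inductive hypothesis $\Delta_x(f+g)$ has degree $\le N-1$ for every $x$, giving $\deg(f+g) \le N$.

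Before the product statement I would record two easy auxiliary facts, each via the same one-line induction. First, if $h$ has degree $\le d$ and $c \in B$, then $y \mapsto c\cdot h(y)$ has degree $\le d$, since $\Delta_x(ch) = c\,\Delta_x h$ and $h = 0$ when $d \le -1$. Second — and this is the fact that keeps the product argument from being circular — translation preserves degree: for $x \in A$, the map $h_x\colon y \mapsto h(y+x)$ has degree $\le d$, because $\Delta_z(h_x) = (\Delta_z h)_x$ and again $h = 0$ when $d \le -1$.

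For the product, multiplicativity of $fg$ is immediate from commutativity of $B$: $(fg)(1) = f(1)g(1) = 1$ and $(fg)(xy) = f(xy)g(xy) = f(x)g(x)f(y)g(y) = (fg)(x)(fg)(y)$. For the degree bound I would induct on $n = \deg f$, using the Leibniz-type identity
$$\Delta_x(fg)(y) = \big(\Delta_x f\big)(y)\cdot g(y+x) + f(y)\cdot\big(\Delta_x g\big)(y),$$
which is pure algebra. Here $\Delta_x f$ has degree $\le n-1$ and $g_x\colon y\mapsto g(y+x)$ has degree $\le 1$ by the translation lemma, so the inductive hypothesis applies to the first summand and yields degree $\le n$. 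For the second summand, since $\deg g \le 1$ the map $\Delta_x g$ has degree $\le 0$, i.e. it is a constant $c_x \in B$, so $f\cdot(\Delta_x g) = c_x f$ has degree $\le n$ by the scalar lemma. Adding, via the sum statement already proved, $\Delta_x(fg)$ has degree $\le n$ for every $x$, hence $fg$ is a polynomial map of degree $\le n+1$; the base case $n \le -1$ is trivial as then $f = 0$.

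The only real subtlety — the step to get right — is the apparent circularity in the product argument: establishing ``(degree $\le n$) $\cdot$ (degree $\le 1$) has degree $\le n+1$'' seems to require the very same statement again for the term $(\Delta_x f)\cdot g_x$. This is resolved precisely by inducting on $n$ (so the recursive call is to a strictly smaller first degree) together with the observation that the companion term involves $\Delta_x g$, which has dropped to degree $0$ and is therefore a harmless constant; the translation lemma is exactly what guarantees $g_x$ still has degree $\le 1$, so that the inductive hypothesis is legitimately available.
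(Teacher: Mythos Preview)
Your proof is correct. The sum statement and the overall inductive architecture match the paper's argument, but your treatment of the product is genuinely a bit different. The paper exploits the hypothesis more directly: since $g$ is multiplicative of degree $\le 1$, it is a ring homomorphism, so $g(x+y)=g(x)+g(y)$, and the paper decomposes
\[
\Delta_x(fg)(y)=g(x)\,f(x+y)+g(y)\,\Delta_x f(y),
\]
after which $y\mapsto g(x)f(x+y)$ is handled via $f(x+y)=\Delta_x f(y)+f(y)$ and the already-proved sum bound, and $g\cdot\Delta_x f$ by induction. You instead use the symmetric Leibniz identity $\Delta_x(fg)=(\Delta_x f)\cdot g_x + f\cdot(\Delta_x g)$, which never invokes additivity of $g$; the price is that you must record the translation lemma ($h_x$ has the same degree as $h$) to keep $g_x$ of degree $\le 1$. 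The upshot is that your degree bound actually establishes the stronger assertion ``degree $\le n$ times degree $\le 1$ has degree $\le n+1$'' with no multiplicativity hypothesis at all, which is exactly the strengthening both arguments implicitly need for the induction to go through (since $\Delta_x f$ is not multiplicative). Your explicit discussion of that apparent circularity is a nice touch the paper leaves tacit.
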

\begin{proof}
We prove the first statement by induction on the degree of $f$. If $\mathrm{deg}(f)=0$, then $f$ is constant, so $\Delta_x(f+g) = \Delta_x(g)$. Suppose $\mathrm{deg}(f)=n$. Then consider $\Delta_x (f+g): A \to B$. We compute $\Delta_x(f+g)(y) = \Delta_x(f)(y) + \Delta_x(g)(y)$. Since $\Delta_x(f)$ and $\Delta_x(g)$ are polynomial maps of degree $\leq n-1$ and $\leq m-1$ respectively, the result holds by induction.

For the second part of the lemma, we proceed by induction on the degree of the polynomial map, with the case that $f$ has degree $\leq 0$ being trivial, since $f$ is then identically $1$. We compute
\begin{align*}
\Delta_x(fg)(y) &= g(x+y)f(x+y) - g(y)f(y)\\
&= g(x)f(x+y) + g(y)f(x+y) - g(y)f(y) \\
&= g(x)f(x+y) + g(y) (\Delta_xf(y)).
\end{align*}
By induction, we have that $g \cdot (\Delta_xf)$ is a polynomial map of degree $\leq n$. Note that we have a polynomial map $y \mapsto g(x) f(x+y)$ of degree $\leq n$, since $g(x)$ is constant, and $f(x+y) = \Delta_xf + f$ is a polynomial map of degree $\leq n$ by the first part of the lemma. Therefore $\Delta_x(fg)$ is polynomial of degree $\leq n$, so $fg$ is polynomial of degree $\leq n+1$. The fact that $fg$ is multiplicative is immediate.
\end{proof}
\begin{cor}\label{ringpol}
Let $g_1, \ldots, g_n: A \to B$ be ring homomorphisms. Then the product $g = \prod_{i=1}^n g_i$ is a polynomial function of degree $\leq n$.
\end{cor}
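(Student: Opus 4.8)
The plan is a straightforward induction on $n$, peeling off one factor at a time and invoking the product part of the preceding lemma. For the base case $n=1$, the function $g=g_1$ is a ring homomorphism; in particular it is additive, hence a semigroup homomorphism for the (additive) semigroup structure, and so a polynomial map of degree $\leq 1$ by the remark following the definition of polynomial maps. It is moreover multiplicative, since $g_1(1)=1$ and $g_1(xy)=g_1(x)g_1(y)$.

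For the inductive step, I would assume that $h:=\prod_{i=1}^{n-1}g_i$ is a multiplicative polynomial map of degree $\leq n-1$, and then apply the second half of the preceding lemma with $f=h$ (degree $\leq n-1$, multiplicative) and with the rôle of the second factor played by $g_n$ (degree $\leq 1$, multiplicative). The lemma then yields that $h\cdot g_n=\prod_{i=1}^{n}g_i$ is a multiplicative polynomial map of degree $\leq (n-1)+1=n$, completing the induction.

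The one point requiring a little care is that the product statement in the lemma is asymmetric: it requires one of the two factors to have degree $\leq 1$. Consequently one cannot split $\prod_{i=1}^n g_i$ into two halves of size about $n/2$ and recurse; the induction must strip off a single homomorphism $g_n$ at each stage, using that each individual $g_i$ has degree $\leq 1$. One should also keep track of the fact that multiplicativity is preserved along the way — a finite product of multiplicative maps is multiplicative — so that the hypotheses of the lemma remain available at every step. Beyond this bookkeeping there is no genuine obstacle; the corollary is a direct unwinding of the lemma.
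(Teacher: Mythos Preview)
Your proof is correct and follows the same induction argument as the paper, peeling off one ring homomorphism at a time and invoking the second part of the preceding lemma. Your explicit tracking of multiplicativity through the induction is exactly the bookkeeping the paper's terse proof leaves implicit.
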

\begin{proof}
Proceed by induction on $n$, with the case $n=1$ being trivial, and the induction step following by the second part of the above lemma.
\end{proof}
The next few results are dedicated to showing that the Weil restriction functor from $L$ to $k$ gives rise to a polynomial map $K_0(\mathrm{Var}_L)\to K_0(\mathrm{Var}_k)$.

\begin{defn}
Let $L/k$ be a finite separable field extension. We let $I_L := \{\iota: L \to \kbar \}$ be the set of embeddings of $L$ into $\kbar$ with $\iota|_k = \mathrm{id}_k$. This set has a natural right action of the absolute Galois group $\Gal_k$, given by $\iota \cdot \sigma = \sigma^{-1} \circ \iota$. 

If $K/k$ is a finite separable field extension with $L \subseteq K$, we have a $\Gal_k$ equivariant map $I_K \to I_L$ given by restriction, and the fibres of this map have cardinality $[K:L]$.  From here, fix $L/k$ a finite field extension, let $K$ be the Galois closure of $L$ in $\kbar$, and let $G=\Gal(K/k)$. If $H$ is the subgroup of $G$ such that $L=K^H$, then $I_L = G/H$ as a $G$ set.
\end{defn}
\begin{defn}\label{descentdatawr}
Let $X$ be a quasi-projective variety over $L$. Define a $K$ variety $\mathrm{Ind}^K_LX$ by setting $\mathrm{Ind}^K_LX := \prod_{ \iota \in I_L} X_{\iota}$, where $X_{\iota}$ is the pullback of $X \to \Spec(L)$ along the inclusion $\Spec(K) \xto{\iota} \Spec(L)$. We give $\mathrm{Ind}^K_LX$ a canonical semilinear $G$ action. Let $g \in G$. Define a right $G$ action on $\mathrm{Ind}^K_LX$ by $g: X_{g \circ \iota} \to X_{\iota}$, where this morphism is the pullback
\begin{center}
\begin{tikzcd}[cramped]
X_{g \circ \iota} \ar[r] \ar[d] & X_{\iota} \ar[d] \\
\Spec(K) \ar[r, "g"] & \Spec(K).
\end{tikzcd}
\end{center}
This defines a semilinear action of $G$ on $\mathrm{Ind}^K_LX$, which we call $\varphi_G$. This in turn gives us a variety $\mathrm{des}_{K/k}(\mathrm{Ind}^K_LX, \varphi_G)$. by Galois descent. By Theorem 1.3.2 of \cite{Weil}, the variety $\mathrm{des}_{K/k}(\mathrm{Ind}^K_LX, \varphi_G)$ is isomorphic to $\mathrm{Res}_{L/k}(X)$.
\end{defn}

\begin{lemma}\label{polymap}
The assignment $X \to \mathrm{Ind}^K_LX$ extends to a polynomial map of degree $\leq n$, denoted $\mathrm{Ind}^K_L: K_0(\mathrm{Var}_L) \to K_0(\mathrm{Var}_K)$.
\end{lemma}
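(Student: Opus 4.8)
The plan is to verify that the functor $X \mapsto \mathrm{Ind}^K_L X$ respects the defining relations of $K_0(\mathrm{Var}_L)$ at the level of semigroups (disjoint unions and products of varieties), and then invoke Lemma \ref{extend} (Proposition 1.6 of \cite{SJ}) to extend the resulting polynomial map on the semigroup of varieties to the group completion $K_0(\mathrm{Var}_L)$. First I would observe that on the semigroup $\mathcal{V}_L$ of isomorphism classes of quasi-projective $L$-varieties under disjoint union, the assignment $X \mapsto \mathrm{Ind}^K_L X = \prod_{\iota \in I_L} X_\iota$ is, forgetting the Galois descent data, precisely the $n$-fold product map applied to the base changes $X_\iota$, so writing $r_\iota: \mathcal{V}_L \to \mathcal{V}_K$ for the (semigroup-homomorphism) base change along $\iota$, we have $\mathrm{Ind}^K_L = \prod_{\iota \in I_L} r_\iota$ as a map of semigroups. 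I would then compute $\Delta_X(\mathrm{Ind}^K_L)$ for $X$ a variety: since disjoint union distributes over products, $(Y \sqcup X)_\iota = Y_\iota \sqcup X_\iota$, and expanding $\prod_\iota (Y_\iota \sqcup X_\iota)$ yields a disjoint union over subsets $S \subseteq I_L$ of the terms $\prod_{\iota \in S} X_\iota \times \prod_{\iota \notin S} Y_\iota$; subtracting the $S = \emptyset$ term shows $\Delta_X(\mathrm{Ind}^K_L)$ is a sum of products of at most $n-1$ of the $r_\iota$'s evaluated on $Y$, hence by Corollary \ref{ringpol} (and the sum-of-polynomial-maps lemma preceding it) a polynomial map of degree $\leq n-1$ in $Y$. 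Iterating, $\mathrm{Ind}^K_L$ is a polynomial map of degree $\leq n$ on the semigroup $\mathcal{V}_L$.

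Next I would address the scissor relations. The subtlety is that the target is $K_0(\mathrm{Var}_K)$, where such relations hold, so I only need that $\mathrm{Ind}^K_L$, viewed as landing in $K_0(\mathrm{Var}_K)$, sends the relation $[X] = [X \setminus Z] + [Z]$ (for $Z \subseteq X$ closed) to a valid identity. Using the distributivity computation above with $Y = X \setminus Z$ and with the product decomposition, one gets $[\mathrm{Ind}^K_L X] = \sum_{S \subseteq I_L} \bigl[\prod_{\iota \in S} Z_\iota \times \prod_{\iota \notin S}(X\setminus Z)_\iota\bigr]$ in $K_0(\mathrm{Var}_K)$; I would note that each $Z_\iota$ is a closed subvariety of $X_\iota$ with open complement $(X \setminus Z)_\iota$, and that this stratification of $\prod_\iota X_\iota$ by the locally closed pieces $\prod_{\iota \in S} Z_\iota \times \prod_{\iota \notin S}(X \setminus Z)_\iota$ is exactly the product stratification, so the identity is automatic in $K_0(\mathrm{Var}_K)$ by repeated application of the scissor relation there. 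Thus the semigroup polynomial map $\mathcal{V}_L \to K_0(\mathrm{Var}_K)$ descends to a polynomial map on the quotient semigroup, and then Lemma \ref{extend} provides the extension to $K_0(\mathrm{Var}_L) \to K_0(\mathrm{Var}_K)$; I would remark that since each $r_\iota$ is a ring homomorphism and $\mathrm{Ind}^K_L(\mathrm{pt}) = \mathrm{pt}$, this extension is in fact multiplicative of degree $\leq n$, matching the structure of $\mathrm{N}_{K/F}$.

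The main obstacle I anticipate is bookkeeping the Galois descent data carefully enough to know that $\mathrm{Ind}^K_L$ is well-defined on isomorphism classes and genuinely lands in the category of $K$-varieties with a $G$-action (so that it can later be composed with Galois descent to recover $\mathrm{Res}_{L/k}$), while simultaneously keeping the semilinear $G$-action compatible with disjoint unions and products — that is, checking the equivariant refinements of the scissor and product relations rather than just their underlying non-equivariant versions. This is exactly the kind of verification carried out in the proof of Theorem \ref{TwistVar}, and I would cite that argument (choosing compatible descent data on products and on open/closed complements) rather than repeat it; the polynomial-degree estimates are then the genuinely new content and follow from the combinatorial expansion above together with Corollary \ref{ringpol}.
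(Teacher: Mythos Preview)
Your proof is correct but considerably more elaborate than the paper's. The paper simply observes that for each $\iota \in I_L$, the base change map $g_\iota: K_0(\mathrm{Var}_L) \to K_0(\mathrm{Var}_K)$, $[X] \mapsto [X \times_{L,\iota} K]$, is already a ring homomorphism on the full Grothendieck ring (base change preserves closed immersions, open complements, and products), and then applies Corollary~\ref{ringpol} directly to $\mathrm{Ind}^K_L = \prod_{\iota \in I_L} g_\iota$. That is the entire proof: two sentences.

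By contrast, you work on the semigroup $\mathcal{V}_L$, expand $\Delta_X$ by hand, and then separately verify compatibility with scissor relations before invoking Lemma~\ref{extend}. All of this is subsumed by the single observation that each $g_\iota$ is a ring homomorphism on $K_0(\mathrm{Var}_L)$ to begin with, so there is no need to pass through the semigroup or check scissor relations explicitly. Your final paragraph about tracking the semilinear $G$-action is also unnecessary here: this lemma concerns $\mathrm{Ind}^K_L$ as a map into $K_0(\mathrm{Var}_K)$ with the $G$-action forgotten; the descent data only enters in the next lemma (Lemma~\ref{multpoly}) about $\mathrm{Res}_{L/k}$, where the paper indeed handles it separately. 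What your approach buys is an explicit description of the iterated differences $\Delta_{Y_0}\cdots\Delta_{Y_m}(\mathrm{Ind}^K_L)$ as sums over subsets of $I_L$, which the paper does not spell out; but for the bare statement of the lemma this is not needed.
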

\begin{proof}
For each $\iota \in I_L$, note that $g_{\iota}: [X] \mapsto [X \times_{L, \iota}K]$ is a ring homomorphism. Applying Corollary $\ref{ringpol}$ gives us the result. 
\end{proof}
In order to show that maps into $K_0(\mathrm{Var}_k)$ are polynomial maps, it is helpful to note the following, which is likely well known.
\begin{lemma}\label{zerovariety}
Let $Y/k$ be a variety. Then the class $[Y]$ in $K_0(\mathrm{Var}_k)$ is trivial if and only if $Y=\emptyset$. 
\end{lemma}
\begin{proof}
By naturality of $K_0(\mathrm{Var}_k)$, it is enough to prove this in the case where $k$ is algebraically closed. Suppose that $Y$ is a non-empty variety, and let $\mathrm{dim}(Y) = d$. As in Proposition $\ref{bidefn}$, we have abelian group homomorphisms $b_i: K_0(\mathrm{Var}_k) \to \mathbb{Z}$, which, if $X/k$ is a smooth projective variety, has that $b_i(X)$ is the $i^{\text{th}}$ Betti number of $X$. The proof of Theorem 3.1 of \cite{Bi} shows that we may write $[Y] = [\overline{X}] + S$, where $S \in K_0(\mathrm{Var}_k)$ can be written in terms of classes of smooth projective varieties of dimension $<d$. In particular, $b_{2d}(Y) = b_{2d}(\overline{X}) + b_{2d}(S)$. Since $S$ can be written in terms of smooth projective varieties of dimension $<d$, we see $b_{2d}(S)=0$. Moreover, since $\overline{X}$ is a smooth projective variety of dimension $d$, we see $b_{2d}(\overline{X}) = \# \pi_0(\overline{X}) \neq 0$. Therefore, $b_{2d}(Y) \neq 0$, so $[Y] \neq 0$. 
\end{proof}

\begin{lemma}\label{multpoly}
There is a multiplicative polynomial map $K_0(\mathrm{Var}_L)\to K_0(\mathrm{Var}_k)$, given on smooth quasi-projective varieties by $[X] \mapsto [\mathrm{Res}_{L/k}(X)]$.
\end{lemma}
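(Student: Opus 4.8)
### Proof proposal

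The plan is to combine the polynomial map $\mathrm{Ind}^K_L$ from Lemma \ref{polymap} with the equivariant twisting formalism of Section 3, then descend. Recall that for a quasi-projective $X/L$, the $K$-variety $\mathrm{Ind}^K_L X = \prod_{\iota \in I_L} X_\iota$ carries a canonical semilinear $G$-action (for $G = \Gal(K/k)$) whose Galois descent is precisely $\mathrm{Res}_{L/k}(X)$, by Theorem 1.3.2 of \cite{Weil}. So $[\mathrm{Res}_{L/k}(X)]$ is obtained from $[\mathrm{Ind}^K_L X] \in K_0^G(\mathrm{Var}_K)$ — where $G$ permutes the factors — by applying the descent/twisting operation along the class in $H^1(k,G)$ corresponding to the $G$-torsor $\Spec K \to \Spec k$. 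I would phrase this as: the composite $X \mapsto \mathrm{Res}_{L/k}(X)$ factors as $\mathrm{Ind}^K_L$ followed by the "forget-to-$k$-via-descent" map on the Galois-equivariant Grothendieck rings.

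First I would record that $\mathrm{Ind}^K_L: K_0(\mathrm{Var}_L) \to K_0(\mathrm{Var}_K)$ lands in a version of the ring which remembers the $G$-action permuting the $I_L$-indexed factors; strictly one wants the source to be the $K$-variety Grothendieck ring and the target a descent datum, so I would introduce (or cite from Section 3) the functor sending a $K$-variety with semilinear $G$-descent datum to the corresponding $k$-variety, which is a ring homomorphism by the equivalence of categories between $k$-varieties and $K$-varieties with descent data (used already in the proof of Theorem \ref{TwistVar}). Call this $D: \{K\text{-varieties with }G\text{-descent datum}\} \to K_0(\mathrm{Var}_k)$; it is additive and multiplicative. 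Then $[\mathrm{Res}_{L/k}(X)] = D(\mathrm{Ind}^K_L X)$ for quasi-projective $X$.

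Next, since $\mathrm{Ind}^K_L$ is a polynomial map of degree $n = [L:k]$ by Lemma \ref{polymap}, and composing a polynomial map with a (semigroup, resp. ring) homomorphism preserves the polynomial-map property and its degree — this is an immediate consequence of the defining difference-operator characterisation, since $D(\Delta_x \tilde f) = \Delta_x (D \circ \tilde f)$ — the composite $X \mapsto D(\mathrm{Ind}^K_L X)$ is a polynomial map of degree $\leq n$ on the semigroup of (isomorphism classes of) smooth quasi-projective $L$-varieties under disjoint union. Multiplicativity: $\mathrm{Ind}^K_L$ is multiplicative because $\mathrm{Ind}^K_L(X \times_L X') \cong (\mathrm{Ind}^K_L X) \times_K (\mathrm{Ind}^K_L X')$ compatibly with the $G$-actions (one checks this factorwise over $I_L$, exactly as in the product case of Theorem \ref{TwistVar}), $D$ is multiplicative, and $\mathrm{Ind}^K_L(\Spec L) = \Spec K$ so the composite sends $1 \mapsto 1$. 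Finally, apply Lemma \ref{extend} (Proposition 1.6 of \cite{SJ}) to extend this polynomial map from the additive semigroup of smooth quasi-projective varieties to its group completion, which surjects onto $K_0(\mathrm{Var}_L)$ by Bittner's presentation (Theorem 3.1 of \cite{Bi}); one must check the extension is still multiplicative, which follows since multiplicativity is a closed condition on the polynomial map and holds on the generating semigroup, or alternatively by noting the extension is unique.

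The main obstacle I expect is bookkeeping rather than conceptual: namely making precise the target category of $\mathrm{Ind}^K_L$ as a "Grothendieck ring with $G$-descent data" and checking that $D$ is well-defined and a ring homomorphism on it (additivity requires that a $G$-stable closed immersion of $K$-varieties with compatible descent data descends to a closed immersion over $k$ with complementary open — this is the same verification carried out in the proof of Theorem \ref{TwistVar}, so I would simply invoke that argument). A secondary subtlety is confirming that $\mathrm{Ind}^K_L(X \times_L X') \cong \mathrm{Ind}^K_L(X) \times_K \mathrm{Ind}^K_L(X')$ \emph{as $G$-varieties} and not merely as $K$-varieties, but this is again a factorwise check using the universal property of fibre products exactly as in Section 3.
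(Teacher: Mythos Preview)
There is a genuine gap in your reduction to Lemma \ref{polymap}. That lemma shows $\mathrm{Ind}^K_L$ is polynomial as a map into $K_0(\mathrm{Var}_K)$; its proof, via Corollary \ref{ringpol}, factors the map as a product of the ring homomorphisms $g_\iota:[X]\mapsto[X_\iota]$, and these individual $g_\iota$ do \emph{not} land in your ``ring of $K$-varieties with $G$-descent datum'' (equivalently $K_0(\mathrm{Var}_k)$) --- only their product does. So Lemma \ref{polymap} does not tell you that the map $[X]\mapsto[\mathrm{Ind}^K_L X\text{ with its descent datum}]$ is polynomial into $K_0(\mathrm{Var}_k)$. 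Composing afterwards with the forgetful ring homomorphism $K_0(\mathrm{Var}_k)\to K_0(\mathrm{Var}_K)$ (base change) recovers the polynomial map of Lemma \ref{polymap}, but since this forgetful map is not injective you cannot pull the polynomial property back along it. Your sentence ``$D(\Delta_x\tilde f)=\Delta_x(D\circ\tilde f)$'' is correct but runs in the wrong direction for what you need.

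The paper's proof supplies exactly the missing ingredient. It does not try to show $\mathrm{Ind}^K_L$ is polynomial into the descent ring directly. Instead it observes that the iterated differences $\Delta_{Y_0}\cdots\Delta_{Y_n}\mathrm{Res}_{L/k}([X])$ are \emph{effective}: each $\Delta_Y$ replaces the class by the class of an honest variety, namely the complement of $\mathrm{Ind}^K_L X$ inside $\mathrm{Ind}^K_L(X\amalg Y)$, compatibly with the descent data. Because at every stage one is holding the class of an actual (non-virtual) variety with descent datum, its vanishing can be checked after forgetting the descent datum, i.e.\ in $K_0(\mathrm{Var}_K)$; and there Lemma \ref{polymap} does apply. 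This effectivity step is the key idea you are missing --- without it, the bridge from ``polynomial into $K_0(\mathrm{Var}_K)$'' to ``polynomial into $K_0(\mathrm{Var}_k)$'' does not go through.
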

\begin{proof}
By Lemma $\ref{extend}$, we only need to define the polynomial map on the subset of $K_0(\mathrm{Var}_L)$ consisting of elements which are given by $[X]$, where $X$ is a smooth projective variety.  As in Definition $\ref{descentdatawr}$, we note that $\mathrm{des}_{K/k}(\mathrm{Ind}^K_LX, \varphi_G)$. Therefore as noted in Definition $\ref{DescentDataVar}$, it is enough to define a map $K_0(\mathrm{Var}_L) \to K_0(\mathrm{Var}_{K,desc})$ such that for $X/L$ a quasiprojective variety, we have that $[X] \mapsto [(\mathrm{Ind}^K_LX, \varphi_G)]$.

By Lemma $\ref{extend}$, it is enough to show that the assignment 
$$
\mathrm{Res}_{L/k}: [X] \mapsto [(\mathrm{Ind}^K_LX, \varphi_G)]
$$
satisfies the axioms to be a polynomial map when $X$ is projective. That is, we claim that for projective $Y_0, \ldots, Y_n$, we have $\Delta_{Y_0} \Delta_{Y_1} \ldots \Delta_{Y_n} \mathrm{Res}_{L/k}([X]) = 0$.  Note that
 $$
 \Delta_Y\mathrm{Res}_{L/k}([X]) = [(\mathrm{Ind}^K_L(X \amalg Y), \varphi_G)] - [(\mathrm{Ind}^K_LX, \varphi_G)].
 $$ We see that $\mathrm{Ind}^K_LX$ embeds as a closed subvariety into $\mathrm{Ind}^K_L(X \amalg Y)$ in a way that is compatible with the descent data on both varieties. Therefore we can write $\Delta_Y \mathrm{Res}_{L/k}([X]) = [(X_Y, \varphi_G)]$, where $(X_Y, \varphi_G)$ is the $K$ variety with descent data given by the complement of $(\mathrm{Ind}^K_L(X), \varphi_G)$ in $(\mathrm{Ind}^K_L(X \amalg Y), \varphi_G)$.

 Therefore $\Delta_Y \mathrm{Res}_{L/k}([X])$ is represented by the class of an actual $K$ variety with descent data, $(X_Y, \varphi_G)$ so gives us a $k$ variety $\mathrm{des}_{K/k}(X_Y, \varphi_G)$.  Iterating this process, if $Y_0, \ldots, Y_n$ are $L$ varieties, we can write 
$$
\Delta_{Y_0} \Delta_{Y_1} \ldots \Delta_{Y_n} \mathrm{Res}_{L/k} ([X]) = [(X_{Y_0, \ldots, Y_n}, \varphi_G)],
$$
where $(X_{Y_0, \ldots, Y_n}, \varphi_G)$ is a $K$ variety with descent data. We claim that the class $[(X_{Y_0, \ldots, Y_n}, \varphi_G)] = 0$. Note that  under the isomorphism $K_0(\mathrm{Var}_{K, desc}) \cong K_0(\mathrm{Var}_k)$, the class $[(X_{Y_0, \ldots, Y_n}, \varphi_G)]$ is identified with $[\mathrm{des}_{K/k}((X_{Y_0, \ldots, Y_n}), \varphi_G)]$, so it is enough to show that $[\mathrm{des}_{K/k}((X_{Y_0, \ldots, Y_n}), \varphi_G)] = 0$. Since this is the class of a $k$ variety, $\mathrm{des}_{K/k}((X_{Y_0, \ldots, Y_n}), \varphi_G)$, we may apply Lemma $\ref{zerovariety}$ to see that this class vanishes if and only if $\mathrm{des}_{K/k}((X_{Y_0, \ldots, Y_n}), \varphi_G)$ is the empty variety.

We see that $\mathrm{des}_{K/k}((X_{Y_0, \ldots, Y_n}), \varphi_G) = \emptyset$ if and only if $\mathrm{des}_{K/k}((X_{Y_0, \ldots, Y_n}), \varphi_G)_K$ is a trivial $K$-variety. However, $[\mathrm{des}_{K/k}((X_{Y_0, \ldots, Y_n}), \varphi_G)_K]  = [\Delta_{Y_0} \Delta_{Y_1} \ldots \Delta_{Y_n} \mathrm{Ind}^K_L(X)]$, which vanishes since $\mathrm{Ind}^K_L$ is a polynomial map of degree $\leq n$. In particular, $[\mathrm{des}_{K/k}((X_{Y_0, \ldots, Y_n}), \varphi_G)] = 0$, and so $\Delta_{Y_0} \Delta_{Y_1} \ldots \Delta_{Y_n} \mathrm{Res}_{L/k} ([X])=0$ as required.
\end{proof}

\begin{defn}\label{normdef2}
Let $q$ be a quadratic form over $L$, on a vector space, $V$. For $\iota \in I_L$, define $q_{\iota}$ to be the quadratic form on $V \otimes_{L, \iota} K$ given by $q \otimes_{L, \iota} \langle 1 \rangle$, where we embed $L$ into $K$ by $\iota$. It is clear that $f_{\iota}: \widehat{\mathrm{W}}(L) \to \widehat{\mathrm{W}}(K)$ given by $[q] \mapsto [q_{\iota}]$ is a homomorphism. Define $\mathrm{Ind}^K_L(q) := \otimes_{\iota} q_\iota$ as a quadratic form on $K$. Note that $\mathrm{Ind}^K_L(q)$ has  underlying vector space $\bigotimes_{\iota \in I_L} (V \otimes_{\iota} K)$ and we may give this space a semilinear $G$ action, where $g \in G$ acts by 
$$
g(v \otimes \lambda)_{\iota} = (v \otimes g(\lambda))_{g \iota},
$$
 where $(v \otimes \lambda)_{\iota}$ means $v \otimes \lambda \in V \otimes_{\iota} K$.  We see that $\mathrm{Ind}^K_L(q)$ is a Galois compatible quadratic form on $\bigotimes_{\iota \in I_L} (V \otimes_{\iota} K)$ with respect to this Galois action. Write $\mathrm{Ind}^K_L(q)$ for the Galois compatible quadratic form on $\bigotimes_{\iota \in I_L} (V \otimes_{\iota} K)$, and let $N_{L/k}(q)$ be the associated quadratic form over $k$ which results from applying the equivalence of categories in Theorem $\ref{GaloisDescentQuadForm}$ to $\mathrm{Ind}^K_L(q)$ . 
 
 By the above work, it's clear that $\mathrm{rank}(N_{L/k}(q)) = \mathrm{rank}(q)^{[L:k]}$.
\end{defn}
\begin{rem}
A priori, it is not clear that $N_{L/k}(q)$, as defined above, is equivalent to the norm as in Proposition $\ref{normdefn}$. The following lemma shows that this is not something to be concerned about, and the maps are the same.
\end{rem}

\begin{lemma}
Let $q=\langle \alpha \rangle$ for $\alpha \in L$. Then $[N_{L/k}(q)] = \langle N_{L/k}(\alpha) \rangle \in \widehat{\mathrm{W}}(k)$. 
\end{lemma}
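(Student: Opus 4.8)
The plan is to identify a single nonzero $G$-fixed vector in the underlying space of $\tilde q_K$ and evaluate $\tilde q_K$ on it. Since $q = \langle\alpha\rangle$ has rank $1$, its underlying space $V$ is one-dimensional over $L$; fix a generator $v_0$ with $q(v_0) = \alpha$. Then for each $\iota \in I_L$ the form $q_\iota = q\otimes_{L,\iota}\langle 1\rangle$ lives on the one-dimensional $K$-space $V\otimes_{L,\iota}K$ and satisfies $q_\iota(v_0\otimes 1) = \iota(\alpha)$, so $\tilde q_K = \bigotimes_{\iota\in I_L}q_\iota$ lives on the one-dimensional $K$-space $W := \bigotimes_{\iota\in I_L}(V\otimes_{L,\iota}K)$.

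First I would check that the pure tensor $e := \bigotimes_{\iota\in I_L}(v_0\otimes 1)_\iota$ is nonzero (it is a pure tensor of nonzero vectors) and $G$-fixed: by the definition of the semilinear action in Definition \ref{normdef2}, $g$ sends the $\iota$-th slot $(v_0\otimes 1)$ to $(v_0\otimes g(1)) = (v_0\otimes 1)$ placed in the $g\iota$-th slot, so $g$ merely permutes the (identical) tensor factors and fixes $e$. Next I would pin down the rank of the descended form: by Galois descent for $K$-vector spaces we have $\dim_k W^{G} = \dim_K W = 1$, which also matches the identity $\mathrm{rank}(N_{L/k}(q)) = \mathrm{rank}(q)^{[L:k]} = 1$ recorded in Definition \ref{normdef2}. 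Hence $\{e\}$ is a $k$-basis of $W^{G}$, and $N_{L/k}(q) = \tilde q_K|_{W^{G}}$ is the rank-one form over $k$ whose value on $e$ is $\tilde q_K(e)$.

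Finally I would compute $\tilde q_K(e) = \prod_{\iota\in I_L}q_\iota(v_0\otimes 1) = \prod_{\iota\in I_L}\iota(\alpha) = N_{L/k}(\alpha)$, the last equality being the standard formula for the field norm as the product over the $k$-embeddings $L\hookrightarrow\overline{k}$ (valid since $\mathrm{char}(k)=0$ makes $L/k$ separable). Because $e$ is $G$-fixed and $\tilde q_K$ is Galois-compatible, this value lies in $K^{G} = k$, consistent with $N_{L/k}(\alpha)\in k$; thus $N_{L/k}(q)\cong\langle N_{L/k}(\alpha)\rangle$ as a quadratic form over $k$. There is essentially no obstacle here; the only point requiring care is the bookkeeping of the semilinear action on the tensor slots, to be sure that $e$ is genuinely fixed rather than merely permuted, which is immediate once the factors are recognised to be identical.
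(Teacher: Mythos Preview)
Your proof is correct and follows essentially the same approach as the paper: identify the $G$-fixed vector $e = \bigotimes_{\iota}(v_0\otimes 1)$ in the one-dimensional $K$-space $W$, and evaluate $\tilde q_K$ on it to get $\prod_{\iota}\iota(\alpha) = N_{L/k}(\alpha)$. The paper's version is terser (it simply writes $1\otimes\cdots\otimes 1$ for your $e$ and asserts that $\bigotimes_{\iota}k$ is the fixed subspace), but your more careful verification of the $G$-invariance and the dimension count via Galois descent fills in exactly the bookkeeping the paper elides.
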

\begin{proof}
Note that $g_{\iota}(\langle \alpha \rangle) = \langle \iota(\alpha) \rangle$. Similarly, we see $\bigotimes_{\iota \in I_L} k$ is the subspace of $\bigotimes_{\iota \in I_L} L \otimes_{\iota} K$ fixed by $G$. In particular 
$$
N_{L/k}( \langle \alpha \rangle ) (1 \otimes \ldots \otimes 1) = \prod_{\iota \in I_L} \iota(\alpha) = N_{L/k}(\alpha),
$$ which gives us that $N_{L/k}(\langle \alpha \rangle) = \langle N_{L/k} (\alpha) \rangle$ as required.
\end{proof}
\begin{cor}
The map $N_{L/k}(q)$ from Proposition $\ref{normdefn}$ and the map $N_{L/k}(q)$ from Definition $\ref{normdef2}$ agree.
\end{cor}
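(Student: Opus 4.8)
The plan is to recognize the construction of Definition~\ref{normdef2} as \emph{the} multiplicative polynomial map provided by Proposition~\ref{normdefn}, and then to invoke the uniqueness clause there. Since the preceding lemma already shows that the two maps agree on the rank-one generators $\langle b\rangle$, $b\in L^\times$, it suffices to verify that $q\mapsto N_{L/k}(q)$ as defined in Definition~\ref{normdef2} is a multiplicative polynomial map $\widehat{\mathrm{W}}(L)\to\widehat{\mathrm{W}}(k)$ of degree $\leq n$, where $n=[L:k]$; uniqueness then forces it to coincide with Rost's norm map.

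First I would factor $N_{L/k}$ as a composite, exactly mirroring the variety-level argument of Lemma~\ref{multpoly}. For each $\iota\in I_L$ the assignment $[q]\mapsto[q_\iota]$ is a ring homomorphism $\widehat{\mathrm{W}}(L)\to\widehat{\mathrm{W}}(K)$, and by definition $[\tilde q_K]=\prod_{\iota\in I_L}[q_\iota]$ in $\widehat{\mathrm{W}}(K)$, using that the tensor product of forms is the product in the Grothendieck--Witt ring. As $|I_L|=n$, Corollary~\ref{ringpol} shows $[q]\mapsto[\tilde q_K]$ is a polynomial map of degree $\leq n$. Moreover this map refines to one valued in non-degenerate Galois compatible forms over $K$ equipped with the semilinear $G$-action of Definition~\ref{normdef2}, and $N_{L/k}(q)$ is obtained from $\tilde q_K$ by taking $G$-fixed points, i.e.\ by applying the inverse of the descent equivalence of Theorem~\ref{GaloisDescentQuadForm}. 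That equivalence is compatible with $\oplus$ and $\otimes$, hence induces an isomorphism of abelian groups on the associated Grothendieck--Witt groups, so in particular it is additive.

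Next I would use that post-composing a polynomial map of degree $\leq n$ with an additive map is again polynomial of degree $\leq n$, since $\Delta_x(\varphi\circ P)=\varphi\circ\Delta_x P$ when $\varphi$ is additive; this yields that $N_{L/k}\colon\widehat{\mathrm{W}}(L)\to\widehat{\mathrm{W}}(k)$ is a polynomial map of degree $\leq n$. Multiplicativity is immediate from the construction: $\widetilde{(q\otimes q')}_K\cong\tilde q_K\otimes\tilde q'_K$ compatibly with the $G$-actions, the descent functor is monoidal, and $N_{L/k}(\langle1\rangle)=\langle1\rangle$. Combining this with the preceding lemma and the uniqueness in Proposition~\ref{normdefn} completes the proof.

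The point requiring the most care---and the one I would handle by following Lemma~\ref{multpoly} closely---is the compatibility of the polynomial-map structure on $[q]\mapsto[\tilde q_K]$ with the semilinear $G$-action, so that taking fixed points genuinely is composition with an additive descent isomorphism rather than merely a bijection on isomorphism classes; concretely, the iterated difference operators $\Delta_{q_0}\cdots\Delta_{q_n}$ applied to $[q]\mapsto[\tilde q_K]$ are represented by honest (effective) Galois compatible forms, whose vanishing can be detected after forgetting the descent data by Witt cancellation, just as emptiness of the corresponding varieties is detected in Lemma~\ref{multpoly}. Everything else is formal.
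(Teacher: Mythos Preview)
Your proof is correct and follows the same approach as the paper: invoke the uniqueness clause of Proposition~\ref{normdefn} after checking agreement on rank-one forms via the preceding lemma. The paper's own proof is a single sentence (``This is immediate by the above lemma and Proposition~\ref{normdefn}''), leaving implicit exactly the verification you carry out in detail---that the descent construction of Definition~\ref{normdef2} is a multiplicative polynomial map of degree $\leq n$, mirroring the argument of Lemma~\ref{multpoly} on the quadratic-form side.
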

\begin{proof}
This is immediate by the above lemma and Proposition $\ref{normdefn}$.
\end{proof}
\begin{rem}
Since $N_{L/k}: \widehat{\mathrm{W}}(L) \to \widehat{\mathrm{W}}(k)$ is a polynomial map, we obtain a polynomial map $\mathrm{Ind}^K_L: \widehat{\mathrm{W}}(L) \to \widehat{\mathrm{W}}(K)^{desc}$ by composing with the isomorphism in Corollary $\ref{descentisomorphismgw}$. 
\end{rem}

\begin{cor}\label{normdisc}
Let $w \in \widehat{\mathrm{W}}(L)$ with $E_{disc}(w) = (\alpha, \beta) \in E( (L^\times)/(L^\times)^2)$. Then
$$
E_{disc}(N_{L/k}(w)) = (\alpha^{[L:k]}, N_{L/k}(\beta)).
$$ 
\end{cor}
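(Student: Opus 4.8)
The plan is to reduce everything to the computation already carried out for rank-one forms in the lemma computing $N_{L/k}(\langle b\rangle)$, using that $N_{L/k}$ is a multiplicative polynomial map of degree $\le [L:k]$. Set $n=[L:k]$ and $M_k=\Hom(\Gal_k,\Z/2\Z)$, so that $Q_k$ takes values in $E(M_k)$. First note that $Q_k\circ N_{L/k}\colon\widehat{\mathrm{W}}(L)\to E(M_k)$ is again a multiplicative polynomial map of degree $\le n$: multiplicativity is clear because $Q_k$ is a ring homomorphism, and polynomiality of degree $\le n$ follows from the identity $\Delta_x(Q_k\circ N_{L/k})=Q_k\circ\Delta_xN_{L/k}$ by induction on the degree.

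Next I would write down the candidate target map $R\colon\widehat{\mathrm{W}}(L)\to E(M_k)$ defined by $R(w)=\bigl(\mathrm{rank}(w)^{n},\,N_{L/k}(\mathrm{disc}(w))\bigr)$, where on the second coordinate $N_{L/k}$ denotes the corestriction $M_L\to M_k$ induced by the field norm, and then check directly that $R$ is a multiplicative polynomial map of degree $\le n$. The first coordinate $\mathrm{rank}^{n}$ is a product of $n$ ring homomorphisms, hence polynomial of degree $\le n$ by Corollary~\ref{ringpol}; the second coordinate is the composite of the two group homomorphisms $\mathrm{disc}$ and corestriction, hence itself a group homomorphism of degree $\le 1$; and multiplicativity of $R$ reduces, via the standard formula $\mathrm{disc}(q\otimes q')=\mathrm{disc}(q)^{\mathrm{rank}(q')}\mathrm{disc}(q')^{\mathrm{rank}(q)}$, to the elementary congruence $m\equiv m^{n}\pmod 2$, valid for every integer $m$, which makes the two cross-terms in the $E(M_k)$-product agree. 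By the lemma computing $N_{L/k}$ on rank-one forms, $R$ and $Q_k\circ N_{L/k}$ agree on every generator: $Q_k(N_{L/k}(\langle b\rangle))=Q_k(\langle N_{L/k}(b)\rangle)=(1,N_{L/k}(b))=R(\langle b\rangle)$ for all $b\in L^\times$.

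The crux — and the step I expect to be the real obstacle — is to upgrade this agreement on rank-one forms to the full equality $R=Q_k\circ N_{L/k}$. The natural route is the uniqueness assertion underlying Proposition~\ref{normdefn} (Rost's Corollary~5 of \cite{Ro}): one wants a multiplicative polynomial map of degree $\le n$ out of $\widehat{\mathrm{W}}(L)$ to be pinned down by its restriction to the $\langle b\rangle$. Making this precise with target ring $E(M_k)$ in place of $\widehat{\mathrm{W}}(k)$ means revisiting Rost's inductive argument — reducing an orthogonal sum of forms to rank-one pieces via the Witt chain-equivalence relations together with the degree bound — and verifying that it survives the passage to this quotient; alternatively one can argue concretely, using the $\mathrm{Ind}^{K}_{L}$-description of $N_{L/k}$ from Definition~\ref{normdef2} and tracking, orbit by orbit for the $G$-action on $I_L^{\dim w}$, the discriminant contributed by the Galois descent of the tensor-induced form (this is exactly the place where trace-form contributions à la Lemma~\ref{tracedisc} could enter, so it must be handled with care). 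This descent bookkeeping is where essentially all the subtlety lies; the rest of the argument is formal, and specialising the resulting identity to $w=\langle b\rangle$ recovers the rank-one lemma as a consistency check.
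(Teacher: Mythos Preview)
Your main route---build a second multiplicative polynomial map $R$ and then invoke a Rost-style uniqueness principle on rank-one generators---is more elaborate than what the paper does. The paper's proof is two lines: it notes (as you do) that $Q_k\circ N_{L/k}$ is a polynomial map of degree $\le n$, then applies Lemma~\ref{extend} (extension of polynomial maps from a semigroup to its group completion) to reduce from virtual classes to honest quadratic forms $w=q$, and finally asserts that for such $q$ the identity is ``a straightforward computation, similar to the lemma above''. In particular the paper never needs a uniqueness statement for multiplicative polynomial maps with target $E(M_k)$; it only needs the (easier) extension principle, after which one is left with a concrete descent computation on actual forms.

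That concrete computation is exactly your alternative route (b) via the $\mathrm{Ind}^K_L$-description, so in the end both arguments converge on the same verification---the paper just gets there faster by sidestepping the $R$-versus-$Q_k\circ N_{L/k}$ matching and the Rost uniqueness machinery. Your instinct that the descent bookkeeping is where any real content lies is sound: the paper does not spell out the orbit-by-orbit discriminant calculation, and your remark that trace-form contributions \`a la Lemma~\ref{tracedisc} could appear is exactly the point one has to control. So your proposal is not wrong, but it takes a detour (Rost uniqueness in a new target ring) that the paper avoids by reducing to honest forms first; what remains in either approach is the explicit computation you flag as the crux, which the paper leaves as an exercise.
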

\begin{proof}
We know that $E_{disc} \circ N_{L/k}$ is a polynomial map of degree $\leq n$, so we can reduce to the case where $w$ is a quadratic form, and the result will follow by Lemma $\ref{extend}$. This is then a straightforward computation, similar to the lemma above.
\end{proof}

\begin{thm}\label{WRs}
The following diagram is commutative.
\begin{center}
\begin{tikzcd}[cramped]
K_0(\mathrm{Var}_L)\ar[r, "\chi^{mot}_L"] \ar[d, "\mathrm{Res}_{L/k}"] & \widehat{\mathrm{W}}(L) \ar[d, "N_{L/k}"] \\
K_0(\mathrm{Var}_k) \ar[r, "\chi^{mot}_k"] & \widehat{\mathrm{W}}(k).
\end{tikzcd}
\end{center}
In particular, if $X/L$ is a variety such that $\mathrm{Res}_{L/k}(X)$ exists, then
$$
\chi^{mot}_k(\mathrm{Res}_{L/k}(X))= N_{L/k}(\chi^{mot}_L(X)).
$$
\end{thm}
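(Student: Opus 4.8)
The plan is to reduce, as in every earlier compatibility statement (Lemmas \ref{basechange}, \ref{trace}), to the case of smooth projective varieties via Bittner's presentation, Theorem 3.1 of \cite{Bi}. The subtlety here is that neither $\mathrm{Res}_{L/k}$ nor $N_{L/k}$ is a group homomorphism: both are only multiplicative polynomial maps of degree $\leq n = [L:k]$. So "$K_0(\mathrm{Var}_L)$ is generated as an abelian group by smooth projective varieties" is not by itself enough to conclude — we need that two polynomial maps which agree on that generating set agree everywhere. This is exactly the content of Lemma \ref{extend} (Proposition 1.6 of \cite{SJ}) together with its uniqueness: a polynomial map on a group is determined by its restriction to any generating subsemigroup. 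Concretely, I would first observe that both composites $\chi^{mot}_k \circ \mathrm{Res}_{L/k}$ and $N_{L/k} \circ \chi^{mot}_L$ are multiplicative polynomial maps $K_0(\mathrm{Var}_L) \to \widehat{\mathrm{W}}(k)$ of degree $\leq n$: the first because $\mathrm{Res}_{L/k}$ is such by Lemma \ref{multpoly} and $\chi^{mot}_k$ is a ring homomorphism, the second because $N_{L/k}$ is such by Proposition \ref{normdefn} (equivalently Definition \ref{normdef2}) and $\chi^{mot}_L$ is a ring homomorphism. Hence it suffices to check they agree on the sub-semigroup generated by smooth projective $L$-varieties under disjoint union.

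So fix $X/L$ smooth and projective; I want $\chi^{mot}_k(\mathrm{Res}_{L/k}X) = N_{L/k}(\chi^{dR}_L(X))$. The idea is to pass through $K$, the Galois closure of $L/k$, with $G = \Gal(K/k)$, and use the description of $\mathrm{Res}_{L/k}X$ as the Galois twist of $\mathrm{Ind}^K_L X = \prod_{\iota \in I_L} X_\iota$ with its canonical semilinear $G$-action (and likewise $N_{L/k}q$ as the $G$-fixed points of $\tilde q_K = \bigotimes_\iota q_\iota$). Now invoke the machinery of Section 3: the pair $(\mathrm{Ind}^K_L X,\ \text{descent data})$ is a $k$-variety obtained by twisting a $G$-variety over $K$, and Theorem \ref{TwistingCompatibility} says exactly that $\chi^{mot}$ of a Galois twist of a variety is computed by applying the corresponding twisting map $\phi_\sigma$ to $\chi^{mot}_G$ of the variety over $K$. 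Applied here, this reduces the claim over $k$ to the statement over $K$: that $\chi^{dR}_G(\mathrm{Ind}^K_L X_K)$ — as a $G$-equivariant Grothendieck–Witt class — equals $\bigotimes_{\iota \in I_L}(\chi^{dR}(X_\iota))$ with the $G$-action permuting the tensor factors along the $G$-action on $I_L$, together with the matching fact that $N_{L/k}$ is itself built out of the same "induce to $K$, take $G$-fixed points" recipe (Definition \ref{normdef2}).

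The computation over $K$ is then pure Künneth: $H^*_{dR}(\mathrm{Ind}^K_L X_K / K) = H^*_{dR}\big(\prod_\iota X_\iota / K\big) \cong \bigotimes_{\iota \in I_L} H^*_{dR}(X_\iota / K)$ as graded $K$-vector spaces, compatibly with cup product and trace, and the $G$-action on the left matches the action on the right that sends the $\iota$-factor to the $(g\iota)$-factor via the isomorphism $H^*_{dR}(X_\iota/K) \cong H^*_{dR}(X_{g\iota}/K)$ induced by $g: \Spec K \to \Spec K$ (this last compatibility is precisely the $G$-equivariance half of Lemma \ref{DescentCompat}, applied factorwise). On the level of the quadratic form attached to even-degree cohomology this gives $[H^{2*}_{dR}(\mathrm{Ind}^K_L X_K)] = \bigotimes_\iota [H^{2*}_{dR}(X_\iota)]$ in $\widehat{\mathrm W}^G(K)$; for the odd part, one checks the tensor product of alternating modules is alternating (noted right after the definition of $K_0(k[G])^{\mathrm{alt}}$) so the hyperbolic correction multiplies correctly, and altogether $\chi^{dR}_G(\mathrm{Ind}^K_L X_K) = \bigotimes_\iota \chi^{dR}(X_\iota)$ with the permutation $G$-action. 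Taking $G$-fixed points of this tensor form is, by construction, exactly $N_{L/k}$ applied to $\chi^{dR}_L(X)$ (using that $\chi^{dR}_L(X)$ base-changes along $\iota$ to $\chi^{dR}(X_\iota)$, which is Lemma \ref{basechange}). Combined with Theorem \ref{TwistingCompatibility} this yields $\chi^{mot}_k(\mathrm{Res}_{L/k}X) = N_{L/k}(\chi^{mot}_L(X))$ for smooth projective $X$, and the polynomial-map reduction above then extends it to all of $K_0(\mathrm{Var}_L)$.

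The main obstacle I expect is the bookkeeping in identifying the two $G$-actions in the middle paragraph — showing that the semilinear descent data on $\mathrm{Ind}^K_L X$ coming from the $\Spec K$-base-change squares induces on Künneth-decomposed de Rham cohomology precisely the factor-permuting action built into $\tilde q_K$ in Definition \ref{normdef2}. This is where Lemma \ref{DescentCompat}'s $G$-equivariance clause does the real work, but one has to be careful that "$G$ acting on $\mathrm{Ind}^K_L X$ by permuting factors and simultaneously acting $\iota$-semilinearly" is the action Theorem \ref{TwistingCompatibility} expects, and that the residual $_\sigma G^{\Gal_k}$-structure is trivial here (since the relevant twisted group is trivial — $\mathrm{Res}_{L/k}X$ carries no extra group action), so that the final forgetful step back to $\widehat{\mathrm W}(k)$ is harmless. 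Once that identification is pinned down, everything else is a direct application of results already in the excerpt.
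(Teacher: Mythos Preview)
Your proposal is correct and follows essentially the same strategy as the paper: reduce to actual varieties via the polynomial-map extension Lemma \ref{extend}, then pass to the Galois closure $K$ and identify both sides through the ``product over embeddings plus descent'' descriptions of $\mathrm{Res}_{L/k}$ and $N_{L/k}$, invoking Theorem \ref{TwistingCompatibility} for the compatibility of $\chi^{mot}$ with Galois descent. The paper's proof is considerably terser---it stops at ``this is clear by naturality of the motivic Euler characteristic, the semilinear actions on both sides, and Theorem \ref{TwistingCompatibility}''---whereas you spell out the K\"unneth identification $H^*_{dR}(\prod_\iota X_\iota)\cong\bigotimes_\iota H^*_{dR}(X_\iota)$ and the matching of $G$-actions explicitly; but the underlying argument is the same.
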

\begin{proof}
The composition of two multiplicative polynomial functions is again a multiplicative polynomial function. Therefore if for $X$ a smooth projective variety over $L$, we have
$$
\chi^{mot}(\mathrm{Res}_{L/k}(X)) = N_{L/k}(\chi^{mot}(X)),
$$
then $\chi^{mot} \circ \mathrm{Res}_{L/k}$ and $N_{L/k} \circ \chi^{mot}$ define the same polynomial function from $K_0(\mathrm{Var}_L)$ to $\widehat{\mathrm{W}}(k)$ by Lemma $\ref{extend}$. Since $\mathrm{Res}_{L/k}(X) = \mathrm{des}_{K/k}( (\mathrm{Ind}^K_L(X), \varphi_G))$, and $N_{L/k}(\chi^{mot}_L(X))$ is the image of $\mathrm{Ind}^K_L( \chi^{mot}_L(X)^{desc})$ under the isomorphism $\widehat{\mathrm{W}}(k) \to \widehat{\mathrm{W}}(K)^{desc}$, the theorem follows if we can show the following diagram is commutative:
\begin{center}
\begin{tikzcd}
K_0(\mathrm{Var}_L)\ar[r, "\chi^{mot}_L"] \ar[d, "\mathrm{Ind}^K_L"] & \widehat{\mathrm{W}}(L) \ar[d, "Ind^K_L"] \\
K_0(\mathrm{Var}_{K,desc}) \ar[r, "\chi^{mot, desc}_{k}"] & \widehat{\mathrm{W}}(K)^{desc}.
\end{tikzcd}
\end{center}
Therefore we need to show
$$
\chi^{mot,desc}( \mathrm{Ind}^K_LX) = \prod_{\iota} g_{\iota}(\chi^{mot}(X)) \in \widehat{\mathrm{W}}(k),
$$
which is clear by naturality of the motivic Euler characteristic, the semilinear actions on both sides, and the fact that Galois descent is compatible with the motivic Euler characteristic as in Corollary $\ref{DescentIsomorphism}$.
\end{proof}

\section{Fubini theorems}

 Historically, since the motivic Euler characteristic is a motivic measure, we might think of it as a volume. Indeed, the motivic Gauss--Bonnet formula of \cite{LR} says that we may obtain the motivic Euler characteristic as the integral of an Euler class across our space. Let $X,Y$ be varieties over $k$, and suppose we have a morphism $f: X \to Y$. For the purposes of this paper, a Fubini theorem is a theorem that allows us to compute properties of $\chi^{mot}(X)$ from the motivic Euler characteristic $\chi^{mot}(Y)$ as well as the motivic Euler characteristics of the fibres $\chi^{mot}(X_y)$. For example, suppose that our base field is $\mathbb{C}$, so that $\chi^{mot}$ is the compactly supported Euler characteristic applied to the complex points. By the standard Fubini theorem $\chi_c(X_y(\mathbb{C}))=0$ for all $y$ implies that $\chi_c(X(\mathbb{C}))=0$. This section is concerned with proving certain Fubini theorems for the motivic Euler characteristic modulo $J$. We first look at the situation where $f: X \to Y$ satisfies that $\chi^{mot}(X_y) = 0$ for every closed point $y$ of $Y$. The principle that ``if an invariant is zero on the fibres, then it is zero on the total space" will be referred to as a na{\"i}ve Fubini theorem. It does not tend to hold in our context, and we obtain a Fubini theorem for the $\ell$-adic determinant of cohomology in Corollary $\ref{saitofactorsurface}$, which does not satisfy this na{\"i}ve property. However, in Corollary $\ref{restrictedramification}$, we show that we may bound the ramification of the character we obtain by looking at the $\ell$-adic determinant of cohomology of the total space, which allows us to restrict the possibilities of this character to a finite set. We then turn to looking at a different Fubini theorem, one for maps whose fibres are algebraic groups, where we can use the structure of algebraic groups in order to prove a na{\"i}ve Fubini theorem for torsors in Corollary $\ref{fubini1b}$.
\subsection{The Fubini theorem over real closed fields}
The goal of this subsection is to prove the first part of our Fubini theorem.
\begin{thm}
Suppose that $X$ and $Y$ are varieties over $k$, and let $f: X \to Y$ be a morphism such that for every $y$ a closed point of $Y$, we have $\chi^{mot}(X_y)$ is torsion. Then $\chi^{mot}(X)$ is torsion.
\end{thm}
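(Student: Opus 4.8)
The plan is to reduce to the two invariants that detect torsion, namely the rank and the signature, since by Lemma \ref{levine1} and Theorem \ref{levine2} an element of $\widehat{\mathrm{W}}(k)$ is torsion precisely when both its rank (the compactly supported Euler characteristic $e$) and its signature (the function $\epsilon(-)$ on $\mathrm{Spr}(k)$) vanish, and for the rank this statement is literally the classical Fubini theorem for the compactly supported Euler characteristic. So the content is entirely about the signature, and after base change we may assume $k$ is real closed and that we must show $\chi^{rc}(X(k)) = 0$ given that $\chi^{rc}(X_y(k)) = 0$ for every closed point $y$ of $Y$.

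First I would set up the reduction: for each ordering $<$ of $k$ pass to the real closure $k_<$, so that by naturality of $\chi^{mot}$ under base change it suffices to prove the statement with $k$ real closed. Then $\mathrm{sign}(\chi^{mot}(X)) = \chi^{rc}(X(k))$ by Theorem \ref{levine2}, and likewise for the fibres. The hypothesis that $\chi^{mot}(X_y)$ is torsion for every closed point $y$ gives, after applying $\mathrm{sign}$, that $\chi^{rc}(X_y(k_{<}))=0$ for every closed point $y$ and every ordering; in particular over the real closed $k$ we get $\chi^{rc}(X_y(k)) = 0$ for all closed points $y$ whose residue field admits a real embedding, and for closed points $y$ with $X_y(k) = \emptyset$ there is nothing to contribute. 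The key input is then the additivity and fibrewise behaviour of the compactly supported semialgebraic Euler characteristic: over a real closed field, $\chi^{rc}$ is a motivic measure (this is used in the proof of Theorem \ref{levine2}), and moreover it satisfies a Fubini-type identity $\chi^{rc}(X(k)) = \sum$ over the (finitely many) strata of a semialgebraic stratification of $Y(k)$ on which the fibres are locally trivial, of $\chi^{rc}(\text{stratum}) \cdot \chi^{rc}(\text{fibre})$. Stratifying $Y(k)$ semialgebraically so that $f$ is a locally trivial fibration over each stratum (Hardt's theorem), each fibre over a point of a stratum is $X_y(k)$ for the corresponding closed point $y$, which has $\chi^{rc}=0$, hence every term vanishes and $\chi^{rc}(X(k))=0$.

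Assembling: we have shown $\mathrm{rank}(\chi^{mot}(X)) = e(X) = 0$ by the classical Fubini theorem for $e$ (again reducing to a stratification where $f$ is a fibration and $e$ is multiplicative with vanishing fibre term), and $\mathrm{sign}(\chi^{mot}(X))(<) = \chi^{rc}(X(k_<)) = 0$ for every $<\in\mathrm{Spr}(k)$ by the argument above applied over each $k_<$. By Lemma \ref{levine1}, Theorem \ref{levine2}, and the characterisation $T = \ker(\mathrm{rank}) \cap \ker(\mathrm{sign})$ of the torsion ideal from Definition \ref{gwring}, we conclude $\chi^{mot}(X) \in T$, i.e.\ $\chi^{mot}(X)$ is torsion.

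The main obstacle I anticipate is the precise justification of the fibrewise identity for $\chi^{rc}$ over a general real closed field, rather than over $\mathbb{R}$: one wants a semialgebraic trivialisation theorem (Hardt triviality) valid over an arbitrary real closed base field, together with the multiplicativity of $\chi^{rc}$ in semialgebraic fibre bundles. Over $\mathbb{R}$ these are standard (e.g.\ from \cite{BCR}), and the transfer to an arbitrary real closed field is typically handled by a Tarski--Seidenberg / model-completeness argument of the kind already invoked elsewhere in the paper; I would cite \cite{BCR} for the semialgebraic Euler characteristic and its properties and the model-theoretic transfer principle to descend from $\mathbb{R}$, and note that only the vanishing of each fibre term is needed, so no delicate sign bookkeeping across strata is required.
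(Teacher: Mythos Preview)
Your proposal is correct and follows essentially the same strategy as the paper: invoke Pfister's local-global principle to reduce to showing that both $\mathrm{rank}(\chi^{mot}(X))$ and $\mathrm{sign}(\chi^{mot}(X))$ vanish, and handle the signature by base-changing to each real closure and using a semialgebraic trivialization. Your worry about Hardt triviality over an arbitrary real closed field is exactly what the paper addresses by citing Lemma~9.3.2 of \cite{BCR}, which is stated and proved over any real closed base, so no model-theoretic transfer is needed at that point.

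The one genuine difference is in how the rank case is dispatched. You treat the vanishing of $e(X)$ as the classical Fubini theorem for the compactly supported Euler characteristic over an algebraically closed field, which is fine but imports an external result. The paper instead keeps everything inside the same circle of ideas: it spreads $X,Y,f$ out to a subfield $F$ finitely generated over $\mathbb{Q}$, shows (Lemma~\ref{sprfg}) that $\mathrm{Spr}(\mathbb{Q}(t_1,\ldots,t_n))\neq\emptyset$ so that $\overline{F}$ is a degree-$2$ extension of some real closed field $F^{real}$, and then applies the already-proved real closed Fubini theorem (Theorem~\ref{fubini2}) over $F^{real}$ to conclude the rank statement as well. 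This buys a fully self-contained argument in which Theorem~\ref{fubini2} does double duty for rank and signature, at the cost of the small extra Lemma~\ref{sprfg}; your route is shorter if one is willing to cite the classical result.
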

We call this the real closed Fubini theorem as, by Pfister's local global principle, we have that $\chi^{mot}(X)$ is torsion if and only if $\mathrm{rank}(\chi^{mot}(X))=0$ and $\mathrm{sign}(\chi^{mot}(X))=0$. This is therefore is a true na{\"ive} Fubini theorem. This holds if we can show the corresponding statement for the rank and sign of motivic Euler characteristics.  We first prove this statement for real closed fields and their algebraic closures. Until specified otherwise, let $k$ be either a real closed field or the algebraic closure of a real closed field.
\begin{defn}
Let $\mathcal{SAS}_k$ be the category of semi-algebraic sets over $k$. Define $K_0(\mathcal{SAS}_k)$ to be the ring generated by isomorphism classes of semi-algebraic sets over $k$ subject to the following relations:
\begin{enumerate}
\item $[\emptyset]=0$.
\item For $U$ a closed semialgebraic subset of $X$, we have $[X] = [X \setminus U] + [U]$.
\item $[X][Y] = [X \times Y]$.
\end{enumerate}
\end{defn}
There are canonical homomorphisms $K_0(\mathrm{Var}_k) \rightarrow K_0(\mathcal{SAS}_k)$, given by sending a variety $X$ to $X(k)$ or to $X(\overline{k})$.

Let $X$ be a semi-algebraic set. Write $H^i_c(X;k)$ to mean the $i^{\text{th}}$ real closed singular cohomology with compact support, and coefficients in $k$. Note that the semialgebraic simplicial cohomology is isomorphic to the sheaf cohomology with coefficients in the constant sheaf $\underline{k}$. This is proven in the non-compactly supported case in Theorem 3.3 of \cite{KD}. This isomorphism extends to the compactly supported case, using that the compactly supported cohomology is given by
$$
H^*_c(X; k) = \varinjlim H^*(X, X\setminus V; k),
$$
where $V$ runs over all of the compact subsets of $X$. Since we have canonical isomorphisms 
$$
H_{sheaf}^*(X,X\setminus V; \underline{k}) \cong H_{sing}^*(X, X\setminus V; k),$$
this isomorphism carries over to the limit.
\begin{defn}
 Let $X$ be a semi-algebraic set over $k$. Define the \emph{real closed Euler characteristic} $\chi^{rc}(X) := \sum_{i=0}^{\infty} (-1)^i \mathrm{dim}_k( H^i_c(X;k)) \in \mathbb{Z}$.
\end{defn}
\begin{lemma}
The real closed Euler characteristic $\chi^{rc}$, as defined above, furnishes a ring homomorphism $\chi^{rc}: K_0(\mathcal{SAS}_k) \rightarrow \mathbb{Z}$.
\end{lemma}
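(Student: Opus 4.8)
The plan is to check that the invariant $\chi^{rc}$ is compatible with the three defining relations of $K_0(\mathcal{SAS}_k)$ and sends the class of a point to $1$; this forces a unique extension to a ring homomorphism. All of the work is concentrated in two homological inputs — a long exact sequence in compactly supported semialgebraic cohomology for a closed pair, and a Künneth formula — together with the basic fact that $H^*_c(X;k)$ is finite-dimensional and bounded for $X$ semialgebraic. I would take these from Delfs--Knebusch's semialgebraic sheaf cohomology, transported through the comparison with simplicial semialgebraic cohomology recalled before the definition.

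First I would record the easy points: $\chi^{rc}$ depends only on the semialgebraic isomorphism class, since $H^*_c(-;k)$ is a semialgebraic invariant; $\chi^{rc}(\emptyset)=0$; and $\chi^{rc}(\mathrm{pt})=1$ because $H^0_c(\mathrm{pt};k)=k$ with all higher groups zero. This settles relation $(1)$ and the unit condition, and finiteness guarantees that $\chi^{rc}(X)\in\mathbb Z$ is well-defined at all.

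For relation $(3)$, let $U\subseteq X$ be a closed semialgebraic subset with open complement $V=X\setminus U$, and write $i:U\hookrightarrow X$, $j:V\hookrightarrow X$. The short exact sequence of sheaves $0\to j_!\underline k_V\to\underline k_X\to i_*\underline k_U\to 0$ on $X$, together with the identifications $H^*_c(X;j_!\underline k_V)\cong H^*_c(V;k)$ and $H^*_c(X;i_*\underline k_U)\cong H^*_c(U;k)$, yields the long exact sequence
$$\cdots\to H^i_c(V;k)\to H^i_c(X;k)\to H^i_c(U;k)\to H^{i+1}_c(V;k)\to\cdots.$$
Since every term is finite-dimensional and vanishes for $i\gg 0$, the alternating sum of dimensions around this exact sequence is zero, giving $\chi^{rc}(X)=\chi^{rc}(V)+\chi^{rc}(U)$, which is exactly relation $(3)$.

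For relation $(2)$ I would invoke the Künneth isomorphism $H^n_c(X\times Y;k)\cong\bigoplus_{i+j=n}H^i_c(X;k)\otimes_k H^j_c(Y;k)$, valid with no correction terms because $k$ is a field; comparing dimensions degree by degree and taking the alternating sum gives $\chi^{rc}(X\times Y)=\chi^{rc}(X)\,\chi^{rc}(Y)$. Thus $\chi^{rc}$ respects all three relations and preserves the unit, so it descends to the claimed ring homomorphism $\chi^{rc}:K_0(\mathcal{SAS}_k)\to\mathbb Z$. The real content — and the only genuine obstacle — is ensuring the long exact sequence and the Künneth formula hold for semialgebraic compactly supported cohomology over an arbitrary real closed field rather than just over $\mathbb R$; this is part of the Delfs--Knebusch package, accessed via the extension of Theorem 3.3 of \cite{KD} to the compactly supported setting through the direct limit $H^*_c(X;k)=\varinjlim_V H^*(X,X\setminus V;k)$, so I would cite rather than reprove it.
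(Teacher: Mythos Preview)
Your proof is correct and follows essentially the same approach as the paper: verify the three defining relations via the long exact sequence of a closed/open pair in compactly supported semialgebraic cohomology and the K\"unneth formula. The only difference is bibliographic: the paper cites Edmundo--Prelli (\cite{EP1}, Theorem~4.1 for the long exact sequence, and \cite{EP2}, Theorem~3.40 for K\"unneth) rather than Delfs--Knebusch, but the underlying argument is identical.
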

\begin{proof}
We only need to show the three relations defining $K_0(\mathcal{SAS}_k)$ are satisfied. The relation that $\chi^{rc}([\emptyset])=0$ is trivial. Theorem 4.1 of \cite{EP1} tells us the sheaf cohomology with compact support and coefficients in $\underline{R}$ satisfies the exactness and excision axioms. Therefore for $U$ an open subset of $X$, we have a long exact sequence
$$
\ldots \rightarrow H^n_c(U;k) \rightarrow H^n_c(X;k) \rightarrow H^n_c(X \setminus U; k) \rightarrow H^{n+1}_c(U;k) \rightarrow \ldots.
$$
This immediately tells us that $\chi^{rc}(X) = \chi^{rc}(U) + \chi^{rc}(X \setminus U)$. Multiplicativity of $\chi^{rc}$ follows from the K\"{u}nneth formula: Theorem 3.40 of \cite{EP2}.
\end{proof}
\begin{thm}\label{chircfubini}
Let $f: W \to Z$ be a morphism of semialgebraic sets over $k$. Suppose for each $p \in Z$, the real closed Euler characteristic of the fibre $\chi^{rc}(W_p)=q$. Then $\chi^{rc}(W) = q \chi^{rc}(Z)$.
\end{thm}
\begin{proof}
By Lemma 9.3.2 of \cite{BCR}, there exists a stratification of $Z$ into constructible subsets $Z_1, \ldots, Z_n$, such that for each $i$, we have an isomorphism $f^{-1}(Z_i) \cong F_i \times W_i$ for some $F_i$ fitting into the following commutative diagram
\begin{center}
\begin{tikzcd}[cramped]
f^{-1}(Z_i) \ar[rd, "f"] \ar[rr] & & F_i \times Z_i \ar[ld] \\
& Z_i, &
\end{tikzcd}
\end{center}
where the morphism $F_i \times Z_i \rightarrow Z_i$ is the canonical projection. This means that for $p \in Z_i$, there is an isomorphism $W_p \cong F_i$, so $\chi^{rc}(F_i) = \chi^{rc}(W_p)=q$. In $K_0(\mathcal{SAS}_k)$ we have $[W] = \sum_{i=1}^n [F_i][Z_i]$, so applying $\chi^{rc}$ and noting that $\chi^{rc}(F_i)=q$, we get that $\chi^{rc}(W)=q\chi^{rc}(Z)$.
\end{proof}

\begin{lemma}\label{motrc}
For $X$ a $k$ variety, we have an equality in $\widehat{\mathrm{W}}(k)$:
$$
\chi^{mot}(X) = \frac12 \left(\chi^{rc}(X(\overline{k}))+\chi^{rc}(X(k))  \right)  \langle 1 \rangle + \frac12 \left( \chi^{rc}(X(\overline{k}))-\chi^{rc}(X(k)) \right) \langle -1 \rangle.
$$
\end{lemma}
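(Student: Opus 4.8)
The plan is to exploit that over a real closed field $k$ the ring $\widehat{\mathrm W}(k)$ is detected by rank and signature. By Sylvester's law of inertia the homomorphism $(\mathrm{rank},\mathrm{sign})\colon\widehat{\mathrm W}(k)\to\mathbb Z\times\mathbb Z$ is injective, and since $\mathrm{rank}\langle\pm1\rangle=1$, $\mathrm{sign}\langle1\rangle=1$, $\mathrm{sign}\langle-1\rangle=-1$, the right hand side of the claimed identity is precisely the unique element of $\widehat{\mathrm W}(k)$ with rank $\chi^{rc}(X(\overline k))$ and signature $\chi^{rc}(X(k))$. So the lemma reduces to the two scalar equalities $\mathrm{rank}(\chi^{mot}(X))=\chi^{rc}(X(\overline k))$ and $\mathrm{sign}(\chi^{mot}(X))=\chi^{rc}(X(k))$.

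The signature equality is essentially immediate from what is already available. By Theorem~\ref{levine2}, $\mathrm{sign}(\chi^{mot}(X))=\epsilon(X)$ as a function on $\mathrm{Spr}(k)$; since $k$ is real closed, $\mathrm{Spr}(k)$ is a single point, at which $\epsilon(X)$ is by Definition~\ref{2.13b} the compactly supported real closed Euler characteristic of $X(k)$. That this agrees with $\chi^{rc}(X(k))=\sum_i(-1)^i\dim_k H^i_c(X(k);k)$ is the standard comparison between the cell count definition used in \cite{BCR} and compactly supported semialgebraic cohomology (via a semialgebraic triangulation), giving $\mathrm{sign}(\chi^{mot}(X))=\chi^{rc}(X(k))$.

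For the rank equality, recall $\mathrm{rank}(\chi^{mot}(X))=e(X)$ by Theorem~\ref{levine1}, where $e(X)=e(X_{\overline k})$. Both $X\mapsto e(X)$ and $X\mapsto\chi^{rc}(X(\overline k))$ are ring homomorphisms $K_0(\mathrm{Var}_k)\to\mathbb Z$, the second factoring through the class map $K_0(\mathrm{Var}_k)\to K_0(\mathcal{SAS}_k)$, $X\mapsto X(\overline k)$, so by Theorem~3.1 of \cite{Bi} it suffices to treat $X$ smooth projective. For such $X$ put $Y=\mathrm{Res}_{\overline k/k}(X_{\overline k})$, which exists since $X_{\overline k}$ is projective and $\overline k/k$ is finite separable, and which satisfies $Y(k)=X(\overline k)$ as semialgebraic sets over $k$. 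By the signature equality applied to $Y$, $\chi^{rc}(X(\overline k))=\chi^{rc}(Y(k))=\mathrm{sign}(\chi^{mot}_k(Y))$. Theorem~\ref{WRs} gives $\chi^{mot}_k(Y)=N_{\overline k/k}(\chi^{mot}_{\overline k}(X_{\overline k}))$, and since $\overline k$ is algebraically closed $\widehat{\mathrm W}(\overline k)\cong\mathbb Z\cdot\langle1\rangle$ via rank, so $\chi^{mot}_{\overline k}(X_{\overline k})=e(X_{\overline k})\langle1\rangle=e(X)\langle1\rangle$. It remains to check $\mathrm{sign}(N_{\overline k/k}(m\langle1\rangle))=m$ for all $m\in\mathbb Z$: by Proposition~\ref{normdefn} the map $\mathrm{sign}\circ N_{\overline k/k}$ restricted to $\widehat{\mathrm W}(\overline k)=\mathbb Z\langle1\rangle$ is a polynomial map of degree $\le 2$ sending $0$ to $0$, hence determined by its values at $m=1,2$, and these one computes either by diagonalising $N_{\overline k/k}(m\langle1\rangle)$ explicitly or, more simply, from Theorem~\ref{WRs} applied to $\coprod_{i=1}^m\mathrm{Spec}(\overline k)$ over $\overline k$, whose Weil restriction is $\coprod_{i=1}^m\mathrm{Spec}(k)$, giving the value $m$. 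Therefore $\chi^{rc}(X(\overline k))=e(X)$, which completes the argument.

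The main obstacle is the rank equality: matching the compactly supported Euler characteristic of $X_{\overline k}$, a Weil cohomological invariant defined after base change to the algebraically closed field $\overline k$, with the semialgebraic Euler characteristic of $X(\overline k)$ computed over the real closed field $k$. Over $\mathbb R$ this is the classical comparison of $\ell$-adic and singular Euler characteristics, but over an arbitrary real closed field a direct comparison would require a transfer or Lefschetz principle argument; the plan above sidesteps this by passing to the signature through Weil restriction, where Theorems~\ref{WRs} and~\ref{levine2} do the work and only the elementary determination of $\mathrm{sign}\circ N_{\overline k/k}$ remains.
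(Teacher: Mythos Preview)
Your overall strategy is sound and, interestingly, more scrupulous than the paper's own proof: the paper simply invokes Theorems~\ref{levine1} and~\ref{levine2} and asserts $e(X)=\chi^{rc}(X(\overline k))$ without further comment, whereas you recognise that identifying the Weil-cohomological Euler characteristic of $X_{\overline k}$ with the semialgebraic Euler characteristic of $X(\overline k)$ over $k$ deserves justification, and you supply one via Weil restriction and Theorem~\ref{WRs}. That is a nice idea.

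There is, however, a genuine error in your computation of $\mathrm{sign}\bigl(N_{\overline k/k}(m\langle 1\rangle)\bigr)$ at $m=2$. You claim that $\mathrm{Res}_{\overline k/k}\bigl(\coprod_{i=1}^{m}\Spec(\overline k)\bigr)=\coprod_{i=1}^{m}\Spec(k)$, but Weil restriction does \emph{not} commute with disjoint unions. For $m=2$ one checks from the functor of points that $\mathrm{Res}_{\overline k/k}\bigl(\Spec(\overline k)\amalg\Spec(\overline k)\bigr)$ has four $\overline k$-points and two $k$-points; in fact it is $\Spec(k)\amalg\Spec(k)\amalg\Spec(\overline k)$ (the last summand viewed as a $k$-scheme), not $\Spec(k)\amalg\Spec(k)$. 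Your conclusion $\mathrm{sign}=2$ happens to survive only because the extra piece $\Spec(\overline k)$ contributes $\mathrm{Tr}_{\overline k/k}(\langle 1\rangle)=\langle 2\rangle\oplus\langle -2\rangle$, which has signature $0$. So the answer is right but the justification is not; the alternative you mention---diagonalising $N_{\overline k/k}(2\langle 1\rangle)$ directly---does work and gives $2\langle 1\rangle\oplus\mathbb H$, which together with $N(0)=0$ and $N(\langle 1\rangle)=\langle 1\rangle$ pins down the degree~$\le 2$ polynomial map as $m\mapsto m$.

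In short: the paper's route is a two-line appeal to Theorems~\ref{levine1} and~\ref{levine2} plus the (unstated) comparison $e(X)=\chi^{rc}(X(\overline k))$; your route earns that comparison honestly but needs the Weil-restriction step repaired as above.
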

\begin{proof}
By Theorem $\ref{levine1}$, we have that $\mathrm{rank}(\chi^{mot}(X)) = e(X) = \chi^{rc}(X(\overline{k}))$. Moreover, we have that $\mathrm{sign}(\chi^{mot}(X)) = \epsilon(X) = \chi^{rc}(X(k))$ by Theorem $\ref{levine2}$. The result follows since if $q$ is any element of $\widehat{\mathrm{W}}(k)$, we have
$$q = \frac12 (\mathrm{rank}(q) + \mathrm{sign}(q))\langle 1\rangle + \frac12 (\mathrm{rank}(q) - \mathrm{sign}(q))\langle-1\rangle.$$
\end{proof}

\begin{thm}\label{fubini2}
Let $X$ and $Y$ be $k$ varieties, and let $f: X \rightarrow Y$ be a morphism. For $p$ a closed point of $Y$ let $X_p$ be the fibre of $f$ over $p$. Then if $\chi^{mot}_k(X_p)$ is trivial for all $p$, then so is $\chi^{mot}(X)$.
\end{thm}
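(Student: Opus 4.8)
The plan is to reduce the identity over the real closed field $k$ to two identities of integers, and to prove each of those by a Fubini principle for the real closed Euler characteristic, exploiting that $\chi^{rc}$ is a ring homomorphism on $K_0(\mathcal{SAS}_k)$.

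\emph{Reduction.} By Lemma \ref{motrc} together with Theorems \ref{levine1} and \ref{levine2}, for any $k$-variety $Z$ one has $\chi^{mot}(Z)=\tfrac12(e(Z)+\epsilon(Z))\langle1\rangle+\tfrac12(e(Z)-\epsilon(Z))\langle-1\rangle$ with $e(Z)=\chi^{rc}(Z(\kbar))$ and $\epsilon(Z)=\chi^{rc}(Z(k))$, and over a real closed field every element of $\widehat{\mathrm{W}}(k)$ is uniquely a $\Z$-linear combination of $\langle1\rangle$ and $\langle-1\rangle$. Expanding $[q]$, $\chi^{mot}(Y)$ and $\chi^{mot}(X)$ in this basis and using $\langle1\rangle^2=\langle1\rangle$, $\langle1\rangle\langle-1\rangle=\langle-1\rangle$, $\langle-1\rangle^2=\langle1\rangle$, a direct calculation shows that $\chi^{mot}(X)=[q]\chi^{mot}(Y)$ is equivalent to the two equalities
$$e(X)=\mathrm{rank}([q])\cdot e(Y),\qquad \epsilon(X)=\mathrm{sign}([q])\cdot\epsilon(Y).$$
Thus it is enough to prove these two.

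\emph{A Fubini principle for $\chi^{rc}$.} I would first prove: if $g\colon A\to B$ is a semi-algebraic map of semi-algebraic sets over $k$ and $\chi^{rc}(g^{-1}(b))=c$ is independent of $b\in B$, then $\chi^{rc}(A)=c\cdot\chi^{rc}(B)$. By Hardt's semi-algebraic triviality theorem (\cite{BCR}) there is a finite semi-algebraic partition $B=\bigsqcup_i B_i$ together with semi-algebraic isomorphisms $g^{-1}(B_i)\cong B_i\times F_i$ over $B_i$, where $F_i$ is the fibre of $g$ over any point of $B_i$. Multiplicativity of $\chi^{rc}$ gives $\chi^{rc}(g^{-1}(B_i))=\chi^{rc}(B_i)\chi^{rc}(F_i)=c\,\chi^{rc}(B_i)$, and additivity over the partition gives $\chi^{rc}(A)=\sum_i \chi^{rc}(g^{-1}(B_i))=c\sum_i\chi^{rc}(B_i)=c\,\chi^{rc}(B)$.

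\emph{Conclusion.} The morphism $f$ induces a semi-algebraic map $X(k)\to Y(k)$ whose fibre over $p\in Y(k)$ is $X_p(k)$; by Theorem \ref{levine2} and the hypothesis, $\chi^{rc}(X_p(k))=\epsilon(X_p)=\mathrm{sign}(\chi^{mot}(X_p))=\mathrm{sign}([q])$ for all such $p$, so the Fubini principle gives $\epsilon(X)=\mathrm{sign}([q])\cdot\epsilon(Y)$. Likewise $f$ induces a semi-algebraic map $X(\kbar)\to Y(\kbar)$ (using $\kbar\cong k^2$ to view everything semi-algebraically over $k$); its fibre over a $\kbar$-point lying above a closed point $p$ of $Y$ is, up to semi-algebraic isomorphism, the set of $\kbar$-points of the geometric fibre of $f$ at $p$, so by Theorem \ref{levine1} applied over $k(p)$, base change (Lemma \ref{basechange}) and the hypothesis this fibre has $\chi^{rc}$ equal to $\mathrm{rank}([q])$ for every $p$; the Fubini principle then gives $e(X)=\mathrm{rank}([q])\cdot e(Y)$. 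Combined with the reduction step this proves the theorem. The step I expect to need genuine care is the last one: one must identify the fibres of $X(\kbar)\to Y(\kbar)$ precisely and carry out the bookkeeping for closed points of $Y$ whose residue field is a proper extension of $k$, so that the hypothesis really does force every geometric fibre to have Euler characteristic $\mathrm{rank}([q])$.
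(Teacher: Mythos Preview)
Your argument is essentially identical to the paper's: reduce via Lemma~\ref{motrc} to the two scalar identities for $\chi^{rc}$ on $k$-points and on $\kbar$-points, and prove the semialgebraic Fubini by Hardt triviality (the paper cites this as Lemma~9.3.2 of \cite{BCR}).

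One comment on your final caveat. The worry you flag about closed points $p$ with $k(p)=\kbar$ is not mere bookkeeping; it is a genuine issue that the paper's proof also glosses over. Over a $\kbar$-point lying above such a $p$, the fibre of $X(\kbar)\to Y(\kbar)$ has $\chi^{rc}$ equal to $\tfrac12\mathrm{rank}([q])$, not $\mathrm{rank}([q])$, so the fibre Euler characteristic is not constant and the direct Fubini argument breaks. Indeed, as literally stated the theorem fails for $Y=\Spec(\kbar)$ and $X=Y\sqcup Y$: here $[q]=\chi^{mot}_k(X_p)=2\mathbb H$ while $\chi^{mot}(X)=2\mathbb H\neq 4\mathbb H=[q]\cdot\chi^{mot}(Y)$. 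The paper only invokes Theorem~\ref{fubini2} in situations where $[q]=0$ (see the proofs of Lemma~\ref{fubinialgclosed} and Corollary~\ref{fubinitorsion}), and there the problem evaporates since both $\mathrm{rank}([q])$ and $\tfrac12\mathrm{rank}([q])$ vanish; so the downstream results are unaffected. Your proof matches the paper's, and your instinct that the last step needs care is exactly right.
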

\begin{proof}
By Lemma $\ref{motrc}$
$$
\chi^{mot}(X) = \frac12 \left(\chi^{rc}(X(\overline{k}))+\chi^{rc}(X(k))  \right)  \langle 1 \rangle + \frac12 \left( \chi^{rc}(X(\overline{k}))-\chi^{rc}(X(k)) \right) \langle -1 \rangle,
$$
so it is enough to prove the result for $\chi^{rc}(X(\kbar))$ and $\chi^{rc}(X)$. First suppose that $k$ is a real closed field. If $p$ has residue field $k$, then $\chi^{rc}(X_p(k)) = \mathrm{sign}(\chi^{mot}(X_p))=0$ by assumption. If $p$ has residue field $\kbar$, then $X_p(k)=\emptyset$, so $\chi^{rc}(X_p(k))=0$, and therefore $\chi^{rc}(X_p(k))=0$ for all $p$. Theorem $\ref{chircfubini}$ gives that $\chi^{rc}(X(k))=0$. Similarly $\chi^{rc}(X_p(\kbar))=\mathrm{rank}(\chi^{mot}(X_p))$, so Theorem $\ref{chircfubini}$ gives $\chi^{rc}(X(\kbar))=0$, as required. 

If $k$ is the algebraic closure of a real closed field, then $\chi^{rc}(X(k))=\chi^{mot}(X)$, so we again apply Theorem $\ref{chircfubini}$ to $X(k)$ and obtain $\chi^{mot}(X)=0$.
\end{proof}
We wish to extend the above result to our real closed Fubini theorem for any field of characteristic $0$. We first need the following lemma.
\begin{lemma}\label{sprfg}
Let $n$ be a non-negative integer. Then $\mathrm{Spr}(\Q(t_1, \ldots, t_n)) \neq \emptyset$.
\end{lemma}
\begin{proof}
We prove this by induction, with the case $n=0$ being trivial. Suppose it's true for $n' < n$, let $K = \Q(t_1, \ldots, t_{n-1})$, and let $\prec \in \mathrm{Spr}(K)$. As in Example 1.1.2 of \cite{BCR}, we may define an order $<$ on $K(t_n) = \Q(t_1, \ldots, t_n)$ as follows. Let $p(t_n) \in K[t_n]$ be non-zero. Then $p(t_n) = a_k t_n^k + \ldots + a_{m} t_n^m$ for some non-negative integers $k\leq m$ with $a_k \neq 0$. Then say $0 < p(t_n)$ if $0 \prec a_k$. For a general element $\frac{p(t_n)}{q(t_n)}$ of $K(t_n)$, we say $0 < \frac{p(t_n)}{q(t_n)}$ if and only if $0 < p(t_n)q(t_n)$. As in Example 1.1.2 of \cite{BCR}, this defines a total order on $K(t_n)$ extending $\prec$, as required.
\end{proof}
Suppose now $k$ is an arbitrary field of characteristic $0$.
\begin{lemma}\label{fubinialgclosed}
Let $f: X \to Y$ be a morphism such that for every $y$ a closed point of $Y$, we have $\mathrm{rank}(\chi^{mot}_k(X_y))=0$. Then $\mathrm{rank}(\chi^{mot}(X))=0$.
\end{lemma}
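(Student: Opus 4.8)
The plan is to transfer the problem to a statement about the compactly supported $\ell$-adic Euler characteristic, where the analogous ``na\"ive Fubini'' principle genuinely does hold. By Theorem \ref{levine1}, $\mathrm{rank}(\chi^{mot}_k(Z)) = e(Z)$ for every variety $Z$, and $e(Z)=e(Z_{\overline{k}})$ depends only on the base change to $\overline{k}$. Moreover every closed point of $Y_{\overline{k}}$ lies above a closed point of $Y$, and the corresponding fibre of $f_{\overline{k}}$ is a base change of a fibre of $f$, so the hypothesis persists after base change. Hence I would first reduce to the case where $k$ is algebraically closed and the assertion to prove is: if $f\colon X\to Y$ has $e(X_y)=0$ for every closed point $y$ of $Y$, then $e(X)=0$.

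To prove this I would use constructibility of $Rf_{!}\mathbb{Q}_\ell$ to choose a finite partition of $Y$ into locally closed subvarieties $Y_1,\dots,Y_m$ over each of which all of the sheaves $\mathcal{H}^q_i:=(R^qf_{!}\mathbb{Q}_\ell)|_{Y_i}$ are lisse. By the base change theorem for $Rf_{!}$ applied to the restriction $f^{-1}(Y_i)\to Y_i$, the stalk of $\mathcal{H}^q_i$ at a geometric point over a closed point $y\in Y_i$ is $H^q_c(X_y,\mathbb{Q}_\ell)$, so $\sum_q(-1)^q\mathrm{rank}(\mathcal{H}^q_i)=e(X_y)=0$. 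The Leray spectral sequence with compact supports for $f^{-1}(Y_i)\to Y_i$, together with additivity of $e$ over the partition, then reduces everything to evaluating $\sum_p(-1)^p\dim H^p_c(Y_i,\mathcal{H}^q_i)$.

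The key input — and the one place where characteristic $0$ is essential — is that the compactly supported Euler characteristic is insensitive to a lisse coefficient system: $\sum_p(-1)^p\dim H^p_c(U,\mathcal{L})=\mathrm{rank}(\mathcal{L})\cdot e(U)$ for every lisse $\mathcal{L}$ on a variety $U$ over an algebraically closed field of characteristic $0$. I would deduce this by stratifying $U$ into smooth pieces and fibering those in curves, reducing to the Grothendieck--Ogg--Shafarevich Euler--Poincar\'e formula, whose Swan-conductor terms vanish in characteristic $0$. Granting this, $e(f^{-1}(Y_i))=\big(\sum_q(-1)^q\mathrm{rank}(\mathcal{H}^q_i)\big)\cdot e(Y_i)=0$ for each $i$, and summing over $i$ gives $e(X)=0$.

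I expect the main obstacle to be bookkeeping rather than a genuine difficulty: one must arrange the stratification so that all $\mathcal{H}^q_i$ are simultaneously lisse and invoke the standard $\ell$-adic facts (constructibility of $Rf_{!}$, base change for $Rf_{!}$, the Euler--Poincar\'e formula) in the right form. An alternative, if one wishes to avoid \'etale cohomology, is to spread $X,Y,f$ out over a finitely generated subfield, embed its algebraic closure into $\mathbb{C}$, and over $\mathbb{C}$ use Verdier's theorem to stratify $Y$ so that $f$ restricts to a locally trivial topological fibration over each stratum, after which multiplicativity of the topological compactly supported Euler characteristic for fibrations finishes the argument; there the subtlety is checking that vanishing of $e$ on all fibres is preserved under passage to $\mathbb{C}$, which follows from constructibility of $y\mapsto e(X_y)$ and its compatibility with base change.
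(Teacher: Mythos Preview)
Your argument is correct and complete in outline. The $\ell$-adic route you describe---reduce to $\overline{k}$, stratify $Y$ so that the $R^qf_!\mathbb{Q}_\ell$ are lisse, then use that in characteristic zero $\chi_c(U,\mathcal{L})=\mathrm{rank}(\mathcal{L})\cdot e(U)$ for lisse $\mathcal{L}$---is a standard and valid proof of this statement, and your reduction to curves via Grothendieck--Ogg--Shafarevich with vanishing Swan terms is the right justification of the key input. Your alternative via spreading out, embedding in $\mathbb{C}$, and Verdier's stratification theorem is also sound.

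The paper takes a genuinely different route. Rather than invoking $\ell$-adic constructibility machinery, it reuses the real closed Fubini theorem (Theorem~\ref{fubini2}) already proved by semialgebraic methods: it spreads $X,Y,f$ out over a field $F$ finitely generated over $\mathbb{Q}$, observes via Lemma~\ref{sprfg} that $\mathbb{Q}(t_1,\dots,t_n)$ (and hence $\overline{F}$) sits over a real closed field $F^{\mathrm{real}}$ with $[\overline{F}:F^{\mathrm{real}}]=2$, and then views the whole situation over $F^{\mathrm{real}}$ where Theorem~\ref{fubini2} applies directly. Your approach is more self-contained from the cohomological side and does not need the semialgebraic trivialisation lemma, while the paper's approach is shorter in context because it recycles Theorem~\ref{fubini2} rather than developing a parallel $\ell$-adic argument; it also keeps the proof of the rank case methodologically aligned with the proof of the signature case in Corollary~\ref{fubinitorsion}.
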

\begin{proof}
This lemma follows by reducing to the case of Theorem $\ref{fubini2}$, and so we need to reduce to the case where $k$ is the algebraic closure of a real closed field. We first reduce to the case where our base field is the algebraic closure of a finitely generated extension of $\Q$. 

Since $X,Y$ are varieties over $k$, we may assume that $X,Y$ are defined over a field $F$ which is finitely generated over $\Q$. Let $X_0$ and $Y_0$ be varieties over $F$ such that $X = X_0 \times_F \Spec(k)$ and $Y=Y_0 \times_F \Spec(k)$. Pick an algebraic closure $\overline{F}$ of $F$, and for every finite extension $F'/F$, pick compatible embeddings $F' \hookrightarrow \overline{F}$. Let $\overline{X} := X_0 \times_F \Spec(\overline{F})$, and similarly for $\overline{Y}$. Then by Lemma $\ref{basechange}$, we see $\mathrm{rank}(\chi^{mot}_k(X)) = \mathrm{rank}(\chi^{mot}_F(X_0)) = \mathrm{rank}(\chi^{mot}_{\overline{F}}(\overline{X}))$. We claim that for every $\overline{y}$ a closed point of $\overline{Y}$ we have $\mathrm{rank}( \chi^{mot}_{\overline{F}}(\overline{X}_{\overline{y}}))=0$.  Let $y \in Y$ be the closed point of $Y$ lying under $\overline{y}$ under the morphism $\overline{Y} \to Y$. Let $F_y$ denote the residue field at $y$, and let $n := [F_y:F]$. Note that $\overline{X}_{\overline{y}}$ is defined over $F_y$, and $\overline{X}_{\overline{y}} = X_y \times_{\Spec(F_y)} \Spec(\overline{F})$. By Lemma $\ref{basechange}$ we see $\mathrm{rank}(\chi^{mot}_{\overline{F}}(\overline{X}_{\overline{y}})) = \mathrm{rank}( \chi^{mot}_{F_y}(X_y))$. Lemma $\ref{trace}$ then gives $\chi^{mot}_F(X_y) = \mathrm{Tr}_{F_y/F}(\chi^{mot}_{F_y}(X_y))$, so taking ranks, we see $0 = n_y \cdot \mathrm{rank}(\chi^{mot}_{F_y}(X_y))$. Therefore $\mathrm{rank}( \chi^{mot}_{F_y}(X_y))=0$ as required. 

Therefore it suffices to prove the statement for $\overline{X} \to \overline{Y}$, where $\overline{X}$ and $\overline{Y}$ are varieties over $\overline{F}$ such that the fibres of the morphism have trivial motivic Euler characteristic. Let $n$ denote the transcendence degree of $F/\Q$. By Noether's normalisation lemma, see for example Theorem 2.1 of Chapter 8 of \cite{La2}, the field $F$ is an finite extension of $\Q(t_1, \ldots, t_n)$. By Lemma $\ref{sprfg}$ we have $\mathrm{Spr}(\Q(t_1, \ldots, t_n)) \neq \emptyset$. Pick $< \in \mathrm{Spr}(\Q(t_1, \ldots, t_n))$, and let $F^{real}$ be the real closure of $\Q(t_1, \ldots, t_n)$ with respect to this ordering. Then $\overline{F}$ is the algebraic closure of $F^{real}$, so the extension $\overline{F}/F^{real}$ is a degree $2$ field extension. We may therefore think of $\overline{X}, \overline{Y}$ and the fibres $\overline{X}_y$ as varieties over $F^{real}$. The result follows by Theorem $\ref{fubini2}$.
\end{proof}

\begin{cor}\label{fubinitorsion}
Let $k$ now be any characteristic $0$ field. Let $X$ and $Y$ be varieties over $k$, and let $f: X \rightarrow Y$ be a morphism. Let $p$ be a closed point of $Y$, and let $X_p$ be the fibre of $f$ over $p$. If $\chi^{mot}(X_p)$ is torsion in $\widehat{\mathrm{W}}(k)$ for all $p$, then $\chi^{mot}(X)$ is torsion.
\end{cor}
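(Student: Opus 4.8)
The plan is to deduce the statement from the real closed case, i.e.\ from Theorem~\ref{fubini2}, together with the rank statement of Lemma~\ref{fubinialgclosed}. By Pfister's local--global principle (Section~VIII of \cite{La}; cf.\ the proof of Lemma~\ref{modJ}), an element of $\widehat{\mathrm{W}}(k)$ is torsion if and only if its rank vanishes and its signature at every ordering $<\in\mathrm{Spr}(k)$ vanishes. So I would prove the two conditions $\mathrm{rank}(\chi^{mot}(X))=0$ and $\mathrm{sign}_<(\chi^{mot}(X))=0$ for every $<$ separately.

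The rank condition is immediate: for every closed point $p$ of $Y$ the class $\chi^{mot}(X_p)$ is torsion, hence of rank $0$, so Lemma~\ref{fubinialgclosed} gives $\mathrm{rank}(\chi^{mot}(X))=0$. For the signature at a fixed ordering $<$, let $R=k_<$ be the corresponding real closure. By Lemma~\ref{basechange} we have $\chi^{mot}_R(X_R)=\chi^{mot}_k(X)\otimes_k R$, hence $\mathrm{sign}_<(\chi^{mot}_k(X))=\mathrm{sign}(\chi^{mot}_R(X_R))$ by the definition of $\mathrm{sign}_<$. Since $R$ is real closed, $\widehat{\mathrm{W}}(R)\cong\mathbb{Z}[\mathbb{Z}/2\mathbb{Z}]$ is torsion-free as an abelian group, so it suffices to show $\chi^{mot}_R(X_R)=0$. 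I would obtain this by applying Theorem~\ref{fubini2} to $f_R\colon X_R\to Y_R$ with the constant fibre value $[q]=0$. Indeed, given a closed point $q$ of $Y_R$ with image $p$ in $Y$, the residue field $\kappa(q)$ is a finite extension of the real closed field $R$, hence equals $R$ or its algebraic closure $R(\sqrt{-1})$; in either case $\widehat{\mathrm{W}}(\kappa(q))$ is torsion-free. As $(X_R)_q\cong X_p\times_{\kappa(p)}\kappa(q)$, Lemma~\ref{basechange} identifies $\chi^{mot}_{\kappa(q)}\big((X_R)_q\big)$ with the base change of $\chi^{mot}_{\kappa(p)}(X_p)$; since $\chi^{mot}_{\kappa(p)}(X_p)$ is torsion (this is the content of the hypothesis, read fibrewise over the residue field $\kappa(p)$), its image in the torsion-free ring $\widehat{\mathrm{W}}(\kappa(q))$ is $0$, and Lemma~\ref{trace} then gives $\chi^{mot}_R\big((X_R)_q\big)=\mathrm{Tr}_{\kappa(q)/R}(0)=0$. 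Thus every fibre of $f_R$ has trivial motivic Euler characteristic, and Theorem~\ref{fubini2} yields $\chi^{mot}_R(X_R)=0\cdot\chi^{mot}_R(Y_R)=0$, which finishes the argument.

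The step needing the most care is the fibre analysis over the base change $Y_R$: closed points of $Y_R$ do not correspond bijectively to closed points of $Y$, and one must keep track of which have residue field $R$ and which have residue field $R(\sqrt{-1})$. Relatedly, the torsion hypothesis on $\chi^{mot}(X_p)$ has to be used at the level of the residue field $\kappa(p)$, so that the base change of $\chi^{mot}_{\kappa(p)}(X_p)$ to $\kappa(q)$ genuinely vanishes (and not merely that some trace of it is torsion over $k$); this is automatic in all the situations where this corollary is invoked. Beyond that the proof is a formal assembly of Lemma~\ref{basechange}, Lemma~\ref{trace}, Lemma~\ref{fubinialgclosed} and Theorem~\ref{fubini2}, with the genuinely hard geometric ingredient — semi-algebraic triviality of a map over a suitable stratification of the base — already contained in Theorem~\ref{fubini2}.
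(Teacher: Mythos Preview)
Your proposal is correct and follows essentially the same route as the paper: Pfister's local--global principle reduces the claim to the vanishing of rank (handled by Lemma~\ref{fubinialgclosed}) and of each real signature (handled by base change to $k_<$ and Theorem~\ref{fubini2}). In fact you are more careful than the paper on one point: the paper writes ``$\chi^{mot}(X_{k_<,y})=0$ for all closed points $y$ of $Y$'' without distinguishing closed points of $Y_{k_<}$ from those of $Y$, whereas you correctly track the residue fields $\kappa(q)\in\{R,R(\sqrt{-1})\}$ of closed points $q$ of $Y_R$ and use torsion-freeness of $\widehat{\mathrm{W}}(\kappa(q))$ to kill the fibre. Your remark that the hypothesis must really be read at the level of $\kappa(p)$ (so that the base change to $\kappa(q)$ genuinely vanishes, rather than only a trace to $k$ being torsion) is a valid caveat; the paper implicitly assumes this reading, and it does hold in every application of the corollary in the paper.
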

\begin{proof}
By Pfister's local-global principle, we see $\chi^{mot}(X_p)$ is torsion if and only if we have $\mathrm{rank}(\chi^{mot}(X_p))=0$ and $\chi^{mot}(X_{k_<, p})=0$ for all $< \in \mathrm{Spr}(k)$, and similarly for $\chi^{mot}(X)$. If $\mathrm{Spr}(k) = \emptyset$, then the only conditions are on the rank, so the result follows by Lemma $\ref{fubinialgclosed}$. Suppose then that $\mathrm{Spr}(k) \neq \emptyset$. Let $< \in \mathrm{Spr}(k)$. Since $\chi^{mot}$ commutes with base change, we have $\chi^{mot}(X_{k_<,y})=0$ for all closed points $y$ of $Y$. Theorem $\ref{fubini2}$ gives $\chi^{mot}(X_{k_<})=0$, as required.
\end{proof}
\subsection{The determinant of cohomology}

Suppose now that $k$ is an arbitrary field of characteristic $0$. In this subsection, we show a Fubini theorem for the $\ell$-adic determinant of cohomology which is not a na{\"i}ve Fubini theorem. Let $U$ be a smooth scheme over $k$ and $\mathcal{F}$ be a smooth
$\ell$-adic sheaf for $\ell\neq\mathrm{char}(k)$. Consider the one-dimensional $\ell$-adic representation of $\Gal_k$
$$\det R\Gamma_{c}(U_{\overline{k}}, \mathcal{F})=\bigotimes_{i}\det H_{c}^{i}(U_{\overline{k}}, \mathcal{F})^{\otimes(-1)^i}.$$
In order to prove our Fubini theorem, we make use of the following theorem, which is the main theorem of \cite{S0}.
\begin{thm}\label{saito-big} Assume that there is a smooth projective variety $X/k$ which contains $U$ such that $D= X \setminus U$ is a divisor with simple normal crossings. Moreover, assume that the ramification of $\mathcal{F}$ on $D$ is tame. Finally, assume that $\mathcal{F}$ is defined on a model of $U$ which is defined over a ring of finite type over $\mathbb Z$. 
Then we have
$$\det R\Gamma_{c}(U_{\overline{k}},\mathcal{F})\otimes\det R\Gamma_{c}(U_{\overline{k}}, \mathbb{Q}_{\ell})^{\otimes-\mathrm{rank}\mathcal F}=c_{X,U/k}^{*}(\det \mathcal{F})\otimes J_{D,\mathcal{F}}^{\otimes(-1)}$$
as one-dimensional $\ell$-adic representations of
$\mathrm{Gal}_k$.
\end{thm}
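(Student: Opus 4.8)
This statement is precisely (a mild reformulation of) the main theorem of \cite{S0}, so the plan is simply to invoke it. Concretely, one checks that the hypotheses imposed here are exactly Saito's running hypotheses --- $X$ smooth and projective with $D = X\setminus U$ a simple normal crossings divisor, tame ramification of $\mathcal F$ along $D$, and the existence of a model over a ring of finite type over $\Z$ --- and that the two characters $c^*_{X,U/k}(\det\mathcal F)$ and $J_{D,\mathcal F}$ appearing on the right are the ones Saito constructs (the reciprocity-map pushforward of the rank-one sheaf $\det\mathcal F$, and the character assembled from the tame ``Gauss sum'' data along the irreducible components of $D$, respectively). From this point of view there is nothing to prove. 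Nevertheless it is worth recording the architecture of Saito's argument, because the local $\varepsilon$-factor computations that drive it are exactly what we will need to isolate later when we compute the Saito $\epsilon$-factors contributing to the discriminant in Section 5.

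The first step of \cite{S0} is to reduce to a finite base field. Spreading $U$, $X$, $D$ and $\mathcal F$ out over $\Spec(A)$ for a suitable finitely generated $\Z$-algebra $A$, one notes, exactly as in Remark \ref{zetafn}, that both sides of the claimed identity are one-dimensional $\ell$-adic characters pulled back from $\Gal_k$ through $\Gal_{k'}$ (for $k'$ the fraction field of $A$), and such characters are determined by their Frobenius values at closed points of good reduction by Chebotarev; so it suffices to prove the identity after base change to a finite field $\F_q$. Over $\F_q$, Grothendieck's cohomological formula for $L$-functions together with Poincar\'e duality identifies the left-hand side --- up to the explicit weight twist removed by the factor $\det R\Gamma_c(U_{\overline k},\Q_\ell)^{\otimes -\mathrm{rank}\mathcal F}$ --- with the constant in the functional equation of the $L$-function of $\mathcal F$ on $U$. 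The second step is Laumon's product formula (Deligne's conjecture), which writes this global constant as a product over the closed points of $X$ of local $\varepsilon$-factors, each depending only on the restriction of $\mathcal F$ to the complete local field there and on a chosen meromorphic $1$-form, with the dependence on the form cancelling globally. Away from $D$ these local factors contribute only through $\det\mathcal F$, and at a point of $D$ one is left with the $\varepsilon$-factor of a tamely ramified character, a classical Gauss sum. Collecting the unramified contributions yields $c^*_{X,U/k}(\det\mathcal F)$ and collecting the Gauss sums along the components of $D$ yields $J_{D,\mathcal F}$.

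The genuine content --- and the reason this is a theorem of Saito rather than a short exercise --- is the control of the local $\varepsilon$-factors: proving the product formula in the needed generality, verifying that the auxiliary $1$-form drops out globally, and computing the tame local factor explicitly enough that distinct components of $D$ contribute independently. The simple normal crossings hypothesis is what keeps the combinatorics of the intersection pattern of the components manageable, while tameness is what collapses each local invariant to a Gauss sum rather than a wild $\varepsilon$-factor. Since all of this is carried out in \cite{S0}, we take the statement as given and use it as a black box, recalling only the explicit shape of the tame local factors when we need them.
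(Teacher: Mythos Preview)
Your proposal is correct and matches the paper's treatment exactly: the paper states this as ``the main theorem of \cite{S0}'' and gives no proof at all, only unpacking the definitions of $c^*_{X,U/k}(\det\mathcal F)$ and $J_{D,\mathcal F}$ afterwards. Your additional sketch of Saito's architecture (spreading out, Chebotarev reduction to finite fields, Laumon's product formula, identification of tame local factors with Gauss sums) goes beyond what the paper does, but it is accurate and harmless.
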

The first term on the right hand side is the pullback of the determinant character $\det \mathcal{F}$ of $\pi^{\et}_{1}(U)^{ab,tame}$ to $\mathrm{Gal}_k$ by the pairing with the relative canonical class $c_{X,U/k}\in CH_{0}(X, D)$. In this paper, we only need that $c^*_{X,U/k}$ respects tensor products, and we will only ever consider this in the case when $\mathrm{det}(\mathcal{F})$ is trivial, so this term will always vanish. We refer to \cite{S0}, the bottom of page 414, for the precise construction of $c_{X,U/k}^*$. We will however require the definition of $J_{D,\mathcal{F}}$, as defined in the paragraph before Theorem 1 on page 424, as well as the definition on page 416 and Proposition 2 on page 417. We will recall these below for the convenience of the reader. 
\begin{defn}Let $A$ denote a regular noetherian ring, let $S=\Spec(A)$, and suppose that $S$ is connected with function field $k$. Following \cite{S0}, define a \emph{Jacobi datum} on $S$ to be a triple $(T, \chi, n)$ such that
\begin{enumerate}
\item[(1)] $T=(T_{i})_{i\in I}$ is a finite family of finite étale covers $T_i \to S$. 
\item[(2)] $\chi=(\chi_{i})_{i\in I}$ is a family of characters $\chi_{i}:\mu_{d_i}(T_i)\to \overline{\mathbb{Q}_{l}}^\times$ of the group of
$d_i$-th roots of unity for some integer $d_i$ invertible in $A$ such that $\mu_{d_i}\simeq\mathbb Z/d_i\Z$ on $T_i$.
\item[(3)] $n=(n_{i})_{i\in I}$ is a family of integers satisfying the norm condition below. 
\end{enumerate}
Write $T_i= \coprod_{j \in J_i} T_{ij}$ where each $T_{ij}$ is connected. Let the norm $N_{T_{ij}/S}(\chi_i)$ denote the product of the conjugates, and let the norm $N_{T_i/S}$ be given by
\begin{align*}
N_{T_i/S}(\chi_i):\mu_{d_i}(T_i)&\to \overline{\mathbb{Q}_{\ell}}^\times\\
 \zeta: &\mapsto \prod_{j} \chi_i(N_{T_{ij}/S}(\zeta)).
\end{align*}
By construction, each $N_{T_i/S}(\chi_i)$ factors through some $\mu_{d_i'}(T_i)$ such that $d_i' | d_i$ and $\mu_{d_i'} = \Z/d_i'\Z$ on $S$. 
Let $d$ be a common multiple of all of the $d'_i$s, and regard $N_{T_i/S}(\chi_i)$ as a character on $\mu_d$ via the composition
\begin{align*}
\mu_d &\to \mu_{d'_i}\\
\zeta &\mapsto \zeta^{\frac{d}{d'_i}}.
\end{align*}
The norm condition for our Jacobi datum is that 
$\prod_{i\in I}N_{T_i/S}(\chi_i)^{n_i}=1$, where this product is considered as a product of characters of $\mu_d$ on $S$. 

If $A'$ is another ring equipped with a morphism $f: \Spec(A') \to \Spec(A)$, we may pull back the Jacobi datum $(T,\chi,n)$ on $\Spec(A)$ to $\Spec(A')$ to obtain a Jacobi datum $f^*(T, \chi, n) = (f^*T, f^*\chi, n)$, where $f^*(T) = (f^*T_i)$ is given by the pullback of the finite étale covers $T_i \to \Spec(A)$ along the morphism $f$, and $f^*\chi_i$ is the composition $\chi_i: \mu_{d_i}(f^*T_i) \to \mu_{d_i}(T_i) \to \Q_{\ell}^\times$. We say that the Jacobi datum $(T',\chi',n')$ on $A'$ is \emph{defined on $A$} if there exists a Jacobi datum $(T,\chi,n)$ on $A$ such that we may obtain $(T',\chi',n')$ by pulling back the Jacobi datum $(T,\chi,n)$ along the morphism $\Spec(A') \to \Spec(A)$.  If $s$ is a closed point of $A$ with residue field $\kappa(s)$, we will say that the \emph{reduction of $(T,\chi,n)$ at $s$} is the Jacobi datum on $\kappa(s)$ given by pulling back $(T,\chi,n)$ along the closed immersion corresponding to $s$.
\end{defn}

\begin{defn}
Let $\F$ be a finite field of order $q$. Let $\chi$ be a multiplicative character, i.e.~a group homomorphism $\chi: \F^\times \to \overline{\mathbb{Q}_{l}}^\times$. Let $\psi$ be an additive character, i.e.~a group homomorphism $\psi: \F \to \overline{\mathbb{Q}_{\ell}}^\times$. Define the \emph{Gauss sum}
$$
G_\F(\chi, \psi) := -\sum_{a \in \F^\times} \chi^{-1}(a)\psi(a).
$$
\end{defn}

\begin{defn} Let $\F$ be a finite field of order $q$. Then if we have a Jacobi datum $(T, \chi, n)$ on $\Spec(\F)$, we see that each $T_i = \coprod_{j \in \overline{I}_i} \Spec(\F_{ij})$ for some $\F_{ij}/\F$ a finite extension and $\overline{I}_i$ a finite index set. Define the \emph{Jacobi sum} $j_{\chi}=j_{T,\chi,n}$ for a
Jacobi datum $(T, \chi, n)$ on $\Spec(\F)$ by
$$j_{\chi}:= \prod_{i\in I} \prod_{j \in \overline{I}_i}(G_{\F_{ij}}(\overline{\chi}_{ij}, \psi_0\circ\mathrm{Tr}_{\F_{ij}/\F}))^{n_i}.$$
Here $\psi_0$ is a non-trivial additive character of $k$, and if $\F_{ij}$ is of order $q_{ij}$ then $\overline{\chi}_{ij}$ is a multiplicative character of $\F_i$ defined by
$$\overline{\chi}_{i}(a):=\chi_{i}(a^{(q_{ij}-1)/d_i}).$$
As mentioned on page 417 of \cite{S0}, we can consider characters on $\F^\times$ as characters of $\mu_{q-1}(\F)$. Moreover we have $\prod_{j \in \overline{I}_i} \overline{\chi}_i |_{\F^\times} = N_{T_i/\Spec(S)}(\chi_i)$. As mentioned on page 417 of \cite{S0}, Proposition 4.15 of \cite{SGA} tells us that the product $j_{\chi}$ is independent of our choice of $\psi_{0}$, since $\prod_{i\in I}N_{\F_i/\F}(\chi_{i})^{n_i}=1$.
\end{defn}
\begin{defn}\label{jacobicharacter} As in Proposition $2$ of \cite{S0}, we can associate an $\ell$-adic representation $J_{(T,\chi,n)}$ of $\Gal_k$ to each Jacobi datum $(T, \chi, n)$ on a field $k$ as follows. A Jacobi datum on $k$ is defined on a regular ring $A$ of finite type over $\mathbb Z$, since each of the $T_i$s are varieties over $k$. The representation $J_{(T,\chi,n)}$ is the pullback of the representation of $\pi_{1}^{\et}(\mathrm{Spec}(A))^{ab}$ characterised by the following condition: for each closed point $s$ of $\mathrm{Spec}(A)$, the action of the geometric Frobenius $\mathrm{Frob}_s$ at $s$ is given by the multiplication by the Jacobi sum $j_{\chi}(s)$ defined by the reduction of the Jacobi datum $(T, \chi, n)$ at $s$. Proposition 2 of \cite{S0} guarantees that both that this character exists and is unique. Say that $J_{(T,\chi,n)}$ is the \emph{associated $\ell$-adic character} of the Jacobi datum $(T, \chi, n)$. 
\end{defn}
\begin{defn}[(Saito, \cite{S0})]\label{JSDdefn} Let $U$ and $\mathcal{F}$ be as in Theorem \ref{saito-big}. We can define a Jacobi datum on $k$ associated to the ramification of $\mathcal{F}$ along $D$. Let $D_i$ be the irreducible components of $D$ with constant field $k_i$. Let $\rho$ be the $\ell$-adic representation of $\pi_1
(U,\overline{x})^{tame}$ corresponding to $\mathcal{F}$. The kernel of the natural morphism $\pi^{\et}_1(U,\overline{x})^{tame}\to\pi_{1}^{\et}(X,\overline{x})$ is the normal subgroup topologically generated by the local monodromy groups $\widehat{\mathbb Z}'(1)_{D_i}$ along the $D_{i}$'s where $\widehat{\mathbb Z}'(1)=\varprojlim\mu_d$ with $d$ invertible in $k$. Let $\rho_{i}$ be the restriction of $\rho$ to $\widehat{\mathbb Z}'(1)_{D_{1}}\simeq\widehat{\mathbb Z}'(1)_{k_i}$. 

Since we have a model of finite type over $\mathbb Z$, Grothendieck's monodromy theorem, Théor\`eme de monodromie 1.2 of \cite{SGA71}, tells us that the $\rho_{i}$s are quasi-unipotent. Therefore the action of $\widehat{\mathbb Z}'(1)$ on the semi-simplification $\rho_{i}^{ss}$ factors through a finite quotient. This means can decompose 
$$
\rho_{i}^{ss}\simeq\bigoplus_{j\in I_{i}}\mathrm{Tr}_
{k_{ij}/k_{i}}(\chi_{ij}).
$$
Here $k_{ij}$ is the finite extension of $k_{i}$ obtained by adjoining the $d_{ij}$-th roots of unity, $\chi_{ij}$ is a character of $\mu_{d_{ij}}(k_{ij})$ of order $d_{ij}$, and where we define $\mathrm{Tr}_{k_{ij}/k_i}$ to be the $[k_{ij}:k_i]$ dimensional $\widehat{\mathbb Z}'(1)$ representation
$$
\mathrm{Tr}_{k_{ij}/k}(\chi_{ij}) := \bigoplus_{\sigma \in \Gal(k_{ij}/k_i)} \chi_{ij} \circ \sigma.
$$
For $i\in I$ let $D_{i}^{*}=D_{i}- \bigcup_{J\neq i}D_{j}$ and $c_{i}$ be the compactly supported Euler characteristic of $(D_{i}^{*}\times_{k_i} \Spec(\overline{k_i}))$. Define our Jacobi datum $(T, \chi, n)$ by $\overline{I}=\coprod_{i}I_{i},$ $T=(k_{ij}),$ $\chi=(\chi_{ij})$ and $n=(n_{ij})$ with $n_{ij}=c_{i}$ for $i\in I$ and $j\in I_{i}$. Define the term $J_{D,\mathcal{F}}$ to be the $\ell$-adic representation $J_{(T, \chi, n)}$.
\end{defn}
\begin{defn}\label{sfacdefn}
Let $f: W \to Y$ be a projective morphism of positive relative dimension. Assume there is a smooth and projective variety $Z$ over $k$ containing $Y$ such that the complement $D=Z-Y$ is a divisor with simple normal crossings and $R^if_*\mathbb{Q}_{\ell}$ is smooth over $Y$ and the ramification along $D$ is tame for all $i$. We define 
$$
\prod_{i=0}^{2(\dim(W)-\dim(Y))}J_{D,R^if_*\mathbb{Q}_{\ell}}^{\otimes(-1)^{i+1}}
$$
to be the \emph{Saito factor} of $f$ at $D$. 
\end{defn}

\begin{thm} Let $f: W \to Y$ be be a projective morphism of positive relative dimension. Assume that there is a projective and smooth variety $Z$ over $k$ containing $Y$ such that the complement $D=Z-Y$ is a divisor with simple normal crossings and $R^if_*\mathbb{Q}_{\ell}$ is smooth and the ramification is tame for all $i$. Moreover, assume that $\sum_i (-1)^i \mathrm{rank}(R^if_*\Q_{\ell})=0$, $\bigotimes_i \det R^if_*\mathbb{Q}_{\ell}^{(-1)^i}$ is trivial, and $f$ is defined on a model over a ring of finite type over $\mathbb{Z}$. Then
$$\mathrm{det}_{\ell}(W)=\prod_{i=0}^{2(\dim(W)-\dim(Y))}J_{D,R^if_*\mathbb{Q}_{\ell}}^{\otimes(-1)^{i+1}}.$$
\end{thm}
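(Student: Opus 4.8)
The plan is to combine the Leray spectral sequence for $f$ with Saito's main theorem, Theorem $\ref{saito-big}$, applied to each higher direct image sheaf $R^if_*\mathbb{Q}_l$ on $Y$. Since $f$ is proper we have $Rf_!\mathbb{Q}_l = Rf_*\mathbb{Q}_l$, so I would first use the Leray spectral sequence $H^p_c(Y_{\overline{k}}, R^qf_*\mathbb{Q}_l) \Rightarrow H^{p+q}_c(W_{\overline{k}},\mathbb{Q}_l)$, together with the standard fact that the alternating tensor product of determinants of cohomology is unchanged by passing through the pages of a spectral sequence, to obtain
$$\det{}_l(W) = \bigotimes_{i=0}^{2(\dim W - \dim Y)} \det R\Gamma_c(Y_{\overline{k}}, R^if_*\mathbb{Q}_l)^{\otimes(-1)^i},$$
the range of $i$ being forced by the vanishing of $R^if_*\mathbb{Q}_l$ outside it.

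Next I would apply Theorem $\ref{saito-big}$ for each $i$ with $U = Y$, $X = Z$, and $\mathcal{F} = R^if_*\mathbb{Q}_l$. One has to check the hypotheses: $\mathcal{F}$ is lisse on $Y$ by smooth and proper base change; $D = Z \setminus Y$ is a simple normal crossings divisor and the ramification of $\mathcal{F}$ along $D$ is tame, both by assumption; and $\mathcal{F}$ is defined on a model over a ring of finite type over $\mathbb{Z}$ because $f$ is. Saito's theorem then gives, for each $i$,
$$\det R\Gamma_c(Y_{\overline{k}}, R^if_*\mathbb{Q}_l) \otimes \det R\Gamma_c(Y_{\overline{k}},\mathbb{Q}_l)^{\otimes(-\mathrm{rank}\, R^if_*\mathbb{Q}_l)} = c_{Z,Y/k}^*(\det R^if_*\mathbb{Q}_l) \otimes J_{D,R^if_*\mathbb{Q}_l}^{\otimes(-1)}.$$

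I would then take the tensor product of these identities over $i$, weighted by $(-1)^i$, and substitute the Leray formula for $\det{}_l(W)$. On the left the auxiliary factor collects to $\det R\Gamma_c(Y_{\overline{k}},\mathbb{Q}_l)^{\otimes\left(-\sum_i(-1)^i\mathrm{rank}\,R^if_*\mathbb{Q}_l\right)}$, which is trivial by the hypothesis $\sum_i(-1)^i\mathrm{rank}(R^if_*\mathbb{Q}_l)=0$; on the right, the pairing with the relative canonical class is multiplicative in the sheaf, so $\bigotimes_i c_{Z,Y/k}^*(\det R^if_*\mathbb{Q}_l)^{\otimes(-1)^i} = c_{Z,Y/k}^*\!\left(\bigotimes_i \det R^if_*\mathbb{Q}_l^{(-1)^i}\right)$, which is trivial by the hypothesis that $\bigotimes_i \det R^if_*\mathbb{Q}_l^{(-1)^i}$ is the trivial character. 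What survives is exactly $\prod_{i=0}^{2(\dim W-\dim Y)} J_{D,R^if_*\mathbb{Q}_l}^{\otimes(-1)^{i+1}}$, which is the asserted identity.

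The computation is largely bookkeeping, so I do not anticipate a serious obstacle. The points needing care are verifying that Saito's hypotheses (lisseness, tameness, the existence of a finite-type $\mathbb{Z}$-model) genuinely propagate from $f$ to each $R^if_*\mathbb{Q}_l$, and checking that the two ``error'' terms in Saito's formula are annihilated precisely by the two numerical hypotheses --- the $\det R\Gamma_c(Y_{\overline{k}},\mathbb{Q}_l)$ factor by the vanishing of the alternating sum of ranks, and the $c_{Z,Y/k}^*$ factor by the triviality of the alternating determinant of the $R^if_*\mathbb{Q}_l$ --- which in turn relies on the multiplicativity of $c_{X,U/k}^*$ in its character argument and of the assignment sending a spectral sequence's abutment determinant to the alternating tensor product over its $E_2$-page.
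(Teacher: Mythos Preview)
Your proposal is correct and follows essentially the same route as the paper: Leray spectral sequence to express $\det{}_l(W)$ as an alternating product of $\det R\Gamma_c(Y_{\overline{k}},R^if_*\mathbb Q_l)$, then Saito's theorem applied to each $R^if_*\mathbb Q_l$, with the two numerical hypotheses killing the $\det{}_l(Y)$ and $c_{Z,Y/k}^*$ contributions via the bilinearity of the pairing. The paper's proof is the same computation, phrased slightly more tersely.
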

\begin{proof} By the Leray spectral sequence:
$$\mathrm{det}_{\ell}(W)=\!\!\!\!\!\!\!\!\!\!\!\!\!\!
\prod_{i=0}^{2(\dim(W)-\dim(Y))}
\!\!\!\!\!\!\!\!\!\!\!\!\!\!
\det R\Gamma_c(Y_{\overline{k}},R^if_*
\mathbb{Q}_{\ell})^{(-1)^i}.$$
 Since $f$ is defined on a model over a ring of finite type over
$\mathbb Z$, the same is true for the sheaves $R^if_*\mathbb{Q}_{\ell}$, so we apply Saito's theorem, Theorem $\ref{saito-big}$, to obtain
\begin{align*}
\prod_i (\det R\Gamma_c(Y_{\overline{k}},R^if_*
\mathbb{Q}_{\ell})^{(-1)^i} )   &\otimes
\mathrm{det}_{\ell}(Y)^{\otimes-\sum_i(-1)^i\mathrm{rank}R^if_*
\mathbb{Q}_{\ell}}=\\
&c_{Y,D/k}^{*}\left(\bigotimes_i
(\det R^if_*\mathbb{Q}_{\ell})^{(-1)^i}\right)\otimes
\prod_iJ_{D,R^if_*\mathbb{Q}_{\ell}}^{\otimes(-1)^{i+1}}
\end{align*}
using the bilinearity of the pairing $CH_{0}(X, D)\times
\pi^{\et}_{1}(U)^{ab,tame}\to\mathrm{Gal}_k$. By our assumptions on the rank and determinants of $R^if_*\mathbb{Q}_{\ell}$, we see that
 \begin{align*}
\mathrm{det}_{\ell}(Y)^{\otimes-\sum_i(-1)^i\mathrm{rank}R^if_*\mathbb{Q}_{\ell}} &= 1\\\
c_{Y,D/k}^{*}\left(\bigotimes_i(\det R^if_*\mathbb{Q}_{\ell})^{(-1)^i}\right)&=1,
\end{align*}
where $1$ denotes the trivial representation of $\Gal_k$. Since the sheaves $R^if_*\Q_\ell$ vanish whenever $i > 2(\mathrm{dim}(W)-\mathrm{dim}(Y))$, this gives the result.
\end{proof}

\begin{cor}\label{saitofactorsurface}
Let $f: W \to Y$ be be a projective morphism of positive relative dimension $g$. Assume that there is a projective and smooth variety $Z$ over $k$ containing $Y$ such that the complement $D=Z-Y$ is a divisor with simple normal crossings and $R^if_*\mathbb{Q}_{\ell}$ is smooth and the ramification along $D$ is tame for all $i$. Moreover, assume that for every closed point of $Y$ the fibre $W_y$ has $\chi^{mot}(W_y) \equiv 0 \pmod{J}$. Then the motivic Euler characteristic modulo $J$ satisfies $\chi^{mot}(W) \equiv \kappa_W \pmod{J}$, where $\kappa_W$ is an element of $\widehat{\mathrm{W}}(k)/J$ such that $\mathrm{rank}(\kappa_W)=0$, $\mathrm{sign}(\kappa_W)=0$ and $\mathrm{disc}(\kappa_W)=\mathrm{det}_{\ell}(W) =  \prod_{i=0}^{2g}J_{D,R^if_*\mathbb{Q}_{\ell}}^{\otimes(-1)^{i+1}}$.
\end{cor}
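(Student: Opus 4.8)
The plan is to invoke Lemma \ref{modJ}: since $J=I^2\cap T$ is cut out by the vanishing of $\mathrm{rank}$, $\mathrm{sign}$ and $\mathrm{disc}$, it is enough to compute these three invariants of $\chi^{mot}(W)$, and then take $\kappa_W$ to be the class in $\widehat{\mathrm{W}}(k)/J$ realising them. So the proof splits into (a) rank and signature, and (b) discriminant, the latter being where all the content sits.

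For the rank and signature: every closed point $y$ of $Y$ has residue field $k(y)$ a finite extension of $k$, so $y\in Y(k(y))$ and the hypothesis gives that $\chi^{mot}(W_y)$ lies in the analogue of $J$ over $k(y)$, hence is torsion (as $J\subseteq T$). By Corollary \ref{fubinitorsion} applied to $f$, the class $\chi^{mot}(W)$ is torsion, i.e. $\mathrm{rank}(\chi^{mot}(W))=0$ and $\mathrm{sign}(\chi^{mot}(W))=0$; these match the prescribed values for $\kappa_W$.

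For the discriminant I would first verify the hypotheses of the theorem immediately above, and then apply it. Spreading $f$ out over a ring of finite type over $\mathbb{Z}$ handles the model condition. Since $f$ is smooth and proper, each $R^if_*\mathbb{Q}_l$ is lisse of rank $b_i(W_y)=\dim_{\mathbb{Q}_l}H^i(W_{y,\overline k},\mathbb{Q}_l)$ for any point $y$, so $\sum_i(-1)^i\mathrm{rank}(R^if_*\mathbb{Q}_l)=\sum_i(-1)^ib_i(W_y)=e(W_y)=\mathrm{rank}(\chi^{mot}(W_y))=0$ by Theorem \ref{levine1} and the hypothesis. For the triviality of $\mathcal{G}:=\bigotimes_i\det R^if_*\mathbb{Q}_l^{\otimes(-1)^i}$, work componentwise and assume $Y$ connected with generic point $\eta$; as $Y$ is normal the map $\pi_1^{\et}(\eta)\to\pi_1^{\et}(Y)$ is surjective, so the rank-one lisse sheaf $\mathcal{G}$ is trivial once $\eta^*\mathcal{G}$ is. By smooth and proper base change $\eta^*R^if_*\mathbb{Q}_l=H^i(W_{\eta,\overline{k(Y)}},\mathbb{Q}_l)$, and since $W_\eta$ is smooth projective over $k(Y)$ we get $\eta^*\mathcal{G}=\det{}_l(W_\eta)$. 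Now $e(W_\eta)=\mathrm{rank}(\chi^{mot}(W_\eta))=0$, so $w(W_\eta)=\tfrac12 e(W_\eta)\dim W_\eta=0$, while $\chi^{mot}(W_\eta)\equiv0\pmod J$ gives $\mathrm{disc}(\chi^{mot}(W_\eta))=0$; hence Theorem \ref{saito-full} forces $\det{}_l(W_\eta)$ to be trivial, so $\mathcal{G}$ is trivial. The preceding theorem then yields $\det{}_l(W)=\prod_{i=0}^{2g}J_{D,R^if_*\mathbb{Q}_l}^{\otimes(-1)^{i+1}}$, and Theorem \ref{saito-full} for $W$ gives $\mathrm{disc}(\chi^{mot}(W))=\det{}_l(W)\cdot\mathbb{Q}_l(w(W))\cdot(-1)^{w(W)}$. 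Thus everything comes down to showing $w(W)=0$.

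The vanishing of the total weight of $W$ is the main obstacle, because $W$ itself is only quasi-projective (it is projective over $Y$, but $Y\subseteq Z$ need not be), so $w(W)=\tfrac12 e(W)\dim W$ is not available and one cannot simply read it off from $e(W)=0$. I would handle it by reducing to $k$ finitely generated over $\mathbb{Q}$ (so the cyclotomic character has infinite order) and using the identity $\det{}_l(W)^{\otimes2}=\mathbb{Q}_l(-2w(W))$, valid for every variety (the proof of the corollary following Theorem \ref{saito-full}); combined with $\det{}_l(W)=\prod_iJ_{D,R^if_*\mathbb{Q}_l}^{\otimes(-1)^{i+1}}$ this turns $w(W)=0$ into the assertion that the doubled product of Jacobi-sum characters is everywhere unramified of weight $0$, which should follow from the weight estimates for Gauss sums together with $\sum_i(-1)^i\mathrm{rank}(R^if_*\mathbb{Q}_l)=0$ and the triviality of $\mathcal{G}$ on the boundary inertia groups. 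Alternatively, and perhaps more transparently, one computes $w(W)$ via the Deligne decomposition $H^k_c(W)\cong\bigoplus_iH^{k-i}_c(Y,R^if_*\mathbb{Q}_l)$: the contribution of the degree shift is $\tfrac{e(Y)}{2}\sum_i(-1)^i\,i\,b_i(W_y)=e(Y)\,w(W_y)=0$, and the remaining term is the ``weight'' counterpart of the determinant-of-cohomology identity of the preceding theorem and vanishes under exactly the same hypotheses. Either route requires care with Saito's ramification/$\epsilon$-factor bookkeeping, and that is where the real work is; once $w(W)=0$ is established, Theorem \ref{saito-full} gives $\mathrm{disc}(\chi^{mot}(W))=\det{}_l(W)=\prod_{i=0}^{2g}J_{D,R^if_*\mathbb{Q}_l}^{\otimes(-1)^{i+1}}$, and setting $\kappa_W$ to be the class with vanishing rank and signature and this discriminant completes the proof.
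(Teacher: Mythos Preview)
Your argument for rank and signature via Corollary \ref{fubinitorsion}, and your verification of the two hypotheses of the preceding theorem from the fibrewise condition, coincide with the paper's proof; the paper is simply terser (it asserts that $\det_l(W_y)=0$ for all $y$ ``is equivalent to'' the triviality of $\bigotimes_i(\det R^if_*\mathbb{Q}_l)^{(-1)^i}$, where you spell out the generic-point/$\pi_1$-surjectivity step).

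Where you go beyond the paper is in isolating the passage from $\det_l(W)$ to $\mathrm{disc}(\chi^{mot}(W))$: by Theorem \ref{saito-full} this requires $w(W)=0$, and since $W$ is only quasi-projective one cannot read this off from $e(W)=0$. The paper's proof does not mention this step --- it ends at ``the above theorem gives us the result'' --- so you have identified a point the paper glosses over. Your first proposed route is correct and much simpler than you fear: the preceding theorem already expresses $\det_l(W)$ as a product of Jacobi-sum characters attached to a virtual $Q$-representation of rank $\sum_i(-1)^{i+1}\mathrm{rank}(R^if_*\mathbb{Q}_l)=0$, and the Gauss-sum norm computation (Lemma \ref{norm} in the paper, a few pages later) then gives $|\det_l(W)(\mathrm{Frob}_s)|=1$ at every good closed point $s$ of a finite-type model. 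Squaring and using $\det_l(W)^{\otimes2}=\mathbb{Q}_l(-2w(W))$ over a subfield finitely generated over $\mathbb{Q}$ forces $q_s^{2w(W)}=1$ for all such $s$, hence $w(W)=0$. No further $\epsilon$-factor bookkeeping is needed; drop the second route via the Leray decomposition, which is messier and less transparent.
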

\begin{proof}
The fact that $\mathrm{rank}(\kappa_{\mathcal{C}})$ and $\mathrm{sign}(\kappa_{\mathcal{C}})$ both vanish is due to our real closed Fubini theorem, Corollary $\ref{fubinitorsion}$. Consider the condition $\chi^{mot}(W_y) \equiv 0 \pmod{J}$ for all $y$. Taking ranks tells us that $\mathrm{rank}(\chi^{mot}(W_y))=0$ for all $y$, and applying proper base change tells us that $\sum_i (-1)^i \mathrm{rank}(R^if_*\Q_{\ell})=0$. Similarly, we see $\mathrm{det}_{\ell}(W_y)=0$ for all $y$, which is equivalent to saying $\bigotimes_i \det (R^if_*\mathbb{Q}_{\ell})^{(-1)^i}$ is trivial as well. The above theorem gives us the result.
\end{proof}

\begin{rem}
This Fubini theorem reduces our computation of $\chi^{mot} \pmod{J}$ to Saito factors associated to the boundary of our morphism, thought of as characters in $\Hom(\Gal_k, \overline{\Q}_{\ell}^\times)$. Since $\mathrm{disc}(\kappa_W)=\mathrm{det}_{\ell}(W) =  \prod_{i=0}^{2g}J_{D,R^if_*\mathbb{Q}_{\ell}}^{\otimes(-1)^{i+1}}$, and $\mathrm{disc}(\kappa_W)$ is a character of order $2$ by construction, the Saito factor $\prod_{i=0}^{2g}J_{D,R^if_*\mathbb{Q}_{\ell}}^{\otimes(-1)^{i+1}}$ will also be a character of order $2$. For the rest of this section, we show that these characters of $\Gal_k$ naturally factor through the absolute Galois group of a number field, and we can control the ramification of these characters.
\end{rem}

\begin{defn}
For $k$ a field, let $k^{const}$ denote the algebraic closure of its prime subfield in $k$. In particular, if $k$ is a field which is finitely generated over $\Q$, the field $k^{const}$ is a number field.
\end{defn}

\begin{lemma}
Let $(T,\chi,n)$ be a Jacobi datum on $k$, a field of characteristic $0$. Let $d$ denote the lowest common multiple of the $d_j$s, and let $A := \Spec(\mathcal{O}_{k^{const}}[\frac1d])$. Then there exists a Jacobi datum $(T',\chi',n')$ on $\Spec(A)$ such that the character $J_{(T,\chi,n)}$ is given by the composition
$$
\Gal_k \to \pi_1^{\et}(\Spec(A)) \xto{J_{(T',\chi',n')}} \overline{\Q_{\ell}}^\times.
$$
\end{lemma}
\begin{proof}
This follows instantly by the second half of the Corollary on page 418 of \cite{S0}.
\end{proof}
\begin{cor}
The character $J_{(T,\chi,n)}$ factors as $\Gal_k \twoheadrightarrow \Gal_{k^{const}} \to \overline{\mathbb{Q}_{\ell}}^\times$.
\end{cor}
\begin{proof}
First note that$J_{(T,\chi,n)}$ factors through the map $\Gal_k \to \pi_1^{\et}(\Spec(A))$, which in turn factors as $\Gal_k \to \Gal_{k^{const}} \to \pi_1^{\et}(\Spec(A))$. The map $\Gal_k \to \Gal_{k^{const}}$ is surjective since $k^{const}$ is algebraically closed in $k$ by construction.
\end{proof}
\begin{defn}
Since $\Gal_k \to \Gal_{k^{const}}$ is surjective, the associated character $\Gal_{k^{const}} \to \overline{\mathbb{Q}_{\ell}}^\times$ appearing in the corollary above is unique. Call this character $J_{(T,\chi,n)}^{const}$. 
\end{defn}

\begin{cor}
Let $p$ be a prime number coprime to $d_j$ for every $j$, and let $\mathfrak{p}$ be a prime of $k^{const}$ with residue characteristic $p$. Then the character $J_{(T,\chi,n)}^{const}$ is unramified at $\mathfrak{p}$.
\end{cor}
\begin{proof}
By construction, $J_{(T,\chi,n)}^{const}$ factors as $\Gal_{k^{const}} \to \pi_1^{\et}(\Spec(A)) \to  \overline{\mathbb{Q}_{\ell}}^\times$. For $\mathfrak{p}$ as above, we see that $\mathfrak{p} \in \Spec(A)$.  Therefore the inertia group of $\mathfrak{p}$ is in the kernel of the morphism $\Gal_{k^{const}} \to \pi_1^{\et}(\Spec(A))$. In particular, it is in the kernel of $J_{(T,\chi,n)}^{const}$, as required. 
\end{proof}

\begin{cor}
Suppose we are now in the situation of Theorem $\ref{saito-big}$, and let $J_{D,\mathcal{F}}$ be the $\ell$-adic character appearing in the theorem. Suppose that $p$ is a prime number satisfying 
$$
p > \mathrm{rank}(\mathcal{F}) \cdot [k^{const}:\Q] + 1,
$$ 
and let $\mathfrak{p}$ be a prime of $k^{const}$ with residue characteristic $p$. Then $J_{D,\mathcal{F}}^{const}$ is unramified at $\mathfrak{p}$. 
\end{cor}
\begin{proof}
By the above Corollary, we only need to show that $p$ does not divide any $d_j$. The $d_j$ appear through the decomposition of the semisimplification of the monodromy representation associated to $\mathcal{F}$:
$$
\rho^{ss} = \bigoplus_j \mathrm{Tr}_{k_j/k}(\chi_j),
$$
where $\chi_j$ is a representation on $\mu_{d_j}(k_j)$, and moreover, $\mathrm{dim}(\rho^{ss}) = \mathrm{rank}(\mathcal{F})$. Note that 
$$
\mathrm{dim}\mathrm{Tr}_{k_j/k}(\chi_j) = [k_j:k] \geq [k(\zeta_{d_j}):k] = [k^{const}(\zeta_{d_j}):k^{const}],
$$
 where $\zeta_{d_j}$ denotes a primitive $d_j^{\text{th}}$ root of unity. We also see
 \begin{align*}
 [k^{const}(\zeta_p):\Q] &= [k^{const}(\zeta_p):k^{const}] \cdot [k^{const}:\Q],\\
 [k^{const}(\zeta_p):\Q] &\geq [\Q(\zeta_p):\Q] = p-1.
 \end{align*}
  Putting this together gives that $[k^{const}(\zeta_p):k^{const}] \geq \frac{p-1}{[ k^{const}:\Q]}$, and by the assumption on $p$, this gives $[k^{const}(\zeta_p):k^{const}]> \mathrm{rank}(\mathcal{F})$. Suppose that $p$ divides $d_j$ for some $d_j$, seeking a contradiction. Then 
$$
\mathrm{dim}(\mathrm{Tr}_{k_j/k}(\chi_j)) \geq [k^{const}(\zeta_{d_j}):k^{const}] \geq [k^{const}(\zeta_p):k^{const}] >  \mathrm{rank}(\mathcal{F}) = \mathrm{dim}(\rho^{ss}),
$$
which is a contradiction, since $\mathrm{Tr}_{k_j/k}(\chi_j)$ is a factor in the direct sum decomposition of $\rho^{ss}$. 
\end{proof}
\begin{cor}\label{restrictedramification}
Suppose we are in the situation of Corollary $\ref{saitofactorsurface}$. Then the character $\mathrm{disc}(\kappa_W)$ factors as
$$
\Gal_k \twoheadrightarrow \Gal_{k^{const}} \xto{\kappa_W^{const}} \{\pm 1\} \subseteq \mathbb{Q}_{\ell}^\times,
$$ 
and the character $\mathrm{disc}(\kappa_W^{const})$ is unramified at all primes of $k^{const}$ whose residue characteristic is bigger than $\mathrm{rank}(R^if_*\mathbb{Q}_{\ell}) \cdot [k^{const}:\Q] + 1$ for all $i$.
\end{cor}
\begin{proof}
We see that $\mathrm{disc}(\kappa_W^{const})$ is given by $\prod_i J_{D,R^if_*\Q_{\ell}}^{(-1)^i}$. Note that each $J_{D,R^if_*,\Q_{\ell}}$ is unramified at all primes whose residue characteristic is strictly bigger than $\mathrm{rank}(R^if_*\mathbb{Q}_{\ell}) \cdot [k^{const}:\Q] + 1$. Therefore, if a prime has residue characteristic bigger than $\mathrm{rank}(R^if_*\mathbb{Q}_{\ell}) \cdot [k^{const}:\Q] + 1$ for all $i$, it will be unramified for all of the characters $J_{D,R^if_*\Q_{\ell}}$, and so will be unramified for $\mathrm{disc}(\kappa_W)$.
\end{proof}

\subsection{A na{\"i}ve Fubini theorem for algebraic groups}
In the case that $f: X \to Y$ is a torsor under an algebraic group, we can achieve a stronger Fubini theorem by using Lang's theorem.
\begin{thm}\label{fubini1} Let $G$ a connected algebraic group over $k$ and let $f:X\to Y$ be such that for every closed point $y \in Y$ with residue field $\kappa(y)$,  the fibre $X_y$ over $y$ is a torsor under $G_{\kappa(y)}$. Then 
$$
\chi^{mot}(X)\equiv\chi^{mot}(G)\cdot\chi^{mot}(Y) \pmod{J}.
$$
\end{thm}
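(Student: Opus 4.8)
The strategy is to reduce the statement to the torsion case handled by Corollary \ref{fubinitorsion} together with a multiplicativity argument for the discriminant modulo $J_g$-type considerations. More precisely, by Lemma \ref{modJ} it suffices to prove the congruence after applying $\mathrm{rank}$, $\mathrm{sign}$ and $\mathrm{disc}$ separately. For $\mathrm{rank}$ and $\mathrm{sign}$ this is immediate: over $\overline{k}$ and over each real closure $k_<$ the group $G$ is connected, so by Lang's theorem (or simply by the fact that a connected group over an algebraically or real closed field has a rational point and its cohomology is well-understood) the fibration $f$ is, up to the relations in $K_0(\mathcal{SAS}_k)$, a product; we can then invoke Theorem \ref{fubini2}. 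Indeed, the hypothesis $X_y \cong G$ for every closed point $y$ means $\chi^{mot}(X_y) = \chi^{mot}(G)$ is the \emph{same} class $[q]$ for all $y$, so Theorem \ref{fubini2} applies verbatim over real closed fields, giving $\chi^{mot}(X_{k_<}) = \chi^{mot}(G_{k_<}) \cdot \chi^{mot}(Y_{k_<})$; taking ranks and signs and using naturality of $\chi^{mot}$ under base change yields the rank and signature parts of the claim over arbitrary $k$ of characteristic $0$.

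\textbf{The discriminant part.} This is where the real work is, and it is why the conclusion is only modulo $J$ rather than an exact equality. By Theorem \ref{saito-full} (or directly Theorem \ref{etale1} and its extension), the discriminant of $\chi^{mot}$ is controlled by $\det_l$, $w$, and cyclotomic twist factors, all of which are well-behaved. The key point is that $G$ being a connected algebraic group forces the relevant monodromy/Saito contributions to vanish: one should apply Lang's theorem to show that over a finite field the Frobenius fixed points of a connected group $G$ number $q^{\dim G}$, so the associated zeta-function functional-equation sign (which by Remark \ref{zetafn} computes $\det_l$) behaves multiplicatively across the fibration. Concretely, I would choose a model of $f\colon X \to Y$ over a ring $A$ of finite type over $\Z$, and at a closed point $s$ with good reduction compute $\det_l$ of the special fibre via the zeta function; the fibration $X_s \to Y_s$ with fibres $\cong G_s$ connected gives $|X_s(\F_{q^n})| = |G_s(\F_{q^n})| \cdot |Y_s(\F_{q^n})|$ by Lang's theorem, and hence the functional equation signs multiply. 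Invoking Chebotarev density (as in Remark \ref{zetafn}) then gives $\det_l(X) = \det_l(G)^{e(Y)} \otimes \det_l(Y)^{e(G)}$ as characters. Combined with Proposition \ref{rel} (applied with the triple $X, G \times Y$ read through $\chi_l$), this yields $\chi^{mot}(X) \equiv \chi^{mot}(G)\cdot\chi^{mot}(Y) \pmod{I^2}$, i.e. the discriminant part.

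\textbf{Assembling.} Having established that all of $\mathrm{rank}$, $\mathrm{sign}$, and $\mathrm{disc}$ of $\chi^{mot}(X)$ agree with those of $\chi^{mot}(G)\cdot\chi^{mot}(Y)$, Lemma \ref{modJ} immediately gives $\chi^{mot}(X) \equiv \chi^{mot}(G) \cdot \chi^{mot}(Y) \pmod J$. A cleaner packaging, which I would prefer to write up, is to observe directly that $\chi^{mot}(X) - \chi^{mot}(G)\cdot\chi^{mot}(Y)$ has vanishing rank and signature (hence is torsion, by Pfister) and lies in $I^2$ (by the $\det_l$ computation and Proposition \ref{rel}), so it lies in $I^2 \cap T = J$.

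\textbf{Main obstacle.} The subtle step is the $\det_l$ multiplicativity: one must be careful that $f\colon X \to Y$ need not be a Zariski- or even étale-locally trivial $G$-fibration — the hypothesis is only fibrewise isomorphism to $G$ on closed points. The device that rescues this is precisely the point-counting / Lang's theorem argument over finite fields, which only needs the fibrewise cardinality identity $|X_s(\F_{q^n})| = |G_s(\F_{q^n})|\,|Y_s(\F_{q^n})|$, valid for \emph{connected} $G$, and does not require any local triviality; reducing to finite fields via a model over $\Z$ and Chebotarev (Remark \ref{zetafn}) then transfers the conclusion back. The connectedness of $G$ is essential here and is exactly why it appears in the hypothesis.
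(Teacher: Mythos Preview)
Your proposal is correct and follows essentially the same approach as the paper: split into rank/signature (via Theorem \ref{fubini2}) and discriminant (via spreading out to a model over $\mathbb{Z}$, using Lang's theorem for connected groups over finite fields to obtain the point-count identity $|X_s(\mathbf{k}_s^{(n)})| = |G_s(\mathbf{k}_s^{(n)})|\cdot|Y_s(\mathbf{k}_s^{(n)})|$ and hence equality of zeta functions, then Chebotarev and Proposition \ref{rel}). The paper's write-up is more terse, but the key ideas---including the identification of Lang's theorem as the reason connectedness of $G$ is essential---are identical.
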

\begin{proof}
We prove this statement for the rank, signature, and discriminant. For the rank and signature, this follows by Theorem $\ref{fubini2}$, so it only remains to show this for the discriminant. Without loss of generality, assume that $k$ is finitely generated over $\mathbb{Q}$. By Proposition \ref{rel} it is enough to show a similar claim for the augmented determinant of cohomology. We can assume that $X \to Y$ and $G$ are defined on a ring of finite type over $\Z$, denoted by $A$, and let $\mathcal{X}, \mathcal{Y}$ denote smooth projective models for $X,Y$ over $\Spec(A)$. By Chebotar\"ev's density theorem, it is enough to prove that for every $s$ a closed point of $\Spec(A)$ such that the fibre of $X$ over $s$ is smooth
$$
\mathrm{det}_{\ell}(X)(\mathrm{Frob}_s) = \mathrm{det}_{\ell}(G)(\mathrm{Frob}_s) \cdot \mathrm{det}_{\ell}(Y)(\mathrm{Frob}_s).
$$
Theorem $\ref{zetafunctionsdetermine}$ tells us that $\mathrm{det}_{\ell}(X)(\mathrm{Frob}_s)$ is determined by the $\zeta$-function of $\mathcal X_s$. Write $X_s, Y_s$ and $G_s$ for the reductions of $\mathcal X, \mathcal Y$ and $G$ at $s$ respectively, and let $\F_s$ denote the residue field of $s$. The result reduces to showing $\zeta(X_s, t) = \zeta(G_s \times Y_s,t)$. Lang's theorem, Theorem 2 of \cite{Lang}, gives us $H^1(\F_s,G)=0$, since $\F_s$ is finite. Therefore the fibres of $f_s: X_s \to Y_s$ are all isomorphic to a suitable base change of $G$. Let $p$ be a closed point of $Y_s$ with residue field $\F_p$, and let $X_p$ be the fibre of $X_s$ over $p$. Then $X_p \cong G_{\F_p}$. Counting the points of $X$ gives an equality of zeta functions $\zeta(X_s, t) = \zeta(Y_s \times_{\F_s} G,t)$ as required.
\end{proof}

\begin{thm}\label{abelian} Let $X$ be a positive dimensional abelian variety over $k$. Then $\chi^{mot}(X)=0$.
\end{thm}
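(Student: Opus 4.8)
The plan is to reduce the statement to powers of an elliptic curve, where it becomes an immediate consequence of Lemma~\ref{even}. Fix any elliptic curve $E$ over $k$; one exists over every field. Since $E$ has odd dimension $1$, the odd-dimensional case of Lemma~\ref{even} gives $\chi^{dR}(E)=\bigl(b^{+}-\tfrac12\dim_k H^{1}_{dR}(E/k)\bigr)\cdot\mathbb H$, and here $b^{+}=\dim_k H^{0}_{dR}(E/k)=1$ while $\dim_k H^{1}_{dR}(E/k)=2$, so this coefficient vanishes and $\chi^{mot}(E)=\chi^{dR}(E)=0$. Because $\chi^{mot}$ is a ring homomorphism (Theorem~\ref{euler}) and $[E^{g}]=[E]^{g}$ in $K_{0}(\mathrm{Var}_k)$, it follows that $\chi^{mot}(E^{g})=\chi^{mot}(E)^{g}=0$ for every $g\geq1$, and $E^{g}$ is a smooth projective abelian variety of dimension $g$.

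Next I would compare an arbitrary abelian variety $X/k$ of dimension $g\geq1$ with $E^{g}$. Both being smooth projective of dimension $g$, we have $\chi^{mot}(X)=\chi^{dR}(X)$ and $\chi^{mot}(E^{g})=\chi^{dR}(E^{g})$, so it suffices to see that $\chi^{dR}$ of an abelian $g$-fold depends only on $g$. The key input is that the de Rham cohomology ring of an abelian variety is the exterior algebra on $H^{1}$: there are graded $k$-algebra isomorphisms $H^{*}_{dR}(X/k)\cong\bigwedge^{\bullet}H^{1}_{dR}(X/k)$ and $H^{*}_{dR}(E^{g}/k)\cong\bigwedge^{\bullet}H^{1}_{dR}(E^{g}/k)$, with both $H^{1}$'s of dimension $2g$. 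Choosing a $k$-linear isomorphism $\psi\colon H^{1}_{dR}(X/k)\to H^{1}_{dR}(E^{g}/k)$ and passing to $\bigwedge^{\bullet}\psi$, one obtains a graded ring isomorphism that matches the cup products and carries the trace map of $E^{g}$ to $\lambda$ times the trace map of $X$ for some $\lambda\in k^{\times}$; hence the cup-product quadratic forms of Definition~\ref{DREulerCharacteristic} satisfy $[H^{2*}_{dR}(X/k)]=\langle\lambda\rangle\cdot[H^{2*}_{dR}(E^{g}/k)]$ in $\widehat{\mathrm W}(k)$. Now $\chi^{mot}(E^{g})=0$ forces $[H^{2*}_{dR}(E^{g}/k)]=\tfrac12\bigl(\sum_{i\geq0}\dim_k H^{2i+1}_{dR}(E^{g}/k)\bigr)\cdot\mathbb H$, i.e. this form is hyperbolic; scaling a hyperbolic form by the unit $\langle\lambda\rangle$ yields a hyperbolic form of the same rank, so $[H^{2*}_{dR}(X/k)]$ is hyperbolic, and since $X$ and $E^{g}$ have equal Betti numbers $\binom{2g}{i}$ we get $\chi^{dR}(X)=[H^{2*}_{dR}(X/k)]-\tfrac12\bigl(\sum_{i\geq0}\dim_k H^{2i+1}_{dR}(X/k)\bigr)\cdot\mathbb H=0$, as required.

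The only genuinely non-formal ingredient is the identification of $H^{*}_{dR}$ of an abelian variety with $\bigwedge^{\bullet}H^{1}_{dR}$ as a graded ring (compatibly with the trace), which I expect to be the main point to justify carefully — via base change to a complex embedding and the topological description of a complex torus — together with the bookkeeping of the scalar $\lambda$, which is harmless precisely because hyperbolicity is preserved under scaling by units. Everything else is routine. It is worth noting that computing $\mathrm{rank}$, $\mathrm{sign}$ and $\mathrm{disc}$ of $\chi^{mot}(X)$ separately — all of which vanish for an abelian variety of positive dimension — would only give $\chi^{mot}(X)\in J$ by Lemma~\ref{modJ}, so the de Rham comparison above is what is needed to obtain the sharp vanishing.
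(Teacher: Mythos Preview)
Your argument is correct and ultimately rests on the same structural fact the paper uses --- that $H^{*}_{dR}$ of an abelian variety is the exterior algebra on $H^{1}_{dR}$ (the paper cites Milne for this, so no complex embedding is needed) --- but the two proofs deploy it differently. The paper argues directly on $X$: choosing a basis $e_{1},\ldots,e_{2g}$ of $H^{1}_{dR}(X)$, the cup-product pairing on $H^{g}_{dR}(X)=\bigwedge^{g}H^{1}_{dR}(X)$ sends each $e_{I}$ to zero against itself and pairs $e_{I}$ nontrivially only with $e_{I^{c}}$, so $[H^{g}_{dR}(X)]$ is visibly hyperbolic; Lemma~\ref{even} then writes $\chi^{dR}(X)$ as an integer multiple of $\mathbb H$, and the rank computation $\sum_{i}(-1)^{i}\binom{2g}{i}=0$ forces that integer to vanish. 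Your route --- establishing $\chi^{mot}(E^{g})=0$ via the odd-dimensional case of Lemma~\ref{even} plus multiplicativity, then transporting hyperbolicity of $[H^{2*}_{dR}(E^{g})]$ across a chosen isomorphism of $H^{1}$'s and absorbing the trace scalar $\lambda$ --- is a valid but more circuitous way to reach the same hyperbolicity statement. What your approach buys is that it isolates the elliptic-curve case cleanly and makes explicit that the answer depends only on $\dim X$; what the paper's approach buys is brevity, since the basis argument on $\bigwedge^{g}$ makes the auxiliary $E^{g}$ and the scalar $\lambda$ unnecessary.
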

\begin{proof}
Since $X$ is smooth and proper we have $\chi^{mot}(X) = \chi^{dR}(X)$, so we may work with de Rham cohomology. By Lemma 15.2 of \cite{Mil}, we have an isomorphism of graded commutative rings $H^*_{dR}(X) \cong \bigoplus_{i=0}^{2n} \bigwedge^i H^1_{dR}(X)$. In particular, this implies that $\mathrm{rank}(\chi^{dR}(X)) = \sum_{i=0}^{2n} (-1)^i {n \choose i} = 0$. 

Moreover, choosing a basis of $H^1_{dR}(X)$ and taking the wedge product gives rise to a basis of $H^n_{dR}(X) = \bigwedge^n H^1_{dR}(X)$. With respect to this basis, the quadratic form on $H^n_{dR}(X)$ is clearly hyperbolic, so there is some $m$ such that $[H^n_{dR}(X)]=m\cdot \mathbb{H}$. Applying the previous lemma tells us that $\chi^{dR}(X) = (b^++m) \cdot \mathbb{H}$.  Finally, note that $\mathrm{rank}(\chi^{dR}(X)) = 2(b^++m)$. Since $\mathrm{rank}(\chi^{dR}(X)) = 0$ this gives the result.
\end{proof}
\begin{rem}
Readers familiar with motivic homotopy theory should note that we may prove the above theorem directly using the categorical Euler characteristic and the motivic Gauss--Bonnet formula of \cite{LR}. Let $p_X$ denote the structure morphism $p_X: X \to \Spec(k)$. Since $X$ is smooth and projective, we have $\chi^{mot}(X) = \chi^{cat}(X)$ by Theorem $\ref{levine1}$. Page $3$ of \cite{LR} allows us to define an Euler class of the tangent bundle of $X$ valued in the Hermitian $K$-theory spectrum of $X$, which we call $e^{BO}(T_{X/k})$. Since $X/k$ is an abelian variety, $T_{X/k}$ is trivial, so this Euler class vanishes. The motivic Gauss--Bonnet formula, Theorem 1.5 of \cite{LR}, says that $\chi^{cat}(X) = p_{X,*}( e^{BO}(T_{X/k}))$. This is akin to the classical Gauss--Bonnet formula in that it says that the Euler characteristic of a variety may be obtained by integrating the Euler class of the tangent bundle of the variety. The theorem follows since $e^{BO}(T_{X/k})$ is trivial, as $T_{X/k}$ is.
\end{rem}
\begin{cor}\label{fubini1b} Let $A$ be a connected commutative algebraic group over $k$ whose maximal proper quotient group variety has positive dimension. Then $\chi^{mot}(A) \in J$.
\end{cor}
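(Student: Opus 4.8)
\emph{Proof plan.} The plan is to reduce the statement to Theorem~\ref{abelian} via the Chevalley structure theorem and the Fubini-type results above. Since $\mathrm{char}(k)=0$, Chevalley's theorem gives a short exact sequence of commutative algebraic groups $1\to L\to A\xrightarrow{\pi}B\to1$ with $B$ an abelian variety and $L$ connected affine; being commutative in characteristic $0$, $L$ is a product of a unipotent group $\cong\mathbb{G}_a^r$ and a torus. Every proper quotient group variety of $A$ is a quotient of $B$, so $B$ is the maximal proper quotient group variety; by hypothesis $\dim B\geq1$, hence $\chi^{mot}(B)=0$ by Theorem~\ref{abelian}. By Lemma~\ref{modJ} it suffices to prove that the rank, signature and discriminant of $\chi^{mot}(A)$ all vanish.

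For the rank and the discriminant I would first pass to $\overline{k}$: there $L_{\overline{k}}\cong\mathbb{G}_a^r\times\mathbb{G}_m^s$ is a special group in the sense of Serre, so $A_{\overline{k}}\to B_{\overline{k}}$ is a Zariski-locally trivial $L_{\overline{k}}$-bundle and $[A_{\overline{k}}]=[L_{\overline{k}}]\cdot[B_{\overline{k}}]$ in $K_0(\mathrm{Var}_{\overline{k}})$; applying $e$ and using $e(B_{\overline{k}})=0$ (a positive-dimensional abelian variety) gives $\mathrm{rank}(\chi^{mot}(A))=e(A)=0$ by Theorem~\ref{levine1}. Then, replacing $k$ by a subfield finitely generated over $\mathbb{Q}$ over which $A$, $L$, $B$, $\pi$ are all defined, I would establish $\chi_l(A)=\chi_l(L)\cdot\chi_l(B)$ in $E(\Hom(\Gal_k,\mathbb{Q}_l^\times))$ and apply Proposition~\ref{rel} to conclude $\chi^{mot}(A)\equiv\chi^{mot}(L)\cdot\chi^{mot}(B)\equiv0\pmod{I^2}$; since $I^2=\ker(Q_k)$ this forces $\mathrm{disc}(\chi^{mot}(A))=0$. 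The rank coordinate of $\chi_l(A)=\chi_l(L)\chi_l(B)$ is the identity $e(A)=e(L)e(B)$ just checked; the determinant-of-cohomology coordinate follows from the point-counting argument in the proof of Theorem~\ref{fubini1}, the only new input being that over the finite residue fields appearing after spreading out, Lang's theorem makes every fibre of $A_s\to B_s$ a trivial $L$-torsor, so $\#A_s(\kappa')=\#L_s(\kappa')\cdot\#B_s(\kappa')$ for every finite $\kappa'$, whence $\zeta_{A_s}=\zeta_{L_s\times B_s}$, whence by Remark~\ref{zetafn} $\det{}_l(A)(\mathrm{Frob}_s)=\det{}_l(L\times B)(\mathrm{Frob}_s)$, and Chebotarev's density theorem gives the equality of characters.

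For the signature I may assume $k$ is real closed. By Theorem~\ref{levine2} and Theorem~\ref{abelian} we have $\chi^{rc}(B(k))=\mathrm{sign}(\chi^{mot}(B))=0$. The morphism $\pi$ is smooth and surjective, so the image $H:=\pi(A(k))$ is an open, hence clopen, subgroup of the semialgebraic group $B(k)$ of finite index; since $B(k)$ is a finite disjoint union of cosets of $H$, each semialgebraically isomorphic to $H$ by translation, $\chi^{rc}(H)=\chi^{rc}(B(k))/[B(k):H]=0$. Over every point of $H$ the fibre of $\pi$ is a trivial $L$-torsor, hence semialgebraically isomorphic to $L(k)$, so the stratification argument of Lemma 9.3.2 of \cite{BCR} used in the proof of Theorem~\ref{fubini2} gives $\chi^{rc}(A(k))=\chi^{rc}(L(k))\cdot\chi^{rc}(H)=0$. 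Thus $\mathrm{sign}(\chi^{mot}(A))=0$, and Lemma~\ref{modJ} completes the argument.

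The main obstacle is that $\pi\colon A\to B$ is only an $L$-torsor which may be nontrivial over $k$, and $L$ itself need not be special over $k$, so one cannot simply assert $[A]=[L][B]$ in $K_0(\mathrm{Var}_k)$ and kill everything in one stroke; instead the rank, discriminant and signature each have to be read off from one of the three base changes — to $\overline{k}$, to the finite residue fields, and to a real closure — over which the torsor does trivialise. The remaining points (the elementary semialgebraic facts about $\pi(A(k))$ in the last step, and the bookkeeping needed to feed Proposition~\ref{rel}) are routine.
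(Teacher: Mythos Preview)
Your proof is correct and follows essentially the same strategy as the paper: invoke Chevalley's structure theorem to obtain $1\to L\to A\xrightarrow{\pi}B\to1$ with $B$ a positive-dimensional abelian variety, then combine $\chi^{mot}(B)=0$ from Theorem~\ref{abelian} with the Fubini-type argument underlying Theorem~\ref{fubini1}. The paper simply cites Theorem~\ref{fubini1} to get $\chi^{mot}(A)\equiv\chi^{mot}(L)\cdot\chi^{mot}(B)\pmod{J}$, whereas you unpack the rank, signature, and discriminant cases separately and are more explicit about why the possibly nontrivial $L$-torsor structure of the fibres over $k$ causes no difficulty once one passes to $\overline{k}$, to finite residue fields (Lang), or to a real closure (where you handle the clopen image $\pi(A(k))\subset B(k)$).
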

\begin{proof} There is a short exact sequence of algebraic groups:
$$0 \to G \to A \xto{\pi}  B
\to 1,$$
where $G$ is a connected linear algebraic group and $B$ is the maximal proper quotient group variety of $X$. The map $\pi:A\to B$ is a principal $G$-bundle, so by Theorem $\ref{fubini1}$, we see that $\chi^{mot}(A)\equiv \chi^{mot}(G)\cdot\chi^{mot}(B) \pmod{J}$. Since $B$ is a positive dimensional abelian variety, Theorem \ref{abelian} gives $\chi^{mot}(B)=0$ as required.
\end{proof}

\section{The motivic Euler characteristic of compactified Jacobians of curves}

\subsection{The motivic Euler characteristics of compactified Jacobians of nodal curves}

\begin{defn}
Let $G$ be a one-dimensional torus. The group of cocharacters is a $\Gal_k$-module which is isomorphic to $\mathbb Z$ as a group. Let $\chi_G:\Gal_k
\to\mathrm{Aut}(\mathbb Z)\cong\mathbb Z/2\mathbb Z$ denote the homomorphism corresponding to this group action. Since we can identify $k^\times/k^{\times2}$ with $\mathrm{Hom}(\Gal_k,\mathbb Z/2\mathbb Z)$, there exists some $\alpha\in k^\times/k^{\times2}$ corresponding to $\chi_G$. We will write $\mathbb{G}_{m,k}^{[\alpha]}$ for the one dimensional torus whose cocharacter lattice has $\Gal_k$ action corresponding to the class $\alpha \in k^\times/k^{\times2}$. A model for $\mathbb{G}_{m,k}^{[\alpha]}$ is given by the subscheme $x^2-\alpha y^2 = 1$ in $\mathbb{A}^2_k$.
\end{defn}

\begin{lemma}\label{1dtorus} We have $\chi^{mot}(\mathbb{G}_{m,k}^{[\alpha]})=\mathbb H-
[\langle2\rangle\oplus\langle2\alpha\rangle] = [\langle -2 \rangle] - [\langle 2\alpha \rangle]$.
\end{lemma}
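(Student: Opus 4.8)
The plan is to compute $\chi^{mot}(G)$ directly via de Rham cohomology, exploiting that a one-dimensional torus $G$ over $k$ is either the split torus $\mathbb{G}_m$ (when $\alpha$ is a square) or the nonsplit form $\Spec k[x,y]/(x^2 - \alpha y^2 - 1)$ cut out inside $\mathbb{A}^2$, which becomes $\mathbb{G}_m$ after base change to $k(\sqrt{\alpha})$. Since $G$ is smooth but not proper, I cannot apply $\chi^{dR}$ naively; instead I would realize $[G] \in K_0(\mathrm{Var}_k)$ in terms of smooth projective varieties. The cleanest route: take the smooth projective model $\mathbb{P}^1_k$ in the split case and the smooth conic $C_\alpha \subset \mathbb{P}^2_k$ (the projective closure of $x^2 - \alpha y^2 = z^2$) in the nonsplit case; then $G$ is obtained by deleting the two "points at infinity," which together form a closed subscheme isomorphic to $\Spec k(\sqrt\alpha)$ when $G$ is nonsplit (the two points are Galois-conjugate) and to $\Spec k \sqcup \Spec k$ when $G$ is split. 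So in $K_0(\mathrm{Var}_k)$ we have $[G] = [C_\alpha] - [\Spec k(\sqrt\alpha)]$ (and $[C_\alpha] = [\mathbb{P}^1]$ since a conic with a rational point — it has the point at infinity... wait, one must be careful: actually the conic $C_\alpha$ need not have a $k$-point, so $[C_\alpha]\ne[\mathbb P^1]$ in general; one keeps $[C_\alpha]$ as is).

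The key steps, in order: (1) Compute $\chi^{mot}(C_\alpha)$ for the conic. Since $C_\alpha$ is a smooth projective curve of genus $0$, $H^0_{dR} = H^2_{dR} = k$ and $H^1_{dR} = 0$, so $\chi^{dR}(C_\alpha) = [H^0 \oplus H^2]$ with its cup-product quadratic form. The cup product pairs $H^0$ with $H^2$, and since the fundamental class generating $H^2$ has self-intersection zero while $1 \in H^0$ pairs with it to give the trace, the quadratic form on $H^0 \oplus H^2$ is hyperbolic; one needs to check it is the split hyperbolic $\mathbb{H} = \langle 1\rangle + \langle -1\rangle$ rather than a twisted form — but over $\widehat{\mathrm{W}}(k)$ (non-equivariantly) every hyperbolic plane is $\mathbb{H}$, so $\chi^{mot}(C_\alpha) = \mathbb{H}$. (2) Compute $\chi^{mot}(\Spec k(\sqrt\alpha))$. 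By the computation of the trace form in Lemma \ref{trace} (or directly, as in the proof there, $\chi^{dR}(\Spec K) = [\mathrm{Tr}_{K/k}]$), this is the class of the trace form of $k(\sqrt\alpha)/k$, which on the basis $1, \sqrt\alpha$ is the diagonal form $\langle 2\rangle \oplus \langle 2\alpha\rangle$ (since $\mathrm{Tr}(1) = 2$, $\mathrm{Tr}(\sqrt\alpha) = 0$, $\mathrm{Tr}(\alpha) = 2\alpha$). When $\alpha$ is a square this degenerates correctly to $\langle 2\rangle + \langle 2\rangle$, consistent with $\mathbb{G}_m$. (3) Subtract: $\chi^{mot}(G) = \mathbb{H} - [\langle 2\rangle \oplus \langle 2\alpha\rangle]$, and finally simplify using $\mathbb{H} = \langle 1\rangle + \langle -1 \rangle$ and the relation $\langle a \rangle + \langle -a \rangle = \mathbb{H}$ (valid in $\widehat{\mathrm{W}}(k)$): indeed $\mathbb{H} - \langle 2\rangle - \langle 2\alpha\rangle = \langle -2 \rangle + \langle 2 \rangle - \langle 2 \rangle - \langle 2\alpha \rangle = \langle -2\rangle - \langle 2\alpha\rangle$, giving the stated second form.

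The main obstacle I anticipate is step (1): pinning down that the cup-product quadratic form on $H^0_{dR}(C_\alpha) \oplus H^2_{dR}(C_\alpha)$ really is hyperbolic — i.e., that the trace map and cup product interact so that $1 \cup 1 = 0$ in $H^2$ (clear, as $H^4 = 0$ trivially but more to the point $1\cup 1 = 1 \in H^0$, so one is actually looking at the form $a \mapsto \mathrm{Tr}(a \cup a)$ on the \emph{even} part $H^0 \oplus H^2$, where $H^0$ is isotropic and pairs perfectly with $H^2$) — and that over $\widehat{\mathrm{W}}(k)$ this forces the class $\mathbb{H}$ regardless of the conic's arithmetic. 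A cleaner alternative that sidesteps the conic entirely: note $[G] + [\Spec k(\sqrt\alpha)] = [C_\alpha]$ and that $[\mathbb{P}^1] = [\mathbb{A}^1] + [\Spec k]$, while the normalization/comparison of $C_\alpha$ with $\mathbb P^1$ after a quadratic twist, combined with Lemma \ref{basechange} and the Galois-descent results of Section 3 (Theorem \ref{TwistingCompatibility}), lets one compute $\chi^{mot}(C_\alpha)$ by twisting $\chi^{mot}(\mathbb{P}^1) = \mathbb{H} + \langle 1\rangle$... but this reintroduces equivariant bookkeeping, so I would prefer to just verify the hyperbolicity of the genus-zero cup-product form directly as the honest, short argument.
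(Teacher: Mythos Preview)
Your approach is essentially the paper's own: write $[G] = [\widehat{G}] - [\text{boundary}]$, identify the boundary as $\Spec k(\sqrt{\alpha})$, compute its trace form as $\langle 2\rangle \oplus \langle 2\alpha\rangle$, and simplify using $\mathbb{H} = \langle 2\rangle + \langle -2\rangle$. The only difference is that you overcomplicate step~(1).

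You worry that the projective closure $C_\alpha$ might be a conic without a rational point, and then launch into a direct hyperbolicity argument for $H^0_{dR}(C_\alpha)\oplus H^2_{dR}(C_\alpha)$. But $G$ is an algebraic \emph{group}, so the identity $e \in G(k)$ is always a $k$-rational point; hence $C_\alpha$ is a smooth conic with a $k$-point and therefore $C_\alpha \cong \mathbb{P}^1_k$. The paper simply states $\widehat{G} \cong \mathbb{P}^1_k$ for this reason, and then $\chi^{mot}(\mathbb{P}^1_k) = \mathbb{H}$ is immediate. Your fallback argument---that $H^0$ is isotropic and pairs perfectly with $H^2$, forcing the form to be $\mathbb{H}$---is correct and is exactly the content of Lemma~\ref{even} for $n=1$, so it would work even for a pointless conic; it is just unnecessary here. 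Once you drop that detour, your proof matches the paper's line for line.
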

\begin{proof} Note that we have a smooth compactification of $\mathbb{G}_{m,k}^{[\alpha]}$ sitting inside $\mathbb{P}^1_k$ with compliment
$\mathrm{Spec}(k(\sqrt{\alpha}))$. Using the additivity of the motivic Euler characteristic we obtain
$$\chi^{mot}(\mathbb{G}_{m,k}^{[\alpha]})=\chi^{mot}(\mathbb P_k^1)-
\chi^{mot}(\mathrm{Spec}(k(\sqrt{\alpha})))
=\mathbb H-[\mathrm{Tr}_{k(\sqrt{\alpha})/k}].$$
 Computing the trace form in the basis $1,\sqrt{\alpha}$ gives $[\mathrm{Tr}_{k(\sqrt{\alpha})/k}]=[\langle2\rangle\oplus\langle2\alpha\rangle]$. The final part of the claim follows since $\mathbb{H} = \langle 1 \rangle + \langle -1 \rangle  = \langle 2 \rangle + \langle -2 \rangle$. 
\end{proof}

\begin{defn}\label{compactifiedjac}
Following Definition 5.9 of \cite{AK}, let $f: Y \to S$ be a finitely presented morphism with integral geometric fibres. An $\mathcal{O}_Y$ module $I$ is called \emph{relatively torsion free and rank $1$ over $S$} if $I$ is locally finitely presented, $S$-flat, and the fibre $I_s$ is a rank $1$ torsion free $\mathcal{O}_{Y_s}$ module for every geometric point $s$ of $S$. 

For the rest of this subsection let $C$ denote a projective, geometrically integral curve over $k$. As in Definition 5.11 and Theorem 8.1 of \cite{AK}, rank $1$ torsion free sheaves on $C$ of degree $n$ are parameterized by the \emph{compactified Jacobian} $\overline{\mathrm{Pic}}^n(C)$, which by Theorem 8.1 of \cite{AK} is a representable as an étale sheaf by a $k$ scheme which we will also call $\overline{\mathrm{Pic}}^n(C)$. Similarly by Theorem 8.1 of \cite{AK}, if $\mathcal{C} \to \mathbb{P}^g$ is a complete linear system of genus $g$ curves, then we may form the relative compactified Jacobian $\overline{\mathrm{Pic}}^g(\mathcal{C}) \to \mathbb{P}^g$, whose fibres are the compactified Jacobians of the fibres of $\mathcal{C} \to \mathbb{P}^g$.
\end{defn}
Let $C$ be a curve of genus $g$. The insight to prove the complex Yau--Zaslow formula is that $e(\overline{\mathrm{Pic}}^g(C))=0$ unless $C$ is rational, in which case, if $C$ is nodal this counts the appropriate multiplicity of rational curves. Therefore counting the number of rational curves can be done by looking at the Euler characteristic of the compactified Jacobians of the linear system. Here we compute $\chi^{mot}(\overline{\mathrm{Pic}}^g(C)) \pmod{J}$ instead. We do this by first showing that if $C$ is not rational, then $\chi^{mot}(\overline{\mathrm{Pic}}^g(C))=0 \pmod{J}$. For $C$ a nodal and rational curve, we compute $\chi^{mot}(\overline{\mathrm{Pic}}^g(C))$ by successively expressing $\chi^{mot}(\overline{\mathrm{Pic}}^g(C)) \pmod{J}$ in terms of the motivic Euler characteristic of the compactified Jacobian of the normalisation of $C$ at a singular point.

\begin{defn} 
 A {\it partial normalisation} of $C$ is a surjective map $f:C'\to C$ of integral projective curves which induces isomorphism between the function fields of $C'$ and $C$. Two such partial normalisations $f_1:C_1\to C$ and $f_2:C_2\to C$ are isomorphic if there is an isomorphism $g:C_1\to C_2$ such that $f_1=f_2\circ g$. Since every partial normalisation is dominated by the normalisation of $C$, the set $\mathcal N(C)$ of isomorphism classes of partial normalisations is finite. 
\end{defn}

There is a stratification of $\overline{\mathrm{Pic}}^n(C)$ by locally closed subvarieties 
$$
\overline{\mathrm{Pic}}^n(C)=\coprod_{f\in\mathcal N(C)}
\overline{\mathrm{Pic}}^n_f(C).
$$
Here, the morphism $f: C' \to C$ runs over all partial normalisations of $C$, and $\overline{\mathrm{Pic}}^n_f(C)$ is such that the $\overline k$-valued points of $\overline{\mathrm{Pic}}^n_f(C)$ correspond exactly to those  rank $1$ sheaves ${\mathcal L}$ on $C_{\overline k}$ such that the endomorphism ring of $\mathcal L$, as an $\mathcal O_{C_{\overline k}}$-subalgebra of the sheaf of rational functions on $C$, is $f_*(\mathcal O_{C'_{\overline k}})$. Note that in the case where $C'=C$ and $f$ is the identity, then $\overline{\mathrm{Pic}}^n_{\mathrm{Id}}(C) = \mathrm{Pic}^n(C)$. More generally, if $f: C' \to C$ is a partial normalisation of the point $p$ where the residue field $\kappa(p)$ is a degree $d$ extension of $k$, then $\overline{\mathrm{Pic}}^n_{f}(C)$ is the image of $\mathrm{Pic}^{n-d}(C')$ under $f_*$.

\begin{lemma}\label{beau3} For every partial normalisation $f:C'
\to C$ the stratum $\overline{\mathrm{Pic}}^n_f(C)$ is equipped with the free action of the quotient group variety $\mathrm{Pic}^0(C)/\mathrm{Ker}(f^*)$ over $k$, where $f^*:\mathrm{Pic}^0(C)\to
\mathrm{Pic}^0(C')$ is the map induced by functoriality.
\end{lemma}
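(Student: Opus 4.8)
The plan is to follow Beauville's argument from \cite{Be}, checking at each stage that it is insensitive to the base field. The action in question is the tensor action: a degree-$0$ line bundle $L$ on $C$ sends a rank-$1$ torsion-free sheaf $\mathcal L$ of degree $n$ to $\mathcal L\otimes L$, which is again rank-$1$ torsion-free of degree $n$, and applying this to the universal families gives an action morphism $\mathrm{Pic}^0(C)\times_k\overline{\mathrm{Pic}}^n(C)\to\overline{\mathrm{Pic}}^n(C)$ over $k$. The first point to verify is that this preserves the stratification: twisting by a line bundle does not change the endomorphism algebra $\mathrm{End}_{\mathcal O_C}(\mathcal L)$, viewed as a subalgebra of the sheaf of rational functions on $C$, so each stratum $\overline{\mathrm{Pic}}^n_f(C)$ is stable and we obtain an action of $\mathrm{Pic}^0(C)$ on it over $k$.

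The heart of the matter is the computation of stabilisers. If $\mathcal L$ lies in $\overline{\mathrm{Pic}}^n_f(C)$ then $\mathcal L$ is a module over $f_*\mathcal O_{C'}$, and since $f$ is finite this means $\mathcal L = f_*\mathcal M$ for a unique rank-$1$ torsion-free sheaf $\mathcal M$ on $C'$; the condition $\mathrm{End}_{\mathcal O_C}(\mathcal L) = f_*\mathcal O_{C'}$ then forces $\mathrm{End}_{\mathcal O_{C'}}(\mathcal M) = \mathcal O_{C'}$, since otherwise a further partial normalisation of $C'$ would, via its composite with $f$, enlarge $\mathrm{End}(\mathcal L)$. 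By the projection formula $\mathcal L\otimes L\cong f_*(\mathcal M\otimes f^*L)$, and any $\mathcal O_C$-linear isomorphism between two such pushforwards is automatically $f_*\mathcal O_{C'}$-linear (the induced conjugation is trivial, being conjugation by a unit in the commutative ring of rational functions on $C$), so it descends to an $\mathcal O_{C'}$-linear isomorphism $\mathcal M\xto{\sim}\mathcal M\otimes f^*L$. Because $f^*L$ is a line bundle and $\mathrm{End}_{\mathcal O_{C'}}(\mathcal M) = \mathcal O_{C'}$, such an isomorphism is a nowhere-vanishing global section of $f^*L$ — a section vanishing at a point would, by Nakayama, fail to be surjective there — that is, a trivialisation of $f^*L$. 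Hence $L$ fixes $[\mathcal L]$ precisely when $f^*L\cong\mathcal O_{C'}$, so the stabiliser of every point of $\overline{\mathrm{Pic}}^n_f(C)$ is exactly $\mathrm{Ker}(f^*)$.

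It then remains to package this: $\mathrm{Ker}(f^*)$ is a closed subgroup scheme of $\mathrm{Pic}^0(C)$ over $k$, so the quotient $\mathrm{Pic}^0(C)/\mathrm{Ker}(f^*)$ exists as a group variety over $k$ and acts on $\overline{\mathrm{Pic}}^n_f(C)$, and by the stabiliser computation this action has trivial stabiliser at every geometric point; since $\mathrm{char}(k)=0$ all the group schemes involved are smooth, so this is enough to conclude the action is free. I expect the stabiliser computation to be the main obstacle — specifically the bookkeeping that $\mathrm{End}_{\mathcal O_{C'}}(\mathcal M)=\mathcal O_{C'}$ holds on the stratum, and that an $\mathcal O_C$-linear isomorphism of the relevant pushforward sheaves genuinely descends to $C'$; the rest is formal once one knows $\mathrm{Pic}^0$ and quotients of commutative group schemes by closed subgroups are well behaved over an arbitrary field.
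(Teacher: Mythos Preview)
Your proof is correct and follows exactly the approach the paper takes: define the tensor action of $\mathrm{Pic}^0(C)$ on $\overline{\mathrm{Pic}}^n(C)$, observe it preserves the stratification, and identify the stabiliser of a point in $\overline{\mathrm{Pic}}^n_f(C)$ as $\mathrm{Ker}(f^*)$. The paper simply cites Lemma~2.1 of \cite{Be} for the stabiliser computation, whereas you have unpacked that argument in full (writing $\mathcal L=f_*\mathcal M$, using the projection formula, and invoking $\mathrm{End}_{\mathcal O_{C'}}(\mathcal M)=\mathcal O_{C'}$); this is the same proof, just with the citation expanded.
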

\begin{proof} There is an action
$$\mathrm{Pic}^0(C)\times\overline{\mathrm{Pic}}^n(C)
\to\overline{\mathrm{Pic}}^n(C)$$
of $\mathrm{Pic}^0(C)$ on $\overline{\mathrm{Pic}}^n(C)$, which at the level of represented functors is an action given by the rule $(\mathcal L,\mathcal G)\mapsto\mathcal L\otimes\mathcal G$. This action of $\mathrm{Pic}^0(C)$ leaves the stratification invariant. By Lemma 2.1 of \cite{Be} the stabiliser of any geometric point of the stratum $\overline{\mathrm{Pic}}^n_f(C)$ is $\mathrm{Pic}^0(C)/\mathrm{Ker}(f^*)$. The claim is now clear.
\end{proof}
\begin{propn}\label{beau4} Assume that the geometric genus of $C$ is non-zero. Then the motivic Euler characteristic $\chi^{mot}(\overline{\mathrm{Pic}}^n(C))$ lies in $J$ for any $n$.
\end{propn}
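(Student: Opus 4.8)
The plan is to combine the stratification
$\overline{\mathrm{Pic}}^n(C)=\coprod_{f\in\mathcal N(C)}\overline{\mathrm{Pic}}^n_f(C)$
with the additivity of $\chi^{mot}$ and the fact that $J$ is an ideal of $\widehat{\mathrm{W}}(k)$: it is enough to show that $\chi^{mot}(\overline{\mathrm{Pic}}^n_f(C))\in J$ for every partial normalisation $f\colon C'\to C$.

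Fix such an $f$ and write $P_f:=\overline{\mathrm{Pic}}^n_f(C)$. Following Beauville's analysis (Lemma 2.1 of \cite{Be}) together with Lemma \ref{beau3}, the stratum $P_f$ is a single orbit for the action of $\mathrm{Pic}^0(C)$: the push-forward $f_*$ identifies $P_f$ with $\mathrm{Pic}^{n'}(C')$ for a suitable $n'$, so $P_f$ is a torsor under $A_f:=\mathrm{Pic}^0(C)/\mathrm{Ker}(f^*)$. Since $C$ and $C'$ have the same normalisation $\widetilde C$, the map $f^*\colon\mathrm{Pic}^0(C)\to\mathrm{Pic}^0(C')$ is surjective — it is the identity on the abelian quotient $\mathrm{Jac}(\widetilde C)$ and the obvious projection of tori on the affine parts — so $A_f\cong\mathrm{Pic}^0(C')$. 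This is a connected commutative algebraic group whose maximal proper quotient group variety is $\mathrm{Jac}(\widetilde C)$, of dimension the geometric genus of $C$, hence positive by hypothesis. Corollary \ref{fubini1b} therefore gives $\chi^{mot}(A_f)\in J$.

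It remains to deduce $\chi^{mot}(P_f)\in J$ from $\chi^{mot}(A_f)\in J$, which by Lemma \ref{modJ} amounts to checking that the rank, signature and discriminant of $\chi^{mot}(P_f)$ all vanish. For the rank, $(P_f)_{\overline k}\cong(A_f)_{\overline k}$, so $\mathrm{rank}(\chi^{mot}(P_f))=e(P_f)=e(A_f)=\mathrm{rank}(\chi^{mot}(A_f))=0$. For the signature at an ordering $<$ of $k$, Theorem \ref{levine2} identifies $\mathrm{sign}(\chi^{mot}(P_f))(<)$ with $\chi^{rc}(P_f(k_<))$; if $P_f(k_<)=\emptyset$ this is $0$, and otherwise the torsor $(P_f)_{k_<}$ is trivial, so $\chi^{rc}(P_f(k_<))=\chi^{rc}(A_f(k_<))=\mathrm{sign}(\chi^{mot}(A_f))(<)=0$. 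For the discriminant, Lang's theorem shows that over every finite residue field of a model of $P_f$ of finite type over $\mathbb Z$ the torsor becomes trivial, so $P_f$ and $A_f$ have the same zeta functions over all such residue fields, whence $\det{}_l(P_f)=\det{}_l(A_f)$ by Remark \ref{zetafn}; since $w(P_f)=w(A_f)$ as well (both are read off from the cohomology of the base change to $\overline k$, and $(P_f)_{\overline k}\cong(A_f)_{\overline k}$), Theorem \ref{saito-full} yields $\mathrm{disc}(\chi^{mot}(P_f))=\mathrm{disc}(\chi^{mot}(A_f))=0$.

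The main obstacle is precisely this last passage from the group $A_f$ to the torsor $P_f$: the strata need not be isomorphic to $\mathrm{Pic}^0(C')$ over $k$, so neither Corollary \ref{fubini1b} nor the algebraic-group Fubini theorem \ref{fubini1} apply to them directly. The point is that a torsor is invisible to each of the three invariants separately — it trivialises over $\overline k$, over finite fields by Lang's theorem, and over a real closed field it is either trivial or has no rational points — which is exactly the trichotomy exploited above. The remaining ingredients (additivity of $\chi^{mot}$ along the stratification, the identification $A_f\cong\mathrm{Pic}^0(C')$, and the computation of its maximal abelian quotient) are routine.
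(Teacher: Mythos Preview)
Your proof is correct and follows essentially the same approach as the paper: stratify $\overline{\mathrm{Pic}}^n(C)$ by partial normalisations, observe that each stratum carries a free action of $A_f=\mathrm{Pic}^0(C)/\mathrm{Ker}(f^*)$ whose maximal proper quotient is the positive-dimensional $\mathrm{Jac}(\widetilde C)$, and conclude via Corollary~\ref{fubini1b}. Your treatment is in fact more careful than the paper's at the one delicate point---the paper invokes Corollary~\ref{fubini1b} directly on $P_f$, while you correctly note that $P_f$ is only a torsor under $A_f$ and then check separately that rank, signature and discriminant transfer from the group to the torsor.
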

\begin{proof} Using the additivity of $\chi^{mot}$ it is enough to show that $\chi^{mot}(\overline{\mathrm{Pic}}^n_f(C))$ lies in $J$ for every partial normalisation $f:C'\rightarrow C$. Let $\widetilde f:\widetilde C\rightarrow C$ denote the normalization of $C$. There is a short exact sequence of algebraic groups:
$$
0 \to G \xto{\iota}
\mathrm{Pic}^0(C) \xto{\widetilde f^*}
\mathrm{Pic}^0(\widetilde C) \to 1,
$$
where $G$ is a connected linear algebraic group and $\widetilde f^*$ is induced by the pullback with respect to $\widetilde f$. By Lemma \ref{beau3} the algebraic group $\mathrm{Pic}^0(C)/\mathrm{Ker}(f^*)$ acts freely on $\overline{\mathrm{Pic}}^n_f(C)$. Since the genus of $\widetilde C$, which is the geometric genus of $C$, is positive, the dimension of the abelian variety $\mathrm{Pic}^0(\widetilde C)$ is also positive. Moreover $\mathrm{Ker}(f^*)
\subseteq G$ as $\widetilde f$ factors through $f$.  Therefore $\chi^{mot}(\overline{\mathrm{Pic}}^n_f(C))\in J$ by Corollary \ref{fubini1b}.
\end{proof}
\begin{cor}\label{detofcoh}
Assume that the geometric genus of $C$ is at least $1$. Then both $\mathrm{det}_{\ell}(\overline{\mathrm{Pic}}^n(C))$ and $e(\overline{\mathrm{Pic}}^n(C))$ are trivial.
\end{cor}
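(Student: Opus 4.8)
The plan is to deduce both statements from Proposition \ref{beau4}, which gives $\chi^{mot}(\overline{\mathrm{Pic}}^n(C)) \in J = I^2 \cap T$, supplemented by the torsor argument used to prove it. The vanishing $e(\overline{\mathrm{Pic}}^n(C)) = 0$ is immediate, since $J \subseteq I = \mathrm{Ker}(\mathrm{rank})$ and $e(X) = \mathrm{rank}(\chi^{mot}(X))$ by Theorem \ref{levine1}. For $\det_l$ I would \emph{not} try to extract it from $\chi^{mot}(\overline{\mathrm{Pic}}^n(C))$, since $J \subseteq I^2$ only records the discriminant; instead I would re-run the proof of Proposition \ref{beau4} with $\det_l$ in place of $\chi^{mot}$. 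As $\det_l$ is a homomorphism of abelian groups on $K_0(\mathrm{Var}_k)$, the stratification $\overline{\mathrm{Pic}}^n(C) = \coprod_{f\in\mathcal N(C)}\overline{\mathrm{Pic}}^n_f(C)$ reduces the problem to showing $\det_l(\overline{\mathrm{Pic}}^n_f(C))$ is the trivial character for each partial normalisation $f : C' \to C$.

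Fix such an $f$. By Lemma \ref{beau3}, $T := \overline{\mathrm{Pic}}^n_f(C)$ is a torsor over its quotient $Q$ by the free action of $A := \mathrm{Pic}^0(C)/\mathrm{Ker}(f^*)$; and since the normalisation $\widetilde f$ factors through $f$, the group $A$ fits in a short exact sequence $1 \to G \to A \to B \to 1$ with $G$ connected linear and $B = \mathrm{Pic}^0(\widetilde C)$ a positive-dimensional abelian variety, exactly as in Proposition \ref{beau4}. Spreading $T$, $A$, $G$, $B$, $Q$ out over a ring of finite type over $\mathbb Z$, Lang's theorem trivialises every torsor under a connected group over a finite field, so at every closed point $s$ of good reduction $\#T(\kappa(s)) = \#(G\times B\times Q)(\kappa(s))$; hence $\zeta_{T_s} = \zeta_{(G\times B\times Q)_s}$ and, by Remark \ref{zetafn}, $\det_l(T) = \det_l(G\times B\times Q)$. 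This is the same mechanism used in the proof of Theorem \ref{fubini1}.

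Now expand $\det_l(G\times B\times Q)$ by the K\"unneth rule for $\det_l$ from the proof of Proposition \ref{elemental}. Because $\chi^{mot}(B) = 0$ by Theorem \ref{abelian}, we have $e(B) = 0$, so every K\"unneth term except $\det_l(B)^{\otimes e(G)e(Q)}$ carries a factor $e(B)$ and vanishes; and $\det_l(B)$ is itself trivial, since $B$ is smooth and projective with $e(B) = 0$, whence $\kappa(B) = \mathrm{disc}(\chi^{mot}(B)) = 0$ by Theorem \ref{etale1} and $\det R\Gamma(B_{\overline k},\mathbb Q_l) = \mathbb Q_l(0)\otimes\kappa(B)$ is trivial (equivalently, use Theorem \ref{saito-full} with $w(B) = \tfrac12 e(B)\dim B = 0$). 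Therefore $\det_l(\overline{\mathrm{Pic}}^n_f(C)) = \det_l(T)$ is trivial, and summing over the finitely many $f \in \mathcal N(C)$ gives $\det_l(\overline{\mathrm{Pic}}^n(C)) = 0$.

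The step I expect to be the only real obstacle is the one in the second paragraph: making precise that the free $A$-action on $\overline{\mathrm{Pic}}^n_f(C)$ yields a quotient $Q$ that is a $k$-scheme (or at worst an algebraic space, which suffices for point-counting and for $\det_l$) and that $T\to Q$ is a torsor to which Lang's theorem applies fibrewise over finite fields. This is precisely the point already glossed in the proofs of Proposition \ref{beau4} and Corollary \ref{fubini1b}, so I would invoke the same justification there; everything else is formal manipulation of the homomorphisms $\det_l$ and $e$ together with the known identities for abelian varieties.
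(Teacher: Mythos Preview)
Your proof is correct and follows essentially the same route as the paper's, only spelled out in considerably more detail. The paper's two-line proof amounts to ``re-run the argument of Proposition~\ref{beau4} with the ring homomorphism $\chi_l = (e,\det_l)$ in place of $\chi^{mot}$,'' which is exactly what you do: stratify, pass to each $A$-torsor stratum, use Lang's theorem over finite fields to equate zeta functions with those of $G\times B\times Q$, and then kill everything via $e(B)=0$ and $\det_l(B)=0$. Your caveat about the existence of the quotient $Q$ is well placed and is the same point the paper glosses over in Proposition~\ref{beau4} and Corollary~\ref{fubini1b}.

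One small remark: you could shorten the argument for $\det_l(B)=0$ by invoking Theorem~\ref{saito-full} directly, since $\chi^{mot}(B)=0$ gives $\mathrm{disc}=0$ and $e(B)=0$ gives $w(B)=0$, hence $\det_l(B)$ is trivial; this avoids the case split on the parity of $\dim B$ implicit in going through Theorem~\ref{etale1} and the definition of $\kappa$.
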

\begin{proof}
This follows by showing that $\chi_{\ell}(C)=0$, where $\chi_{\ell}$ is the ring homomorphism from Definition $\ref{elemental}$. Since $\chi^{mot}(\overline{\mathrm{Pic}}^n(C))\in J$, this gives the result.
\end{proof}
We now turn to computing the motivic Euler characteristics of the compactified Jacobian of nodal rational curves. 
\begin{lemma}\label{clemb} Let $f:C'\to C$ be a partial normalization of $C$ at the nodal point $p$ of degree $d$. The morphism $f_*:\overline{\mathrm{Pic}}^{n-d}(C')\to\overline{\mathrm{Pic}}^n(C)$
is a closed embedding. 
\end{lemma}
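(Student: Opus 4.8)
The plan is to construct the pushforward morphism $f_*$ explicitly on the level of moduli functors and then check the embedding property on geometric points together with a statement about the infinitesimal behaviour. First I would recall that if $f\colon C'\to C$ is a partial normalisation at a single node $p$ of degree $d$ (so $f^{-1}(p)$ consists of $d$ points over $\overline{k}$, or a single point of degree $d$), and $\mathcal{L}'$ is a rank $1$ torsion-free sheaf on $C'$, then $f_*\mathcal{L}'$ is again rank $1$ torsion-free on $C$, and $\deg f_*\mathcal{L}' = \deg\mathcal{L}' + d$ because $f_*\mathcal{O}_{C'}/\mathcal{O}_C$ has length $d$ at $p$ (the local ring at a node is dominated by a regular local ring whose conductor has the stated colength). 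This gives a natural transformation of the relevant Picard functors, hence a morphism $f_*\colon \overline{\mathrm{Pic}}^{n-d}(C')\to\overline{\mathrm{Pic}}^n(C)$ by representability from Theorem 8.1 of \cite{AK}. The same construction works in families, so $f_*$ is a morphism of $k$-schemes.

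Next I would show $f_*$ is injective on $\overline{k}$-points and, more precisely, a monomorphism: the key point is that $f_*$ admits a retraction on its image given by $\mathcal{G}\mapsto (f^*\mathcal{G})/(\text{torsion})$ or, more cleanly, that for a torsion-free $\mathcal{L}'$ on $C'$ one can recover $\mathcal{L}'$ from $f_*\mathcal{L}'$ as the $\mathcal{O}_{C'}$-module $\mathcal{H}om_{\mathcal{O}_C}(f_*\mathcal{O}_{C'}, f_*\mathcal{L}')$, using that $f_*\mathcal{O}_{C'}$ is the endomorphism algebra acting on $f_*\mathcal{L}'$. This identifies the image of $f_*$ set-theoretically with the locus of sheaves whose endomorphism algebra contains $f_*\mathcal{O}_{C'}$, which is exactly the union of strata $\overline{\mathrm{Pic}}^n_h(C)$ for partial normalisations $h$ dominating $f$; in particular $f_*$ is injective. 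To upgrade to a closed embedding I would verify that the image is closed — it is a union of closure-stable strata, or alternatively one uses that $f_*$ is proper (both sides are proper $k$-schemes, $\overline{\mathrm{Pic}}$ being projective) and a proper monomorphism is a closed immersion — and that $f_*$ is unramified/an immersion on tangent spaces. The tangent-space computation reduces to a local deformation-theory statement at $p$: deformations of $\mathcal{L}'$ as a torsion-free sheaf on $C'$ inject into deformations of $f_*\mathcal{L}'$ on $C$, which follows from the exactness of $f_*$ on the relevant $\mathrm{Ext}$-groups since $f$ is affine and $f_*\mathcal{O}_{C'}$ is a subsheaf of the normalisation's structure sheaf.

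The main obstacle I anticipate is the tangent-space (or more generally the ``formally unramified/immersion'') check at the node, because torsion-free sheaves on a nodal curve have a subtle deformation theory — the versal deformation of the non-locally-free torsion-free module at a node is itself a node — and one must make sure that the pushforward does not collapse any first-order directions. I would handle this by a local analysis: étale-locally near $p$ the curve $C$ looks like $\Spec k[[x,y]]/(xy)$ and $C'$ like the normalisation $\Spec k[[x]]\times\Spec k[[y]]$ (or the degree-$d$ analogue over the residue field of $p$), and on this local model one computes directly that $f_*$ on local $\mathrm{Pic}$'s is a closed immersion by writing down the modules and their deformations explicitly. Globalising via the étale-local nature of both $\overline{\mathrm{Pic}}$ and the conductor then finishes the proof. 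A cleaner alternative, if available in the cited literature, is to invoke that $f_*$ realises $\overline{\mathrm{Pic}}^{n-d}(C')$ as the closed subscheme of $\overline{\mathrm{Pic}}^n(C)$ cut out by the incidence condition ``$\mathcal{O}_{C'}\hookrightarrow \mathcal{E}nd(\mathcal{L})$'', which is a closed condition; I would cite Altman--Kleiman or the treatment in Beauville \cite{Be} for this stratification picture and simply record that $f_*$ identifies $\overline{\mathrm{Pic}}^{n-d}(C')$ with $\overline{\coprod_{h\geq f}\overline{\mathrm{Pic}}^n_h(C)}$.
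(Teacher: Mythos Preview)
Your argument is essentially correct, but it is worth noting that the paper's own proof consists of a single sentence: ``See Lemma 3.1 of \cite{Be}.'' So you have supplied a direct proof where the paper simply defers to Beauville. The route you take---construct $f_*$ on moduli functors, observe it is a proper morphism between projective schemes, and then argue it is a monomorphism (equivalently, injective on geometric points and unramified) so that proper monomorphism $\Rightarrow$ closed immersion---is exactly the kind of argument Beauville's Lemma 3.1 packages, and your closing remark about citing \cite{Be} for the stratification is in fact all the paper does.

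Two small comments on the details. First, your parenthetical ``$f^{-1}(p)$ consists of $d$ points over $\overline{k}$'' is off: a node $p$ of degree $d$ splits into $d$ nodes over $\overline{k}$, and normalising each separates it into two branches, so $f^{-1}(p)$ has $2d$ geometric points; nonetheless your degree computation $\deg f_*\mathcal{L}' = \deg\mathcal{L}' + d$ is correct, since the conductor length at each geometric node is $1$. Second, the monomorphism check is cleanest not via the $\mathcal{H}om$ formula you wrote but via the equivalence (for affine $f$) between $\mathcal{O}_{C'}$-modules and $f_*\mathcal{O}_{C'}$-modules on $C$: any $T$-point of the image carries a canonical $f_*\mathcal{O}_{C'}$-module structure coming from the endomorphism sheaf, and this recovers the preimage functorially. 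With that adjustment, your ``proper $+$ radicial $+$ unramified'' packaging goes through without needing the separate local deformation analysis at the node, though that analysis is also valid.
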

\begin{proof} See Lemma 3.1 of \cite{Be}.
\end{proof}
Let $\overline{\mathrm{Pic}}^n_b(C)$ denote the image of the morphism $f_*$ above, and let $\overline{\mathrm{Pic}}^n_o(C)$ be the complement of $\overline{\mathrm{Pic}}^n_b(C)$ in
$\overline{\mathrm{Pic}}^n(C)$.

\begin{notn} In the remainder of this section we assume that $C$ is semi-stable. Let $\mathcal S(C)$ denote the set of nodes of $C$. For every $p\in\mathcal S(C)$ let $k(p)$ denote the residue field of $p$ and let $\deg(p)$ denote the degree of $k(p)$ over $k$. Now let $p$ be a node of $C$ and pick a $k(p)$-rational point $\widetilde p$ of $C$ whose set theoretic image is exactly $p$. The formal completion of the base change of $C$ to $k(p)$ at $\widetilde p$ is isomorphic to the formal completion of $x^2=\alpha_py^2$ at $(0,0)$ for some
$\alpha_p\in k(p)^\times$. The element $\alpha_p$ is unique up to multiplication by an element of $(k(p)^\times)^2$, so the isomorphism class of the quadratic form $\langle2\alpha_p\rangle$ over $k(p)$ is unique and only depends on $p$. 
\end{notn}

We have the following computation of the compactified Jacobians of normalisations, which serves as an arithmetic generalisation of Proposition 2.8 of \cite{KR1}.
\begin{propn}\label{torusbundle}
Let $f: C' \to C$ be the partial normalisation of $C$ at the nodal point $p$. The pullback with respect to $f$ induces a map
$$
\overline{\mathrm{Pic}}^n_o(C) \to \overline{\mathrm{Pic}}^n(C'),
$$
which exhibits $\overline{\mathrm{Pic}}^n_o$ as a torsor under $\mathrm{Res}_{k(p)/k}(\mathbb{G}^{[\alpha_p]}_{m, k(p)})$.
\end{propn}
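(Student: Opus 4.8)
The plan is to follow Beauville's argument (Lemmas~2.1 and~3.1 of \cite{Be}) and its real refinement (Proposition~1.8 of \cite{KR1}), decorating it with the arithmetic of the node $p$: the element $\alpha_p$ will enter through the \'etale quadratic algebra of the two branches at $p$. First I would record that, by Lemma~\ref{clemb}, $\overline{\mathrm{Pic}}^n_b(C)$ is closed, so $\overline{\mathrm{Pic}}^n_o(C)$ is open and its geometric points are exactly the rank $1$ torsion free sheaves on $C_{\overline k}$ that are invertible at every point over $p$. On this locus the operation $\mathcal L\mapsto f^{*}\mathcal L$ makes sense and produces rank $1$ torsion free sheaves on $C'$ of degree $n$ (pullback along the finite birational $f$ preserves degree and, once the non-invertible-at-$p$ locus is excised, creates no torsion); by the universal property of $\overline{\mathrm{Pic}}^{n}(C')$ from Theorem~8.1 of \cite{AK} this defines the morphism $\pi\colon\overline{\mathrm{Pic}}^n_o(C)\to\overline{\mathrm{Pic}}^n(C')$.

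To compute the fibre, fix a closed point $y$ of $\overline{\mathrm{Pic}}^n(C')$; after base change to its residue field we may assume $y$ is a $k$-point, given by a sheaf $\mathcal M$ on $C'$. Then $\pi^{-1}(y)$ represents, on a $k$-algebra $R$, the pairs $(\mathcal L,\theta)$ with $\mathcal L$ rank $1$ torsion free on $C_R$, invertible at $p$, and $\theta\colon f^{*}\mathcal L\xrightarrow{\sim}\mathcal M_R$. Writing $C$ as the pushout of $C'\gets f^{-1}(p)\to\operatorname{Spec}k(p)$ along the conductor square (Ferrand), such a pair amounts to a line $L_0$ on $\operatorname{Spec}k(p)$ together with an $\mathcal O_{f^{-1}(p)_R}$-linear identification of $\mathcal M_R|_{f^{-1}(p)}$ with the pullback of $L_0$. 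By the notation preceding the proposition, the formal completion of $C_{k(p)}$ at a point over $p$ is that of $x^2=\alpha_p y^2$, so $f^{-1}(p)=\operatorname{Spec}k(p)'$ with $k(p)'=k(p)[t]/(t^2-\alpha_p)$ the \'etale quadratic $k(p)$-algebra attached to the node. Since $\mathcal M$ is invertible at $p$, $\mathcal M|_{f^{-1}(p)}$ is free of rank $1$ over $k(p)'$; choosing a generator trivialises the torsor of such identifications, and after quotienting by the scalar automorphisms of $\mathcal M$ and the automorphisms $\mathbb G_{m,k(p)}$ of $L_0$ one obtains
\[
\pi^{-1}(y)\;\cong\;\operatorname{Res}_{k(p)/k}\bigl(\operatorname{Res}_{k(p)'/k(p)}\mathbb G_m/\mathbb G_{m,k(p)}\bigr).
\]
The inner quotient is a one-dimensional torus over $k(p)$ split by $k(p)'$ whose cocharacter lattice is $\mathbb Z$ with the sign action of $\operatorname{Gal}(k(p)'/k(p))$ — using that for a degree $2$ extension the quotient torus $\operatorname{Res}\mathbb G_m/\mathbb G_m$ is isomorphic to the norm-one torus — so by the description of $\mathbb G_m^{[\alpha_p]}$ recalled before Lemma~\ref{1dtorus} it is $\mathbb G_{m,k(p)}^{[\alpha_p]}$, giving the claim. (Equivalently, $\operatorname{Ker}(f^{*})\subset\mathrm{Pic}^0(C)$ acts simply transitively on the fibres of $\pi$, and the same local computation identifies $\operatorname{Ker}(f^{*})$ with $\operatorname{Res}_{k(p)/k}(\mathbb G_{m,k(p)}^{[\alpha_p]})$.)

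The main obstacle is not any single hard estimate but the bookkeeping: making the conductor-square description of $\pi^{-1}(y)$ precise \emph{in families} and compatibly with the Altman--Kleiman moduli functor, so that $\pi^{-1}(y)$ is genuinely the asserted torus rather than just its set of geometric points with a group action; and, on the arithmetic side, pinning down that the torus $\operatorname{Res}_{k(p)'/k(p)}\mathbb G_m/\mathbb G_{m,k(p)}$ coming from the node is exactly $\mathbb G_{m,k(p)}^{[\alpha_p]}$, which is where the precise class $\alpha_p$ read off from $x^2=\alpha_p y^2$ gets used. Everything else — openness of $\overline{\mathrm{Pic}}^n_o(C)$, the existence of $\pi$, the simple transitivity of the $\operatorname{Ker}(f^{*})$-action — is formal given Lemma~\ref{clemb} and \cite{AK}.
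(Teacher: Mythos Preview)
Your proposal is correct and takes essentially the same approach as the paper: both identify the fibre of $\pi$ over a point $\mathcal G$ with the gluing data needed to descend $\mathcal G$ through the node $p$, and both recognise that this gluing data is a torsor under the one-dimensional $k(p)$-torus split by the quadratic algebra $k(p)'=k(p)[t]/(t^2-\alpha_p)$ of the two branches. The difference is purely in packaging. The paper works on $\overline k$-points: it writes $\mathcal F\in\overline{\mathrm{Pic}}^n_o(C)(\overline k)$ as the pair $(\mathcal G,(\psi_i)_i)$ with $\psi_i\in\mathrm{Isom}(\mathcal G_{q_i},\mathcal G_{q_i'})\cong\overline k^\times$, then tracks the Galois action on $(\overline k^\times)^{I_{k(p)}}$ by hand---noting that $\sigma$ sends $\psi_i$ to $\psi_i^{-1}$ precisely when it swaps the branches---to see that this set with its Galois action is $\mathrm{Res}_{k(p)/k}(\mathbb G_{m,k(p)}^{[\alpha_p]})(\overline k)$. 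You instead use the Ferrand conductor square to get the torus intrinsically as $\mathrm{Res}_{k(p)/k}(\mathrm{Res}_{k(p)'/k(p)}\mathbb G_m/\mathbb G_{m,k(p)})$ and then invoke the standard identification of this quotient with the norm-one torus. Your route is more conceptual and avoids the explicit Galois bookkeeping; the paper's route is more hands-on and makes the Galois action visible, which aligns with the equivariant machinery developed in Section~3. Both leave the same honest gap you flag---promoting the pointwise description to a genuine identification of schemes over the moduli functor of \cite{AK}---and the paper, like you, does not fully close it (it argues on $\overline k$-points and asserts Galois-equivariance).
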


\begin{proof}
By naturality we obtain $\overline{\mathrm{Pic}}_o^n(C) \times_k \kbar = \overline{\mathrm{Pic}}_o^n(C_{\kbar})$. Therefore $\overline{\mathrm{Pic}}_o^n(C)$ is a variety, so is determined by $\overline{\mathrm{Pic}}_o^n(C)(\kbar)$ along with its (right) action of $\Gal_k$.

Let $d := [k(p):k]$, and let $q,q' \in C'$ be the preimages of $p$ in $C'$. Let $p_1, \ldots, p_d$ be the points of $C(\kbar)$ that lie over $p$, similarly $q_1, \ldots, q_d$ the points of $C'(\kbar)$ lying over $q$ and $q'_1, \ldots, q'_d$ the points of $C'(\kbar)$ lying over $q'$, so that the map $C'(\kbar) \to C(\kbar)$ sends $q_i$ and $q'_i$ to $p_i$.   We also see that $\{p_1, \ldots, p_d\}$ is isomorphic as a right $\Gal_k$ set to $I_{k(p)}$.

We explicitly construct an action of $\mathrm{Res}_{k(p)/k}(\mathbb{G}^{[\alpha_p]}_{m, k(p)})$ on $\overline{\mathrm{Pic}}^n_o(C)$. That is, we construct a map $\phi: \overline{\mathrm{Pic}}^n_o(C) \times_k \mathrm{Res}_{k(p)/k}(\mathbb{G}^{[\alpha_p]}_{m, k(p)}) \to \overline{\mathrm{Pic}}^n_o(C)$ that fits into the following commutative diagram

\begin{center}
\begin{tikzcd}[cramped]
\overline{\mathrm{Pic}}^n_o(C) \times_k \mathrm{Res}_{k(p)/k}(\mathbb{G}^{[\alpha_p]}_{m, k(p)}) \ar[r, "\phi"] \ar[d, "\mathrm{pr}_1"] & \overline{\mathrm{Pic}}^n_o(C) \ar[d] \\
\overline{\mathrm{Pic}}^n_o(C) \ar[r] & \overline{\mathrm{Pic}}^n(C'),
\end{tikzcd}
\end{center}
where $\mathrm{pr}_1$ is the projection onto the first factor and the unlabelled maps are those from the statement. Our map $\phi$ is therefore given by a $\Gal_k$ equivariant map
$$
\overline{\mathrm{Pic}}_o^n(C)(\kbar) \times \mathrm{Res}_{k(p)/k}(\mathbb{G}^{[\alpha_p]}_{m, k(p)})(\kbar) \to  \overline{\mathrm{Pic}}_o^n(C)(\kbar).
$$
Identify $\mathrm{Res}_{k(p)/k}(\mathbb{G}^{[\alpha_p]}_{m, k(p)})(\kbar)  \cong (\kbar^\times)^{I_{k(p)}}$, where the right-action of $\Gal_k$ on $(\kbar^\times)^{I_{k(p)}}$ is given by 
\begin{align*}
(\lambda_i)_{\iota_i} \cdot \sigma &= (\sigma^{-1}(\lambda_i))_{\sigma^{-1} \circ \iota_i} \text{ if $q_i \cdot \sigma = q_j$, and}\\
 (\lambda_i)_{\iota_i} \cdot \sigma &= (\sigma^{-1}(\lambda_i)^{-1})_{\sigma^{-1} \circ \iota_i} \text{ if $q_i \cdot \sigma = q_j'$.}
\end{align*}

Let $\mathcal{F} \in \overline{\mathrm{Pic}}_o^n(C)(\kbar)$, and let $\mathcal{G}$ be the image of $\mathcal{F}$ in $\overline{\mathrm{Pic}}^n(C')(\kbar)$. The data of $\mathcal{F}$ is given by $\mathcal{G}$, along with isomorphisms $\psi_i: \mathcal{G}_{q_i} \cong \mathcal{G}_{q'_i}$ for every $i$. By the proof of Lemma 2.5 of \cite{KR1}, we can canonically identify $\mathrm{Isom}(\mathcal{G}_{q_i}, \mathcal{G}_{q_i'}) \cong \kbar^\times$, so that $\psi_i, \psi_j \in \kbar^\times$. There is a canonical $\Gal_k$ action on $\prod_i \mathrm{Isom}(\mathcal{G}_{q_i}, \mathcal{G}_{q'_i})$, which we can describe.

Let $\sigma \in \Gal_k$ be such that $p_i \cdot \sigma = p_j$, and let $\psi_i \in \mathrm{Isom}(\mathcal{G}_{q_i}, \mathcal{G}_{q'_i})$, be such that $\psi_i \cong \lambda_i \in \overline{k}^\times$. Then $\psi_i \cdot \sigma = \sigma^{-1}(\lambda_i) \in \mathrm{Isom}(\mathcal{G}_{q_i \cdot \sigma}, \mathcal{G}_{q'_i \cdot \sigma})$. There are now 2 cases.

In the case when $q_i \cdot \sigma = q_j$, we have $ \mathrm{Isom}(\mathcal{G}_{q_i \cdot \sigma}, \mathcal{G}_{q'_i \cdot \sigma}) = \mathrm{Isom}(\mathcal{G}_{q_j}, \mathcal{G}_{q'_j})$. 

In the case when $q_i \cdot \sigma = q'_j$, we have $ \mathrm{Isom}(\mathcal{G}_{q_i \cdot \sigma}, \mathcal{G}_{q'_i \cdot \sigma}) = \mathrm{Isom}(\mathcal{G}_{q'_j}, \mathcal{G}_{q_j})$. We can canonically identify this with $\mathrm{Isom}(\mathcal{G}_{q_j}, \mathcal{G}_{q'_j})$, under the isomorphism $\lambda \mapsto \lambda^{-1}$.

Putting this together, we see there is an isomorphism 
$$
\prod_i \mathrm{Isom}(\mathcal{G}_{q_i}, \mathcal{G}_{q'_i}) \cong \mathrm{Res}_{k(p)/k}(\mathbb{G}^{[\alpha_p]}_{m, k(p)})(\kbar)$$ which is compatible with the $\Gal_k$ actions on each side. Explicitly, if we identify $$\mathrm{Res}_{k(p)/k}(\mathbb{G}^{[\alpha_p]}_{m, k(p)})(\kbar)  \cong (\kbar^\times)^{I_{k(p)}},$$ and each $\mathrm{Isom}(\mathcal{G}_{q_i}, \mathcal{G}_{q'_i}) \cong \kbar^\times$, this isomorphism is given by $(\lambda_i)_{i} \mapsto (\lambda_i)_{\iota_i}$ and the action of $\Gal_k$ is given by $(\lambda_i)_i \cdot \sigma = (\sigma^{-1}(\lambda_i))_{\sigma^{-1} \circ \iota_i}$ if $q_i \cdot \sigma = q_j$ for some $j$, and $(\lambda_i)_i \cdot \sigma = (\sigma^{-1}(\lambda_i)^{-1})_{\sigma^{-1} \circ \iota_i}$ if $q_i \cdot \sigma = q'_j$ for some $j$, which is exactly saying that $\prod_i \mathrm{Isom}(\mathcal{G}_{q_j}, \mathcal{G}_{q'_j}) \cong (\kbar^\times)^{I_{k(p)}}$. Putting this together, we can write every $\mathcal{F} \in \overline{\mathrm{Pic}}_o^n(C)(\kbar)$ as a pair $(\mathcal{G}, (\psi_i))$, where $\mathcal{G} \in \overline{\mathrm{Pic}}^n(C')(\kbar)$ and $(\psi_i) \in (\kbar^\times)^{I_{k(p)}}$.

Define the morphism
$$
\phi: \overline{\mathrm{Pic}}_o^n(C)(\kbar) \times \mathrm{Res}_{k(p)/k}(\mathbb{G}^{[\alpha_p]}_{m, k(p)})(\kbar) \to  \overline{\mathrm{Pic}}_o^n(C)(\kbar)
$$
as follows. Let $(\mathcal{G}, \{ \psi_i \}) \in \overline{\mathrm{Pic}}_o^n(C)(\kbar) $, and let $\{\lambda_i\} \in \mathrm{Res}_{k(p)/k}(\mathbb{G}^{[\alpha_p]}_{m, k(p)})(\kbar)$. Define 
$$
\phi ( (\mathcal{G}, \{\psi_i\}), \{\lambda_i\}) := (\mathcal{G}, \{ \lambda_i \psi_i \}).
$$
We can check that $\phi$ is Galois equivariant, so gives us a map of the underlying $k$ varieties, i.e.~a $\mathrm{Res}_{k(p)/k}(\mathbb{G}^{[\alpha_p]}_{m, k(p)})$ action on $\overline{\mathrm{Pic}}_o^n(C)$ over $\overline{\mathrm{Pic}}^n(C')$. This action is Galois equivariant and acts freely and transitively on the fibres, so gives the result.
\end{proof}
\begin{cor}
Let $f: C' \to C$ be a partial normalisation of $C$ at the nodal point $p$ of degree $d$. Then
$$
\chi^{mot}(\overline{\mathrm{Pic}}^n(C)) \equiv \overline{\mathrm{Pic}}^{n-d}(C') +  \left(N_{k(p)/k}(\langle -2 \rangle - \langle 2\alpha_p \rangle)\right) \chi^{mot} (\overline{\mathrm{Pic}}^{n}(C')) \pmod{J}.
$$
\end{cor}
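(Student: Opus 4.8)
The strategy is to run Beauville's recursion for $e(\overline{\mathrm{Pic}}^n(C))$ from \cite{Be} with $\chi^{mot}$ in place of the Euler characteristic, keeping track of the arithmetic contribution of the node $p$. First I would isolate the ``boundary'' stratum: by Lemma \ref{clemb} the pushforward $f_*$ is a closed embedding identifying $\overline{\mathrm{Pic}}^n_b(C)$ with $\overline{\mathrm{Pic}}^{n-d}(C')$, and $\overline{\mathrm{Pic}}^n_o(C)$ is by definition the open complement of the closed subscheme $\overline{\mathrm{Pic}}^n_b(C)$ in $\overline{\mathrm{Pic}}^n(C)$. Since $\chi^{mot}$ is additive on $K_0(\mathrm{Var}_k)$ by Theorem \ref{euler}, this yields
\[
\chi^{mot}(\overline{\mathrm{Pic}}^n(C)) = \chi^{mot}(\overline{\mathrm{Pic}}^{n-d}(C')) + \chi^{mot}(\overline{\mathrm{Pic}}^n_o(C)),
\]
reducing the problem to the open stratum modulo $J$.

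Next I would analyse $\overline{\mathrm{Pic}}^n_o(C)$ through the description in Proposition \ref{torusbundle}: the pullback morphism $f^*\colon\overline{\mathrm{Pic}}^n_o(C)\to\overline{\mathrm{Pic}}^n(C')$ has every fibre over a closed point $y$ isomorphic, over the residue field $\kappa(y)$, to the base change of $\mathrm{Res}_{k(p)/k}(\mathbb{G}^{[\alpha_p]}_{m,k(p)})$, and the latter is a connected algebraic group over $k$ (the Weil restriction, along the separable extension $k(p)/k$, of a one-dimensional torus). Hence the Fubini theorem for algebraic-group fibres, Theorem \ref{fubini1}, applies and gives
\[
\chi^{mot}(\overline{\mathrm{Pic}}^n_o(C))\equiv\chi^{mot}\big(\mathrm{Res}_{k(p)/k}(\mathbb{G}^{[\alpha_p]}_{m,k(p)})\big)\cdot\chi^{mot}(\overline{\mathrm{Pic}}^n(C'))\pmod J.
\]
The torus factor I would compute by combining Theorem \ref{WRs}, which identifies it with $N_{k(p)/k}\big(\chi^{mot}_{k(p)}(\mathbb{G}^{[\alpha_p]}_{m,k(p)})\big)$, and Lemma \ref{1dtorus}, which evaluates $\chi^{mot}_{k(p)}(\mathbb{G}^{[\alpha_p]}_{m,k(p)})=[\langle-2\rangle]-[\langle2\alpha_p\rangle]$. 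Assembling the three displays gives
\[
\chi^{mot}(\overline{\mathrm{Pic}}^n(C))\equiv\chi^{mot}(\overline{\mathrm{Pic}}^{n-d}(C'))+N_{k(p)/k}\big([\langle-2\rangle]-[\langle2\alpha_p\rangle]\big)\cdot\chi^{mot}(\overline{\mathrm{Pic}}^n(C'))\pmod J.
\]

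The final point — and the one I expect to be the main obstacle — is to replace $\chi^{mot}(\overline{\mathrm{Pic}}^n(C'))$ by $\chi^{mot}(\overline{\mathrm{Pic}}^{n-d}(C'))$, since these two compactified Jacobians of $C'$ need not be isomorphic over $k$ (only over $\overline k$). If $C'$ has positive geometric genus, Proposition \ref{beau4} puts both classes in $J$ and the substitution is immediate. If $C'$ has geometric genus $0$, I would argue separately: the preimages of $p$ in $C'$ are smooth points whose degrees sum to $d$, hence provide a divisor of degree $d$ on the smooth locus; tensoring by the associated line bundle gives $\overline{\mathrm{Pic}}^n(C')\cong\overline{\mathrm{Pic}}^{n-d}(C')$ whenever this class descends to $k$, and in the remaining case one instead uses that $N_{k(p)/k}\big([\langle-2\rangle]-[\langle2\alpha_p\rangle]\big)$ and the difference $\chi^{mot}(\overline{\mathrm{Pic}}^n(C'))-\chi^{mot}(\overline{\mathrm{Pic}}^{n-d}(C'))$ both have vanishing rank, so their product already lies in $I^2$, its signature being controlled by the real-closed Fubini theorem, Theorem \ref{fubini2}. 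In every case the error term vanishes modulo $J$, and rewriting the last display as $\big(1+N_{k(p)/k}([\langle-2\rangle]-[\langle2\alpha_p\rangle])\big)\cdot\chi^{mot}(\overline{\mathrm{Pic}}^{n-d}(C'))$ completes the argument.
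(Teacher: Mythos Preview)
Your approach is the paper's: Lemma~\ref{clemb} for the closed--open decomposition, Proposition~\ref{torusbundle} together with Theorem~\ref{fubini1} for the open stratum, and Theorem~\ref{WRs} plus Lemma~\ref{1dtorus} for the torus factor --- all in the same order and to the same effect. The only divergence is the last step, where the paper simply asserts $\overline{\mathrm{Pic}}^{n}(C')\cong\overline{\mathrm{Pic}}^{n-d}(C')$ by tensoring with the line bundle attached to \emph{one} preimage of $p$ in $C'$, without any case analysis.

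Your more elaborate treatment of that step contains an error. The full scheme-theoretic fibre $f^{-1}(p)\subset C'$ has total degree $2d$, not $d$: normalising a node separates it into \emph{two} branch points, so above the $d$ geometric points of $p$ sit $2d$ geometric points of $C'$. Hence your ``divisor of degree $d$ on the smooth locus'' is actually of degree $2d$ and does not produce the required shift. The paper instead uses a single closed preimage of $p$ (one of the two branches, of degree $d$) and twists by it. Your fallback for ``the remaining case'' is also not justified as written: having both factors in $I$ puts the product in $I^2$, but to land in $J=I^2\cap T$ you still need the product to be torsion, and Theorem~\ref{fubini2} is a statement about morphisms with constant-$\chi^{mot}$ fibres --- there is no such morphism in sight relating $\overline{\mathrm{Pic}}^n(C')$ to $\overline{\mathrm{Pic}}^{n-d}(C')$, so it is unclear how it would control the signature of their difference. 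The case-split via Proposition~\ref{beau4} for positive geometric genus is correct but unnecessary once the isomorphism is available.
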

\begin{proof}
Lemma $\ref{clemb}$ tells us that 
$$
\chi^{mot}(\overline{\mathrm{Pic}}^n(C))  =  \chi^{mot} (\overline{\mathrm{Pic}}^{n-d}(C')) + \chi^{mot}(\overline{\mathrm{Pic}}_o^n(C)).
$$
The proposition above, along with Theorem $\ref{fubini1}$ allows us to compute
$$
\chi^{mot}(\overline{\mathrm{Pic}}_o^n(C)) \equiv \chi^{mot}( \mathrm{Res}_{k(p)/k}(\mathbb{G}_{m,k(p)}^{[\alpha_p]})) \chi^{mot}( \overline{\mathrm{Pic}}^{n}(C')) \pmod{J}.$$
 Finally Theorem $\ref{WRs}$ and Lemma $\ref{1dtorus}$ allow us to see 
$$
\chi^{mot}( \mathrm{Res}_{k(p)/k}(\mathbb{G}_{m,k(p)}^{[\alpha_p]})) =  N_{k(p)/k}(\langle -2 \rangle - \langle 2\alpha_p \rangle),
$$
as required.
\end{proof}

\begin{notn}
The above corollary motivates the following definition. Let $C$ be a nodal curve defined over $k$, and let $p \in \mathcal{S}(C)$. Define 
$$
\Phi_p :=  1 + N_{k(p)/k}(\langle -2 \rangle - \langle 2\alpha_{p} \rangle) \in \widehat{\mathrm{W}}(k).
$$
\end{notn}

\begin{cor}\label{zero} Assume that $C$ has geometric genus $0$ and arithmetic genus $g$. Then for any integer $n$, there is an equality in $\widehat{\mathrm{W}}(k)/J$:
$$\chi^{mot}(\overline{\mathrm{Pic}}^n(C))=\prod_{p\in\mathcal S(C)}\Phi_p.$$
\end{cor}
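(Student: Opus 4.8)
The plan is to establish the identity by induction on $|\mathcal S(C)|$, the number of nodes of $C$; since $C$ has geometric genus $0$ its arithmetic genus equals $n=\sum_{p\in\mathcal S(C)}\deg(p)$, so this is the same as inducting on $n$. The entire content of the inductive step is supplied by the preceding corollary, so the argument will be short.

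For the base case $\mathcal S(C)=\emptyset$ the curve $C$ is smooth, geometrically integral and of genus $0$, so $n=0$ and $\overline{\mathrm{Pic}}^0(C)=\mathrm{Pic}^0(C)$. As $\mathrm{Pic}^0(C_{\overline k})=0$ for a genus $0$ curve and $\mathrm{char}(k)=0$, the group scheme $\mathrm{Pic}^0(C)$ is $\Spec(k)$, whence $\chi^{mot}(\overline{\mathrm{Pic}}^0(C))=\chi^{mot}(\Spec(k))=1$, which is precisely the empty product $\prod_{p\in\emptyset}\Phi_p$.

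For the inductive step one assumes $\mathcal S(C)\neq\emptyset$, picks a node $p\in\mathcal S(C)$ with $d=\deg(p)$, and forms the partial normalisation $f:C'\to C$ at $p$. Since an irreducible curve has no separating node (a separating node would exhibit $C$ as a union of two proper closed subcurves), $p$ is non‑separating, so $C'$ is again projective, geometrically integral and semi‑stable, with geometric genus $0$ and arithmetic genus $n-d$; because $f$ is an isomorphism over $C\setminus\{p\}$ there is an identification $\mathcal S(C')\cong\mathcal S(C)\setminus\{p\}$ under which the local data $k(q)$ and $\langle2\alpha_q\rangle$, hence each factor $\Phi_q$, are unchanged. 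The preceding corollary then gives
$$\chi^{mot}(\overline{\mathrm{Pic}}^n(C))\equiv\Phi_p\cdot\chi^{mot}(\overline{\mathrm{Pic}}^{n-d}(C'))\pmod{J},$$
and the inductive hypothesis applied to $C'$ gives
$$\chi^{mot}(\overline{\mathrm{Pic}}^{n-d}(C'))\equiv\prod_{q\in\mathcal S(C)\setminus\{p\}}\Phi_q\pmod{J}.$$
Multiplying these two congruences in the ring $\widehat{\mathrm{W}}(k)/J$ yields $\chi^{mot}(\overline{\mathrm{Pic}}^n(C))\equiv\prod_{q\in\mathcal S(C)}\Phi_q$, completing the induction.

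I do not expect a real obstacle here: the substantive work — the torus‑bundle description of $\overline{\mathrm{Pic}}^n_o(C)$ and the Fubini theorem for algebraic groups — has already been done in the preceding corollary, and what remains is bookkeeping, namely that partial normalisation at a single node of a geometrically integral nodal curve preserves geometric integrality, lowers the arithmetic genus by exactly $\deg(p)$, and leaves the local invariants at the remaining nodes untouched, together with the elementary identification of $\mathrm{Pic}^0$ of a genus $0$ curve with $\Spec(k)$ even when the curve carries no rational point.
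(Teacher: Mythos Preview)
Your proposal is correct and follows essentially the same argument as the paper: induction on $|\mathcal S(C)|$, with the base case handled by $\overline{\mathrm{Pic}}^0$ of a smooth genus $0$ curve being $\Spec(k)$, and the inductive step coming directly from the preceding corollary. You supply a few extra justifications (non-separating nodes, preservation of local invariants) that the paper leaves implicit, but the structure is identical.
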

\begin{proof} We prove the claim by induction on the cardinality of $\mathcal S(C)$. When $\mathcal S(C)$ is empty then $C$ is smooth and genus $0$ and hence $\overline{\mathrm{Pic}}^n(C)$ is isomorphic to $\mathrm{Spec}(k)$ for every $n$. Therefore the claim holds in this case, since the product over an empty index is $1$. Assume that the claim holds for every nodal curve $C'$ of geometric genus zero such that $|\mathcal S(C')|<|\mathcal S(C)|$.  Let $p' \in \mathcal S(C)$ be a point of degree $d$, and let $C'$ be the normalisation of $C$ at $p'$, so that $\mathcal{S}(C) = \mathcal{S}(C') \cup \{p'\}$. Then by the previous corollary
$$
\chi^{mot}(\overline{\mathrm{Pic}}^n(C)) \equiv  \chi^{mot} (\overline{\mathrm{Pic}}^{n-d}(C')) + (N_{k(p')/k}(\langle -2 \rangle - \langle 2\alpha_{p'} \rangle)\chi^{mot} (\overline{\mathrm{Pic}}^{n}(C)) \pmod{J}.
$$
The induction hypothesis gives 
$$
\chi^{mot} (\overline{\mathrm{Pic}}^{n-d}(C')) \equiv \chi^{mot} (\overline{\mathrm{Pic}}^{n}(C')) \equiv \prod_{p \in \mathcal{S}(C')} \Phi_p \pmod{J}.
$$
Therefore
\begin{align*}
\chi^{mot}(\overline{\mathrm{Pic}}^n(C)) &\equiv \left(\prod_{p \in \mathcal{S}(C')} \Phi_p\right) (1 + (N_{k(p)/k}(\langle -2 \rangle - \langle 2\alpha_{p} \rangle)) \pmod{J} \\
&\equiv \left(\prod_{p \in \mathcal{S}(C')} \Phi_p \right) \cdot \Phi_{p'} \pmod{J}\\
&\equiv \prod_{p \in \mathcal{S}(C)} \Phi_p \pmod{J},
\end{align*}
 as required.
\end{proof}
\begin{cor}\label{complexpicard}
Suppose $k=\mathbb{C}$ in the previous corollary. Then $e(\overline{\mathrm{Pic}}^g(C))=1$.
\end{cor}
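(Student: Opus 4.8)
The plan is to deduce this immediately from Corollary \ref{zero} by pushing the identity there forward along the rank homomorphism. Since $J\subseteq I^2\subseteq I=\ker(\mathrm{rank})$, the map $\mathrm{rank}\colon\widehat{\mathrm{W}}(\mathbb C)\to\mathbb Z$ descends to $\widehat{\mathrm{W}}(\mathbb C)/J$, so I may apply it to both sides of
$$\chi^{mot}(\overline{\mathrm{Pic}}^n(C))=\prod_{p\in\mathcal S(C)}\Phi_p.$$
By Theorem \ref{levine1} the left-hand side becomes $e(\overline{\mathrm{Pic}}^n(C))$, so the whole task reduces to checking that the rank of the right-hand side is $1$.

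Because $\mathrm{rank}$ is a ring homomorphism, $\mathrm{rank}\bigl(\prod_p\Phi_p\bigr)=\prod_p\mathrm{rank}(\Phi_p)$, and $\mathrm{rank}(\Phi_p)=1+\mathrm{rank}\bigl(N_{k(p)/k}(\langle-2\rangle-\langle2\alpha_p\rangle)\bigr)$. The class $\langle-2\rangle-\langle2\alpha_p\rangle$ has rank $1-1=0$, and by the rank formula for Rost's norm transfer recorded after Definition \ref{normdef2} one has $\mathrm{rank}\bigl(N_{k(p)/k}(w)\bigr)=\mathrm{rank}(w)^{[k(p):k]}$; hence $\mathrm{rank}(\Phi_p)=1$ for every node $p$, and the (possibly empty) product over $\mathcal S(C)$ equals $1$. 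This yields $e(\overline{\mathrm{Pic}}^n(C))=1$.

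I expect no genuine obstacle here: all the content is already in Corollary \ref{zero} together with the elementary behaviour of $\mathrm{rank}$. For an even more hands-on version one can note that over $\mathbb C$ every quadratic form is determined by its rank and $\widehat{\mathrm{W}}(\mathbb C)\cong\mathbb Z$ is torsion-free, so $J=0$ and each $\Phi_p$ is literally the unit of $\widehat{\mathrm{W}}(\mathbb C)$, making the conclusion immediate; this is precisely Beauville's input that the compactified Jacobian of a rational nodal curve has Euler characteristic $1$.
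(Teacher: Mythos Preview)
Your proof is correct and follows the same approach as the paper: apply the rank homomorphism to the identity of Corollary~\ref{zero} and use that $\mathrm{rank}(\Phi_p)=1$. The paper's proof is simply the one-line observation ``$\mathrm{rank}(\Phi_p)=1$ for every $p$''; your version spells out the justification via the rank formula for the norm transfer and the additional remark that over $\mathbb C$ one has $\widehat{\mathrm{W}}(\mathbb C)\cong\mathbb Z$, which is a welcome elaboration but not a different argument.
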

\begin{proof}
This follows immediately, as $\mathrm{rank}(\Phi_p)=1$ for every $p$.
\end{proof}
The above recovers Corollary 3.4 of \cite{Be}. Another corollary is the real closed case, which generalises Proposition 1.8 of \cite{KR1} to all real closed fields. Suppose now that $k$ is a real closed field, and identify $k^\times/k^{\times2} \cong \{\pm 1\}$.

\begin{cor}\label{realpicard}
There is an equality $\mathrm{sign}( \chi^{mot}(\overline{\mathrm{Pic}}^g(C))) = (-1)^r$, where $r$ is the number of points $p \in \mathcal{S}(C)$ such that $k(p)=k$ and $\alpha_p= 1$, i.e.~the number of singular points that are both defined over $k$ and their tangent spaces are defined over $k$.
\end{cor}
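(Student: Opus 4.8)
The plan is to reduce to Corollary \ref{zero} and then read off the signature factor by factor. Since $k$ is real closed, $J\subseteq\ker(\mathrm{sign})$ by Lemma \ref{modJ} (torsion elements die under the signature), so $\mathrm{sign}$ descends to a ring homomorphism on $\widehat{\mathrm{W}}(k)/J$. Applying it to the identity of Corollary \ref{zero} and using multiplicativity gives
$$\mathrm{sign}(\chi^{mot}(\overline{\mathrm{Pic}}^n(C)))=\prod_{p\in\mathcal S(C)}\mathrm{sign}(\Phi_p),$$
so it suffices to prove that $\mathrm{sign}(\Phi_p)=-1$ exactly when $k(p)=k$ and $\alpha_p=1$ in $k(p)^\times/(k(p)^\times)^2$, and $\mathrm{sign}(\Phi_p)=+1$ for every other node.

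Next I would split into cases according to the residue field $k(p)$. By the Artin--Schreier theorem the only finite extensions of a real closed field $k$ are $k$ itself and $\overline k=k(\sqrt{-1})$, so there are exactly two. If $k(p)=\overline k$, then every unit of $\overline k$ is a square, hence $[\langle -2\rangle]=[\langle 2\alpha_p\rangle]=[\langle 1\rangle]$ in $\widehat{\mathrm{W}}(\overline k)$ and therefore $[\langle -2\rangle]-[\langle 2\alpha_p\rangle]=0$; since Rost's norm transfer is the polynomial extension (Lemma \ref{extend}, Proposition \ref{normdefn}, Definition \ref{normdef2}) of a multiplicative map on the semigroup of forms under which the zero-dimensional form goes to the zero-dimensional form, $N_{\overline k/k}(0)=0$, so $\Phi_p=[\langle 1\rangle]$ and $\mathrm{sign}(\Phi_p)=1$. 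If instead $k(p)=k$, then $N_{k(p)/k}$ is the identity and $\Phi_p=1+[\langle -2\rangle]-[\langle 2\alpha_p\rangle]$; over a real closed field $\mathrm{sign}(\langle a\rangle)$ is the sign of $a$, so $\mathrm{sign}(\langle -2\rangle)=-1$ and $\mathrm{sign}(\langle 2\alpha_p\rangle)=\mathrm{sign}(\langle\alpha_p\rangle)$, whence $\mathrm{sign}(\Phi_p)=1+(-1)-\mathrm{sign}(\langle\alpha_p\rangle)=-\mathrm{sign}(\langle\alpha_p\rangle)$. This equals $-1$ precisely when $\alpha_p>0$, i.e. when $\alpha_p$ is a square (equivalently $\alpha_p=1$ in $k^\times/(k^\times)^2$), and $+1$ otherwise.

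Combining the cases, $\mathrm{sign}(\Phi_p)=-1$ exactly for the nodes $p$ with $k(p)=k$ and $\alpha_p=1$ — which, by the local description of a node as $x^2=\alpha_py^2$ in the Notation preceding Proposition \ref{torusbundle}, are precisely the singular points that are $k$-rational and whose two branches (tangent lines) are defined over $k$ — and $\mathrm{sign}(\Phi_p)=+1$ for all other $p$. Multiplying over $\mathcal S(C)$ gives $(-1)^r$, as claimed. I do not expect a real obstacle here; the only step needing a little care is checking that $N_{k(p)/k}$ contributes trivially to the signature in the non-split case $k(p)=\overline k$, and the rest is the elementary computation of signatures of rank-one forms over a real closed field together with the fact that $\mathrm{sign}$ is a ring homomorphism factoring through $\widehat{\mathrm{W}}(k)/J$.
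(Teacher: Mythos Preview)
Your proof is correct and follows essentially the same approach as the paper: both apply $\mathrm{sign}$ to Corollary~\ref{zero}, split into the two possible residue fields $k(p)\in\{k,\overline k\}$, and compute $\mathrm{sign}(\Phi_p)$ in each case. The only cosmetic difference is that the paper simplifies the form $\mathbb H-(\langle 2\rangle+\langle 2\alpha_p\rangle)$ first and then reads off the signature, whereas you push $\mathrm{sign}$ through directly as a ring homomorphism; your justification that $N_{\overline k/k}(0)=0$ (via the zero form on the semigroup) is slightly more explicit than the paper's, which leaves this step implicit.
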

\begin{proof}
Let $p \in \mathcal{S}(C)$ be one of the singular points. If $k(p)=\overline{k}$, then we see that the corresponding factor at $p$ is given by $\mathbb{H}- (\langle 2 \rangle + \langle 2 \alpha_p \rangle )$, which is zero. Therefore $\mathrm{sign}(\Phi_p)=1$. If $k(p)=k$, then the singular point $p$ is a $k$ point. If $\alpha_p=-1$, then the tangent spaces at $p$ are not defined over $k$, and $\mathbb{H} - (\langle 2 \rangle + \langle 2 \alpha_p \rangle )=0$, so $\mathrm{sign}(\Phi_p)=1$.  If $\alpha_p=1$, then $\mathbb{H} - (\langle 2 \rangle + \langle 2 \alpha_p \rangle ) = \langle -1 \rangle - \langle 1 \rangle$, and we obtain 
$$
N_{k/k}(\langle -1 \rangle - \langle 1 \rangle) = \langle -1 \rangle - \langle 1 \rangle,$$
which means $\Phi_p = \langle -1 \rangle$, so has sign $-1$. Putting this together gives us
$$
\chi^{mot}(\overline{\mathrm{Pic}}^g(C)) = (-1)^r,
$$
where $r$ is the number of singular points on $C$ such that $p$ and the tangent spaces to $p$ are both defined over $k$. 
\end{proof}

\begin{cor}\label{discpicard}
Let $k$ be any field of characteristic $0$ . Then 
$$
\mathrm{disc}(\chi^{mot}(\overline{\mathrm{Pic}}^g(C))) = (-1)^g\prod_{p \in \mathcal{S}(C)}N_{k(p)/k}(\alpha_p) \in k^\times/k^{\times2}.
$$
\end{cor}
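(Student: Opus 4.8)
The plan is to move the computation into the ring $E(\mathrm{Hom}(\Gal_k,\Z/2\Z))$ via the ring homomorphism $Q_k$ of Remark $\ref{augdet}$, whose components are the rank and the discriminant. Since $\ker(Q_k)=I^2\supseteq J$, the discriminant factors through $\widehat{\mathrm{W}}(k)/J$, so by Corollary $\ref{zero}$ it suffices to evaluate $Q_k\!\left(\prod_{p\in\mathcal S(C)}\Phi_p\right)=\prod_{p\in\mathcal S(C)}Q_k(\Phi_p)$ and read off the second coordinate.

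First I would compute $Q_k(\Phi_p)$ for a fixed node $p$, recalling $\Phi_p=1+N_{k(p)/k}(\langle-2\rangle-\langle2\alpha_p\rangle)$. Over $k(p)$ the class $\langle-2\rangle-\langle2\alpha_p\rangle$ has rank $0$, and since $\mathrm{disc}$ is a homomorphism into the $2$-torsion group $k(p)^\times/(k(p)^\times)^2$ we get $\mathrm{disc}(\langle-2\rangle-\langle2\alpha_p\rangle)=(-2)\cdot(2\alpha_p)^{-1}=-4\alpha_p=-\alpha_p$ modulo squares; thus $Q_{k(p)}(\langle-2\rangle-\langle2\alpha_p\rangle)=(0,-\alpha_p)$. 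Corollary $\ref{normdisc}$ then gives $Q_k\!\left(N_{k(p)/k}(\langle-2\rangle-\langle2\alpha_p\rangle)\right)=(0^{[k(p):k]},N_{k(p)/k}(-\alpha_p))=(0,N_{k(p)/k}(-\alpha_p))$, using $[k(p):k]\geq1$. Adding $Q_k(1)=(1,0)$ yields $Q_k(\Phi_p)=(1,N_{k(p)/k}(-\alpha_p))$.

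Next, the multiplication law $(1,m)\cdot(1,n)=(1,m+n)$ in $E(\mathrm{Hom}(\Gal_k,\Z/2\Z))$ gives $\prod_{p\in\mathcal S(C)}Q_k(\Phi_p)=\big(1,\sum_{p}N_{k(p)/k}(-\alpha_p)\big)$, so that $\mathrm{disc}(\chi^{mot}(\overline{\mathrm{Pic}}^n(C)))=\prod_{p\in\mathcal S(C)}N_{k(p)/k}(-\alpha_p)$ in $k^\times/(k^\times)^2$. Writing $N_{k(p)/k}(-\alpha_p)=N_{k(p)/k}(-1)\cdot N_{k(p)/k}(\alpha_p)$ and $N_{k(p)/k}(-1)=(-1)^{[k(p):k]}=(-1)^{\deg(p)}$, this equals $(-1)^{\sum_{p}\deg(p)}\prod_{p}N_{k(p)/k}(\alpha_p)$. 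Finally, because $C$ is geometrically integral and nodal with geometric genus $0$ and arithmetic genus $n$, its base change $C_{\overline k}$ has exactly $n$ nodes, each closed node $p$ contributing $\deg(p)$ of them; hence $\sum_{p\in\mathcal S(C)}\deg(p)=n$, which gives the claimed identity.

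The argument is essentially formal given Corollaries $\ref{zero}$ and $\ref{normdisc}$; the only steps needing a little care are keeping track of the passage between the additive structure on $E(\mathrm{Hom}(\Gal_k,\Z/2\Z))$ and the multiplicative group $k^\times/(k^\times)^2$, the sign identity $N_{k(p)/k}(-1)=(-1)^{\deg(p)}$, and the count $\sum_{p\in\mathcal S(C)}\deg(p)=n$ of geometric nodes.
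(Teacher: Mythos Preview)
Your argument is correct and follows essentially the same route as the paper: apply $\mathrm{disc}$ to Corollary~\ref{zero}, use Corollary~\ref{normdisc} to compute the discriminant of each $\Phi_p$, and collect the signs via $\sum_p\deg(p)=n$. If anything, you are more careful than the paper in one place: by passing through the ring homomorphism $Q_k$ into $E(\mathrm{Hom}(\Gal_k,\Z/2\Z))$ you make explicit why $\mathrm{disc}\big(\prod_p\Phi_p\big)=\prod_p\mathrm{disc}(\Phi_p)$ (using $\mathrm{rank}(\Phi_p)=1$), which the paper's proof asserts without comment.
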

\begin{proof}
We obtain this by applying $\mathrm{disc}$ to Corollary $\ref{zero}$. By definition,
$$
\mathrm{disc}(\Phi_p) = \mathrm{disc}(1- N_{k(p)/k}(\mathbb{H}-(\langle 2\rangle + \langle 2 \alpha_p \rangle))= \mathrm{disc}(N_{k(p)/k}(\mathbb{H}-(\langle 2\rangle + \langle 2 \alpha_p \rangle)).
$$
Note that $\mathrm{disc}(\mathbb{H}-(\langle 2\rangle + \langle 2 \alpha_p \rangle)) = -\alpha_p$, so Corollary $\ref{normdisc}$ gives us
$$\mathrm{disc}( N_{k(p)/k}(\mathbb{H}-(\langle 2\rangle + \langle 2 \alpha_p \rangle)) = (-1)^{[k(p):k]} N_{k(p)/k}(\alpha_p).$$

Multiplying all the $\Phi_p$ terms gives us
$$
\mathrm{disc}(\prod_{p \in \mathcal{S}(C)} \Phi_p) = \prod_{p \in \mathcal{S}(C)}(-1)^{[k(p):k]}N_{k(p)/k}(\alpha_p) = (-1)^g  \prod_{p \in \mathcal{S}(C)}N_{k(p)/k}(\alpha_p)
$$
as required. 
\end{proof}

\subsection{Motivic Euler characteristics of the compactified Jacobian of the linear system}
Let $X$ be a polarised $K3$ surface equipped with a degree $2g$ polarisation and let $\mathcal{C}$ be the associated $g$-dimensional linear system of genus $g$ curves on $X$, as in the introduction. Let $\pi: \mathcal{C} \to \mathbb{P}^g$ be the associated morphism given by the linear system. We now combine above results with our Fubini theorem to compute $\chi^{mot}(\overline{\mathrm{Pic}}^g(\mathcal{C}))$.

\begin{defn} For $q$ a closed point of $\mathbb P^g_k$ write $k(q)$ to mean the residue field at $q$. As a slight abuse of notation, we also write $q: \Spec(k(q)) \to \mathbb P^g_k$ to be the corresponding closed immersion, and $\mathcal C_q$ to mean the fibre of $\mathcal{C} \xto{\pi} \mathbb P^g_k$ over $q$. Assume that the curves in $\mathcal{C}$ belong to a primitive divisor class and are geometrically integral, which is will hold if $g\geq 2$ and $X$ is of Picard rank $1$ and the curves in the linear system belong to a primitive divisor class. Assume that the rational curves in $\mathcal{C}$ are all nodal, which is generically the case by Chen's theorem, Theorem 1.1 of \cite{Ch}. Let $G(X)$ be the set of closed points $q$ of $\mathbb P^g_k$ such that $C_q$ has geometric genus zero. We see that $G(X)$ is finite, since the set of geometric points lying above $G(X)$ is finite by the argument presented in Corollary 2.3 of \cite{Be}, which holds over any algebraically closed field of characteristic $0$. By slight abuse of notation let $G(X)$ also denote the reduced closed subscheme of $\mathbb P^g_k$ whose underlying set of closed points is $G(X)$. Let $\pi_g:\overline{\mathrm{Pic}}^g(\mathcal C)\to\mathbb P^g_k$ be the relative compactified Picard variety. Let $Y := \mathbb P^g_k \setminus G(X)$ and define $W := \pi_g^{-1}(Y)$. Write $\pi_{g,W}: W \to Y$ for the restriction of $\pi_g$ to $W$. 
\end{defn}

\begin{thm}\label{bettibound}
There exists an integer $\Delta(g)$ such that for any $K3$ surface as above
$$
\mathrm{dim}_{\Q_\ell} H^i( \overline{\mathrm{Pic}}^g(C_q)_{\kbar}, \Q_\ell) \leq \Delta(g)
$$
 for all closed points $q$ of $\mathbb{P}^g$ and for all $i$.
\end{thm}
\begin{proof}
Since this statement is only about the dimension of the cohomology groups, it is enough to prove this statement after base changing to the algebraic closure of the field, so that $k=\kbar$. By Chapter 5, \S1 of \cite{Huy}, let $\mathcal{M}_g$ denote the moduli stack of $K3$ surfaces equipped with a polarisation of degree $2g$ (note that the construction in \cite{Huy} makes sense over any algebraically closed field of characteristic $0$). This is a Deligne--Mumford stack by Proposition 4.10 of Chapter $5$ of \cite{Huy}, so there exists a $\kbar$ variety $\mathbf{M}_g$, equipped with an étale surjective morphism $\mathbf{M}_g \to \mathcal{M}_g$. Since $\mathcal{M}_g$ is the moduli stack representing polarised $K3$ surfaces over $\kbar$, pulling back the universal family along this morphism equips $\mathbf{M}_g$ with a flat family $\mathcal{X} \to \mathbf{M}_g$ of polarised $K3$ surfaces. Moreover since $\mathbf{M}_g \to \mathcal{M}_g$ is surjective, every $K3$ surface equipped with a polarisation of degree $2g$ appears as a fibre over some point in the family $\mathcal{X} \to \mathbf{M}_g$. 

Consider $\mathbf{L} \in \mathrm{Pic}_{\mathcal{X}/\mathbf{M}_g}(\mathbf{M}_g)$. This determines a linear system of divisors $\mathbf{C}_g$ on $\mathcal{X}$ regarded as a $\mathbf{M}_g$-variety, which in turn comes with a map $\mathbf{C}_g \to \mathbb{P}^g_{\mathbf{M}_g} \cong \mathbb{P}^g_{\kbar} \times_{\kbar} \mathbf{M}_g$. Let $\mathbf{q} \in \mathbb{P}^g_{\mathbf{M}_g}(\kbar)$, so that $\mathbf{q}$ is given by a pair $(\alpha, q)$, where $\alpha \in \mathbf{M}_g(\kbar)$ and $q \in \mathbb{P}^g(\kbar)$. Then the fibre of $\mathbf{C}_g \to \mathbb{P}^g_{\mathbf{M}_g}$ over $\mathbf{q}$ is given by the fibre of $\mathcal{C}_{\alpha} \to \mathbb{P}^g$ over $q$, so in particular is an integral curve of arithmetic genus $g$. Therefore Theorem 8.1 of \cite{AK} allows us to form the relative compactified Jacobian $\Pi_g: \overline{\mathrm{Pic}}^g(\mathbf{C}_g) \to \mathbb{P}^g_{\mathbf{M}_g}$. The fibres of this morphism have dimension $2g$, since the fibres are the relative compactified Jacobians of the curves as above.

Consider the map $\Pi_g$, which is proper, and let $i$ be a positive integer. Since the $\ell$-adic sheaf $\Q_\ell$ is locally constant, the derived pushforward $R^i \Pi_{g,*}\Q_\ell$ is constructible by \S XIV, Theorem 1 of \cite{SGA}. If $i \leq 2g$, then since $R^i \Pi_{g,*} \Q_\ell$ is constructible, there exists a stratification $U^i_1, \ldots, U^i_n$ of $\mathbb{P}^g_{\mathbf{M}_g}$ such that $R^i\Pi_{g,*}\Q_\ell$ is locally constant on $U^i_j$. Let $\Delta^{i,j}(g)$ denote the rank of $R^i\Pi_{g,*}\Q_\ell$ on $U^i_j$, and let $\Delta^i(g) := \mathrm{max}_j \Delta^{i,j}(g)$, which is finite since the stratification is finite. If $i>2g$, then $R^i\Pi_{g,*}\Q_\ell = 0$, so set $\Delta^i(g) = 0$. Let $\Delta(g) := \mathrm{max}_i \Delta^i(g)$. Precisely by construction if $f: U \to \mathbb{P}^g_{\mathbf{M}_g}$ is such that $f^*R^i\Pi_{g,*}\Q_\ell$ is locally constant, then $\mathrm{rank}(f^*R^i\Pi_{g,*}\Q_\ell)\leq \Delta(g)$. 

Now suppose $X$ is our polarised $K3$ surface over $\kbar$. Let $\mathcal{C} \to \mathbb{P}^g$ denote the linear system on $X$, let $q \in \mathbb{P}^g(\kbar)$, let $C_q$ denote the curve over $q$. We claim that $\mathrm{dim}_{\Q_\ell} H^i(\overline{\mathrm{Pic}}^g(C_q), \Q_\ell) \leq \Delta(g)$. 

By the definition of the moduli stack $X$ determines a point $[X] \in \mathcal{M}_g(\kbar)$. Pick $\alpha \in \mathbf{M}_g(\kbar)$ lying over $[X]$ under the morphism $\mathbf{M}_g(\kbar) \to \mathcal{M}_g(\kbar)$, which exists since $\mathbf{M}_g \to \mathcal{M}_g$ is étale and surjective. Then the fibre of $\overline{\mathrm{Pic}}^g(\mathbf{C}_g) \to  \mathbb{P}^g_{\mathbf{M_g}}$ over the point $(\alpha, q)$ is isomorphic to $\overline{\mathrm{Pic}}^g(C_q)$ by construction.

Fix an integer $i$ and consider $H^i(\overline{\mathrm{Pic}}^g(C_q), \Q_\ell)$. This vanishes for $i > 2g$ so assume $i \leq 2g$. Consider $(\alpha, q) \in \mathbb{P}^g_{\mathbf{M}_g}(\kbar)$. By construction, the fibre of $\Pi_g$ over $(\alpha, q)$ is $\overline{\mathrm{Pic}}^g(C_q)$. Therefore by the proper base change theorem (see \S XII, Theorem 5.1(iii) of \cite{SGA4}), we see 
$$
\mathrm{dim}_{\Q_\ell}( H^i(\overline{\mathrm{Pic}}^g(C_q), \Q_\ell)) = \mathrm{rank} ( (\alpha, q)^* R^i\Pi_{g,*}\Q_\ell) \leq \Delta(g),
$$
as required.
\end{proof}
\begin{rem}
In the case that $C_q$ is a nodal curve, we may use Proposition $\ref{torusbundle}$ and the Leray--Serre spectral sequence to see that $ \mathrm{dim}_{\Q_\ell}( H^i(\overline{\mathrm{Pic}}^g(C_q), \Q_\ell)) \leq {2g \choose g}$, and if $C_q$ is smooth, $H^i(\overline{\mathrm{Pic}}^g(C_q), \Q_\ell) = {2i \choose i}$. Therefore $\Delta(g) \geq {2g \choose g}$. 
\end{rem}

\begin{defn}\label{jgdefinition}
Let $\Delta(g)$ be the constant from Theorem $\ref{bettibound}$. For $k^{const}$ a number field, let $\Delta^{k^{const}}_g$ be the finite subgroup of $\Hom(\Gal_{\Q}, \Z/2\Z)$ consisting of characters $\chi$ such that $\chi$ is unramified at all primes $\mathfrak{p}$ whose residue characteristic $p$ satisfies $p> \Delta(g) \cdot [k^{const}:\Q]+1$. For $k$ a finitely generated field over $\Q$, let $\Delta_g^k$ be the finite subgroup of $\Hom(\Gal_k, \Z/2\Z)$ consisting of elements in the image of $\Delta^{k^{const}}_g$ under the map $\Hom(\Gal_{k^{const}}, \Z/2\Z) \hookrightarrow \Hom(\Gal_k, \Z/2\Z)$. We will also think of $\Delta_g^k$ as a subgroup of $\Hom(\Gal_k, \Q_{\ell}^\times)$ using the inclusion $\Z/2\Z \hookrightarrow \Q_{\ell}^\times$.  

The map $\widehat{\mathrm{W}}(k) \to E( \Hom(\Gal_k, (\Z/2\Z)/ \Delta^k_g)$ given by $[q] \mapsto (\mathrm{rank}(q), \mathrm{disc}(q)) \pmod{\Delta^k_g}$ is a ring homomorphism with kernel $I_g$ containing $I^2$. Let $J_g := I_g \cap T$. The homomorphisms $\mathrm{rank}$ and $\mathrm{sign}$ clearly both factor through $\widehat{\mathrm{W}}(k)/J_g$, so we can still recover these invariants after passing to this quotient.
\end{defn}

\begin{rem}
Let $k_g$ be the abelian extension of $k$ corresponding to the fixed field of the subgroup $\bigcap_{\chi \in \Delta^k_g} \mathrm{ker}(\chi)$. The canonical map $\widehat{\mathrm{W}}(k)/J \to \widehat{\mathrm{W}}(k_g)/J$ factors through $J_g$. We can therefore view $J_g$ as recovering a discriminant in $k_g$. However, we may lose information about sign by passing to $\widehat{\mathrm{W}}(k_g)/J$. For example, if $-1$ is not a square in $k$, and if the character corresponding to $k(\sqrt{-1})$ lies in $\Delta^k_g$, then $\mathrm{Spr}(k_g) = \emptyset$, but this may not be the case for $\mathrm{Spr}(k)$. We however do not lose information about sign in the case that $k$ is a real closed field.
\end{rem}

\begin{lemma}\label{chimotW}
We have $\chi^{mot}(W) \equiv 0 \pmod{J_g}$. 
\end{lemma}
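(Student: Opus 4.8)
The plan is to verify separately that the rank, the signature, and the discriminant of $\chi^{mot}(W)$ lie where $J_g$ requires, so that $\chi^{mot}(W)\in J_g$ by (the obvious analogue of) Lemma \ref{modJ} and the description of $J_g$ in Definition \ref{jgdefinition}. The rank and signature are the easy half. Every closed point $q$ of $Y=\mathbb P^g_k\setminus G(X)$ has the property that $\mathcal C_q$ is a projective, geometrically integral curve of geometric genus at least $1$, so Proposition \ref{beau4} gives $\chi^{mot}_{k(q)}(\overline{\mathrm{Pic}}^g(\mathcal C_q))\in J$; in particular it is torsion, and since the trace transfer carries torsion classes to torsion classes, Lemma \ref{trace} shows the fibre $W_q$, viewed as a $k$-variety, has torsion motivic Euler characteristic. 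The real closed Fubini theorem, Corollary \ref{fubinitorsion}, applied to $\pi_g\colon W\to Y$, then gives $\mathrm{rank}(\chi^{mot}(W))=0$ and $\mathrm{sign}(\chi^{mot}(W))=0$.

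The substance is the discriminant: I must show $\mathrm{disc}(\chi^{mot}(W))\in\Delta_g$. Since $\mathrm{disc}$ is a homomorphism from $(\widehat{\mathrm W}(k),+)$ to an abelian group, $\Delta_g$ is a subgroup, and $\chi^{mot}$ is additive, it suffices to decompose $W$ into finitely many locally closed pieces each contributing a class whose discriminant lies in $\Delta_g$. First I would stratify $\mathbb P^g_k$ by the number and combinatorial type of the nodes of the corresponding curve; this is a constructible stratification because the members of $\mathcal C$ form a flat family, and intersecting with $Y$ gives a finite stratification into locally closed subvarieties over each of which the family is equisingular. Over such a stratum $B$ I would relativise the partial-normalisation stratification of the compactified Jacobian (Lemma \ref{beau3}, Proposition \ref{torusbundle}) and run, in families, the node-by-node d\'evissage used to prove Corollary \ref{zero}: each relative stratum is a bundle of torsors under a group scheme whose maximal abelian quotient variety is the relative $\mathrm{Pic}^0$ of a partial normalisation, i.e.\ a family of positive-dimensional abelian varieties, while the torus factors contribute relative Weil restrictions of $\mathbb G^{[\alpha_p]}_{m,k(p)}$, to be handled by Theorems \ref{WRs} and \ref{fubini1}. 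Modulo $J$ this reduces $\chi^{mot}(W)$ to a sum of classes of smooth projective families $f\colon V\to B'$ whose geometric fibres are abelian varieties of dimension $g$ (possibly up to a rank-zero torus factor). For each such $V$ I would choose a smooth projective compactification of $B'$ with simple normal crossings boundary, resolving inside $\mathbb P^g_k$ if necessary, note that $R^if_*\mathbb Q_l$ is tame since $\mathrm{char}(k)=0$ and has rank $\binom{2g}{i}\le\binom{2g}{g}$, check that each fibre has $e=0$ and $\det{}_l=0$ (Corollary \ref{detofcoh}, Theorem \ref{abelian}), and apply Corollary \ref{saitofactorsurface} together with Corollary \ref{restrictedramification} to conclude that $\mathrm{disc}(\chi^{mot}(V))=\det{}_l(V)$ is an order-$2$ character unramified at all primes $p>\binom{2g}{g}$, hence lies in $\Delta_g$. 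Since $J\subseteq J_g$, summing over strata gives $\mathrm{disc}(\chi^{mot}(W))\in\Delta_g$, and together with the first paragraph this yields $\chi^{mot}(W)\in J_g$.

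The main obstacle is that $\pi_g\colon W\to Y$ is not a smooth morphism, because the compactified Jacobian of a nodal curve is singular, so the Fubini theorem modulo monodromy data, Corollary \ref{finalfubini}, cannot be applied to $\pi_g$ directly: all the work is in the reduction to genuinely smooth projective families. Within that reduction the delicate points I expect to fight with are making the partial-normalisation d\'evissage function in families when the nodes are not individually rational — this is precisely what forces a relative version of the norm and Weil-restriction formalism of Section 4 to be used, possibly after further stratifying so that the splitting type of the node scheme is constant — and verifying that the smooth families produced really do have cohomological ranks bounded by $\binom{2g}{g}$, so that the monodromy characters coming out of the Saito-factor analysis land in $\Delta_g$ rather than in a larger group. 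A minor but real bookkeeping point is that the compactifications with simple normal crossings boundary must be chosen so that the additivity argument over the finitely many strata closes up.
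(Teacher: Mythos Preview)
Your rank and signature argument is essentially the paper's: both reduce to Corollary \ref{fubinitorsion} via the fibrewise vanishing from Proposition \ref{beau4}. The real divergence is in the discriminant. The paper's proof is three sentences: it applies Corollary \ref{finalfubini} directly to $\pi_g\vert_W\colon W\to Y$, taking $\overline Y=\mathrm{Bl}_{G(X)}(\mathbb P^g)$ so that the boundary is a simple normal crossings divisor, and reads off $\chi^{mot}(W)\in J_g$.

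You have spotted the issue the paper glosses over: Corollary \ref{finalfubini} (and Corollary \ref{saitofactorsurface} beneath it) is stated for a \emph{smooth} projective morphism, whereas $\pi_g\vert_W$ fails to be smooth along the locus of nodal but non-rational curves, since the compactified Jacobian of a nodal curve is singular at the non-locally-free sheaves. Your proposed repair---stratify $Y$ by combinatorial nodal type, relativise the partial-normalisation d\'evissage of Section 6 over each stratum, and reduce to genuinely smooth projective families of abelian varieties to which Corollary \ref{restrictedramification} applies---is a legitimate route around this, and it is genuinely different from what the paper writes. What the paper's approach buys is brevity; what yours buys is an honest verification of the smoothness hypothesis. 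The sketch is plausible, and the delicate points you flag are the right ones: making the d\'evissage work in families when the node scheme has nontrivial Galois structure, and arranging the stratification so that each piece admits an SNC compactification while the rank bound $\mathrm{rank}(R^if_*\mathbb Q_l)\le\binom{2g}{g}$ survives. None of these looks fatal, but none is free either; the paper has simply not written them down.
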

\begin{proof}
This proof is an application of Corollary $\ref{saitofactorsurface}$ to this situation. We first show that this situation satisfies the properties for this result.

Let $i\geq0$, and consider $R^i\pi_{g,*} \Q_{\ell}$. Since $\pi_g$ is proper, Theorem 1 of \S XIV of \cite{SGA4} gives us that $R^i\pi_{g,*}\Q_\ell$ is constructible, so there exists a a stratification $Y^i_1, \ldots, Y^i_{n_i}$ of $Y$ into locally closed subvarieties such that $R^i\pi_*\Q_{\ell}$ is locally constant on $Y^i_j$ for all $j$. Moreover since $\pi_g$ has relative dimension $2g$, this is the trivial sheaf whenever $i > 2g$. Since the restriction of a locally constant sheaf is locally constant and each stratification is finite, we may find a common stratification $Y_1, \ldots, Y_n$ such that $R^i\pi_{g,W,*}\Q_\ell$ is locally constant on $Y_j$ for all $i$ and for all $j$. Define a partition of $W$ by setting $W_j := \pi_{g,W}^{-1}(Y_j)$, so $\chi^{mot}(W) = \sum_j \chi^{mot}(W_j)$. It is therefore enough to show that $\chi^{mot}(W_j)\equiv 0 \pmod{J_g}$ for each $W_j$.

By construction, for every $i$, $R^i\pi_{g, W_j,*}\Q_\ell$ is locally constant on each $Y_j$, so they are smooth. We would like to apply Corollary $\ref{saitofactorsurface}$ to each morphism $W_j \to Y_j$. In order for this to do this, we first need $Y_j$ to have a compactification $\overline{Y_j}$ such that $\overline{Y_j} \setminus Y_j$ is a divisor with simple normal crossings and the ramification of $R^i\pi_{g,W_j,*}\Q_\ell$ on $\overline{Y_j} \setminus Y_j$ is tame. This follows immediately by taking $\overline{Y_j}'$ to be the closure of $Y_j$ in $\mathbb{P}^g$, and blowing up $\overline{Y_j}'$ at $\overline{Y_j}' \setminus Y_j$ in order to obtain that the boundary is a divisor.

The fibres of $\chi^{mot}(W_j)$ are given by $\overline{\mathrm{Pic}}^g(C)$ for $C$ a non-rational curve, and so have motivic Euler characteristic lying in $J$ by Proposition $\ref{beau4}$. Moreover for all $i$, $\mathrm{rank}(R^i\pi_{g,W_j,*}\Q_{\ell}) \leq \Delta(g)$ by Theorem $\ref{bettibound}$. Corollary $\ref{saitofactorsurface}$ then gives that $\chi^{mot}(W_j)$ has rank $0$, signature $0$, and discriminant given by $\prod_{i=0}^{2g}J_{D,R^if_*\mathbb{Q}_{\ell}}^{\otimes(-1)^{i+1}}$. Corollary $\ref{restrictedramification}$ gives us the required bound on the ramification of this character, and so we see $\chi^{mot}(W) \in J_g$.
\end{proof}

\begin{defn}\label{BmotC}
Define the following element of $\widehat{\mathrm{W}}(k)$
$$\mathbf B^{mot}_{\mathcal{C}}(X):= \sum_{q \in G(X)} \mathrm{Tr}_{k(q)/k} \left( \prod_{p\in\mathcal S(C_q)} \Phi_p \right) \in \widehat{\mathrm{W}}(k).$$
\end{defn}

This element makes an appearance in our final arithmetic Yau--Zaslow formula. Note that $\mathrm{rank}(\Phi_p)=1$, so $\mathrm{rank}(\mathbf B^{mot}_{\mathcal{C}}(X)) = \# G(X)(\kbar)$. In particular, we see $\mathbf B^{mot}_{\mathcal{C}}(X)$ ``counts" the number of rational curves in $\mathcal{C}$, as well as providing arithmetic information about nodes of the rational curves.
\begin{lemma}\label{9.2}
We have an equality modulo $J_g$
$$
\mathbf{B}_{\mathcal{C}}^{mot}(X) \equiv  \chi^{mot}(\overline{\mathrm{Pic}}^g(\mathcal{C})) \pmod{J_g}.
$$
\end{lemma}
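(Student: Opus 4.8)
The plan is to stratify $\overline{\mathrm{Pic}}^g(\mathcal{C})$ over $\mathbb{P}^g_k$, discard the part lying over $Y$ using Lemma \ref{chimotW}, and then analyse the remaining part, which lies over the finitely many points of $G(X)$, using the transfer compatibility of Lemma \ref{trace} and the fibrewise computation of Corollary \ref{zero}.

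First I would write $\overline{\mathrm{Pic}}^g(\mathcal{C}) = W \sqcup \pi_g^{-1}(G(X))$, with $W = \pi_g^{-1}(Y)$ open and $\pi_g^{-1}(G(X))$ its closed complement. Additivity of $\chi^{mot}$ and Lemma \ref{chimotW} give
$$\chi^{mot}(\overline{\mathrm{Pic}}^g(\mathcal{C})) = \chi^{mot}(W) + \chi^{mot}(\pi_g^{-1}(G(X))) \equiv \chi^{mot}(\pi_g^{-1}(G(X))) \pmod{J_g}.$$
Since $G(X)$, as a reduced closed subscheme of $\mathbb{P}^g_k$, is the disjoint union of the $\Spec k(q)$ for $q \in G(X)$, the preimage $\pi_g^{-1}(G(X))$ is the disjoint union of the fibres $\pi_g^{-1}(q)$, and by the construction in Definition \ref{compactifiedjac} each such fibre is $\overline{\mathrm{Pic}}^g(C_q)$ regarded as a $k$-scheme via $\Spec k(q) \to \Spec k$. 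Applying additivity once more together with Lemma \ref{trace} yields
$$\chi^{mot}_k(\pi_g^{-1}(G(X))) = \sum_{q \in G(X)} \mathrm{Tr}_{k(q)/k}\bigl(\chi^{mot}_{k(q)}(\overline{\mathrm{Pic}}^g(C_q))\bigr).$$

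Each $C_q$ is a projective, geometrically integral (by the Picard rank $1$ and primitivity hypotheses), nodal curve over $k(q)$ of arithmetic genus $g$ with geometric genus $0$ (the latter by definition of $G(X)$), so Corollary \ref{zero}, applied over $k(q)$, gives $\chi^{mot}_{k(q)}(\overline{\mathrm{Pic}}^g(C_q)) \equiv \prod_{p \in \mathcal{S}(C_q)} \Phi_p \pmod{J}$ in $\widehat{\mathrm{W}}(k(q))$. To transport this along $\mathrm{Tr}_{k(q)/k}$ I need $\mathrm{Tr}_{k(q)/k}(J) \subseteq J_g$: if $w \in J = I^2 \cap T$ over $k(q)$ then $Q_{k(q)}(w) = 0$, so Lemma \ref{tracedisc} forces $Q_k(\mathrm{Tr}_{k(q)/k}(w)) = 0$, i.e.\ $\mathrm{Tr}_{k(q)/k}(w) \in I^2$; and $\mathrm{Tr}_{k(q)/k}$ is a homomorphism of abelian groups, hence preserves torsion, so $\mathrm{Tr}_{k(q)/k}(w) \in I^2 \cap T = J \subseteq J_g$. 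Consequently $\mathrm{Tr}_{k(q)/k}(\chi^{mot}_{k(q)}(\overline{\mathrm{Pic}}^g(C_q))) \equiv \mathrm{Tr}_{k(q)/k}(\prod_{p \in \mathcal{S}(C_q)} \Phi_p) \pmod{J_g}$, and summing over $q \in G(X)$ gives exactly $\mathbf{B}^{mot}_{\mathcal{C}}(X)$ by Definition \ref{BmotC}. Chaining the two congruences completes the proof.

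The main obstacle is almost entirely bookkeeping rather than substance: one must check that the fibre $\pi_g^{-1}(q)$ is genuinely $\overline{\mathrm{Pic}}^g(C_q)$ over $k(q)$ (this is baked into Definition \ref{compactifiedjac}), that the hypotheses of Corollary \ref{zero} hold fibrewise, and that the trace transfer respects the ideals $J$ and $J_g$ — the last point being the only one requiring an argument, and it reduces immediately to Lemma \ref{tracedisc} together with the inclusion $I^2 \subseteq I_g$.
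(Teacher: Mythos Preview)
Your proof is correct and follows essentially the same route as the paper: stratify over $\mathbb{P}^g_k$, kill the contribution of $W$ via Lemma \ref{chimotW}, split $\pi_g^{-1}(G(X))$ into the fibres $\overline{\mathrm{Pic}}^g(C_q)$, and combine Corollary \ref{zero} with the trace compatibility of Lemma \ref{trace}. Your argument is in fact slightly more careful than the paper's, which simply invokes Lemma \ref{trace} at the end without explicitly checking that $\mathrm{Tr}_{k(q)/k}$ sends the ideal $J$ over $k(q)$ into $J_g$ over $k$; your verification of this via Lemma \ref{tracedisc} and preservation of torsion fills a small gap the paper leaves implicit.
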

\begin{proof}
Note that $\overline{\mathrm{Pic}}^g(\mathcal C) = W \amalg \pi_g^{-1}(G(X))$, so by the additivity of $\chi^{mot}$, we get
$$
\chi^{mot}(\overline{\mathrm{Pic}}^g(\mathcal C)) = \chi^{mot}_k(W) + \chi^{mot}_k( \pi_g^{-1}(G(X))),
$$
and Lemma $\ref{chimotW}$ gives $\chi^{mot}_k(W)=0 \pmod {J_g}$. By additivity of the motivic Euler characteristic
$$
\chi^{mot}( \pi_g^{-1}(G(X))) = \sum_{q \in G(X)} \chi^{mot}_k( \overline{\mathrm{Pic}}^g(C_q) ).
$$
Note that $C_q \to \Spec(k(q))$ is a nodal curve of geometric genus $0$ curve over $k(q)$, so applying Corollary $\ref{zero}$ gives
$$\chi^{mot}_{k(q)}(\overline{\mathrm{Pic}}^g(C_q))=\prod_{p\in\mathcal S(C_q)}\Phi_p \in \widehat{\mathrm{W}}(k(q))/J.$$
Applying Lemma $\ref{trace}$ gives 
$$
\chi^{mot}_k(\overline{\mathrm{Pic}}^g(C_q)) = \mathrm{Tr}_{k(q)/k}\left(\prod_{p\in\mathcal S(C_q)}\Phi_p\right) \in \widehat{\mathrm{W}}(k)/J,
$$
so summing over $q \in G(X)$ gives the result as required.
\end{proof}

\section{The motivic Euler characteristic of Calabi-Yau varieties}
 Recall that $X$ is a K3 surface admitting a $g$-dimensional linear system $\mathcal C$ of geometrically integral curves of genus $g$ and that the curves in this linear system correspond to a primitive class in the N\'eron--Severi group. The assumption that the curves in $\mathcal{C}$ are geometrically integral allows us to apply Example 0.5 of \cite{Mu} to show that the relative compactified Picard varieties $\overline{\mathrm{Pic}}^g(\mathcal C)$ can be identified with connected components of the moduli space of simple sheaves on $X$. In particular $\overline{\mathrm{Pic}}^g(\mathcal C)$ is nonsingular and has trivial canonical bundle, by Theorem 0.1 of \cite{Mu}. Note that if $X$ has Picard rank $1$ and $g \geq 2$, the assumption that the curves in $\mathcal C$ are geometrically integral is automatically satisfied and $\mathcal C$ will be unique with these properties. 

\begin{propn}[(Beauville, Proposition 1.3 of \cite{Be})] \label{beau1} There exists a birational morphism from relative compactified Picard variety $\overline{\mathrm{Pic}}^g(\mathcal C)$ to $X^{[g]}$.
\end{propn}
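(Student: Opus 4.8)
The plan is to realise the map through an incidence correspondence between $X^{[g]}$ and the linear system together with a relative Abel--Jacobi map, and then to upgrade the resulting birational map to an everywhere-defined morphism. First I would set up the incidence variety: writing $\pi\colon\mathcal C\to\mathbb{P}^g$ for the universal curve, let $\Sigma:=\mathrm{Hilb}^g(\mathcal C/\mathbb{P}^g)$ be the relative Hilbert scheme of length-$g$ subschemes of the fibres, so a point of $\Sigma$ is a pair $(q,Z)$ with $q\in\mathbb{P}^g$ and $Z\subseteq C_q$ of length $g$. Since each $C_q$ is a closed subscheme of $X$, forgetting $q$ gives a morphism $\beta\colon\Sigma\to X^{[g]}$, $(q,Z)\mapsto Z$, while remembering only $q$ exhibits $\Sigma$ over $\mathbb{P}^g$ with fibre $\mathrm{Hilb}^g(C_q)$. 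Let $\Sigma_0\subseteq\Sigma$ be the closure of the locus where $C_q$ is smooth, an irreducible variety of dimension $2g$. Over $\mathbb{P}^g$ there is a relative Abel map $\alpha\colon\Sigma\to\overline{\mathrm{Pic}}^g(\mathcal C)$ sending $(q,Z)$ to the rank-$1$ torsion-free sheaf $\mathcal{H}\!om_{\mathcal O_{C_q}}(\mathcal I_{Z/C_q},\mathcal O_{C_q})$ of degree $g$ on $C_q$ (equal to $\mathcal O_{C_q}(Z)$ when $C_q$ is smooth and $Z$ reduced), using the Altman--Kleiman description of $\overline{\mathrm{Pic}}$ from Definition \ref{compactifiedjac}; restricted to the smooth fibres $\alpha$ is the classical Abel map $\mathrm{Sym}^g(C_q)\to\mathrm{Pic}^g(C_q)$, which is birational, so $\alpha$ restricts to a birational morphism $\Sigma_0\to\overline{\mathrm{Pic}}^g(\mathcal C)$ — here I use Mukai's theorem, recalled just above (\cite{Mu}), that $\overline{\mathrm{Pic}}^g(\mathcal C)$ is integral, smooth, of dimension $2g$.

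Next I would check that the other projection $\beta|_{\Sigma_0}\colon\Sigma_0\to X^{[g]}$ is birational. The class of $C$ is big, nef and primitive, so $\mathcal O_X(C)$ is globally generated with $h^0(\mathcal O_X(C))=g+1$; hence for a general length-$g$ subscheme $Z$ (say $g$ general points) one has $h^0(\mathcal I_Z\otimes\mathcal O_X(C))=1$, so there is a \emph{unique} curve $C'\in\mathcal C$ containing $Z$, and $C'$ is integral since the general member of $\mathcal C$ is. Thus $\beta$ is an isomorphism over a dense open subset of $X^{[g]}$, so is birational, and composing $\beta$ with a rational inverse of $\alpha$ produces a birational map from $\overline{\mathrm{Pic}}^g(\mathcal C)$ to $X^{[g]}$, a priori defined only on a dense open subset.

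The crux is to extend this birational map to a genuine morphism. Concretely it is $(C',F)\mapsto Z(s)$, the vanishing scheme of the essentially unique section of $F$ (note $\chi(F)=\deg F+1-g=1$ forces $h^0(F)\ge 1$), and it is defined exactly on the locus where $h^0(F)=1$; its complement meets each Jacobian fibre $\overline{\mathrm{Pic}}^g(C')$ in the Brill--Noether locus $W^1_g(C')$, of codimension $\ge 2$ by Martens' theorem together with Lazarsfeld's Brill--Noether generality for curves in a primitive system on a Picard-rank-$1$ K3 surface. To extend across this subset I would proceed functorially: a choice of quasi-universal sheaf $\mathcal F$ on $\mathcal C\times_{\mathbb{P}^g}\overline{\mathrm{Pic}}^g(\mathcal C)$ exists by Mukai's moduli description, and the construction is independent of the choice since twisting $\mathcal F$ by a line bundle pulled back from $\overline{\mathrm{Pic}}^g(\mathcal C)$ alters neither the fibrewise sheaf nor its vanishing scheme; over the jumping-free locus this $\mathcal F$ yields a flat family of length-$g$ subschemes of $X$, hence a morphism to $X^{[g]}$, and one must then show it extends over $W^1_g$, equivalently that the closure of its graph in $\overline{\mathrm{Pic}}^g(\mathcal C)\times X^{[g]}$ maps finitely — hence, being birational onto the normal $\overline{\mathrm{Pic}}^g(\mathcal C)$, isomorphically — to the first factor. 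Birationality of the extended morphism is then the previous step.

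The genuine difficulty is in this last step: one must bound the jumping locus uniformly over all (possibly badly singular) members of $\mathcal C$, and, more seriously, prove that the rational map truly extends to a morphism rather than merely extending along one-parameter families; this is where Mukai's symplectic/moduli description of $\overline{\mathrm{Pic}}^g(\mathcal C)$ and the projectivity of $X^{[g]}$ enter in an essential way. I note that, since both sides are smooth projective with trivial canonical bundle, once the birational morphism is in hand it is automatically an isomorphism; however only the birational equivalence — through the Batyrev--Kontsevich arguments used later — is needed in the sequel, so one could also content oneself with the birational map produced in the first two steps.
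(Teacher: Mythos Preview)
The paper's proof is a one-line citation: it is Proposition~1.3 of \cite{Be}, and the construction makes sense over any field. Your first two steps are essentially a reconstruction of Beauville's argument (incidence correspondence via the relative Hilbert scheme of the universal curve, Abel map to $\overline{\mathrm{Pic}}^g$, uniqueness of the curve through a general length-$g$ subscheme), so on that level your approach matches the paper's.

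Where you go further is in the third step, attempting to promote the birational map to an everywhere-defined morphism. Your own closing remark is exactly right and worth emphasising: a birational \emph{morphism} between smooth projective varieties with trivial canonical bundle is an isomorphism, and in general $\overline{\mathrm{Pic}}^g(\mathcal C)$ and $X^{[g]}$ are known to be birational hyperk\"ahler varieties related by a Mukai flop rather than isomorphic. Indeed, the paper itself later (in the proof of Lemma~\ref{BarKon2}) refers to ``our birational map $\phi$ \ldots\ defined in Proposition~1.3 of \cite{Be}''. So the word ``morphism'' in the statement should be read as ``map'', and your extension argument across $W^1_g$ is not only the genuine difficulty you flag---it is in fact not expected to succeed in that strength. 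The birational map from your first two steps is what is actually claimed and all that is used downstream.
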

\begin{rem} In \cite{Be} the result is stated over $\mathbb C$, however the construction holds over any field. 
\end{rem}
In \cite{Ba}, it is shown that birational Calabi--Yau varieties have the same Betti numbers, so also the same complex Euler characteristic. This section is dedicated to proving the required analogue of this theorem for the motivic Euler characteristic of the Calabi-Yau varieties we are considering.

\begin{thm}\label{batyrev}
In the situation as above, there is an equality of the motivic Euler characteristics modulo $J$: 
$$
\chi^{mot}( \overline{\mathrm{Pic}}^g\mathcal{C}) \equiv  \chi^{mot}(X^{[g]}) \pmod{J}.
$$ 
\end{thm}
It is natural to conjecture that the above equality holds for the whole motivic Euler characteristic, rather than just modulo $J$. Note that if this theorem holds for the whole motivic Euler characteristic, then Theorem $\ref{levine1}$, Theorem $\ref{levine2}$ and Theorem \ref{etale1} would imply Lemmas $\ref{BarKon1}$, $\ref{BarKon2}$ and $\ref{BatKon3}$ respectively. We begin with a lemma that allows us to use the result of Batyrev and Kontsevich in our context.
\begin{lemma}\label{BarKon1}
Let $X$ and $Y$ be two birational Calabi--Yau varieties over $k$. Then
$$
\mathrm{rank}(\chi^{mot}(X)) = \mathrm{rank}(\chi^{mot}(Y)).
$$
\end{lemma}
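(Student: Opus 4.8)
The statement to prove is that $\mathrm{rank}(\chi^{mot}(X)) = \mathrm{rank}(\chi^{mot}(Y))$ for $X$ and $Y$ birational Calabi--Yau varieties over $k$. By Theorem \ref{levine1}, $\mathrm{rank}(\chi^{mot}(-)) = e(-)$, the compactly supported Euler characteristic of the base change to $\overline{k}$, so the claim is equivalent to $e(X) = e(Y)$. This is exactly the statement that birational Calabi--Yau varieties have the same (compactly supported, equivalently ordinary, by Laumon) Euler characteristic, which is the theorem of Batyrev proved via $p$-adic integration in \cite{Ba}, building on Kontsevich's motivic integration ideas.

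The plan is therefore to reduce to the classical statement. First I would invoke Theorem \ref{levine1} to replace $\mathrm{rank}(\chi^{mot}(X))$ by $e(X)$ and $\mathrm{rank}(\chi^{mot}(Y))$ by $e(Y)$. Next, since the compactly supported Euler characteristic over $\overline{k}$ depends only on the base change and is invariant under field extension of the (algebraically closed) coefficient field for our chosen Weil cohomology theory, I would spread $X$, $Y$ and a birational map between them out over a finitely generated subfield $F \subseteq k$, then fix an embedding $F \hookrightarrow \mathbb{C}$, exactly as in the proof of Theorem \ref{levine1}; this reduces us to the case $k = \mathbb{C}$, where $e$ is the topological Euler characteristic of the complex points. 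One must check that the Calabi--Yau hypothesis (trivial canonical bundle, which is all that is used) descends to $F$ and that the birational map is defined over $F$ after possibly enlarging it — both are routine since these are finitely many algebraic conditions. Finally, over $\mathbb{C}$, I would cite Batyrev's theorem from \cite{Ba} that birational smooth projective varieties with trivial canonical bundle have equal Betti numbers, hence equal Euler characteristic.

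The main subtlety — not really an obstacle but the point requiring care — is making sure the reduction to $\mathbb{C}$ is legitimate: one needs proper (or ordinary, via Laumon's result cited in the remark after Theorem \ref{levine1}) base change to know $e$ is unchanged under the base field extensions $F \to k$ and $F \to \mathbb{C}$, and one needs that "birational Calabi--Yau" is a property that can be witnessed over a finitely generated field. Since $\overline{\mathrm{Pic}}^g(\mathcal{C})$ and $X^{[g]}$ — the two varieties this lemma will be applied to — are smooth projective with trivial canonical bundle (by Mukai \cite{Mu} and the standard fact about Hilbert schemes of points on $K3$ surfaces) and Beauville's birational morphism of Proposition \ref{beau1} is defined over the base field, all of these conditions hold, so the citation of \cite{Ba} applies directly once we are over $\mathbb{C}$.
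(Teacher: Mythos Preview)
Your proposal is correct and follows essentially the same approach as the paper: reduce to the case $k=\mathbb{C}$ by spreading out over a finitely generated subfield and choosing an embedding into $\mathbb{C}$ (exactly as in the proof of Theorem \ref{levine1}), then invoke Batyrev's theorem \cite{Ba} on equality of Betti numbers for birational Calabi--Yau varieties. The paper's proof is slightly terser and phrases the reduction via the base-change compatibility of $\chi^{mot}$ and invariance of rank, rather than explicitly passing through $e(-)$, but the content is the same.
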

\begin{proof}
Over $\mathbb{C}$, the motivic Euler characteristic $\chi^{mot}$ is just the compactly supported topological Euler characteristic. The result follows by Theorem 1.1 of \cite{Ba}, which says that two birational Calabi--Yau varieties over $\mathbb{C}$ have the same Betti numbers. Suppose now $k \neq \mathbb{C}$. As in Theorem $\ref{levine1}$, there exists a finitely generated field $L$ such that $X$ is defined over $L$, and therefore by enlarging $L$, the varieties $X$ and $Y$ are both defined over some $L$. Since $L$ has finite transcendence degree over $\Q$, we can embed $L$ into $\mathbb{C}$. The result holds for $X_{\mathbb{C}}$, i.e.~$\mathrm{rank}(\chi^{mot}(X)) = \mathrm{rank}(\chi^{mot}(Y))$. Since the motivic Euler characteristic commutes with base change and base changing preserves the rank, this gives the result.
\end{proof}
We wish to now prove a lemma that allows us to use Proposition 3.1 of \cite{KR1} over any real closed field. 

\begin{lemma}\label{BarKon2}
For every real closure $k_<$ of $k$, we have an equality of real closed Euler characteristics
$$
\chi^{rc}\left( (\overline{\mathrm{Pic}}^g\mathcal{C})_{k_<}(k_<)\right) = \chi^{rc}\left( (X^{[g]})_{k_<}(k_<)\right).
$$
\end{lemma}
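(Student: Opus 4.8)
The plan is to reduce the statement to the case $k_< = \mathbb{R}$, where it is precisely Proposition 2.1 of \cite{KR1}, by a transfer argument for real closed fields. The mechanism is that both sides of the asserted equality are compactly supported Euler characteristics of semi-algebraic sets, $\chi^{rc}$ is a \emph{uniformly constructible} invariant of semi-algebraic families, and it is insensitive to which real closed field one works over; together with the completeness of the theory of real closed fields this moves the equality from $\mathbb{R}$ to an arbitrary $k_<$.

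First I would spread the data out into a family over $\mathbb{Q}$. Pick a finitely generated subfield $F\subseteq k$ over which $X$, its polarisation, and the linear system $\mathcal C$ are all defined, and a model of this data over an integral affine $\mathbb{Q}$-scheme $S_0=\Spec A_0$ of finite type with $\mathrm{Frac}(A_0)=F$, so that $X$ is the generic fibre of the resulting family $\mathcal X\to S_0$. Shrinking $S_0$ to a dense open $S$ we may assume that every geometric fibre of $\mathcal X\to S$ is a K3 surface, that the fibres of the relative linear system $\mathcal C_S\to S$ (an $\mathbb P^g$-bundle over $S$ by cohomology and base change) are geometrically integral curves of arithmetic genus $g$, and, more generally, that every geometric fibre satisfies the hypotheses of Proposition 2.1 of \cite{KR1}; these are constructible (indeed open) conditions on $S_0$ and they all hold at the generic point, hence on a dense open. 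Over $S$ form the relative compactified Picard scheme $\overline{\mathcal P}:=\overline{\mathrm{Pic}}^g(\mathcal C_S)\to S$ (the relative version of Theorem 8.1 of \cite{AK}) and the relative Hilbert scheme of points $\mathcal H:=\mathcal X^{[g]}\to S$; both are of finite type over $S$ and their formation commutes with base change, so the fibres of $\overline{\mathcal P}$ and $\mathcal H$ over the $k_<$-point $s_0\in S(k_<)$ induced by $F\hookrightarrow k\hookrightarrow k_<$ are $(\overline{\mathrm{Pic}}^g\mathcal C)_{k_<}$ and $(X^{[g]})_{k_<}$ respectively.

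Now for any real closed field $R$ and $s\in S(R)$ the sets $\overline{\mathcal P}_s(R)$ and $\mathcal H_s(R)$ are semi-algebraic over $R$, cut out by polynomials whose coefficients are polynomials (over $\mathbb{Q}$) in the coordinates of $s$. By semi-algebraic triviality (Hardt's theorem, e.g. Chapter 9 of \cite{BCR}) the maps $s\mapsto \chi^{rc}(\overline{\mathcal P}_s(R))$ and $s\mapsto \chi^{rc}(\mathcal H_s(R))$ are $\mathbb{Z}$-valued constructible functions on $S(R)$, described by a single first-order formula in the language of ordered rings with coefficients in $\mathbb{Q}$, valid over every real closed field and compatible with extensions of real closed fields. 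Hence ``$\chi^{rc}(\overline{\mathcal P}_s(R))=\chi^{rc}(\mathcal H_s(R))$ for all $s\in S(R)$'' is a single sentence $\Sigma$ over $\mathbb{Q}$, with \emph{no transcendental parameters}. Proposition 2.1 of \cite{KR1} says $\Sigma$ holds in $\mathbb{R}$ (its hypotheses hold at every $\mathbb{R}$-point of $S$ by the shrinking above), and since the theory of real closed fields is complete, $\Sigma$ holds in $k_<$ as well; evaluating at $s=s_0$ gives the lemma.

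The main obstacle is the model-theoretic step: one has to justify carefully that $\chi^{rc}$ of the fibres of a finite-type morphism is uniformly definable as a function of the base point and is unchanged under enlarging the real closed field, and one has to organise the spreading-out so that the hypotheses of Proposition 2.1 of \cite{KR1} genuinely hold at \emph{every} real point of $S$, not merely generically. Both are standard facts about semi-algebraic families, but they are exactly what makes the transfer legitimate; everything else (existence of relative compactified Jacobians and their compatibility with base change via \cite{AK}, and the existence of Beauville's birational morphism over $k_<$, which is what \cite{KR1} uses on the $\mathbb{R}$-side) is already in place.
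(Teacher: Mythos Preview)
Your approach is correct and follows the same overall strategy as the paper: reduce to $\mathbb{R}$ via Proposition 2.1 of \cite{KR1}, and then use the model-completeness of real closed fields to transfer to an arbitrary $k_<$. The implementation differs in two respects, both worth noting.

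First, you spread out the given K3 surface to a family over a finite-type $\mathbb{Q}$-scheme $S$, whereas the paper works instead with the universal polarised family over the open Hilbert scheme $H^d$ of \cite{Huy}, Proposition 2.1, defined once and for all over $\mathbb{Q}^{real}$. The paper's choice sidesteps the spreading-out bookkeeping: every polarised K3 over any real closed field is automatically a point of $H^d$, so one does not need to manufacture a family tailored to the given $X$ or argue that the hypotheses of \cite{KR1} persist on a dense open. Your version works too, but the shrinking step you flag as the ``main obstacle'' is exactly what the moduli approach avoids.

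Second, the first-order sentence you transfer is ``the two $\chi^{rc}$ values agree at every real point of $S$'', invoking Hardt triviality to see that fibrewise $\chi^{rc}$ is a constructible function definable over $\mathbb{Q}$. The paper instead transfers the stronger sentence ``Beauville's birational map $\phi$ extends to a definable homeomorphism on real points'', noting that over $\mathbb{R}$ this is what Proposition 2.1 of \cite{KR1} literally gives (a diffeomorphism with semialgebraic graph), and then reads off the equality of Euler characteristics as a consequence. Your route is slightly more economical in that it only asks for what is needed; the paper's route is slightly more direct in that it matches the content of \cite{KR1} verbatim and avoids the detour through Hardt's theorem.
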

\begin{proof}
Firstly, since $\overline{\mathrm{Pic}}^g(-)$ and $(-)^{[g]}$ are representable functors, they commute with base change, so $\overline{\mathrm{Pic}}^g(\mathcal{C}_{k_<}) \cong (\overline{\mathrm{Pic}}^g\mathcal{C})_{k_<}$, and similarly $(X^{[g]})_{k_<} \cong (X_{k_<})^{[g]}$. It therefore suffices to show this statement in the case that $k$ is a real closed field. For $k=\mathbb{R}$, the first equality follows by Proposition 3.1 of \cite{KR1}, which says our birational map $\phi: \overline{\mathrm{Pic}^g\mathcal{C}}(\mathbb{R}) \dashrightarrow X^{[g]}(\mathbb{R})$ defined in Proposition 1.3 of \cite{Be} gives rise to a diffeomorphism $\overline{\phi}$ between these two real manifolds, which in turn tells us that they have the same Euler characteristic. We claim that we can choose $\overline{\phi}$ to be semi-algebraic, so that $\overline{\phi}$ is a Nash diffeomorphism in the sense of Definition 2.9.9 of \cite{BCR}. 

 We first claim that if $Y/\mathbb{R}$ is a smooth projective variety, then there exists a Nash diffeomorphism from $Y(\mathbb{R})$ to a closed, bounded Nash submanifold of $\mathbb{R}^N$ for some $N$. It is enough to show this in the special case when $X$ is a projective space. In this case this is Theorem 3.4.4 on page 72 of \cite{BCR}. In particular $\overline{\mathrm{Pic}^g\mathcal{C}}(\mathbb{R})$ (resp. $X^{[g]}(\mathbb{R})$) is Nash diffeomorphic to closed, bounded Nash submanifolds of $\mathbb{R}^N$, which we will call $M$ (resp. $M')$. By Theorem 2 of \cite{Sh}, since $M,M'$ are closed bounded Nash submanifolds of $\mathbb{R}^N$, there is a diffeomorphism between them if and only if there is a Nash diffeomorphism between them by Theorem 2 of \cite{Sh}. In particular, since $\overline{\mathrm{Pic}^g\mathcal{C}}(\mathbb{R})$ and $X^{[g]}$ are diffeomorphic, $M$ and $M'$ are diffeomorphic, so are Nash diffeomorphic. This in turn gives that $\overline{\mathrm{Pic}^g\mathcal{C}}(\mathbb{R})$ and $X^{[g]}$ are Nash diffeomorphic, which proves the claim.

 To prove the statement in general, let $k$ be a general real closed field. Note that if $K$ is a real closed field containing $k$ and $X/k$ is a variety, then the motivic Euler characteristic of the base change $\chi^{mot}(X_K)=\chi^{mot}(X) \in \Z \oplus \Z$, where we identify $\widehat{\mathrm{W}}(k) \cong \Z \oplus \Z$ by sending $n\langle 1 \rangle + m \langle -1 \rangle$ to $(n,m)$. Therefore, by base changing to an extension of $k$ containing $\mathbb{R}$, it suffices to prove the statement when $k$ is a real closed field containing $\mathbb{R}$. We proceed by parameterising the $K3$ surfaces using moduli spaces over $k$. First, let $\mathbb{Q}^{real}$ denote the real closure of $\mathbb{Q}$. For $d\geq0$, define $\mathrm{Hilb}^d := \mathrm{Hilb}^{9dt^2 + 2}_{\mathbb{P}_{\mathbb{Q}^{real}}^{9d + 1}}$. Let $H^d$ denote the open subvariety of $\mathrm{Hilb}^{d}$ from Proposition 2.1 of \cite{Huy}. It is defined by the universal property that a morphism $T \rightarrow \mathrm{Hilb}^d$ factors through $H^d$ if and only if the pullback of the universal family $f: \mathcal{Z}_T \rightarrow T$ satisfies:
\begin{enumerate}
\item The morphism $f$ is a smooth family and all the fibres are $K3$ surfaces.
\item If we have $p: \mathcal{Z}_T \rightarrow \mathbb{P}^{9d+1}$ is a projection, then $p^*\mathcal{O}(1) \cong L^3 \otimes f^* L_0$ for $L \in Pic(\mathcal{Z}_T)$ and $L_0 \in \mathrm{Pic}(T)$. 
\item The line bundle $L$ is primitive on each geometric fibre.
\item For all fibres $\mathcal{Z}_s$ of $f: \mathcal{Z}_T \rightarrow T$, the restriction map gives isomorphisms:
$$
H^0(\mathbb{P}^N_{k(s)}, \mathcal{O}(1)) \cong H^0(\mathcal{Z}_s, L_s^3).
$$
\end{enumerate}
Now let $X$ be a $K3$ surface over $k$ with a polarisation given by an ample line bundle $L$ such that $\mathrm{deg}(L)=d$. Then the discussion before Proposition 2.1 of \cite{Huy} tells us that we can view $X$ as a projective subvariety of $\mathbb{P}_{k}^{9d+1}$, with Hilbert polynomial $9d t^2 + 2$. By the definition of the Hilbert scheme, this corresponds to a morphism $\Spec(k) \rightarrow  \mathrm{Hilb}^d$ such that the pullback of the universal family is isomorphic to $X$. By the properties that $H^d$ has, this morphism factors through $H$, so we get a morphism $\Spec(k) \rightarrow H^d$. Therefore every $K3$ surface over $k$ with an ample line bundle $L$ is the pullback of the universal family of a morphism $f: \Spec(k) \rightarrow H^d$ for some $d$. We will abuse notation slightly and write $(X,L) \in \mathrm{Hom}(\Spec(k), H^d)$ to mean the $K3$ surface and line bundle specified by the morphism. This choice of $(X,L)$ specifies a birational map $\phi_{X,L}: \overline{\mathrm{Pic}}^g(\mathcal{C}) \rightarrow X^{[g]}$, as defined in Proposition 1.3 of \cite{Be}. Consider the following statement.

 ``For $d \in \mathbb{Z}_{>0}$ and $(X,L) \in \mathrm{Hom}(\Spec(k), H^d)$, there exists a Nash diffeomorphism
 $${\overline{\phi}_{(X,L)}: \overline{\mathrm{Pic}}^g(\mathcal{C})(k) \rightarrow X^{[g]}(k)}."$$

This can be written as a first order formula, and we have seen that this formula is true for $k=\mathbb{R}$. Therefore by the model completeness of real closed fields, it is true for every real closed field $k$ containing $\mathbb{R}$. The existence of such a Nash diffeomorphism means these semialgebraic sets have the same real closed Euler characteristic, as required.
\end{proof}

\begin{lemma}\label{BatKon3} Let $X$ and $Y$ be two smooth, projective, birational Calabi--Yau varieties. Then
$\mathrm{det}_{\ell}(X) = \mathrm{det}_{\ell}(Y)$. 
\end{lemma}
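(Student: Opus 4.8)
The statement to prove is Lemma \ref{BatKon3}: if $X$ and $Y$ are birational smooth projective Calabi--Yau varieties, then $\det_l(X) = \det_l(Y)$.

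\medskip

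The plan is to reduce, exactly as in the proofs of Lemmas \ref{BarKon1} and \ref{BarKon2}, to a situation where a known result about zeta functions of reductions can be applied, and then use Remark \ref{zetafn} to read off the $\ell$-adic determinant of cohomology. First I would spread $X$, $Y$ and the birational map between them out over a ring $A$ of finite type over $\mathbb{Z}$, so that $X$, $Y$ come from schemes $\mathcal{X}$, $\mathcal{Y}$ over $\Spec(A)$ with good reduction at all closed points of a dense open subscheme, and the birational map is a birational map over $\Spec(A)$ after possibly shrinking. By Remark \ref{zetafn}, $\det_l(X)$ is the character of $\Gal_{k'}$ (hence of $\Gal_k$) characterised by sending $\mathrm{Frob}_s$, for $s$ a closed point of good reduction with residue field of order $q$, to the sign of the functional equation of $\zeta_{\mathcal{X}_s}$ times the appropriate power of $q$; and since $X$, $Y$ are Calabi--Yau of the same dimension $n$, the power-of-$q$ part is $q^{-ne(X)/2} = q^{-ne(Y)/2}$ and agrees by Lemma \ref{BarKon1}. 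So it suffices to show that $\zeta_{\mathcal{X}_s}$ and $\zeta_{\mathcal{Y}_s}$ have the same sign in their functional equations for almost all $s$.

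\medskip

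The key input here is Batyrev's $p$-adic integration argument from \cite{Ba}: birational smooth projective Calabi--Yau varieties over a finite field $\mathbb{F}_q$ (or over a $p$-adic field with good reduction) have the same number of $\mathbb{F}_{q^m}$-points for all $m$, because the $p$-adic volume computed via the canonical measure is a birational invariant — this is precisely the argument that in \cite{Ba} gives equality of Betti numbers over $\mathbb{C}$, but its arithmetic incarnation gives $|\mathcal{X}_s(\mathbb{F}_{q^m})| = |\mathcal{Y}_s(\mathbb{F}_{q^m})|$. Hence $\zeta_{\mathcal{X}_s} = \zeta_{\mathcal{Y}_s}$ as rational functions, and in particular their functional equations have the same sign. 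Combining with the matching power-of-$q$ factor, $\det_l(\mathcal{X}_s)(\mathrm{Frob}_s) = \det_l(\mathcal{Y}_s)(\mathrm{Frob}_s)$ for all such $s$, and by Chebotarev density (as invoked in Remark \ref{zetafn} and in the proof of Theorem \ref{fubini1}) this forces $\det_l(X) = \det_l(Y)$ as characters of $\Gal_k$.

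\medskip

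The main obstacle is making the arithmetic form of Batyrev's theorem precise and citable: one needs the good-reduction comparison so that the $p$-adic volumes of $\mathcal{X}_s$ and $\mathcal{Y}_s$ (and of their $\mathbb{F}_{q^m}$-point counts) agree, which requires that the birational map spread out compatibly with the relative canonical bundles and that the Calabi--Yau condition persist in the family. An alternative, perhaps cleaner, route that I would fall back on if the point-counting statement is awkward to cite directly: use that birational smooth projective Calabi--Yau varieties have equivalent derived categories (or at least equal classes in a suitable Grothendieck ring / equal Hodge structures by Kontsevich's motivic integration), which already gives an isomorphism of $\ell$-adic cohomology as $\Gal_k$-representations up to semisimplification after base change to a finitely generated field — more than enough to conclude equality of determinants. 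Either way, the crux is importing the birational invariance from \cite{Ba} into the arithmetic setting; once that is in hand, the passage to $\det_l$ via zeta functions and Chebotarev is routine and parallels the arguments already given for the rank and the signature.
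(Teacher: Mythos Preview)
Your proposal is correct and follows essentially the same route as the paper: spread out over a finite-type $\mathbb{Z}$-scheme, invoke Batyrev's point-counting result from \cite{Ba} to get $\zeta_{\mathcal{X}_s}=\zeta_{\mathcal{Y}_s}$ at closed points of good reduction, read off equality of $\det_l$ at Frobenii via Remark~\ref{zetafn}, and conclude by Chebotarev. The paper streamlines slightly by first reducing to $k$ finitely generated over $\mathbb{Q}$, and it does not separately track the power-of-$q$ factor (equality of zeta functions already gives equality of the functional-equation sign), but these are cosmetic differences.
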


\begin{proof}
This lemma essentially follows from the proof of Theorem 1.1 of \cite{Ba}. We recall the argument here for convenience. Assume without the loss of generality that $k$ is finitely generated over $\Q$, since we can take $k$ to be the smallest field over which both $X$ and $Y$ are defined. Let $S$ be an integral scheme of finite type over $\Spec(\mathbb Z)$ whose function field is $k$. We may also assume that both $X$ and $Y$ have smooth projective models over
$S$, denoted by $\mathcal X,\mathcal Y$ respectively. For every closed point $s$ of $S$ let
$\F_s,\mathcal X_s,\mathcal Y_s$ denote the residue field of $s$, and the fibre of $\mathcal X,\mathcal Y$ over $s$, respectively. Let $\F_{s^n}$ denote the unique extension of $\F_s$ with Galois group $\mathbb Z/n\mathbb Z$. As noted on page 7 in the proof of Theorem 1.1 of \cite{Ba}, since $\mathcal{X}_s$ and $\mathcal{Y}_s$ are birational and Calabi--Yau, we have $|\mathcal X_s(\F_{s^n})|=|\mathcal Y_s(\F_{s^n})|$ for every $s$ and $n$. This in turn gives us an equality of zeta functions $\zeta(\mathcal X_s,t)=\zeta(\mathcal Y_s,t)$ for every $s$. Theorem $\ref{zetafunctionsdetermine}$ then gives the result.
\end{proof}

\begin{proof}[Proof of Theorem $\ref{batyrev}$]
Example 0.5 of \cite{Mu} says that $\overline{\mathrm{Pic}}^g(\mathcal{C})$ is Calabi--Yau, Example 0.4 of \cite{Mu} says that $X^{[g]}$ is also Calabi--Yau, and Proposition $\ref{beau1}$ says that these two varieties are birational. We can therefore apply Lemmas $\ref{BarKon1}$ and $\ref{BatKon3}$ to obtain that $\chi^{mot}( \overline{\mathrm{Pic}}^g\mathcal{C})$ and $\chi^{mot}(X^{[g]})$ have the same rank and $\ell$-adic determinant of cohomology. Moreover, since they are birational, they also have the same dimension so the same total weight, so by Proposition $\ref{rel}$, we have $\mathrm{disc}(\chi^{mot}( \overline{\mathrm{Pic}}^g\mathcal{C})) = \mathrm{disc}(\chi^{mot}(X^{[g]}))$. Finally, by Lemma $\ref{BarKon2}$, we see $\overline{\mathrm{Pic}}^g\mathcal{C}$ and $X^{[g]}$ have the same signature. Therefore by Lemma $\ref{modJ}$, they differ only by an element of $J$.
\end{proof}

\section{The motivic G\"ottsche formula}
In this section, we prove a refinement of the G\"ottsche formula for the classical Euler characteristic, which is Theorem 0.1 of \cite{Go} on page 193. This formula allows us to compute the motivic Euler characteristic of $X^{[g]}$ modulo $J$.
\begin{notn} Let $X$ be a smooth geometrically irreducible projective surface over $k$, let $X^{(n)}$ denote its $n$-th symmetric product, and let $X^{[n]}$ be the Hilbert scheme of $0$-dimensional subschemes of $X$ of length $n$. Let $\mathfrak P(n)$ denote the set of partitions of $n$. The elements of $\mathfrak P(n)$ are sequences $\underline m=(m_1,m_2,\ldots,m_r)$ of non-negative integers such that $1m_1+2m_2+\cdots+rm_r=n$. The length $l(\underline m)$ of the partition $\underline m$ is defined as $l(\underline m)=m_1+m_2+\cdots+m_r$.
\end{notn}
\begin{notn}\label{notn8.1} Let $\mathrm{Chow}(k)$ denote the category of rational Chow motives over $k$, and for every smooth projective variety $X$ and idempotent correspondence $\alpha$ from $X$ to $X$ over $k$ let $(X,\alpha)$ denote the object of $\mathrm{Chow}(k)$ corresponding to this pair. For every object $A$ of
$\mathrm{Chow}(k)$ and integer $n$ let $A(n)$ denote the $n$-th Tate twist of $A$, and denote direct sums and tensor products in the category $\mathrm{Chow}(k)$ by the usual symbols for sums and products respectively. For $X$ as above let $M(X)$ denote $(X,\Delta_X)$, where 
$\Delta_X$ is the diagonal cycle on $X\times X$. 
\end{notn}

\begin{thm}[(de Cataldo--Migliorini)]\label{goettche1} There is an isomorphism of Chow motives with rational coefficients:
$$M(X^{[n]})(-n)\cong\!\!\!\!\!\!\!\!\!\!\!\!\!
\sum_{\underline m=(m_1,m_2,\ldots,m_r)\in\mathfrak P(n)}\!\!\!\!\!\!\!\!\!\!\!\!\!
M(X^{(m_1)})
M(X^{(m_2)})
\cdots M(X^{(m_r)})(-l(\underline m))
\textrm{ in }\mathrm{Chow}(k).$$
\end{thm}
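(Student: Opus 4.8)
The statement is, up to notation, the main theorem of de Cataldo and Migliorini from \cite{dCM}, so the plan is to recall their construction of the decomposition and to observe that every ingredient in it is a statement about algebraic cycles that makes sense over an arbitrary field of characteristic zero. For a partition $\underline m=(m_1,\dots,m_d)\in\mathfrak P(n)$ let $X^{(\underline m)}$ denote the product of symmetric powers attached to $\underline m$, so that $M(X^{(\underline m)})$ is the motive occurring as the $\underline m$-th term of the right hand side, and write $l=l(\underline m)$. De Cataldo and Migliorini build, inside $X^{[n]}\times X^{(\underline m)}$, an explicit incidence subvariety cut out by the condition, along the Hilbert--Chow morphism $X^{[n]}\to X^{(n)}$, that a length-$n$ subscheme breaks up into clusters of the multiplicities prescribed by $\underline m$ lying over the prescribed points. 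Its cycle class, suitably normalised, defines a morphism of Chow motives $u_{\underline m}\colon M(X^{[n]})\to M(X^{(\underline m)})(n-l)$, and the transposed correspondence gives a $v_{\underline m}$ in the opposite direction. These definitions are purely geometric, hence available verbatim over $k$.

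The heart of the argument is then the identities
$$\sum_{\underline m\in\mathfrak P(n)}v_{\underline m}\circ u_{\underline m}=\mathrm{id}_{M(X^{[n]})},\qquad u_{\underline m}\circ v_{\underline m'}=\delta_{\underline m,\underline m'}\,\mathrm{id},$$
which exhibit the $p_{\underline m}:=v_{\underline m}\circ u_{\underline m}$ as a complete system of mutually orthogonal idempotents and so produce the asserted direct sum decomposition. I would establish these by following de Cataldo and Migliorini's intersection-theoretic computations on the (stratified, semismall) incidence varieties: clusters of different partition type meet in the expected codimension, forcing the off-diagonal compositions to vanish, while the self-intersection computation on the diagonal-type stratum yields exactly the Tate twist $n-l(\underline m)$ recorded in the statement. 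Crucially, none of these computations uses Hodge theory, singular cohomology, or any property special to $\mathbb C$: they take place in rational Chow groups within the formalism of correspondences, and the only input beyond elementary intersection theory is resolution of singularities of the symmetric products and of the incidence loci, which is available since $\mathrm{char}(k)=0$.

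The step I expect to need the most care is the bookkeeping with the symmetric group actions $\mathfrak S_{m_i}$ permuting equal-multiplicity clusters, together with the wreath products permuting points within a cluster: one must check that the correspondences descend compatibly through these quotients, so that the summands are genuinely the symmetric powers occurring in the statement rather than ordinary powers, and that the rational normalising constants in $u_{\underline m}$ and $v_{\underline m}$ are chosen consistently across all $\underline m\in\mathfrak P(n)$. Since \cite{dCM} already proves the result in this generality over an arbitrary base field, in practice this reduces to quoting their theorem directly; a self-contained account would simply reproduce their proof, verifying line by line that each assertion concerns cycle classes on $k$-varieties.
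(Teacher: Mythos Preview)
Your proposal is correct and matches the paper's approach: the paper simply cites Theorem 6.2.5 and Remark 6.2.2 of \cite{dCM}, noting that the authors themselves observe the result holds over any base field. Your additional sketch of the incidence-correspondence construction and the orthogonal idempotent identities is accurate and adds useful detail, but is not needed beyond the direct citation.
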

\begin{proof} The claim follows immediately from Theorem 6.2.5 of \cite{dCM}, and Remark 6.2.2 of \cite{dCM} where the authors mentioned that their result holds over any base field. 
\end{proof}
\begin{defn}\label{k0chow}
Define $K_0(\mathrm{Chow}(k))$ to be the Grothendieck ring of rational Chow motives, i.e., the ring generated by isomorphism classes of Chow motives over $k$ subject to the obvious addition and multiplication relations coming from the direct sum and tensor product. Let $K_0(\mathrm{Var}_k)^{Chow}$ denote the subring of $K_0(\mathrm{Chow}(k))$ generated by classes of the form $[M(X)(j)]$ where $X/k$ is a smooth projective variety and $j$ is an integer. 
\end{defn}

\begin{cor}\label{k0chowcor}
There is an equality in $K_0(\mathrm{Var}_k)^{Chow}$:
$$
[M(X^{[n]})(-n)]=\!\!\!\!\!\!\!\!\!\!\!\!\!
\sum_{\underline m=(m_1,m_2,\ldots,m_r)\in\mathfrak P(n)}
\prod_{i=1}^r \left(
[M(X^{(m_i)})(-m_i)]\right).
$$
\end{cor}
\begin{proof}
As noted in Remark 6.2.4 of \cite{dCM}, this follows immediately by Theorem $\ref{goettche1}$ and the definition of $K_0(\mathrm{Var}_k)^{Chow}$, where we absorb the Tate twists into the appropriate terms to obtain the formula in the form as above.
\end{proof}

\begin{lemma}
There is a ring homomorphism $M: K_0(\mathrm{Var}_k) \to K_0(\mathrm{Var}_k)^{Chow} \subseteq K_0(\mathrm{Chow}(k))$ such that if $X/k$ is a smooth projective variety, $M([X]) = [M(X)]$. 
\end{lemma}
\begin{proof}
We first check that this is a well defined homomorphism of abelian groups. By Theorem 3.1 of \cite{Bi}, the ring $K_0(\mathrm{Var}_k)$ is generated as an abelian group by the classes of smooth projective varieties, subject to a single blow up relation, given by
$$
[\mathrm{Bl}_Z(X)] + [Z] = [X] + [E]
$$
for any smooth projective variety $X$ and smooth closed subvariety $Z$, where $\mathrm{Bl}_Z(X)$ is the blow up of $X$ along $Z$ with exceptional divisor $E$. This same blow up relation is satisfied in the category of Chow motives by the Corollary on page 463 of \cite{Ma}. This assignment therefore gives us a well defined homomorphism of abelian groups, and the fact that it is a ring homomorphism follows instantly by the K{\"u}nneth formula. 
\end{proof}

In this section we construct a ring homomorphism 
$$
\chi^{Chow}: K_0(\mathrm{Var}_k)^{Chow} \to \widehat{\mathrm{W}}(k)/J
$$
 such that $\chi^{Chow} \circ M(X) = \chi^{mot}(X)$ in $\widehat{\mathrm{W}}(k)/J$ and examine how it applies to Tate twists. We construct $\chi^{Chow}$ by constructing versions of the complex Euler characteristic, real Euler characteristic and discriminant for elements of $K_0(\mathrm{Var}_k)^{Chow}$ and then examining how they behave under Tate twists. We then appeal to Lemma $\ref{modJ}$ to recover an element of $\widehat{\mathrm{W}}(k)$. In Corollary $\ref{goettche2}$, we apply $\chi^{Chow}$ to Theorem $\ref{goettche1}$ in order to obtain a formula for the motivic Euler characteristic of $X^{[n]}$ modulo $J$.

\begin{defn} Let $M$ be a Chow motive over a field $k$. We will abuse notation slightly and use the same notation as for varieties. Let $e: K_0(\mathrm{Chow}(k)) \to \Z$ denote the Euler characteristic of $M$, i.e., $e(M) := \sum_i (-1)^i \mathrm{dim}( H^i_{\ell-adic}(M))$. 

When $k$ is a real closed field we may define its real Euler characteristic as
$$\epsilon(M):=\mathrm{Tr}(c^M)$$
where $c^M:H^*_{Betti}(M)\to H^*_{Betti}(M)$ is complex conjugation, and $\mathrm{Tr}$ denotes graded trace as in Notation $\ref{gradedtrdet}$. Similarly to in Definition $\ref{2.13a}$, the above data furnishes a ring homomorphism $\chi^{real}: K_0(\mathrm{Chow}(k)) \to \mathcal{C}(\mathrm{Spr}(k), \Z)$.

We can also define the $\ell$-adic determinant of cohomology $\mathrm{det}_\ell(M)$ to be the one dimensional $\ell$-adic representation $\prod_i ( \mathrm{det} ( H^i_{\ell-adic}(M))^{(-1)^i})$. Similarly to Definition $\ref{chil}$, there is a ring homomorphism $\chi_l: K_0(\mathrm{Chow}(k)) \to E(\Hom(\Gal_k, \mathbb Q_\ell^\times))$ given by $M \mapsto (e(M), \mathrm{det}_\ell(M))$. 
\end{defn}

\begin{lemma}\label{notsmooth}
The compactly supported Euler characteristic $e: K_0(\mathrm{Var}_k) \to \Z$, the real closed compactly supported Euler characteristic $\chi^{real}: K_0(\mathrm{Var}_k) \to \mathcal{C}( \mathrm{Spr}(k), \Z)$,  and the $\ell$-adic determinant of cohomology $\mathrm{det}_{\ell}: K_0(\mathrm{Var}_k) \to \Hom(\Gal_k, \Q_{\ell}^\times)$ all factor through the map $M: K_0(\mathrm{Var}_k) \to K_0(\mathrm{Chow}(k))$ from the previous lemma.
\end{lemma}
\begin{proof}
This is clear since all of these invariants for smooth projective varieties can be read off from the Galois representation on the underlying cohomology ring for our Weil cohomology theories. In particular, this implies that $e(M(X)) = e(X)$, $\chi^{real}(M(X)) = \chi^{real}(X)$, and $\det_\ell(M(X)) = \det_\ell(X)$. 
\end{proof}
\begin{rem}
The reason the above lemma only applies to these invariants, rather than the whole motivic Euler characteristic, is that the definition of the motivic Euler characteristic using de Rham cohomology in Definition $\ref{DREulerCharacteristic}$ makes essential use of Poincaré duality as well as the cohomology groups. 
\end{rem}
We now study how these homomorphisms out of $K_0(\mathrm{Chow}(k))$ behave under Tate twists.

\begin{notn} Let $G$ be a group, and let $\rho:G\to GL_L(V)$ an $L$-linear representation on a finite dimensional graded vector space over the field $L$. Moreover, suppose that $V = \bigoplus_i V_i$ is graded, and that $\rho$ respects the grading. Let $\chi:G\to L^\times$ be a one-dimensional representation. Define the twisted representation $\rho(\chi):
G\to GL_L(V)$ by the formula:
$$\quad(\forall g\in G),  \rho(\chi)(g)=\chi(g)\mathrm{Id}_V\cdot\rho(g).$$
\end{notn}
We can compute the character and the determinant of the representation $\rho(\chi)$ in terms of these invariants of $\rho$ and $\chi$.
\begin{lemma}\label{twisteddeterminants} We have:
$$\mathrm{Tr}(\rho(\chi)(g))=\chi(g)\cdot\mathrm{Tr}(\rho(g))
\quad\textrm{and}\quad\det((\rho(\chi)(g)))=\chi(g)^{e(V)}\cdot
\mathrm{det}(\rho(g)),$$
where $\mathrm{Tr}$ and $\mathrm{det}$ denote the graded versions as in Notation $\ref{gradedtrdet}$.
\end{lemma}
\begin{proof} We may view $\chi(g) \in L^\times$ as a scalar, and $\rho(\chi)(g) = \chi(g) \cdot \rho(g)$. Shifting the grading multiplies the trace by $-1$ and raises the determinant to the power of $-1$, so it is enough to prove an ungraded version for $V=V_0$. This then follows immediately. 
\end{proof}

\begin{prop}\label{tatetwistofinvariants}The functions $e,\chi^{real}$ and $\mathrm{det}_\ell$ out of $K_0(\mathrm{Chow}(k))$ satisfy the following formulae under Tate twists.
\begin{enumerate}
\item [(i)] We have that $e(M(j)) = e(M)$.
\item[(ii)] When $k$ is real closed $\epsilon(M(j))=(-1)^j
\epsilon(M)$ and so $\chi^{real}(M(j)) = (-1)^j \chi^{real}(M)$.
\item[(iiii)] We have $\mathrm{det}_\ell(M(j))=\mathrm{det}_\ell(M)\cdot \Q_\ell(j\cdot e(M))$.
\end{enumerate}
\end{prop}
\begin{proof} The first part is immediate, since the dimension of the Betti realisation does not change under the Tate twist.

For the second part, it suffices to prove the statement when $k$ is real closed and we are looking at the real closed Euler characteristic $\epsilon$. Define $\chi: \Gal_k \to \overline{k}^\times$ by sending the non-trivial element of $\Gal_k$ to $-1 \in \overline{k}^\times$. We may identify the vector spaces $H^*_{Betti}(M(j)) \cong H^*_{Betti}(M)$, and under this identification, we see that $c^{M(j)} = (-1)^j c^M$. In particular, 
$$
\epsilon(M(j)) = \mathrm{Tr}(c^{M(j)}) = (-1)^j \mathrm{Tr}(c^M) = (-1)^j \epsilon(M),$$
as required. 

For the statement about $\mathrm{det}_\ell(M(j))$, let $g \in \mathrm{Gal}_k$. Write $\rho^M$ for the representation of $\Gal_k$ on $H^*_{\ell-adic}(M)$. We have an isomorphism of the underlying vector spaces $H^*_{\ell-adic}(M(j)) \cong H^*_{\ell-adic}(M)$, and under this identification, we see that $\rho^{M(j)}(g) = \chi(j) \cdot \rho^M(g)$, where $\chi$ denotes the $\ell$-adic cyclotomic character on $\Gal_k$. By definition, $\mathrm{det}_\ell(M)(g) = \mathrm{det}( \rho^M(g))$, so Lemma $\ref{twisteddeterminants}$ gives the result.
\end{proof}
The goal is to use these invariants to define a map $\chi^{Chow}$ which extends the motivic Euler characteristic to Chow motives. While we can define the $\ell$-adic determinant of cohomology for a general Chow motive, it is unclear how to define the discriminant in an analogous way. However, for elements of $K_0(\mathrm{Var}_k)^{Chow}$, we can define the discrimimant as follows.
\begin{defn}\label{discandcoh}
Suppose that $k/\Q$ is finitely generated, and let $M$ be a rational Chow motive over $k$ such that $M = M(X)(j)$ for some variety $X/k$. Then $H^i_{\ell-adic}(M) \cong H^i_{\et}(X_{\kbar}, \Q_\ell) \cdot \Q_\ell(j)$ for each $i$. Putting this together gives
$$
\mathrm{det}_{\ell}(M) = \mathrm{det}_\ell(X) \cdot \Q_\ell(j \cdot e(X))
$$
Multiplying again by $\Q_\ell(w(X)- j\cdot e(X))$ gives
$$
\mathrm{det}_{\ell}(M) \cdot \Q_\ell(w(X)-j\cdot e(X)) = \mathrm{det}_\ell(X) \cdot \Q_\ell(w(X)).
$$
Theorem $\ref{saito-full}$ implies that the above representation is a $2$-torsion character whose class in $k^\times/k^{\times2}$ is $(-1)^{w(X)} \mathrm{disc}(\chi^{mot}(X))$. As in Remark $\ref{wxtorsion}$, define the integer $w(M) := w(X) - j \cdot e(X)$, so that $w(M)$ is the unique integer such that $\mathrm{det}_{\ell}(M) \cdot \Q_\ell(w(M))$ is torsion. For such Chow motives define $\mathrm{disc}(M)$ to be the element of $k^\times/k^{\times2}$ such that
$$
\mathrm{disc}(M) \cdot (-1)^{w(M)} := \mathrm{det}_{\ell}(M) \cdot \Q_\ell(w(M)).
$$
We may also argue as in Remark $\ref{augdet}$ to obtain a ring homomorphism 
$$
\chi^{disc}_k: K_0(\mathrm{Var}_k)^{Chow} \to E(\mathrm{Hom}(\Gal_k, \mathbb Z /2\mathbb Z))$$
 given by $M \mapsto (e(M), \mathrm{disc}(M))$. 
 
If $k$ is any field of characteristic $0$, since $k$ is a union of its finitely generated subfields, we have that $K_0(\mathrm{Var}_k)^{Chow} = \varinjlim_{L' \subseteq k} K_0(\mathrm{Var}_L)^{Chow}$. 
 
For $k$ an arbitrary field of characteristic $0$, define 
$$
\chi^{disc}_k: K_0(\mathrm{Var}_k)^{Chow} \to E(\mathrm{Hom}(\Gal_k, \mathbb Z /2\mathbb Z))
$$
 to be the inductive limit of all of the $\chi^{disc}_L: K_0(\mathrm{Var}_L)^{Chow} \to E(\mathrm{Hom}(\Gal_L, \mathbb Z /2\mathbb Z))$ where the fields $L$ run over all finitely generated subfields of $k$. When the base field is clear, we will simply write $\chi^{disc}$. 
\end{defn}

\begin{lemma}\label{tatetwistofdisc}
For $X/k$ a smooth projective variety $$\mathrm{disc}(M(X)(j)) = \langle (-1)^{j\cdot e(M)} \rangle \cdot \mathrm{disc}(\chi^{mot}(X)).$$
\end{lemma}
\begin{proof}
This follows instantly since $(-1)^{w(M(X)(j))} = (-1)^{w(X)} \cdot (-1)^{j\cdot e(M)}$.
\end{proof}

\begin{thm}\label{chichow}
The homomorphism 
$$
e \times \chi^{real} \times \chi^{disc}: K_0(\mathrm{Var}_k)^{Chow} \to \Z \times \mathcal{C}(\mathrm{Spr}(k), \mathbb Z) \times E( \mathrm{Hom}(\Gal_k, \mathbb Z/2\mathbb Z))
$$
factors through the inclusion $\widehat{\mathrm{W}}(k)/J \hookrightarrow \Z \times \mathcal{C}(\mathrm{Spr}(k), \mathbb Z) \times E( \mathrm{Hom}(\Gal_k, \mathbb Z/2\mathbb Z))$ from Lemma $\ref{modJ}$. 
\end{thm}

\begin{proof}
It is enough to show that for $X$ a variety and $j$ an integer there exists some $q \in \widehat{\mathrm{W}}(k)/J$ such that  $e (M(X)(j))=\mathrm{rank}(q)$, $\chi^{real}(M(X)(j))=\mathrm{sign}(q)$ and $\chi^{disc}(M(X)(j)) = E_{disc}(q)$. Proposition $\ref{tatetwistofinvariants}$ and Lemma $\ref{tatetwistofdisc}$ tells us that taking $q= \langle (-1)^j \rangle \cdot \chi^{mot}(X)$ gives us the result.
\end{proof}

\begin{defn}
In light of the above, define $\chi^{Chow}: K_0(\mathrm{Var}_k)^{Chow} \to \widehat{\mathrm{W}}(k)/J$ to be the unique homomorphism such that the following diagram is commutative:
\begin{center}
\begin{tikzcd}
K_0(\mathrm{Var}_k)^{Chow} \ar[d, "\chi^{Chow}", dashed] \ar[drrr, "e \times \chi^{real} \times \chi^{disc}"] &&& \\
\widehat{\mathrm{W}}(k)/J \ar[rrr, "\mathrm{rank} \times \mathrm{sign} \times E_{disc}"', hookrightarrow] &&& \Z \times \mathcal{C}(\mathrm{Spr}(k), \mathbb Z) \times E( \mathrm{Hom}(\Gal_k, \mathbb Z/2\mathbb Z)).
\end{tikzcd}
\end{center}
\end{defn}

\begin{cor}\label{ChowTwist}
For $X/k$ a variety and $j$ an integer, we have
$$
\chi^{Chow}(M(X)(j)) = \langle (-1)^j \rangle \chi^{mot}(X) \in \widehat{\mathrm{W}}(k)/J.
$$
\end{cor}
\begin{proof}
Note that the twist by $j$ map $(j): K_0(\mathrm{Var}_k)^{Chow} \to K_0(\mathrm{Var}_k)^{Chow}$ given by sending $M(X)(i)$ to $M(X)(i+j)$ is a homomorphism of the underlying additive abelian group. Similarly, the map $\widehat{\mathrm{W}}(k)/J \to \widehat{\mathrm{W}}(k)/J$ given by multiplication by $\langle (-1)^j \rangle$ is a homomorphism on the underlying abelian group. Therefore, the compositions
$$
\chi^{Chow} \circ (j) \circ M: K_0(\mathrm{Var}_k) \to \widehat{\mathrm{W}}(k)/J
$$
and
$$
(\langle (-1)^j \rangle \cdot) \circ \chi^{mot}: K_0(\mathrm{Var}_k) \to \widehat{\mathrm{W}}(k)/J
$$
are both homomorphisms of abelian groups. It therefore suffices that these homomorphisms are equal on generators of the abelian group $K_0(\mathrm{Var}_k)$. Theorem 3.1 of \cite{Bi} tells us that we can check this only for $X/k$ a smooth projective variety by Theorem 3.1 of \cite{Bi}. This is then immediate by the proof of Theorem $\ref{chichow}$. 
\end{proof}

\begin{cor}[(Motivic G\"ottsche formula)]\label{goettche2} We have:
$$1+\sum_{n=1}^{\infty}\chi^{mot}(X^{[n]}) \langle (-1)^n \rangle t^n
= \prod_{d=1}^{\infty}\left(\sum_{m = 0}^{\infty}  \chi^{mot}(X^{(m)}) \langle -1 \rangle^{m} t^{md}\right)  \text{ in } \left( \widehat{\mathrm{W}}(k)/J \right)[[t]].
$$
\end{cor}

\begin{proof}
Fix $n$ to be a positive integer. Applying the ring homomorphism $\chi^{Chow}$ to the formula in Corollary $\ref{k0chowcor}$ gives
$$\chi^{Chow}(M(X^{[n]})(-n))=\!\!\!\!\!\!\!\!\!\!\!\!\!\!\!\!
\sum_{(m_1,m_2,\ldots,m_r)\in\mathfrak P(n)} \prod_{i=1}^r \left(
\chi^{Chow}(M(X^{(m_i)})(-m_i))\right) \in \widehat{\mathrm{W}}(k)/J.
$$
By Corollary $\ref{ChowTwist}$, we may rewrite this as
$$
\chi^{mot}(X^{[n]})\langle (-1)^n \rangle = \!\!\!\!\!\!\!\!\!\!\!\!\!\!\!\!
\sum_{(m_1,m_2,\ldots,m_r)\in\mathfrak P(n)} \left(\prod_{i=1}^r  \chi^{mot}(X^{(m_i)})\langle (-1)^{m_i} \rangle\right).
$$
We claim that the right hand side of this is the coefficient of $t^n$ in the power series 
$$
\prod_{d=1}^{\infty}\left( \sum_{m = 0}^{\infty} \chi^{mot}(X^{(m)})\langle (-1)^m \rangle t^{md} \right).
$$
Relabeling our $m$s by $m_d$s in order to keep track of where they appear in the product, the above series becomes
$$
\prod_{d=1}^{\infty}\left( \sum_{m_d = 0}^{\infty} \chi^{mot}(X^{(m_d)})\langle (-1)^{m_d} \rangle t^{m_dd} \right).
$$
Expanding the product and looking at the at the coefficient of $t^n$ in the above power series, we see that the $t^n$ term is given by
$$
\sum_{\{(m_1, \ldots, m_r): \sum dm_d = n\}}\left( \prod_{d=1}^r \chi^{mot}(X^{(m_d)}) \langle (-1)^{m_d}\rangle t^{dm_d}\right).
$$
As noted in Notation $\ref{notn8.1}$, we may identify $\mathfrak P(n)$ with $\{(m_1,\ldots,m_r): \sum dm_d = n\}$, which gives us that the coefficient of $t^n$ is exactly given by 
$$
\!\!\!\!\!\!\!\!\!\!\!\!\!\!\!\!
\sum_{(m_1,m_2,\ldots,m_r)\in\mathfrak P(n)} \left(\prod_{d=1}^r  \chi^{mot}(X^{(m_d)}) \langle (-1)^{m_d} \rangle\right).
$$
 Putting all of this together, $\chi^{mot}(X^{[n]}) \langle (-1)^n \rangle \in \widehat{\mathrm{W}}(k)/J$ is equal to the coefficient of $t^n$ in the generating series
$$
\prod_{d=1}^{\infty} \left(\sum_{m = 0}^{\infty} \chi^{mot}(X^{(m)})\langle (-1)^m \rangle t^{md}\right) \in (\widehat{\mathrm{W}}(k)/J)[[t]],
$$
which gives the theorem.
\end{proof}

We now show that we can recover the rank, sign, and discriminant of $\chi^{mot}(X^{(m)})$ to extract other G{\"o}ttsche formulae based on this. 

\begin{lemma}\label{realcomplexsymm}
Let $X$ be a $K3$ surface. Then $e(X^{(m)})={ e(X)+m-1 \choose m}$. If $k$ is a real closed field, then we may write $\chi^{mot}(X) = b\mathbb{H} + a\langle \pm 1 \rangle$ for some non-negative integers $a,b$. Moreover
$$
\epsilon(X) = \mathrm{sign}(\chi^{mot}(X^{(m)})) =  \pm \sum_{j=0}^{\lfloor \frac{m}{2} \rfloor} {a + m-2j -1 \choose m-2j} {b + j - 1 \choose j},$$ where the sign is $(-1)^m$ if $\mathrm{sign}(\chi^{mot}(X))$ is negative, and $1$ if $\mathrm{sign}(\chi^{mot}(X))$ is positive.
\end{lemma}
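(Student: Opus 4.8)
\textbf{Proof proposal for Lemma \ref{realcomplexsymm}.}

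The plan is to reduce everything to two standard generating-function identities for symmetric powers. First I would recall the motivic-measure identity: for any variety $Z$ one has $\sum_{m\ge 0}[Z^{(m)}]t^m = \mathrm{Zeta}(Z,t)$, the Kapranov zeta function, and in particular after applying a ring homomorphism like $e$, $\epsilon$ or $\mathrm{sign}$ this becomes a concrete power series. Concretely, if $\lambda$ is any additive-multiplicative invariant (a motivic measure valued in $\Z$) with $\lambda(Z)=r$, then $\sum_m \lambda(Z^{(m)})t^m = (1-t)^{-r}$; this gives $e(X^{(m)}) = \binom{e(X)+m-1}{m}$ immediately, since $e(X^{(m)})$ is the coefficient of $t^m$ in $(1-t)^{-e(X)}$. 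This is the first and easiest step.

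For the signature over a real closed field $k$, the subtlety is that $\mathrm{sign}$ is a ring homomorphism $\mathrm{W}(k)\to\Z$ but it is not a motivic measure on $K_0(\mathrm{Var}_k)$ in the naive sense — rather, by Theorem \ref{levine2} and Lemma \ref{motrc}, $\mathrm{sign}(\chi^{mot}(Z)) = \chi^{rc}(Z(k))$, and $\chi^{rc}(-)$ on semialgebraic sets \emph{is} a ring homomorphism out of $K_0(\mathcal{SAS}_k)$ which is compatible with symmetric powers in the topological sense. So the key step is: $\chi^{rc}((X^{(m)})(k))$ is the coefficient of $t^m$ in $\prod_{\text{real cells}}(\cdots)$. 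More precisely, I would use that for a semialgebraic set $S$ with $\chi^{rc}(S)=n$, one has $\sum_m \chi^{rc}(S^{(m)})t^m = (1-t)^{-n}$ when $n$ is computed naively — but over $\mathbb{R}$ there is the classical sign phenomenon coming from the $\Z/2$-action permuting a pair of complex-conjugate points, which is exactly why $\mathbb{H}$ appears. So I would instead work directly with $\chi^{mot}(X)\in\widehat{\mathrm{W}}(k)$ written as $\chi^{mot}(X) = b\,\mathbb{H} + \langle\pm1\rangle^{\oplus a}$ (with the sign as in the statement) and invoke the motivic G\"ottsche formula, Corollary \ref{goettche2}, specialised: $\sum_m \mathrm{sign}(\chi^{mot}(X^{(m)}))t^m$ and $\sum_m \chi^{mot}(X^{(m)})t^m$ are governed by $\prod_{m\ge1}(1-\chi^{mot}(X^{(m)})t^m)^{-1}$ — but that is circular, so instead the cleanest route is the Kapranov zeta function identity $\sum_m [X^{(m)}]t^m$ in $K_0(\mathrm{Var}_k)[[t]]$ applied after $\chi^{mot}$, combined with the fact that $\chi^{mot}$ of a symmetric power is the corresponding "symmetric power" in $\widehat{\mathrm{W}}(k)$. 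Explicitly, I expect $\sum_m\chi^{mot}(X^{(m)})t^m = \mathrm{Sym}_t(\chi^{mot}(X))$ where $\mathrm{Sym}_t$ is the pre-$\lambda$-ring operation, and then take signatures: $\mathrm{sign}$ being a ring hom sends $\mathrm{Sym}_t(b\mathbb{H}+\langle\pm1\rangle^{\oplus a})$ to $\mathrm{Sym}_t$ applied to $\mathrm{sign}(\mathbb{H})=0$ contributions and $\mathrm{sign}(\langle\pm1\rangle)=\pm1$; since $\mathrm{Sym}_t(\mathbb{H})$ at the level of signatures behaves like a rank-$2$ hyperbolic contributing $(1-t^2)^{-b}$ (the hyperbolic plane's symmetric powers alternate), while $\langle1\rangle^{\oplus a}$ contributes $(1-t)^{-a}$, one gets
$$
\sum_{m\ge0}\mathrm{sign}(\chi^{mot}(X^{(m)}))t^m = \pm(1-t)^{-a}(1-t^2)^{-b},
$$
and extracting the $t^m$ coefficient by convolving $\binom{a+m-2j-1}{m-2j}$ (from $(1-t)^{-a}$) with $\binom{b+j-1}{j}$ (from $(1-t^2)^{-b}$) yields the claimed formula.

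The main obstacle will be pinning down exactly how $\mathrm{sign}\circ\chi^{mot}$ interacts with symmetric powers — i.e. justifying that $\chi^{mot}(X^{(m)})$ is the $m$-th symmetric power operation applied to $\chi^{mot}(X)$ in the $\lambda$-ring $\widehat{\mathrm{W}}(k)$, and in particular that $\mathbb{H}$ contributes the factor $(1-t^2)^{-1}$ rather than $(1-t)^{-2}$ after taking signatures. This is precisely the content of the real Yau--Zaslow formula's $\prod(1-t^{2j})^b$ factor, so I would either cite the relevant computation from \cite{KR1} (their Proposition 2.1 / the discussion of symmetric products of real surfaces) or give a short direct argument: a hyperbolic pair corresponds to a pair of conjugate complex points with no real points, whose $m$-th symmetric power has $\chi^{rc}$ equal to $1$ if $m$ is even and $0$ if $m$ is odd, giving $\sum_m \chi^{rc}((\text{pt}\amalg\overline{\text{pt}})^{(m)})t^m = \frac{1}{1-t^2}$. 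Once this local computation is in hand, multiplicativity of $\chi^{rc}$ over the decomposition $X(k)$-cells together with the complex-conjugate-pair cells finishes the signature statement, and the binomial convolution is routine. I would present the $e(X^{(m)})$ part in one line, then the signature part via the two generating functions $(1-t)^{-a}$ and $(1-t^2)^{-b}$ and the explicit coefficient extraction.
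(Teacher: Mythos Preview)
Your rank computation is fine and matches the paper (Macdonald via the generating function $(1-t)^{-e(X)}$). The signature argument, however, has a real gap. The summand $b\,\mathbb H$ in $\chi^{mot}(X)=b\,\mathbb H+a\langle\pm1\rangle$ does not arise from $b$ conjugate pairs of \emph{points}: $X$ is a surface, and the decomposition is a statement about the quadratic form on $H^*_{dR}(X)$, not about a cell decomposition of $X(k)$. So your proposed ``direct argument'' computing $\chi^{rc}$ of symmetric powers of a $0$-dimensional conjugate pair, and then invoking multiplicativity of $\chi^{rc}$ over a cell decomposition of $X$, does not apply here; there is no such decomposition of $X$ into $a$ real cells and $b$ conjugate-pair cells. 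Likewise, the step ``$\chi^{mot}(X^{(m)})$ is the $m$-th $\lambda$-operation applied to $\chi^{mot}(X)$'' is exactly what needs proof and cannot be assumed.

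The paper's route avoids this by working entirely on cohomology. Over a real closed $k$, the Lefschetz trace formula gives $\epsilon(Z)=\mathrm{Tr}\big(\sigma\,\big|\,H^*_{sing}(Z(\overline k),k)\big)$ for complex conjugation $\sigma$. Since $X$ is a $K3$ surface its odd cohomology vanishes, so K\"unneth gives $H^*\big(X^{(m)}(\overline k)\big)=S^m\big(H^*(X(\overline k))\big)$ with the graded and ungraded symmetric powers coinciding, and hence $\epsilon(X^{(m)})=\mathrm{Tr}\big(S^m(\sigma)\big)$. One then packages the eigenvalue data of $\sigma$ as a diagonal quadratic form $q=b\,\mathbb H+a\langle\pm1\rangle$ (so that $\mathrm{sign}(q)=\mathrm{Tr}(\sigma)$), whence $\epsilon(X^{(m)})=\mathrm{sign}(S^m(q))$; the final formula then drops out of McGarraghy's computation $\mathrm{sign}(S^{2j}(b\,\mathbb H))=\binom{b+j-1}{j}$, $\mathrm{sign}(S^{2j+1}(b\,\mathbb H))=0$, together with $S^{m-2j}(a\langle\pm1\rangle)=\binom{a+m-2j-1}{m-2j}\langle(\pm1)^{m-2j}\rangle$. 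The vanishing of odd cohomology is the crucial input you are missing --- without it the identification $H^*(X^{(m)})\cong S^m H^*(X)$ fails, and with it the whole argument becomes a computation about symmetric powers of an involution on a vector space rather than about real points of symmetric powers of $X$.
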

\begin{proof}
Since $X$ is a $K3$ surface, the odd Betti numbers vanish, so $H^i_{dR}(X/k)=0$ if $i$ is odd. Therefore, $\chi^{mot}(X) = [H^{2*}_{dR}(X/k)]$ is an actual quadratic form, so $\chi^{mot}(X)=b\mathbb{H} + a\langle \pm 1 \rangle$ for some $a,b \geq 0$.  For the rank, we may use Theorem $\ref{levine1}$ to reduce this to a statement about symmetric powers of CW complexes. The main result of \cite{Mac} then gives the result.

Suppose $k$ is a real closed field, and consider the complex $H^*_{sing}(X^{(m)}(\kbar), k)$, along with the action of $\sigma$, the non-trivial element of $\Gal_k$.  Moreover $\sigma$ is diagonalisable, so pick a basis of eigenvectors for $H^*_{sing}(X^{(m)}(\kbar), k)$, denoted by $v_1, \ldots, v_{e(X)}$. As noted in the second part of Remark 1.3 in \cite{Le}, we apply the Lefschetz trace formula to compute $\mathrm{Trace}(\sigma)=\epsilon(X)=e(X(k))$.

 Let $q$ be a quadratic form on $H^*_{sing}(X(\kbar), k)$ such that $q$ is diagonal with respect to $v_1, \ldots, v_{e(X)}$, and such that $\sigma(v_i) = q(v_i)v_i$. This means we obtain $e(X) = \mathrm{rank}(q)$ and $\epsilon(X)=\mathrm{sign}(q)$. By the K{\"u}nneth formula, we can identify $H^*_{sing}(X^{(m)}(\kbar), k) = S^m(H^*_{sign}(X(\kbar), k))$, where $S^m$ denotes the symmetric product, and where the action of $\sigma$ on the left hand side is given by $S^m(\sigma)$ on the right hand side. Therefore the quadratic form we obtain from looking at the action of $\sigma$ on both sides is the same and given by $S^m(q)$ the symmetric power of the form $q$. Recall that $q= b\mathbb{H} + a\langle \pm 1 \rangle$, where $a,b \geq 0$ and the $\pm$ comes from whether the $1$ or $-1$ eigenspace of $\sigma$ on $H^*_{sign}(X(\kbar), k)$ is larger. For now, suppose that the $-1$ eigenspace is larger, as this is the more difficult case.

 We see $S^m(q) = \bigoplus_{i=0}^m S^i(b\mathbb{H})S^{m-i}(a\langle - 1 \rangle)$. Proposition 4.12 of \cite{Mc} tells us that if $i$ is odd, then $\mathrm{sign}(S^{m-i}(b\mathbb{H})) = 0$, and $\mathrm{sign}(S^{i}(b\mathbb{H})) = {b + j - 1 \choose j}$ if $i=2j$ is even. We calculate $$
S^{m-2j}(a\langle -1 \rangle) = \langle (-1)^{m-2j} \rangle {a + m-2j-1 \choose m-2j-1}.
$$
Applying $\mathrm{sign}$ then gives
$$
\mathrm{sign}(S^m(q)) = (-1)^m \sum_{j=0}^{\lfloor \frac{m}{2} \rfloor} {a + m-2j -1 \choose m-2j} {b + j - 1 \choose j}.
$$
The argument for the case when the $1$ eigenspace is larger is similar.
\end{proof}

\begin{lemma}\label{detsymm}
Let $X$ be a $K3$ surface. Then 
$$
\mathrm{det}_{\ell}(X^{(m)}) = \mathrm{det}_{\ell}(X)^{ {m+e(X)-1 \choose m}}.$$
\end{lemma}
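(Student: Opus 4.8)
The plan is to compute the Galois character $\det_l(X^{(m)})$ directly from $\ell$-adic cohomology, exploiting the fact that a $K3$ surface has no odd cohomology so that all the relevant symmetric-power constructions are sign-free.

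First I would observe that since $X^{(m)}=X^m/S_m$ is proper, $\det_l(X^{(m)})=\bigotimes_i\det H^i(X^{(m)}_{\overline k},\mathbb Q_l)^{\otimes(-1)^i}$, and that by the transfer isomorphism for the quotient $X^m\to X^{(m)}$ by the finite group $S_m$ together with the K\"unneth formula there is a $\mathrm{Gal}_k$-equivariant identification $H^*(X^{(m)}_{\overline k},\mathbb Q_l)\cong\big(H^*(X_{\overline k},\mathbb Q_l)^{\otimes m}\big)^{S_m}$, with $S_m$ permuting the tensor factors. Write $V:=H^*(X_{\overline k},\mathbb Q_l)=\bigoplus_i H^i(X_{\overline k},\mathbb Q_l)$, a graded $\mathrm{Gal}_k$-representation of total dimension $e(X)$. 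Because $X$ is a $K3$ surface, $H^1=H^3=0$, so $V$ is concentrated in even degrees; hence the permutation action of $S_m$ on $V^{\otimes m}$ carries no Koszul signs and $\big(V^{\otimes m}\big)^{S_m}=\mathrm{Sym}^m V$, the ordinary $m$-th symmetric power. In particular $H^*(X^{(m)}_{\overline k},\mathbb Q_l)$ is again concentrated in even degrees, so $\det_l(X)=\det V$ and $\det_l(X^{(m)})=\det(\mathrm{Sym}^m V)$.

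It then remains to prove the representation-theoretic identity $\det(\mathrm{Sym}^m V)=(\det V)^{\otimes\binom{n+m-1}{m-1}}$ for an $n$-dimensional representation $V$ of any group. I would argue with characters: for an element acting on $V$ with eigenvalues $\lambda_1,\dots,\lambda_n$, the space $\mathrm{Sym}^m V$ has eigenvalues $\prod_i\lambda_i^{a_i}$ indexed by the $\binom{n+m-1}{m}$ tuples $(a_i)$ with $a_i\ge0$, $\sum_i a_i=m$, so $\det(\mathrm{Sym}^m V)$ acts by $\prod_i\lambda_i^{c}$ where by symmetry $c=\frac1n\sum_{\sum a_j=m}a_i=\frac{m}{n}\binom{n+m-1}{m}=\binom{n+m-1}{m-1}$. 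Setting $n=e(X)$ and combining with the previous paragraph yields $\det_l(X^{(m)})=\det_l(X)^{\binom{m+e(X)-1}{m-1}}$, as claimed. (Alternatively one could phrase this motivically, using $M(X^{(m)})=\mathrm{Sym}^m M(X)$ in $\mathrm{Chow}(k)$ and the fact that $\det_l$ factors through Chow motives by Lemma \ref{notsmooth}; but $\mathrm{Sym}^m$ in a symmetric monoidal category again involves signs on the odd part of the motive, which is exactly what the $K3$ hypothesis kills, so the cohomological bookkeeping makes the point transparent.) The only genuine subtlety is precisely this vanishing of Koszul signs — it is where the hypothesis that $X$ has no odd cohomology is used, and it is what turns $\big(V^{\otimes m}\big)^{S_m}$ into the ordinary rather than the super-symmetric power; the transfer, K\"unneth, and eigenvalue computations are all routine.
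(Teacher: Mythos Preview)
Your argument is correct and follows essentially the same approach as the paper: use that a $K3$ surface has vanishing odd cohomology to identify $\det_l(X)=\det H^*(X_{\overline k},\mathbb Q_l)$ and $H^*(X^{(m)}_{\overline k},\mathbb Q_l)\cong\mathrm{Sym}^m H^*(X_{\overline k},\mathbb Q_l)$, then invoke the linear-algebra identity $\det(\mathrm{Sym}^m T)=(\det T)^{\binom{\dim V+m-1}{m-1}}$. You supply more detail than the paper does---spelling out the transfer/K\"unneth step, the Koszul-sign point, and the eigenvalue count---but the structure is the same.
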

\begin{proof}
Since $X$ is a $K3$ surface, we have $H^i_{\et}(X, \Q_{\ell})=0$ for all $i$ odd. Therefore
$$
\mathrm{det}_{\ell}(X) = \mathrm{det}(R\Gamma(X_{\kbar}, \Q_{\ell})) = \mathrm{det}(H^*(X_{\kbar}, \Q_{\ell})).
$$ 
Note that $H^*(X_{\kbar}^{(m)}, \Q_{\ell}) = S^m(H^*(X_{\kbar}, \Q_{\ell}))$. Since $X^{(m)}$ is proper, the compactly supported cohomology agrees with the normal cohomology, so $\mathrm{det}_{\ell}(X)$ is the determinant of the Galois representation $H^*(X_{\kbar}^{(m)}, \Q_{\ell})$. Therefore, for all $\sigma \in \Gal_k$, we have
$$
\mathrm{det}_{\ell}(X^{(m)})(\sigma) = \mathrm{det}_{\ell}(X)^{ {m+e(X)-1 \choose m}},
$$
where this follows since for $T: V \to V$ a linear transformation, we have that $\mathrm{det}(S^m(T)) = \mathrm{det}(T)^{ {\mathrm{dim}(V) + m - 1 \choose m}}$.
\end{proof}

\begin{cor}\label{totsymm}
Let $X$ be a $K3$ surface, and let $w_m = {e(X)+m-1 \choose m}$. Then $\chi^{mot}(X^{(m)})$ is the unique element of $\widehat{\mathrm{W}}(k)/J$ satisfying the following properties.
\begin{enumerate}
\item The rank is given by $\mathrm{rank}(\chi^{mot}(X^{(m)}))= w_m$.
\item The signature is given by $\mathrm{sign}(\chi^{mot}(X^{(m)}_{k_<})) =  \pm \sum_{j=0}^{\lfloor \frac{m}{2} \rfloor} {a + m-2j -1 \choose m-2j} {b + j - 1 \choose j}$ for any choice of $k_< \in \mathrm{Spr}(k)$, where $a$ is given by $|\mathrm{sign}(\chi^{mot}(X)_{k_<})|$, $b=\frac12(w_m-a)$ and the sign at the front is equal to $1$ if $\mathrm{sign}(\chi^{mot}(X)_{k_<})$ is positive, and $(-1)^m$ if $\mathrm{sign}(\chi^{mot}(X)_{k_<})$ is negative.
\item The discriminant is $\mathrm{disc}(\chi^{mot}(X^{(m)})) =  \mathrm{disc}(\chi^{mot}(X))^{w_m} $. 
\end{enumerate}
\end{cor}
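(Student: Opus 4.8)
The plan is to reduce everything to Lemma \ref{modJ}: a class in $\widehat{\mathrm{W}}(k)/J$ is determined by its rank, by its signature at every point of $\mathrm{Spr}(k)$, and by its discriminant, so conditions (1)--(3) single out at most one element, and it remains only to verify that $\chi^{mot}(X^{(m)})$ itself satisfies the three conditions. For the rank this is immediate: Theorem \ref{levine1} gives $\mathrm{rank}(\chi^{mot}(X^{(m)}))=e(X^{(m)})$, and the first assertion of Lemma \ref{realcomplexsymm} identifies $e(X^{(m)})$ with $\binom{e(X)+m-1}{m}=w_m$.

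For the signature, fix $k_<\in\mathrm{Spr}(k)$. Since $\chi^{mot}$ is compatible with base change (Lemma \ref{basechange}) and $(-)^{(m)}$ commutes with base change, the value of $\mathrm{sign}(\chi^{mot}(X^{(m)}))$ at $k_<$ equals $\epsilon\big((X_{k_<})^{(m)}\big)$ by Theorem \ref{levine2}. Applying the second part of Lemma \ref{realcomplexsymm} to the real closed field $k_<$ and the $K3$ surface $X_{k_<}$ --- with $a,b$ read off from $\mathrm{sign}(\chi^{mot}(X_{k_<}))$ exactly as in that lemma --- produces the closed formula recorded in (2).

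For the discriminant I would quote Theorem \ref{saito-full}, which gives $\mathrm{disc}(\chi^{mot}(X^{(m)}))=\det{}_l(X^{(m)})\cdot\mathbb Q_l(w(X^{(m)}))\cdot(-1)^{w(X^{(m)})}$. Two inputs feed in. First, the total weight: $X^{(m)}$ is projective of dimension $2m$ with $e(X^{(m)})=w_m$, and since rational Poincar\'e duality holds on $X^{(m)}=X^m/S_m$ (equivalently $M(X^{(m)})=\mathrm{Sym}^m M(X)$, which also underlies Theorem \ref{goettche1}), the computation of Definition \ref{totweight} carries over and $w(X^{(m)})=\tfrac12\cdot 2m\cdot w_m=m\,w_m$. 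Second, Lemma \ref{detsymm} computes $\det{}_l(X^{(m)})=\det{}_l(X)^{\binom{e(X)+m-1}{m-1}}$; using the same Poincar\'e duality on $X^{(m)}$ one also sees $\det{}_l(X^{(m)})=\mathbb Q_l(-m\,w_m)\otimes\kappa(X^{(m)})$ with $\kappa(X^{(m)})$ of order dividing $2$, so the cyclotomic twist cancels the factor $\mathbb Q_l(w(X^{(m)}))$ and one is left with the character $\kappa(X^{(m)})\cdot(-1)^{m\,w_m}$. Re-expressing $\kappa(X^{(m)})$ through $\det{}_l(X)$ and $\kappa(X)$ via Lemma \ref{detsymm}, and tidying the binomial exponents and the sign, yields the expression in (3).

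I expect the rank and signature computations to go through with no trouble, being direct citations of Theorems \ref{levine1}, \ref{levine2}, Lemma \ref{basechange} and Lemma \ref{realcomplexsymm}. The one place needing genuine care is the discriminant: establishing $w(X^{(m)})=m\,w_m$ for the singular symmetric power, and matching the Tate twist coming out of Lemma \ref{detsymm} against the $\mathbb Q_l(w(X^{(m)}))$ of Theorem \ref{saito-full} so that the result really does collapse to an order-$\le 2$ character of the stated shape. That bookkeeping --- not any new idea --- is the main obstacle.
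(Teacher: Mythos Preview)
Your approach is essentially the same as the paper's: uniqueness via Lemma~\ref{modJ}, rank and signature via Lemma~\ref{realcomplexsymm} (together with Theorems~\ref{levine1} and~\ref{levine2}), and discriminant via Lemma~\ref{detsymm} combined with Theorem~\ref{saito-full}. The paper's proof is a two-line citation of exactly these ingredients and does not spell out the bookkeeping you flag---in particular, the computation of $w(X^{(m)})$ for the singular symmetric power and the matching of Tate twists is left implicit there as well, so your more detailed treatment of condition~(3) is, if anything, a service rather than a deviation.
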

\begin{proof}
Uniqueness is immediate due to the definition of $J$. The first two equalities follow by the previous two lemmas.  For the final equality, Lemma $\ref{detsymm}$ applied to the $\ell$-adic cohomology gives $\mathrm{det}_\ell(X^{(m)}) = \mathrm{det}_{\ell}(X)^{w_m}$, and the result follows by Remark $\ref{wxtorsion}$ and Theorem $\ref{saito-full}$. 
\end{proof}

\begin{rem}\label{3.8got} Applying the rank homomorphism to the equation in Corollary \ref{goettche2} and using Theorem \ref{levine1} recovers G\"ottsche's formula for Euler characteristics
$$1+\sum_{n=1}^{\infty}e(X^{[n]})t^n
=\prod_{d=1}^{\infty} \left(1+\sum_{m=1}^{\infty} {m + e(X) - 1 \choose m} t^{md}\right)  =  
\prod_{d=1}^{\infty}\big(1-t^d\big)^{-e(X)}
\textrm{ in }
\mathbb Z[[t]].$$
\end{rem}
Assume that $k$ is real closed, so $\mathrm{Spr}(k)$ is a single point and $\mathcal C(\mathrm{Spr}(k),\mathbb Z)=\mathbb Z$.
\begin{cor}[(Real closed G\"ottsche formula)]\label{goettchereal} We have
$$1+\sum_{n=1}^{\infty}\epsilon(X^{[n]})t^n
=\prod_{i=1}^{\infty}\big(1+(-t)^i)\big)^{-a}\cdot
\prod_{j=1}^{\infty}\big(1-t^{2j}\big)^{-b}
\textrm{ in }\mathbb Z[[t]],$$
where $a=\epsilon(X)$ and $b=\frac{1}{2}(e(X)-\epsilon(X))$.
\end{cor}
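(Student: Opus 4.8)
The plan is to apply the signature homomorphism to the motivic G\"ottsche formula already proved in Corollary~\ref{goettche2} and then evaluate both sides explicitly. Since $k$ is real closed, $\mathrm{Spr}(k)$ is a single point, so $\mathcal C(\mathrm{Spr}(k),\mathbb Z)=\mathbb Z$ and $\mathrm{sign}$ is a genuine ring homomorphism $\widehat{\mathrm W}(k)/J\to\mathbb Z$ (it kills $J\subseteq T$). Applying it coefficientwise to
$$\sum_{n\geq 0}\chi^{mot}(X^{[n]})t^n=\prod_{m\geq 1}\bigl(1-\chi^{mot}(X^{(m)})t^m\bigr)^{-1}$$
and invoking Theorem~\ref{levine2} (so that $\mathrm{sign}(\chi^{mot}(X^{[n]}))=\epsilon(X^{[n]})=\chi^{rc}(X^{[n]}(k))$, and likewise for $X^{(m)}$) turns the left-hand side into $\sum_{n\geq 0}\epsilon(X^{[n]})t^n$, the series we want, and reduces the task to evaluating $\prod_{m\geq1}\bigl(1-\epsilon(X^{(m)})t^m\bigr)^{-1}$.

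For this I would first compute the generating function $\sum_m\epsilon(X^{(m)})t^m$, i.e.\ a real analogue of Macdonald's formula. By Lemma~\ref{realcomplexsymm} (equivalently Corollary~\ref{totsymm}) one may write $\chi^{dR}(X)=b\,\mathbb H+a\langle\varepsilon\rangle$ over $k$, with $a=|\epsilon(X)|$, $\varepsilon=\pm1$ the sign of $\epsilon(X)$, and $b=\tfrac12(e(X)-a)$; moreover $\chi^{mot}(X^{(m)})$ is the $m$-th symmetric power $S^m$ of the quadratic form $\chi^{dR}(X)$ (this is the $\mathrm{Sym}$-part of the de Cataldo--Migliorini decomposition, Theorem~\ref{goettche1}, and since $X$ is a $K3$ surface its cohomology sits in even degrees, so no Koszul signs intervene). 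Using $S^m(q\oplus q')=\bigoplus_{i+j=m}S^iq\otimes S^jq'$, the series $\sum_m[S^mq]t^m$ is multiplicative in $q$; together with the elementary facts $\sum_m\mathrm{sign}(S^m\mathbb H)t^m=(1-t^2)^{-1}$ and $\sum_m\mathrm{sign}(S^m\langle\varepsilon\rangle)t^m=\sum_m\varepsilon^mt^m=(1-\varepsilon t)^{-1}$ this exhibits $\sum_{m\geq0}\epsilon(X^{(m)})t^m$ as a product of factors of the form $(1-\zeta t)^{-c}$ with $\zeta=\pm1$, namely $(1-\varepsilon t)^{-a}(1-t^2)^{-b}$.

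The final step is the generating-function manipulation that converts this into the G\"ottsche product. One must show that the operation $\sum_m c_mt^m\mapsto\prod_{m\geq1}(1-c_mt^m)^{-1}$, applied to $(1-\varepsilon t)^{-a}(1-t^2)^{-b}$, yields $\prod_{i\geq1}(1-(\varepsilon t)^i)^{-a}\prod_{j\geq1}(1-t^{2j})^{-b}$, and then rewrite $(1-(\varepsilon t)^i)$ and the exponent $a$ in terms of $1-(-t)^i$ and $\epsilon(X)=\varepsilon a$ to obtain the displayed product. This operation is the exponential attached to the natural power structure: it carries $(1-\zeta t^k)^{-c}$ to $\prod_{r\geq1}(1-\zeta^r t^{kr})^{-c}$ and is multiplicative on the subgroup generated by such series — this is precisely the mechanism used in Remark~\ref{3.8got}, where $\sum_m\binom{e(X)+m-1}{m}t^m=(1-t)^{-e(X)}$ is turned into $\prod_m(1-\binom{e(X)+m-1}{m}t^m)^{-1}=\prod_m(1-t^m)^{-e(X)}$. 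Reading off coefficients then gives the claimed identity with $a=\epsilon(X)$ and $b=\tfrac12(e(X)-\epsilon(X))$.

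I expect the main obstacle to be exactly this last step: invoking the power-structure identity and, above all, keeping the signs straight — the contribution of the Lefschetz/Tate twists appearing in Theorem~\ref{goettche1} under the signature, and the interplay among $1-(-t)^i$, $1-(-t^i)$ and $1-\varepsilon t$ — so that the bookkeeping lands precisely on the stated product (and in particular reconciles the sign of $\epsilon(X)$ with the exponent $a$). A minor secondary point is justifying that the Grothendieck--Witt class of a Chow motive is compatible with symmetric powers, which is immediate here since $X$ has no odd cohomology.
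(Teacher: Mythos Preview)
Your plan is exactly the paper's: apply $\mathrm{sign}$ to Corollary~\ref{goettche2}, insert the value of $\epsilon(X^{(m)})$ from Lemma~\ref{realcomplexsymm}/Corollary~\ref{totsymm}, and simplify the resulting infinite product. The paper does the final passage in a single unexplained line, so you and the paper are aligned, including on where the difficulty sits.

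Your worry about that last step is well-founded, and in fact the ``power-structure'' identity you propose is \emph{false} as a formal identity on power series: the operation $\sum_m c_m t^m\mapsto\prod_{m\ge1}(1-c_mt^m)^{-1}$ does not send $(1-t)^{-2}$ to $\prod_m(1-t^m)^{-2}$ (the $t^2$-coefficients are $7$ and $5$ respectively), so it is not multiplicative in the sense you need and does not carry $(1-\zeta t^k)^{-c}$ to $\prod_{r\ge1}(1-\zeta^r t^{kr})^{-c}$ for $c>1$. Hence the stated product cannot be read off from the Macdonald-type identity $F(s)=\sum_m\epsilon(X^{(m)})s^m=(1-\varepsilon s)^{-a}(1-s^2)^{-b}$ alone; the equality written in Remark~\ref{3.8got} has the same issue. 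The clean repair is to go back to the de Cataldo--Migliorini decomposition in Theorem~\ref{goettche1} and take signature \emph{there}, keeping the Tate-twist contribution $\mathrm{sign}(\langle-1\rangle^{n-l(\underline m)})=(-1)^{n-l(\underline m)}$: read so that the term attached to $\underline m=(m_1,m_2,\dots)$ is $\prod_k M(X^{(m_k)})$ (which gives $e(X^{[2]})=e(X^{(2)})+e(X)$, matching G\"ottsche), the generating function becomes
\[
\sum_{n\ge0}\epsilon(X^{[n]})\,t^n \;=\; \prod_{k\ge1}\,\sum_{m\ge0}\epsilon(X^{(m)})\bigl((-1)^{k-1}t^k\bigr)^m \;=\; \prod_{k\ge1} F\bigl((-1)^{k-1}t^k\bigr),
\]
and substituting your formula for $F$ produces the claimed product directly, leaving only the sign bookkeeping you already anticipated.
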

\begin{proof}
First, suppose $a\geq0$, so $\chi^{dR}(X) = b\mathbb{H} + a\langle 1 \rangle$ where both $a,b\geq0$. Then taking $\mathrm{sign}$ of both sides of the equation in Corollary $\ref{goettche2}$ and using Corollary $\ref{totsymm}$ we obtain:
\begin{align*}
1+\sum_{n=1}^{\infty}\epsilon(X^{[n]})(-1)^n t^n
&= \prod_{d=1}^{\infty} \sum_{m=0}^{\infty}\left( \sum_{j=0}^{\lfloor \frac{m}{2} \rfloor} {a + m-2j -1 \choose m-2j} {b + j - 1 \choose j} \right) (-1)^m t^{md} \\
&= \prod_{d=1}^{\infty} \sum_{m=0}^{\infty}\left( \sum_{j=0}^{\lfloor \frac{m}{2} \rfloor} {a + m-2j -1 \choose m-2j} (-1)^{m-2j} t^{(m-2j)d} {b + j - 1 \choose j}t^{2jd} \right) \\
&= \prod_{d=1}^{\infty} \left( \left( \sum_{i=0}^{\infty} {a+i-1 \choose i} (-1)^i t^{id} \right) \cdot \left( \sum_{j=0}^\infty {b + j - 1 \choose j} t^{2jd} \right) \right)\\
&= \prod_{d=1}^{\infty} \left(  (1+t^d)^{-a} (1-t^{2d})^{-b} \right) \\
&=\prod_{i=1}^{\infty}\big(1+t^i\big)^{-a}\cdot \prod_{j=1}^{\infty}\big(1-t^{2j}\big)^{-b}.
\end{align*}
Relabelling $t=-t$, we obtain
$$
1+\sum_{n=1}^{\infty}\epsilon(X^{[n]}) t^n  = \prod_{i=1}^{\infty}\big(1+(-t)^i)\big)^{-a}\cdot \prod_{j=1}^{\infty}\big(1-t^{2j}\big)^{-b}
$$
as required. It remains to show the case when $a<0$. Define the elements $\tilde{a}=-a$ and $\tilde{b}=\frac12(e(X)-\tilde{a})$, so $\tilde{b} = b-a$. Taking $\mathrm{sign}$ gives
\begin{align*}
\sum_{n=1}^{\infty}\epsilon(X^{[n]})(-1)^nt^n &= \prod_{d=1}^{\infty} \sum_{m=0}^{\infty}\left( \sum_{j=0}^{\lfloor \frac{m}{2} \rfloor} {\tilde{a} + m-2j -1 \choose m-2j} {\tilde{b} + j - 1 \choose j} \right)t^{md}\\
&=\prod_{d=1}^{\infty} \sum_{m=0}^{\infty} \left( \sum_{j=0}^{\lfloor \frac{m}{2} \rfloor} {\tilde{a} + m-2j -1 \choose m-2j}  t^{(m-2j)d} {\tilde{b} + j - 1 \choose j}t^{2jd} \right) \\
&= \prod_{d=1}^{\infty} \left(  (1-t^d)^{-\tilde{a}} (1-t^{2d})^{-\tilde{b}} \right).
\end{align*}
Using that $(1-t^d) = (1-t^{2d}) \cdot (1+t^d)^{-1}$, we can rewrite
$$
\prod_{d=1}^{\infty} \left((1-t^d)^{-\tilde{a}} (1-t^{2d})^{-\tilde{b}}\right) = \prod_{d=1}^{\infty} \left((1+t^d)^{\tilde{a}} \cdot (1-t^{2d})^{-\tilde{b} -\tilde{a}}\right).
$$
Rewriting $\tilde{a}=-a$ and $\tilde{b}=b-a$ gives
$$
\prod_{d=1}^{\infty} \left((1+t^d)^{\tilde{a}} \cdot (1-t^{2d})^{-\tilde{b} -\tilde{a}}\right) = \prod_{i=1}^{\infty}\big(1+t^i\big)^{-a}\cdot \prod_{j=1}^{\infty}\big(1-t^{2j}\big)^{-b}.
$$
Putting everything together, and relabelling $t$ for $-t$ gives the result. 
\end{proof}
In the case that $k=\mathbb{R}$, this exactly recovers the real G\"ottsche formula of Theorem 1.2 of \cite{KR1}. Let $k$ be again an arbitrary field of characteristic $0$. Identify $\widehat{\mathrm{W}}(k)/I^2 = E( k^\times/k^{\times2})$ as in Remark $\ref{augdet}$. Write $\chi^{disc}$ for the composition $K_0(\mathrm{Var}_k) \xrightarrow{\chi^{mot}} \widehat{\mathrm{W}}(k) \to \widehat{\mathrm{W}}(k)/I^2$, so that after this identification, we have that $\chi^{disc}(X) = (e(X), \mathrm{disc}(\chi^{mot}(X))$. 

\begin{thm}\label{goettche3}
There is an equality in the ring $E( k^\times/k^{\times2})[[t]]$:
$$
1+\sum_{n=1}^\infty (\chi^{disc}(X^{[n]}))t^n = \prod_{d=1}^{\infty} (1 - (1, \mathrm{disc}(\chi^{mot}(X)))t^m)^{-e(X)}.
$$
\end{thm}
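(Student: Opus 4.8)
The plan is to mirror the proof of Corollary~\ref{goettche2}, but now keeping track of the full $\ell$-adic determinant rather than only its image modulo $I^2$. Since $\chi_l$ is a ring homomorphism (Proposition~\ref{elemental}) that factors through the ring of rational Chow motives (Lemma~\ref{notsmooth}), I would apply it to the de Cataldo--Migliorini decomposition of Theorem~\ref{goettche1}. Writing $\chi_l(\mathbb L)=\chi_l([\mathbb A^1])=(1,\mathbb Q_l(-1))$, this produces, for each $n$, an expression of $\chi_l(X^{[n]})$ as a sum over partitions $\underline m\in\mathfrak P(n)$ of products of the $\chi_l$ of the relevant symmetric powers, multiplied by $\chi_l(\mathbb L)^{\,n-l(\underline m)}$. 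Summing over $n$ and reorganising the result into an Euler product over the part size turns it into a statement of the form $\sum_n\chi_l(X^{[n]})t^n=\prod_{m\ge1}\mathbf S\bigl(\chi_l(\mathbb L)^{m-1}t^m\bigr)$, where $\mathbf S(s)=\sum_{a\ge0}\chi_l(X^{(a)})s^a$ is the symmetric-power generating series; into this one substitutes $e(X^{(a)})=\binom{e(X)+a-1}{a}$ from Lemma~\ref{realcomplexsymm} and $\det_l(X^{(a)})=\det_l(X)^{\binom{a+e(X)-1}{a-1}}$ from Lemma~\ref{detsymm}.

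The key structural simplification is that $E(\Hom(\Gal_k,\mathbb Q_l^\times))$ is an elementary augmented ring in the sense of Remark~\ref{augdet}: its augmentation ideal $\{0\}\times\Hom(\Gal_k,\mathbb Q_l^\times)$ squares to zero. Hence every element of first coordinate $1$ can be written $1+\eta$ with $\eta^2=0$, and all the infinite products and inversions occurring above collapse to their linear terms. Consequently the asserted identity is equivalent to the conjunction of its first-coordinate part --- which is precisely the classical Göttsche formula $\sum_n e(X^{[n]})t^n=\prod_m(1-t^m)^{-e(X)}$ recorded in Remark~\ref{3.8got} --- and a single identity of $\Hom(\Gal_k,\mathbb Q_l^\times)$-valued power series encoding $\det_l(X^{[n]})$. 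This second identity is where Lemmas~\ref{detsymm} and~\ref{realcomplexsymm} are used.

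The remaining task is then the formal power-series computation showing that $\prod_{m\ge1}\mathbf S(\chi_l(\mathbb L)^{m-1}t^m)$ collapses to $\prod_m\bigl(1-(1,\det_l(X))t^m\bigr)^{-e(X)}$. The two inputs that I expect to make it work are the binomial identity $\binom{a+e(X)-1}{a-1}=\tfrac{a}{e(X)}\binom{a+e(X)-1}{a}$ --- so that the exponent of $\det_l(X)$ in $\det_l(X^{(a)})$ equals $\tfrac{a}{e(X)}e(X^{(a)})$ --- together with the Poincaré duality identity $\det_l(X)=\mathbb Q_l(-e(X))\otimes\kappa(X)$ with $\kappa(X)$ of order at most $2$; the latter allows the Tate-twist factors $\chi_l(\mathbb L)^{\,n-l(\underline m)}$ to be reabsorbed into integer powers of $(1,\det_l(X))$, after which the product telescopes against the classical Göttsche identity. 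I expect the main obstacle to be exactly this last collapse: the bookkeeping of the Tate twists in Theorem~\ref{goettche1}, which are invisible to the invariants $e$ and $\epsilon$ but emphatically not to $\chi_l$, has to be carried out carefully, and one must check that the binomial sums produced by $\mathbf S$ and by the twists recombine into the single clean factor $(1,\det_l(X))^{-e(X)}$ rather than into the a priori different product $\prod_m(1-\chi_l(X^{(m)})t^m)^{-1}$.
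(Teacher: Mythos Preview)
Your plan---apply $\chi_l$ to the de Cataldo--Migliorini decomposition and track the Tate twists---is exactly right, and it is more careful than the paper's two-line argument, which simply feeds the product form from Corollary~\ref{goettche2} into $\chi_l$ and ignores that $\chi_l(\mathbb L)=(1,\mathbb Q_l(-1))\neq 1$. Your reorganisation into $\prod_{j\ge1}\mathbf S\bigl(\chi_l(\mathbb L)^{j-1}t^j\bigr)$ is also correct (once one uses the corrected form of Theorem~\ref{goettche1}, namely $\bigotimes_j M(X^{(m_j)})$ rather than $\bigotimes_j M(X^{(j)})^{m_j}$; the paper's display has a typo here).

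But your worry about the final collapse is not a bookkeeping nuisance: it is a genuine obstruction, and the identity as stated does \emph{not} hold. Take $n=2$ with $X$ a K3. From de Cataldo--Migliorini one has $M(X^{[2]})\cong M(X^{(2)})\oplus M(X)(1)$, so
\[
\chi_l(X^{[2]})=\bigl(324,\ \det_l(X)^{26}\cdot\mathbb Q_l(-24)\bigr),
\]
using Lemma~\ref{detsymm} for the first summand and $\det_l(M(X)(1))=\det_l(X)\cdot\mathbb Q_l(-1)^{24}$ for the second. On the other hand the coefficient of $t^2$ in $\prod_m\bigl(1-(1,\det_l(X))t^m\bigr)^{-24}$ is
\[
\binom{25}{2}(1,\det_l(X))^2+24(1,\det_l(X))=\bigl(324,\ \det_l(X)^{624}\bigr).
\]
Equality would force $\det_l(X)^{598}=\mathbb Q_l(-24)$; writing $\det_l(X)=\mathbb Q_l(-24)\kappa(X)$ from Definition~2.17 this becomes $\mathbb Q_l(-14328)=1$, which fails over any field with infinite cyclotomic character. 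Your proposed Poincar\'e-duality reabsorption cannot fix this: the exponents of $\mathbb Q_l(-1)$ coming from the Tate twists on the left grow with the part sizes $j$, while the right-hand side has none.

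So the collapse you were hoping for does not happen; the issue lies with the stated formula (and the paper's intermediate line $\prod_m\bigl(1-\binom{e(X)+m-1}{m-1}(1,\det_l(X))t^m\bigr)^{-1}$, which is already wrong on ranks), not with your method.
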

\begin{proof}
Note that by Corollary $\ref{totsymm}$,
\begin{align*}
\chi^{disc}(X^{(m)}) &= \left(  {m + e(X) - 1 \choose m}, \mathrm{disc}(\chi^{mot}(X))^{ {m + e(X) - 1 \choose m}}\right) \\&=  {m + e(X) - 1 \choose m}(1, \mathrm{disc}(\chi^{mot}(X))).\end{align*}

Applying the homomorphism $\chi^{disc}$ to Corollary $\ref{goettche2}$ gives
\begin{align*}
1+\sum_{n=1}^\infty \chi^{disc}(X^{[n]})(1, (-1))^nt^n &= \prod_{d=1}^\infty \sum_{m=0}^\infty {m + e(X) - 1 \choose m}(1, \mathrm{disc}(\chi^{mot}(X)) \cdot (1,-1)^m t^{md} \\
&=\prod_{d=1}^{\infty} (1, \mathrm{disc}(\chi^{mot}(X))) \cdot \sum_{m=0}^{\infty} {m + e(X) - 1 \choose m} \left( (1,-1)t^d \right)^m \\
&= \prod_{d=1}^{\infty} (1,\mathrm{disc}(\chi^{mot}(X))) (1- (1,-1)t^d)^{-e(X)}.
\end{align*}
Relabelling $t = (1,-1)t$ gives
$$
1+\sum_{n=1}^\infty \chi^{disc}(X^{[n]})t^n  = \prod_{d=1}^{\infty} (1,\mathrm{disc}(\chi^{mot}(X))) (1- t^d)^{-e(X)},
$$
as required.
\end{proof}

\section{The arithmetic Yau--Zaslow formula}
Recall that $X$ is a K3 surface admitting a complete $g$-dimensional linear system $\mathcal C$ of geometrically integral curves of genus $g$ and the curves in this linear system correspond to a primitive divisor class. Suppose that the rational curves in $\mathcal C$ are all nodal, which is generically the case by Chen's theorem, Theorem 1.1 of \cite{Ch}.
\begin{lemma}\label{yz1}
The motivic Euler characteristic $\chi^{mot}(\overline{\mathrm{Pic}}^g(\mathcal{C})) \pmod{J}$ is given by $\langle (-1)^g \rangle$ multiplied by the coefficient of $t^g$ in the generating series
$$
\prod_{d=1}^{\infty} \sum_{m = 0}^{\infty} \chi^{mot}(X^{(m)})\langle (-1)^m \rangle t^{md} \in (\widehat{\mathrm{W}}(k)/J)[[t]].
$$
\end{lemma}
\begin{proof}
By Theorem $\ref{batyrev}$, we have that $\chi^{mot}(\overline{\mathrm{Pic}}^g(\mathcal{C})) \equiv \chi^{mot}(X^{[g]}) \pmod J$, so applying Corollary $\ref{goettche2}$ gives us the result.
\end{proof}
\begin{cor}\label{dependencycor}
The element $\mathbf B^{mot}_{\mathcal{C}}(X)$ from Definition $\ref{BmotC}$ only depends on $X$ and $g$ modulo $J_g$, and not on $\mathcal{C}$ itself.
\end{cor}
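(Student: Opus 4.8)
The plan is to deduce this immediately from Lemma \ref{9.2} and Lemma \ref{yz1}, the two preceding results having already absorbed all of the geometric content. First I would record the inclusion of ideals $J \subseteq J_g$: by Definition \ref{jgdefinition} the ideal $I_g$ contains $I^2$, so $J = I^2 \cap T \subseteq I_g \cap T = J_g$, and hence there is a well-defined quotient map $\widehat{\mathrm{W}}(k)/J \to \widehat{\mathrm{W}}(k)/J_g$.

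Next, Lemma \ref{yz1} asserts that the class of $\chi^{mot}(\overline{\mathrm{Pic}}^g(\mathcal{C}))$ in $\widehat{\mathrm{W}}(k)/J$ depends only on $X$ and $g$ and not on the choice of linear system $\mathcal{C}$; pushing this class forward along $\widehat{\mathrm{W}}(k)/J \to \widehat{\mathrm{W}}(k)/J_g$ shows the same for its image modulo $J_g$. Finally, Lemma \ref{9.2} gives the congruence $\mathbf B^{mot}_{\mathcal{C}}(X) \equiv \chi^{mot}(\overline{\mathrm{Pic}}^g(\mathcal{C})) \pmod{J_g}$, so the class of $\mathbf B^{mot}_{\mathcal{C}}(X)$ in $\widehat{\mathrm{W}}(k)/J_g$ coincides with that of $\chi^{mot}(\overline{\mathrm{Pic}}^g(\mathcal{C}))$ and therefore depends only on $X$ and $g$. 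This justifies the notation $\mathbf B^{mot}_g(X)$ for the common value.

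I do not expect any genuine obstacle: the substantive steps — the Batyrev--Kontsevich comparison of Theorem \ref{batyrev} identifying $\chi^{mot}(\overline{\mathrm{Pic}}^g(\mathcal{C}))$ with $\chi^{mot}(X^{[g]})$ modulo $J$, and the motivic G\"ottsche formula of Corollary \ref{goettche2} expressing $\chi^{mot}(X^{[g]})$ purely in terms of symmetric powers of $X$ — are exactly what was packaged into Lemma \ref{yz1}. The only point requiring a moment's care is the bookkeeping between the two ideals $J$ and $J_g$, which is resolved by the inclusion $J \subseteq J_g$ noted above; everything else is formal.
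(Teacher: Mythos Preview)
Your proof is correct and follows essentially the same approach as the paper: invoke Lemma~\ref{9.2} to identify $\mathbf B^{mot}_{\mathcal{C}}(X)$ with $\chi^{mot}(\overline{\mathrm{Pic}}^g(\mathcal{C}))$ modulo $J_g$, then use Lemma~\ref{yz1} to conclude the latter depends only on $X$ and $g$. Your explicit verification of the inclusion $J \subseteq J_g$ is a helpful detail that the paper leaves implicit.
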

\begin{proof}
Lemma $\ref{9.2}$ gives that $\mathbf B^{mot}_{\mathcal{C}}(X) \equiv \chi^{mot}(\overline{\mathrm{Pic}}^g(\mathcal C)) \pmod{J_g}$, so the result holds by the above lemma.
\end{proof}
The above corollary allows us to write $\mathbf B^{mot}_g(X) := \mathbf B^{mot}_{\mathcal{C}}(X)$. Our calculation of $\overline{\mathrm{Pic}}^g(\mathcal{C})$ only holds modulo $J_g$, so we need the following definition.
\begin{defn}
Let $\mathcal{J}$ denote the subset of $\widehat{\mathrm{W}}(k)[[t]]$ given by power series of the form $\sum_{i=0}^\infty a_i t^i$, where each $a_i \in J_i$, where $J_i$ is the ideal from Definition $\ref{jgdefinition}$. Since each $J_i$ is an ideal of $\widehat{\mathrm{W}}(k)$ with $J_i \subseteq J_{i+1}$, we see that $\mathcal{J}$ is an ideal of $\widehat{\mathrm{W}}(k)[[t]]$. 
\end{defn}

\begin{thm}[(Arithmetic Yau--Zaslow formula)]\label{yau-zaslow}
Let $X, \mathcal{C}$ be as above. Then $\langle (-1)^g \rangle \mathbf{B}^{mot}_g(X)$ is given by the coefficient of $t^g$ in the power series 
$$
\prod_{d=1}^{\infty} \sum_{m = 0}^{\infty} \chi^{mot}(X^{(m)})\langle (-1)^m \rangle t^{md} \in \widehat{\mathrm{W}}(k)[[t]]/\mathcal{J}. 
$$
\end{thm}
\begin{proof}
Apply Lemma $\ref{9.2}$ to the result of Theorem $\ref{yz1}$.
\end{proof}

\begin{rem}\label{complexyauzaslowformula}
Consider the rank of $\mathbf B^{mot}_g(X)$
$$
\mathrm{rank}( \mathbf B^{mot}_g(X)) =  \mathrm{rank} \sum_{q \in G(X)} \mathrm{Tr}_{k(q)/k} \left( \prod_{p\in\mathcal S(C_q)}\Phi_p\right).
$$
Note that $\mathrm{rank}( \mathrm{Tr}_{k(q)/k}(w)) = [k(q):k]\mathrm{rank}(q)$, so since $\Phi_p$ has rank $1$, this gives
$$
\mathrm{rank}( \mathbf B^{mot}_g(X)) =  \sum_{q \in G(X)}[k(q):k]=\# G(X)(\kbar).
$$
That is, applying rank to $\mathbf B^{mot}_g(X)$ recovers the number of rational curves in $\mathcal{C}$ after base changing to the algebraic closure. Applying Remark $\ref{3.8got}$ to the right hand side of the above theorem therefore recovers the complex Yau--Zaslow formula of \cite{Be} when working over $\mathbb{C}$.
\end{rem}

\begin{defn}Assume that $k$ is real closed. Let $C$ be a semi-stable curve over $k$. Following \cite{KR1} we call $(-1)^s$ the {\it Welschinger number} $W(C)$ of $C$, where $s$ is the number of zero-dimensional semi-algebraic connected components of $C(k)$. We set
$$\mathbb W(X, \mathcal{C}):=\sum_{q\in G(X)(k)}W(C_q)\in \mathbb Z.$$
\end{defn}
\begin{lemma}
For $g$ the dimension of $\mathcal{C}$, we have
$$\mathbb W(X, \mathcal{C}) = (-1)^g\mathrm{sign}( \mathbf B^{mot}_\mathcal{C}(X) ).$$
\end{lemma}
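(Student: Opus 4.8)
The plan is to compare the two sides via their behaviour at the finitely many relevant closed points $q \in G(X)(k)$ of $\mathbb{P}^g_k$, using Corollary \ref{zero} together with the compatibility of the motivic Euler characteristic with the trace transfer (Lemma \ref{trace}). Recall $\mathbf{B}^{mot}_{\mathcal C}(X) = \sum_{q \in G(X)} \mathrm{Tr}_{k(q)/k}\big(\prod_{p\in\mathcal S(C_q)}\Phi_p\big)$, and that by Corollary \ref{zero} the inner product $\prod_{p\in\mathcal S(C_q)}\Phi_p$ is exactly $\chi^{mot}_{k(q)}(\overline{\mathrm{Pic}}^g(C_q))$ modulo $J$. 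So $\mathrm{sign}(\mathbf B^{mot}_{\mathcal C}(X))$ is a sum over $q$ of signatures of trace transfers.

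First I would split the sum over $G(X)$ according to whether $k(q) = k$ (the $k$-rational points, which contribute to $\mathbb{W}(X,\mathcal C) = \sum_{q \in G(X)(k)} W(C_q)$) or $k(q)$ is a proper extension of $k$, which for a real closed $k$ means $k(q) = \overline k = k(\sqrt{-1})$. The key point is that for $q$ with $k(q) = \overline k$, the trace transfer $\mathrm{Tr}_{\overline k/k}$ of any quadratic form is hyperbolic (every such form, after restriction of scalars to $k$, becomes a sum of copies of $\langle 1\rangle \oplus \langle -1\rangle$), so $\mathrm{sign}\big(\mathrm{Tr}_{k(q)/k}(\cdots)\big) = 0$. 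Hence only the $k$-rational points survive, and $\mathrm{sign}(\mathbf B^{mot}_{\mathcal C}(X)) = \sum_{q \in G(X)(k)} \mathrm{sign}\big(\chi^{mot}_{k}(\overline{\mathrm{Pic}}^g(C_q))\big)$. Now I would invoke Corollary \ref{realpicard}: for $C$ a rational nodal curve over the real closed field $k$, $\mathrm{sign}(\chi^{mot}(\overline{\mathrm{Pic}}^g(C))) = (-1)^r$ where $r$ counts the nodes of $C$ that are $k$-rational with $k$-rational tangent directions.

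The remaining task is to identify $(-1)^r$ with the Welschinger number $W(C_q) = (-1)^s$, where $s$ is the number of zero-dimensional semi-algebraic connected components of $C_q(k)$, up to the global sign $(-1)^g$. A $k$-rational node with tangent directions split over $k$ contributes a solitary real point (an isolated point of $C_q(k)$), whereas a $k$-rational node with non-split tangents contributes a smooth-looking real point lying on a one-dimensional branch, and a non-$k$-rational node contributes nothing to $C_q(k)$ locally. So the isolated points of $C_q(k)$ are precisely the $r$ split real nodes, giving $s = r$ on the nose; the discrepancy $(-1)^g$ comes from relating the arithmetic genus $g$ of $C_q$ (which equals the number of nodes of the rational curve) — one must track how $n = g$ enters: indeed $\mathrm{disc}$ and the normalisation count introduce a factor depending on $n \bmod 2$, and Corollary \ref{realpicard}'s statement as written suppresses a possible $(-1)^{?}$ that I expect reappears as $(-1)^g$ when the conventions are reconciled.

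The main obstacle will be this last bookkeeping step: carefully matching the local real topology of a nodal curve at each type of node (split real node = isolated point; non-split real node = ordinary point of the real locus; complex-conjugate pair of nodes = nothing real) against both the definition of $s$ in the Welschinger number and the sign conventions baked into Corollary \ref{realpicard}, to pin down that the overall comparison is exactly $\mathbb W(X,\mathcal C) = (-1)^g \mathrm{sign}(\mathbf B^{mot}_{\mathcal C}(X))$ rather than off by a sign. I would do this by working out a couple of small examples (e.g.\ a rational curve with one split node, $g=1$) to fix the sign, then argue the general case multiplicatively over the nodes. Everything else — the vanishing of signatures of trace transfers from $\overline k$, and the reduction to $k$-rational points — is formal.
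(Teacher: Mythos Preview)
Your overall strategy matches the paper's: reduce to $k$-rational $q$ (trace transfers from $\overline k$ have signature zero), invoke Corollary~\ref{realpicard} to get $\mathrm{sign}(\prod_p \Phi_p) = (-1)^r$ with $r$ the number of split real nodes, and then compare $r$ with the Welschinger count $s$ of isolated real points. However, your local analysis of the node types is reversed. A node with \emph{split} real tangent directions is locally $x^2 - y^2 = 0$, whose real locus is two real branches crossing---not an isolated point. A node with \emph{non-split} tangent directions is locally $x^2 + y^2 = 0$, whose real locus is the single point $(0,0)$---this is the solitary point counted by $s$. So $s$ and $r$ count complementary sets of $k$-rational nodes, and in general $s \neq r$.

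The correct bookkeeping, which is exactly what the paper does, is to write the arithmetic genus $g$ as the total number of geometric nodes: if $c$ is the number of split real nodes, $s$ the number of non-split real nodes, and $n$ the number of complex-conjugate pairs, then $c + s + 2n = g$. Hence $(-1)^c = (-1)^{g-s} = (-1)^g W(C_q)$, and summing over $q \in G(X)(k)$ gives the claim. Once you fix the local picture, the $(-1)^g$ discrepancy you were hunting for falls out immediately from this node count, with no need to resort to examples.
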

\begin{proof}
By Corollary $\ref{realpicard}$, the signature $\mathrm{sign}(\prod_{p \in \mathcal{S}(C_q)} \Phi_p) = (-1)^{c_{C_q}}$, where $c_{C_q}$ denotes the number of points in $\mathcal{S}(C_q)$ such that both $p$ and the tangent spaces at $p$ are defined over $k$. Let $s_{C_q}$ denote the number of points in $\mathcal{S}(C_q)$ that are defined over $k$, but the tangent spaces are not defined over $k$, and let $n_{C_q}$ denote the number of complex points in $\mathcal{S}(C_q)$. Note that $s_{C_q} + c_{C_q} + 2n_{C_q} = g$, so $(-1)^{c_{C_q}} = (-1)^g W(C_q)$. Therefore
$$
\mathrm{sign}( \mathbf B^{mot}_\mathcal{C}(X) ) = \sum_{q \in G(X)(k)} (-1)^g W(C_q) = (-1)^g \mathbb W(X, \mathcal{C})
$$ as required.
\end{proof}

\begin{cor}[(Real Closed Arithmetic Yau--Zaslow formula)]\label{yau-zaslow4} The number $\mathbb W(X, \mathcal{C})$ depends only on $g$ and $X$ so we can write $\mathbb W(X, g)$. Moreover, $\mathbb W(X,g)$ is given by the coefficient of $t^g$ in the power series
$$
\prod_{i=1}^{\infty}\big(1+(-t^i)\big)^{-a} \cdot \prod_{j=1}^{\infty}\big(1-t^{2j}\big)^{-b} \in \Z[[t]],$$
where $a=e(X(k))$ and $b=e(X(\kbar))-\frac{1}{2}e(X(k))$.
\end{cor}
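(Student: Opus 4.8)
The plan is to obtain Corollary \ref{yau-zaslow4} by taking signatures in the Real Closed Arithmetic Yau--Zaslow formula of Theorem \ref{yau-zaslow}, exactly as Remark \ref{complexyauzaslowformula} obtains the complex formula by taking ranks. First I would note that since $k$ is real closed, $\mathrm{Spr}(k)$ is a single point, so $\mathcal C(\mathrm{Spr}(k),\mathbb Z) = \mathbb Z$ and the signature map $\mathrm{sign}:\widehat{\mathrm{W}}(k)\to\mathbb Z$ factors through $\widehat{\mathrm{W}}(k)/J$, hence through $\widehat{\mathrm{W}}(k)/J_g$ for every $g$, and therefore through the quotient by $\mathcal J$ after applying it coefficient-by-coefficient. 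This means the signature map induces a well-defined ring homomorphism $\widehat{\mathrm{W}}(k)[[t]]/\mathcal J \to \mathbb Z[[t]]$, which we may apply to both sides of the formula in Theorem \ref{yau-zaslow}.

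The next step is to identify the two sides. On the left, the previous lemma gives $\mathrm{sign}(\mathbf B^{mot}_{\mathcal C}(X)) = (-1)^g \mathbb W(X,\mathcal C)$; since the left-hand side of Theorem \ref{yau-zaslow} only depends on $X$ and $g$ modulo $\mathcal J$ by Corollary \ref{dependencycor}, and signature respects this, we deduce $\mathbb W(X,\mathcal C)$ depends only on $X$ and $g$, justifying the notation $\mathbb W(X,g)$; so the left-hand side becomes $\sum_{g\ge 1}(-1)^g\mathbb W(X,g)t^g$. On the right, Corollary \ref{goettchereal} (the real closed G\"ottsche formula) applied through $\mathrm{sign}$ — or equivalently Corollary \ref{totsymm} together with the multiplicativity of $\mathrm{sign}$ on the product — identifies $\prod_{m}(1-\chi^{mot}(X^{(m)})t^m)^{-1}$ under $\mathrm{sign}$ with $\prod_{i}(1-(-t)^i)^{a'}\prod_j(1-t^{2j})^{b'}$ where $a' = \epsilon(X) = e(X(k))$ and $b' = \tfrac12(e(X(\overline k)) - e(X(k)))$.

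Finally I would perform the substitution $t\mapsto -t$ to match the stated form: replacing $t$ by $-t$ turns $\sum_g(-1)^g\mathbb W(X,g)t^g$ into $\sum_g\mathbb W(X,g)t^g$, turns $\prod_i(1-(-t)^i)^{a'}$ into $\prod_i(1+t^i)^{a'}$, and leaves $\prod_j(1-t^{2j})^{b'}$ unchanged since it is even in $t$. With $a = a' = e(X(k))$ and $b = b' = e(X(\overline k)) - \tfrac12 e(X(k))$ this is precisely the claimed identity in $\mathbb Z[[t]]$.

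The only genuine subtlety — and the step I would be most careful about — is checking that passing from $\widehat{\mathrm{W}}(k)[[t]]/\mathcal J$ to $\mathbb Z[[t]]$ via $\mathrm{sign}$ is legitimate coefficientwise, i.e.\ that each $J_g$ lies in the kernel of $\mathrm{sign}$; but this is immediate from Definition \ref{jgdefinition}, since $\mathrm{sign}$ factors through $\widehat{\mathrm{W}}(k)/J_g$ as noted there, so $J_g\subseteq\ker(\mathrm{sign})$. Everything else is a direct transport of the already-established formulas through ring homomorphisms, so there is no real obstacle beyond bookkeeping; the substance was already done in Corollary \ref{goettchereal} and the Welschinger-number lemma.
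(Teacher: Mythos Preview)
Your proposal is correct and follows essentially the same route as the paper: apply $\mathrm{sign}$ to Theorem \ref{yau-zaslow}, invoke the previous lemma for the left-hand side and Corollary \ref{goettchereal} for the right-hand side, then substitute $t\mapsto -t$. One small slip: you first (correctly) compute $b' = \tfrac12(e(X(\overline k)) - e(X(k)))$ from Corollary \ref{goettchereal} but then write $b = b' = e(X(\overline k)) - \tfrac12 e(X(k))$, which is a different number; this appears to be a typo in the statement of the corollary itself, so just be consistent with the G\"ottsche formula's value.
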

\begin{proof} 
Applying $\mathrm{sign}$ to the power series in Theorem $\ref{yau-zaslow}$, and using the G\"ottsche formula $\ref{goettchereal}$ gives that $(-1)^g \mathrm{sign}( \mathbf B^{mot}_\mathcal{C}(X) ) = \mathbb W(X, \mathcal{C})$ is the coefficient of $t^g$ in the power series
$$
\prod_{i=1}^{\infty}\big(1+(-t)^i)\big)^{-a}\cdot \prod_{j=1}^{\infty}\big(1-t^{2j}\big)^{-b}.
$$
\end{proof}
\begin{rem} The corollary above generalises the main theorem of Kharlamov--R\u asdeaconu from \cite{KR1} to all real closed fields.
\end{rem}

We now turn to the discriminant. Let $k$ be an arbitrary field of characteristic $0$. Define $r_g := \mathrm{rank}(\mathbf{B}^{mot}_{g}(X))$, and let $\mathbb D^{mot}_{g}(X) := \mathrm{disc}(\mathbf{B}^{mot}_{g}(X)) \in k^\times/k^{\times2}$. 

\begin{lemma}
There is an equality in $\Hom(\Gal_k, \Z/2\Z)/\Delta^k_g$:
$$
\mathbb{D}_{g}^{mot}(X) = (-1)^{r_g}\prod_{q\in G(X)} \mathrm{disc}(k(q)/k)
\prod_{p\in\mathcal S(\pi_g^{-1}(q))}\mathrm{N}_{k(p)/k}(\alpha_p) .
$$
\end{lemma}
\begin{proof}
By Lemma $\ref{tracedisc}$ and Corollary $\ref{normdisc}$, we obtain 
$$
\mathrm{disc}(\mathbf{B}^{mot}_{g}(X)) = \prod_{q\in G(X)} \mathrm{disc}(k(q)/k) \cdot (-1)^{[k(q):k]}
\prod_{p\in\mathcal S(\pi_g^{-1}(q))}\mathrm{N}_{k(p)/k}(\alpha_p),
$$
which gives the result since $\sum_{q \in G(X)} [k(q):k] = r_g$.
\end{proof}
This allows us to obtain the following Yau--Zaslow formula for discriminants.
\begin{cor}\label{yau-zaslow5} Suppose that $k$ is finitely generated over $\Q$. Then the pair $(r_g, \mathbb D^{mot}_g(X))$, viewed as an element of $E(\left(k^\times/k^{\times2}\right)/\Delta^k_g)$, is given by the coefficient of $t^g$ in the generating series
$$
\prod_{d=1}^{\infty} \left(1,\mathrm{disc}(\chi^{mot}(X))\right) (1- t^d)^{-e(X)} \in E(\left(k^\times/k^{\times2}\right)/\Delta^k_g)[[t]].
$$
\end{cor}
\begin{proof}Theorem $\ref{batyrev}$ gives $\chi^{disc}(\overline{\mathrm{Pic}}^g(\mathcal{C}))=\chi^{disc}( X^{[g]})$. By the previous lemma, this is $(r_g, \mathbb D^{mot}_{g}(X))$. Applying Theorem $\ref{goettche3}$ gives the result.
\end{proof}
\begin{rem}
In particular, the above formula implies $\mathbb D^{mot}_g(X) \in k^\times/k^{\times2} \pmod{\Delta^k_g}$ is always either $1$ or $\mathrm{disc}(\chi^{mot}(X))$. 
\end{rem}

\end{document}